\newtheorem{theorem}{Theorem}
\newtheorem{proposition}[theorem]{Proposition}
\newtheorem{corollary}[theorem]{Corollary}
\newtheorem{lemma}[theorem]{Lemma}
\newtheorem{sublemma}[theorem]{Sublemma}
\theoremstyle{definition}
\newtheorem{question}[theorem]{Question}
\newtheorem{definition}[theorem]{Definition}
\newtheorem{remark}[theorem]{Remark}
\DeclareMathOperator{\diam}{diam}
\DeclareMathOperator{\dist}{dist}
\DeclareMathOperator{\id}{id}
\DeclareMathOperator{\Area}{Area}
\DeclareMathOperator{\length}{length}
\DeclareMathOperator{\spt}{spt}
\renewcommand{\mod}{\operatorname{mod}}
\newcommand{\del}{\delta}
\newcommand{\gam}{\gamma}
\newcommand{\eps}{\epsilon}
\newcommand{\lam}{\lambda}
\newcommand{\bdry}{\partial_\infty}
\newcommand{\nats}{\mathbb{N}}
\newcommand{\cT}{\mathcal{T}}
\newcommand{\cH}{\mathcal{H}}
\newcommand{\cC}{\mathcal{C}}
\newcommand{\cD}{\mathcal{D}}
\newcommand{\cU}{\mathcal{U}}
\newcommand{\cR}{\mathcal{R}}
\newcommand{\ra}{\rightarrow}
\newcommand{\R}{\mathbb{R}}
\newcommand{\N}{\mathbb{N}}
\newcommand{\C}{\mathbb{C}}
\newcommand{\ba}{\mathbf{a}}
\newcommand{\Sph}{\mathbb{S}}
\newcommand{\restrict}{\begin{picture}(12,12)
                        \put(2,0){\line(1,0){8}}
                        \put(2,0){\line(0,1){8}}
                       \end{picture}}
\def\Xint#1{\mathchoice
{\XXint\displaystyle\textstyle{#1}}%
{\XXint\textstyle\scriptstyle{#1}}%
{\XXint\scriptstyle\scriptscriptstyle{#1}}%
{\XXint\scriptscriptstyle\scriptscriptstyle{#1}}%
\!\int}
\def\XXint#1#2#3{{\setbox0=\hbox{$#1{#2#3}{\int}$}
\vcenter{\hbox{$#2#3$}}\kern-.5\wd0}}
\def\dashint{\Xint-}
\numberwithin{theorem}{section}
\numberwithin{equation}{section}
\begin{document}
\title[Modulus and Poincar\'e inequalities on carpets]{Modulus and
  Poincar\'e inequalities on non-self-similar Sierpi\'nski carpets}
\date{\today}
\author{John M. Mackay}
%\address{Department of Mathematics, University of Illinois, 1409 West
%  Green St., Urbana, IL 61801 USA}
\address{Mathematical Institute, University of Oxford, 24-29 St
  Giles, Oxford, OX1 3LB, UK}
\email{John.Mackay@maths.ox.ac.uk}
\author{Jeremy T. Tyson}
\address{Department of Mathematics, University of Illinois, 1409 West
  Green St., Urbana, IL 61801 USA}
\email{tyson@math.uiuc.edu}
\author{Kevin Wildrick}
\address{Mathematisches Institut \\ Universit\"at Bern \\ Sidlerstrasse 5, 3012 Bern, Switzerland}
\email{kevin.wildrick@math.unibe.ch}
\thanks{JMM and JTT were supported by US National Science Foundation Grant DMS-0901620, 
JMM was supported by EPSRC grant ``Geometric and analytic aspects of infinite groups'', JTT was supported by US National Science Foundation Grant DMS-1201875,
KW supported by Academy of Finland Grants 120972 and 128144, the Swiss National Science Foundation, ERC Project CG-DICE, and European Science Council Project HCAA}
\keywords{Sierpi\'nski carpet, doubling measure, modulus, Poincar\'e inequality, Gromov--Hausdorff tangent cone.}
\subjclass[2010]{30L99; 31E05; 28A80}
\begin{abstract}
A carpet is a metric space homeomorphic to the Sierpi\'nski
carpet. We characterize, within a certain class of examples,
non-self-similar carpets supporting curve families of nontrivial
modulus and supporting Poin\-car\'e inequalities. Our results yield
new examples of compact doubling metric measure spaces supporting
Poincar\'e inequalities: these examples have no manifold points, yet
embed isometrically as subsets of Euclidean space.
\end{abstract}

\maketitle

%\tableofcontents

\section{Introduction}

Metric spaces equipped with doubling measures that support Poin\-car\'e
inequalities (also known as {\it PI spaces}) are ideal environments
for first-order analysis and differential geometry \cite{hk:quasi},
\cite{ch:lipschitz}, \cite{hkst:banach}, \cite{kei:differentiable},
\cite{kz:poincare}. Extending the scope of this theory by verifying
Poincar\'e inequalities on new classes of spaces is a problem of high
interest and relevance. Previously, several classes of spaces have
been shown to support Poincar\'e inequalities:
\begin{itemize}
\item compact Riemannian manifolds or noncompact Riemannian manifolds
  satisfying suitable curvature bounds \cite{bus:isoperimetric},
\item Carnot groups and more general sub-Riemannian manifolds equipped
  with Carnot-Carath\'e\-odory (CC) metric \cite{hk:quasi},
  \cite{hei:capacity}, \cite{jer:poincare},
\item boundaries of certain hyperbolic Fuchsian buildings, see Bourdon
  and Pajot \cite{bp:buildings},
\item Laakso's spaces \cite{laa:regular},
\item linearly locally contractible manifolds with good volume growth
  \cite{sem:curves}.
\end{itemize}
These examples fall into two (overlapping) classes: examples for which
the underlying topological space is a manifold, and abstract metric
examples which admit no bi-Lipschitz embedding into any
finite-dimensional Euclidean space. Such bi-Lipschitz nonembeddability
follows from Cheeger's celebrated Rademacher-style differentiation
theorem in PI spaces, as explained in \cite[\S 14]{ch:lipschitz}.
Euclidean bi-Lipschitz nonembeddability is known, for instance, for
all nonabelian Carnot groups and other regular sub-Riemannian
manifolds, as well as for the examples of Bourdon and Pajot
\cite{bp:buildings} and Laakso \cite{laa:regular}.

The preceding dichotomy should not be taken too seriously.
Nonabelian Carnot groups equipped with the CC metric, for instance,
have underlying space which is a topological manifold, yet do not
admit any Euclidean bi-Lipschitz embedding. On the other hand, it is
certainly possible to construct Euclidean subsets with some
nonmanifold points which are PI spaces. This can be done, for
instance, by appealing to various gluing theorems for PI spaces, see
\cite[Theorem 6.15]{hk:quasi} (reproduced as Theorem
\ref{gluing-theorem} in this paper) for a general result along these
lines. However, the following question appears to have been
unaddressed in the literature until now.

\begin{question}\label{Q1}
Do there exist sets $X \subset \R^N$ (for some $N$) with {\bf no}
manifold points which are PI spaces when equipped with the Euclidean
metric and some suitable measure?
\end{question}

In connection with Question \ref{Q1} we recall the examples
constructed by Heinonen and Hanson \cite{hh:manifold}. For each $n\ge
2$, these authors construct a compact, geodesic, Ahlfors $n$-regular
PI space of topological dimension $n$ with no manifold points. They
suggest \cite[p.\ 3380]{hh:manifold}, but do not check, that their
nonmanifold example admits a bi-Lipschitz embedding into some
Euclidean space. Note that the question about embeddability of the
Heinonen--Hansen example is not resolved by Cheeger's work, since this
example admits almost everywhere unique tangent cones coinciding
with~$\R^n$.

The examples of PI spaces due to Bourdon and Pajot \cite{bp:buildings}
comprise a class of compact metric spaces arising as the Gromov
boundaries of certain hyperbolic groups acting geometrically on
Fuchsian buildings. Topologically, all of the Bourdon--Pajot examples
are homeomorphic to the Menger sponge. It is well-known that
`typical' Gromov hyperbolic groups have Menger sponge boundaries.
While examples of Gromov hyperbolic groups with Sierpi\'nski carpet
boundary do exist, it is not presently known whether any such boundary
can verify a Poincar\'e inequality in the sense of Heinonen and
Koskela.

\begin{question}\label{Q2}
Do there exist PI spaces that are homeomorphic to the
Sierpi\'nski carpet?
\end{question}

In this paper we answer Questions \ref{Q1} and \ref{Q2} affirmatively.
We identify a new class of doubling metric measure spaces supporting
Poincar\'e inequalities. Our main results are Theorem \ref{thmA} and
\ref{thmB}. Our spaces have no manifold points, indeed, they are all
homeomorphic to the Sierpi\'nski carpet. On the other hand, all of our
examples arise as explicit subsets of the plane equipped with the
Euclidean metric and the Lebesgue measure. These are the first
examples of compact subsets of Euclidean space without interior that
support Poincar\'e inequalities for the usual Lebesgue measure.

To fix notation and terminology we recall the notion of Poincar\'e
inequality on a metric measure space as introduced by Heinonen and
Koskela \cite{hk:quasi}. Let $(X,d,\mu)$ be a metric measure space,
i.e., $(X,d)$ is a metric space and $\mu$ is a Borel measure which
assigns positive and finite measure to all open balls in $X$. A Borel
function $\rho:X \ra [0,\infty]$ is an {\em upper gradient} of a
function $u:X\ra\R$ if $|u(x)-u(y)| \le \int_\gamma \rho \, ds$
whenever $\gamma$ is a rectifiable curve joining $x$ to $y$.

\begin{definition}[Heinonen--Koskela]\label{Poincare-inequality-definition}
Fix $p \ge 1$. The space $(X,d,\mu)$ is said to support a {\em
  $p$-Poincar\'e inequality} if there exist constants $C,\lam \ge 1$
so that for any continuous function $u:X \ra \R$ with upper gradient
$\rho:X \ra [0,\infty]$, the inequality
\begin{equation}\label{eq-pi}
\dashint_B \left| u - \dashint_{B} u\, d\mu \right| \, d\mu \le C
\diam(B) \left( \dashint_{\lam B} \rho^p \, d\mu \right)^{1/p}
\end{equation}
holds for every ball $B = B(x,r) \subset X$. Here we denote, for a
subset $E\subset X$ of positive measure, the mean value of a function
$u:E\to\R$ by $\dashint_{E} u\, d\mu = \frac{1}{\mu(E)} \int_E u\,d\mu$.
\end{definition}

The validity of a Poincar\'e inequality in the sense of Definition \ref{Poincare-inequality-definition} reflects strong connectivity properties of the underlying space. Roughly speaking, metric measure spaces $(X,d,\mu)$ supporting a Poincar\'e inequality have the property that any two regions are connected by a rich family of relatively short curves which are evenly distributed with respect to the background measure $\mu$. (For a more precise version of this statement, see Theorem \ref{keith-theorem-1}.) The main results of this paper are a reflection and substantiation of this general principle in the setting of a highly concrete collection of planar examples.

We now turn to a description of those examples. To each sequence $\ba = (a_1, a_2, \ldots)$ consisting of reciprocals
of odd integers strictly greater than one we associate a modified
Sierpi\'nski carpet $S_\ba$ by the following procedure. Let
$T_0=[0,1]^2$ be the unit square and let $S_{\ba,0}=T_0$. Consider the
standard tiling of $T_0$ by essentially disjoint closed congruent
subsquares of side length $a_1$. Let $\cT_1$ denote the family of such
subsquares obtained by deleting the central (concentric) subsquare,
and let $S_{\ba,1} = \cup \{ T : T \in \cT_1\}$. Again, let $\cT_2$
denote the family of essentially disjoint closed congruent subsquares
of each of the elements of $\cT_1$ with side length $a_1a_2$ obtained
by deleting the central (concentric) subsquare from each square in
$\cT_1$, and let $S_{\ba,2} = \cup \{ T : T \in \cT_2\}$. Continuing
this process, we construct a decreasing sequence of compact sets
$\{S_{\ba,m}\}_{m\ge 0}$ and an associated carpet
\begin{equation*}%\label{Sa}
S_\ba := \bigcap_{m\ge 0} S_{\ba,m}.
\end{equation*}
For example, when $\ba = (\tfrac13,\tfrac13,\tfrac13,\ldots)$, the set
$S_\ba$ is the classical Sierpi\'nski carpet $S_{1/3}$ (Figure \ref{fig:sc}).
For any $\ba$, $S_\ba$ is a compact, connected, locally connected
subset of the plane without interior and with no local cut points. By
a standard fact from topology, $S_\ba$ is homeomorphic to the
Sierpi\'nski carpet $S_{1/3}$.

\begin{figure}[b]
\begin{minipage}{225pt}
\centering\includegraphics[width=0.45\textwidth]{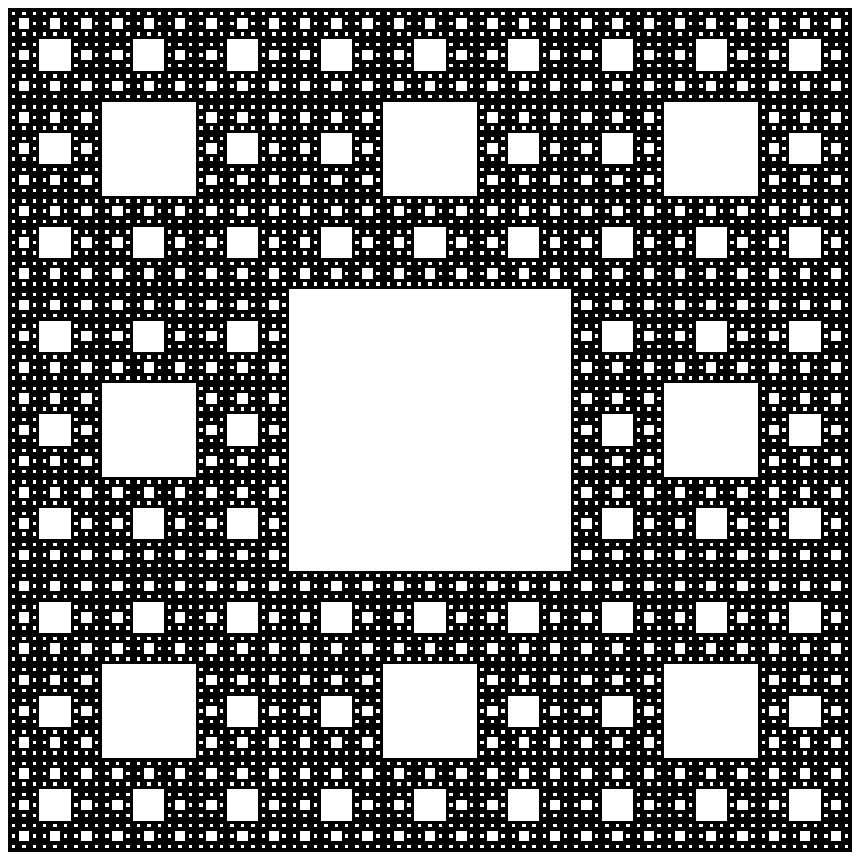}
\caption{$S_{1/3}$}\label{fig:sc}
\end{minipage}
\begin{minipage}{225pt}
\centering\includegraphics[width=0.45\textwidth]{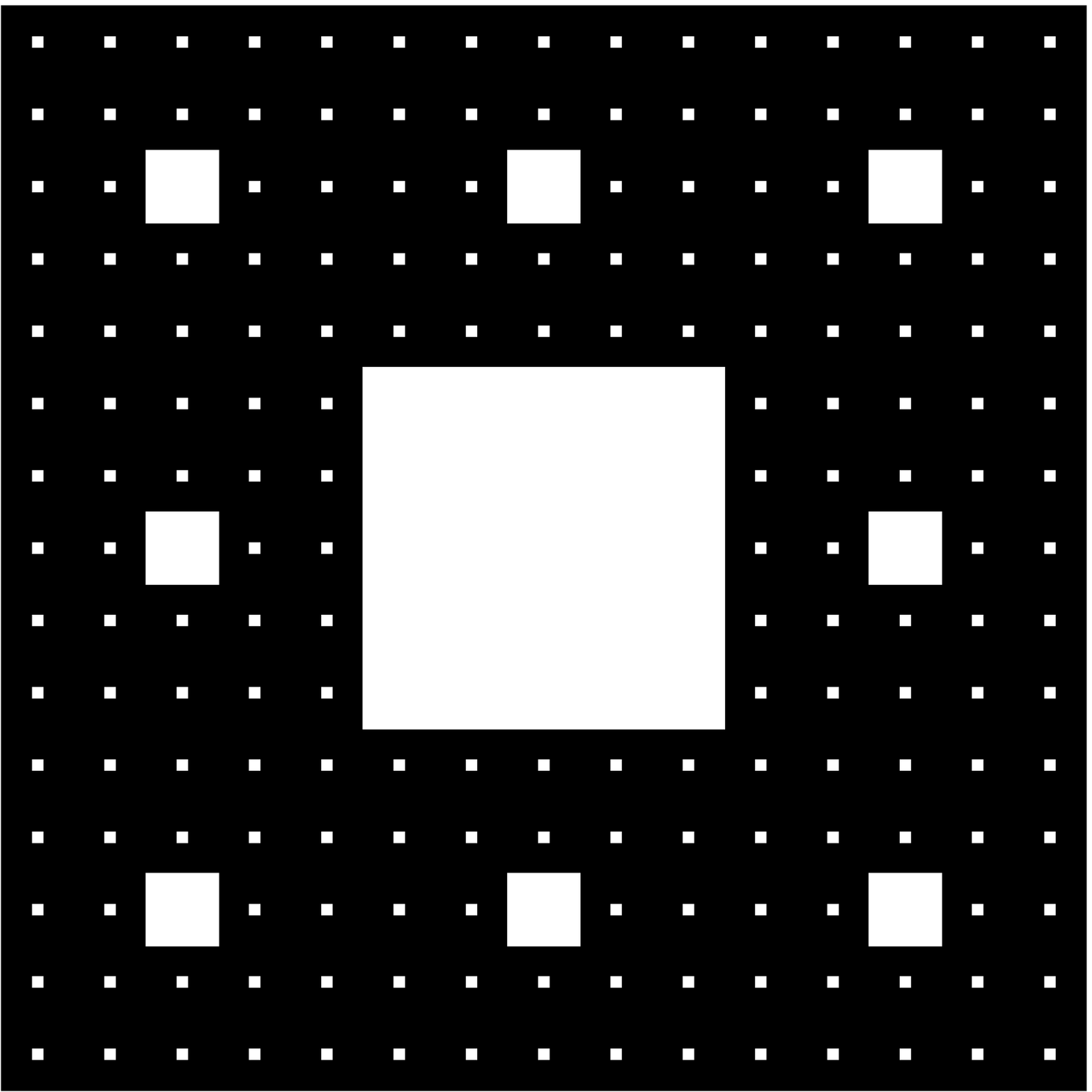}
\caption{$S_{(1/3,1/5,1/7,\ldots)}$}\label{fig:357}
\end{minipage}
\end{figure}

For each $k\in\N$, we will denote by $S_{1/(2k+1)}$ the self-similar
carpet $S_\ba$ associated to the constant sequence $\ba=(\tfrac1{2k+1},\tfrac1{2k+1},\tfrac1{2k+1},\ldots)$.
For each $k$, the carpet $S_{1/(2k+1)}$ has Hausdorff dimension equal to
\begin{equation}\label{Qk}
Q_k = \frac{\log((2k+1)^2-1)}{\log(2k+1)} =
\frac{\log(4k^2+4k)}{\log(2k+1)} < 2
\end{equation}
and is Ahlfors regular in that dimension.

The starting point for our investigations was the following well-known fact.

\begin{proposition}\label{factZ}
For each $k$, the carpet $S_{1/(2k+1)}$, equipped with Euclidean
metric and Hausdorff measure in its dimension $Q_k$, does {\bf not}
support any Poincar\'e inequality.
\end{proposition}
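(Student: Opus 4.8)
\medskip
\noindent\emph{Proof strategy.} Write $N=2k+1$, let $S=S_{1/(2k+1)}$, let $\mu$ be the restriction of $\mathcal{H}^{Q_k}$ to $S$, and let $\pi\colon\R^2\to\R$, $\pi(x,y)=x$, be the first coordinate projection. The plan is to prove that the family $\Gamma$ of all rectifiable curves in $S$ joining the left edge $E=\{0\}\times[0,1]$ to the right edge $F=\{1\}\times[0,1]$ satisfies $\operatorname{Mod}_p(\Gamma)=0$ for \emph{every} $p\ge1$, and then to invoke the fact that a Poincar\'e inequality forces such a modulus to be positive. The whole point is that the push-forward $\nu:=\pi_\ast\mu$ is a measure on $[0,1]$ that is \emph{singular} with respect to Lebesgue measure $\mathcal{L}^1$, which in turn is the measure-theoretic shadow of the deleted central squares.

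\medskip
\noindent\emph{Step 1: identify $\nu$ and show it is singular.} The set $S$ is the attractor of the $N^2-1$ homotheties $f_{ij}(x,y)=\bigl((x+j-1)/N,\ (y+i-1)/N\bigr)$, taken over the surviving index pairs $(i,j)$; these satisfy the open set condition, their overlaps are contained in line segments (hence $\mu$-null since $Q_k>1$), and since they all have ratio $1/N$ with $N^{Q_k}=N^2-1$, the self-similarity of Hausdorff measure yields $\mu=\sum_{(i,j)}(N^2-1)^{-1}\,\mu\circ f_{ij}^{-1}$. As $\pi\circ f_{ij}=\phi_j\circ\pi$ with $\phi_j(t)=(t+j-1)/N$ depending only on the column $j$, we get $\nu=\sum_{j=1}^N q_j\,(\phi_j)_\ast\nu$, where $q_j=n_j/(N^2-1)$ and $n_j$ is the number of surviving squares in column $j$. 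Here $n_j=N$ for every column except the central one, where $n_{(N+1)/2}=N-1$, so the weights $q_j$ are \emph{not} all equal to $1/N$. Consequently $\nu/\mu(S)$ is the law of $\sum_{m\ge1}N^{-m}(X_m-1)$ with $X_1,X_2,\dots$ i.i.d.\ and $\mathbb{P}(X_m=j)=q_j$; by the strong law of large numbers this measure is concentrated on the set of $t\in[0,1]$ whose base-$N$ digits have limiting frequencies $(q_1,\dots,q_N)$, which --- since $(q_j)\neq(1/N,\dots,1/N)$ --- is disjoint from the set of base-$N$ normal numbers and hence has $\mathcal{L}^1$-measure zero. So there is a Borel set $A\subset[0,1]$ with $\mathcal{L}^1(A)=0$ and $\nu([0,1]\setminus A)=0$.

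\medskip
\noindent\emph{Step 2: vanishing modulus and contradiction.} Fix $p\ge1$ and $\varepsilon\in(0,1)$, choose an open $A_\varepsilon\supset A$ with $\mathcal{L}^1(A_\varepsilon)<\varepsilon$, and put $\rho=(1-\varepsilon)^{-1}\bigl(\chi_{[0,1]\setminus A_\varepsilon}\circ\pi\bigr)$ on $S$. If $\gamma\in\Gamma$ then $\pi\circ\gamma$ runs continuously from $0$ to $1$, hence covers $[0,1]$, and since arclength dominates the variation of $\pi$ along $\gamma$ we get $\int_\gamma\rho\,ds\ge(1-\varepsilon)^{-1}\,\mathcal{L}^1([0,1]\setminus A_\varepsilon)\ge1$; thus $\rho$ is admissible for $\Gamma$. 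On the other hand $\int_S\rho^p\,d\mu=(1-\varepsilon)^{-p}\,\nu([0,1]\setminus A_\varepsilon)\le(1-\varepsilon)^{-p}\,\nu([0,1]\setminus A)=0$. Hence $\operatorname{Mod}_p(\Gamma)=0$. Now $\mu$ is doubling (indeed Ahlfors $Q_k$-regular), so if $(S,|\cdot|,\mu)$ supported a $p$-Poincar\'e inequality then the $p$-modulus of the family of curves joining two disjoint nondegenerate continua whose distance is comparable to the smaller of their diameters would be bounded below by a positive constant --- this is the standard lower bound underlying the characterizations of the Poincar\'e inequality (one builds from an admissible $\rho$ of small $L^p$-norm a continuous function $u$ vanishing on the first continuum, equal to $1$ on the second, and having $\rho$ as upper gradient, then tests the inequality on a ball containing both); see \cite{hk:quasi} and Theorem~\ref{keith-theorem-1}. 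Applied to $E$ and $F$ (for which $\dist(E,F)=\diam E=\diam F=1$ and $\Gamma(E,F)=\Gamma$) this contradicts $\operatorname{Mod}_p(\Gamma)=0$; for $p=1$ one may instead use that a $1$-Poincar\'e inequality implies a $2$-Poincar\'e inequality and argue at $p=2$. Since $p\ge1$ was arbitrary, $S$ supports no Poincar\'e inequality.

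\medskip
\noindent I expect the only genuine obstacle to be Step~1 --- verifying that $\nu$ is singular --- and even that reduces at once to the elementary observation that the weights $q_j$ differ from $1/N$ precisely because the central square of each nine(-to-$N^2$)-tiling is deleted; Step~2 is then formal manipulation of the definition of modulus together with a well-known consequence of the Poincar\'e inequality. I would present the argument exactly in the order above.
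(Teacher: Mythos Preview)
Your proof is correct and follows precisely the Bourdon--Pajot argument that the paper itself cites (rather than proves): the paper gives no self-contained proof of this proposition, instead referencing \cite{bp:survey} for exactly the argument you give --- singularity of the push-forward $\pi_*\mu$ with respect to $\mathcal{L}^1$ --- and \cite{mt:confdimsurvey} for an alternative modulus computation. Your Step~2 is in fact the same mechanism the paper later uses in its proof of Proposition~\ref{noPIl2}, so your presentation meshes well with the surrounding material; the caveat about $p=1$ at the end is unnecessary, since the direct construction of $u$ from $\rho$ already works for all $p\ge 1$.
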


Several proofs for Proposition \ref{factZ} can be found in the
literature. Bourdon and Pajot \cite{bp:survey} provide an elegant
argument involving the mutual singularity of one-dimensional Lebesgue
measure and the push forward of the $Q_k$-dimensional Hausdorff measure
on $S_{1/(2k+1)}$ under projection to a coordinate axis. A different
argument involving modulus computations can be found in the monograph
by the first two authors \cite{mt:confdimsurvey}.

In this paper, we study non-self-similar carpets $S_\ba$ for which $\ba$ is not a constant sequence. We are primarily
interested in the case when $S_\ba$ has Hausdorff dimension two. It is easy to see that this holds, for instance, if the sequence $(a_m)$ of scaling ratios tends to zero, i.e., $\ba \in c_0$.
Figure \ref{fig:357} illustrates the set $S_{(1/3,1/5,1/7,\ldots)}$.

Note that the left and right hand edges of $S_\ba$ are separated by the generalized Cantor set
$$
C_\ba := S_\ba \cap \left( \big\lbrace\tfrac12\big\rbrace \times [0,1] \right).
$$
This Cantor set will have positive length if and only if the length at each stage, $\prod_{j=1}^{m} (1-a_j)$, remains bounded away from zero.  After taking logarithms, this is seen to be equivalent to $\ba \in \ell^1$.

In a similar fashion, we see that $\Area(S_\ba) = \cH^2(S_\ba)$ is positive if and only if $\Area(S_{\ba,m}) = \prod_{j=1}^{m} (1-a_j^2)$ is bounded away from zero, i.e., $\ba \in \ell^2$.

We equip $S_\ba$ with the Euclidean metric $d$ and the canonically defined measure $\mu$ arising as the weak limit of normalized Lebesgue measures on the precarpets $S_{\ba,m}$. For all $\ba$, the measure $\mu$ is doubling. Under the assumption $\ba \in \ell^2$, $\mu$ is Ahlfors $2$-regular and is comparable (with constant depending only on $||\ba||_2$) to the restriction of Lebesgue measure to $S_\ba$. For these and other facts, see Proposition \ref{properties-of-mu}.

We now state our main theorems.

\begin{theorem}\label{thmA}
The carpet $(S_\ba,d,\mu)$ supports a $1$-Poincar\'e inequality if and only if $\ba \in \ell^1$.
\end{theorem}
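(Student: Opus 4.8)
The plan is to prove the two implications separately, with the ``only if'' direction following the modulus-obstruction philosophy behind Proposition \ref{factZ} and the ``if'' direction built on the observation that $\ba \in \ell^1$ forces the separating Cantor sets $C_\ba$ (and their images under the obvious self-maps at every scale) to carry positive, uniformly controlled length. For the ``only if'' direction, suppose $\ba \notin \ell^1$, so that $\prod_{j=1}^m (1-a_j) \to 0$. I would test the Poincar\'e inequality \eqref{eq-pi} against the coordinate function $u(x,y) = x$, whose natural upper gradient is $\rho \equiv 1$ (Lipschitz functions have bounded upper gradient, and here $u$ is $1$-Lipschitz). The left side of \eqref{eq-pi} on $B = S_\ba$ is a fixed positive constant, since $u$ genuinely oscillates; the point is instead to show that no $p=1$ inequality can hold by producing, at each scale $m$, a function (a suitable cutoff of $u$ across one ``column'' of $\cT_m$-squares meeting the thin Cantor set $C_\ba$) for which the right-hand side tends to zero while the left-hand side stays bounded below. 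Concretely, I expect the cleanest route is via the curve-modulus characterization: any rectifiable curve in $S_\ba$ joining the left edge to the right edge must cross $C_\ba$, and more generally cross the scaled copies of $C_{\sigma^m\ba}$ inside each surviving square; since these Cantor sets have length $\to 0$, the $1$-modulus of the family of left--right crossing curves is zero, while a Poincar\'e inequality would force it to be positive (Theorem \ref{keith-theorem-1} and the modulus lower bounds it provides). This reproduces, in the non-self-similar setting, the Bourdon--Pajot/Mackay--Tyson obstruction.

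For the ``if'' direction, assume $\ba \in \ell^1$. The strategy is to verify the $1$-Poincar\'e inequality by exhibiting, for any pair of points $x, y \in S_\ba$, a ``thick'' family of rectifiable curves joining them that is evenly spread with respect to $\mu$; by the known pencil-of-curves / Semmes-type characterizations (Theorem \ref{keith-theorem-1}) this yields \eqref{eq-pi} with $p = 1$. The key geometric input is that $\ba \in \ell^1 \subset \ell^2$, so by Proposition \ref{properties-of-mu} the measure $\mu$ is Ahlfors $2$-regular and comparable to Lebesgue measure on $S_\ba$, and moreover every separating Cantor set $C_{\sigma^m \ba}$ appearing at scale $m$ has length bounded below by $\prod_{j>m}(1-a_j) \ge \prod_{j=1}^\infty (1-a_j) =: c_0(\ba) > 0$, uniformly in $m$. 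Thus at every scale and location there is a definite amount of ``room'' to route curves horizontally and vertically through the carpet. I would build the curve family hierarchically: to connect $x$ to $y$, first connect them at the coarsest relevant scale using horizontal and vertical segments that run along the boundaries of $\cT_m$-squares and through the thick parts of the Cantor sets, then refine within each square crossed, using the self-similar-like structure of $S_{\sigma^m \ba}$ and the uniform length lower bound to distribute the curves so that the total mass pushed through any ball is controlled by $\mu$ of a bounded dilate. Equivalently, one can try to invoke the gluing/limiting machinery: realize $(S_\ba, d, \mu)$ as a controlled limit of the precarpets $S_{\ba, m}$, each of which (being a finite union of squares with slits) trivially supports a $1$-Poincar\'e inequality with constants that one must show stay uniformly bounded as $m \to \infty$ precisely when $\ba \in \ell^1$, and then pass to the limit (Theorem \ref{gluing-theorem} and stability of PI under Gromov--Hausdorff limits with uniformly doubling measures).

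The main obstacle I anticipate is the quantitative, uniform-in-scale construction of the curve family in the ``if'' direction: it is not enough that each $C_{\sigma^m\ba}$ has positive length, one needs the curves routed through these Cantor sets to avoid piling up on small subsets, so that the measure-theoretic spreading condition in Theorem \ref{keith-theorem-1} holds with constants independent of $x$, $y$, and the scale. This requires a careful combinatorial accounting of how many curves pass through each $\cT_m$-square and a matching lower bound on $\mu$ of that square, using $\ell^1$-summability to control the infinite product of error factors $\prod(1-a_j)$ and $\prod(1-a_j^2)$ simultaneously. A secondary technical point is handling balls $B(x,r)$ whose radius is not comparable to any single scale $a_1 \cdots a_m$ and balls that straddle several squares; these are dealt with by the usual covering and chaining arguments, but must be done with care so that $\lam$ and $C$ in \eqref{eq-pi} do not degenerate. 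Once the uniform curve family is in hand, the deduction of \eqref{eq-pi} is standard.
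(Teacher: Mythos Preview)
Your proposal is correct and follows essentially the same approach as the paper. For the ``only if'' direction the paper uses exactly the cutoff function you describe (linear across the central strip of width $s_m$, with $\int \rho_m\,d\mu = \prod_{j\le m}(1+a_j)^{-1}\to 0$), and for the ``if'' direction it proceeds as you outline---reduce to precarpets via Gromov--Hausdorff stability (Theorem \ref{keith-theorem-2}), invoke Keith's Theorem \ref{keith-theorem-1}, and build the required curve families hierarchically, using the $\ell^1$ hypothesis to ensure that the parametrizing sets $\pi_M(L)$ on each block edge have $\cH^1$-measure uniformly comparable to $\diam(L)$; the paper packages this combinatorial construction into explicit ``building-block'' lemmas (expanding, turning, going straight) and a block-chain lemma (Lemma \ref{lem-block-chain}), which is precisely the quantitative accounting you identify as the main obstacle.
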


Under the assumption of Theorem \ref{thmA}, the $1$-modulus of all
horizontal paths in $S_\ba$ is easily seen to be positive. This fact
follows from the usual Fubini argument, since the cut set $C_\ba$ has
positive length. The difficult part of the proof of Theorem \ref{thmA}
is the verification of the $1$-Poincar\'e inequality. This is done
using a theorem of Keith (Theorem \ref{keith-theorem-1}) and a
combinatorial procedure involving concatenation of curve families of
positive $1$-modulus.

\begin{theorem}\label{thmB}
The following are equivalent:
\begin{enumerate}
\item[(a)] $(S_\ba,d,\mu)$ supports a $p$-Poincar\'e inequality for
  each $p>1$,
\item[(b)] $(S_\ba,d,\mu)$ supports a $p$-Poincar\'e inequality for
  some $p>1$,
\item[(c)] $\ba \in \ell^2$.
\end{enumerate}
\end{theorem}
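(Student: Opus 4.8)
The implication $(a)\Rightarrow(b)$ is immediate, so the plan is to prove $(b)\Rightarrow(c)$ and $(c)\Rightarrow(a)$, closing the cycle. Both directions run through Keith's modulus characterization of Poincar\'e inequalities (Theorem~\ref{keith-theorem-1}): for a complete doubling metric measure space, the $p$-Poincar\'e inequality is equivalent to a uniform lower bound of the form $\operatorname{Mod}_p(\Gamma(E,F);\lambda B)\gtrsim\mu(B)/\diam(B)^p$ for families of curves joining positive-measure subsets $E,F$ of a ball $B$. The geometry of $S_\ba$ lets one localize this condition: every ball in $S_\ba$ is, up to bounded distortion of shape and measure, a union of boundedly many rescaled copies of the shifted carpets $S_{\ba^{(k)}}$, where $\ba^{(k)}=(a_{k+1},a_{k+2},\dots)$; since $\|\ba^{(k)}\|_2\le\|\ba\|_2$, verifying Keith's condition reduces to (i) a uniform lower bound, over all admissible sequences ${\bf c}$ of a given $\ell^2$-norm, for the $p$-modulus of the horizontal (and, by symmetry, vertical) crossing families of $S_{\bf c}$, and (ii) a combinatorial procedure for concatenating these crossing families into the curve families joining arbitrary pairs $E,F$ in an arbitrary ball --- exactly as in the proof of Theorem~\ref{thmA}, but now with $p$-modulus in place of $1$-modulus.

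For $(c)\Rightarrow(a)$, assume $\ba\in\ell^2$; then $\mu$ is Ahlfors $2$-regular and comparable, with constant depending only on $\|\ba\|_2$, to two-dimensional Lebesgue measure restricted to $S_\ba$ (Proposition~\ref{properties-of-mu}), and likewise for every $S_{\ba^{(k)}}$. Write $M_{\bf c}=\operatorname{Mod}_p(\Gamma_h(S_{\bf c}))$, taken against Lebesgue measure on $S_{\bf c}$. A horizontal crossing of $S_{\bf c}$ passes in turn through the $c_1^{-1}$ columns of the first subdivision, each of which is $c_1^{-1}$ rescaled copies of $S_{{\bf c}'}$ stacked vertically; the lone column containing the deleted central square is handled by observing that it still carries $c_1^{-1}-1$ such rescaled copies. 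Using only the elementary lower-bound halves of the serial and parallel laws for $p$-modulus, together with the scaling relation $\operatorname{Mod}_p(\ell\Gamma,\ell^2\mu)=\ell^{2-p}\operatorname{Mod}_p(\Gamma,\mu)$, one obtains the recursion $M_{\bf c}\ge\Phi_p(c_1)\,M_{{\bf c}'}$ with $\Phi_p(c)=1-c^2+O(c^3)$ as $c\to0$, where the leading term is independent of $p$ and, for each fixed $p>1$, the error is uniform over $c\in(0,\tfrac13]$ (the key point being that, for $1<p<2$, the factor $\ell^{2-p}$ by which a single square crossing shrinks under refinement is exactly compensated by the $c_1^{-1}$ parallel copies per column, the deficient middle column producing only the $-c^2$ correction). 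Iterating and using $\sum c_j^2<\infty$ to control the tail gives $M_{\bf c}\ge\prod_j\Phi_p(c_j)>0$, bounded below uniformly in $\|{\bf c}\|_2$; feeding these uniform crossing-modulus estimates, via the concatenation procedure, into Keith's criterion yields the $p$-Poincar\'e inequality for every $p>1$.

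For $(b)\Rightarrow(c)$ we argue contrapositively: if $\ba\notin\ell^2$ then $\Area(S_\ba)=0$, and $\mu$ fails to be Ahlfors $2$-regular --- indeed $\mu(B(x,r))/r^2$ grows like $\prod_{j\le m}(1-a_j^2)^{-1}\to\infty$ along the scales $r\asymp a_1\cdots a_m$ --- so $\mu$ is strictly ``lower-dimensional'' than the plane, the constant sequences $\ba=(\tfrac1{2k+1},\dots)$, for which $\mu$ is the $Q_k$-regular measure (cf.\ Proposition~\ref{factZ}), being the model case. We show this is incompatible with any $p$-Poincar\'e inequality. By the modulus side of Keith's theorem a $p$-PI would force, at every scale and for every admissible $E,F$ inside a ball, a lower bound of the form above; running the serial/parallel analysis in the regime $\prod(1-a_j^2)\to0$, and producing admissible densities for the relevant separating families concentrated on collars of the deleted squares across \emph{all} generations, one shows that the $p$-modulus available at small scales drops strictly below what the right-hand side $\mu(B)/r^p$ demands, the governing series diverging precisely because $\sum a_j^2=\infty$. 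Hence no $p$-PI can hold.

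The main obstacle is the passage from crossing-modulus estimates for individual carpets to Keith's condition for \emph{all} balls and \emph{all} pairs $E,F$: the combinatorial bookkeeping of which sub-squares a crossing curve meets, how crossing families are concatenated around the deleted squares while keeping $\mu$-mass under control, and how the per-scale modulus bounds assemble into a single scale-uniform estimate. This is the step that mirrors the technical heart of Theorem~\ref{thmA}. A second, subtler difficulty appears in $(b)\Rightarrow(c)$: unlike the $1$-Poincar\'e case of Theorem~\ref{thmA}, the failure of the $p$-Poincar\'e inequality for $p>1$ cannot be localized to a single global curve family (crossing families of $S_\ba$ can retain positive $p$-modulus even when $\ba\notin\ell^2$), so the obstruction must be extracted scale by scale from the low-dimensionality of $\mu$; this is why genuinely sharp, rather than merely qualitative, control of the multiscale $p$-moduli is required, and where the precise exponent $2$ of the $\ell^2$ condition enters.
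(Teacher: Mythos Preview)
Your proposal has genuine gaps in both nontrivial directions.

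For $(c)\Rightarrow(a)$: the recursion $M_{\bf c}\ge\Phi_p(c_1)\,M_{{\bf c}'}$ cannot be obtained from ``the lower-bound halves of the serial and parallel laws'', because there is no useful serial \emph{lower} bound for $p$-modulus. If every curve in $\Gamma$ contains a subcurve in each of the disjointly supported families $\Gamma_1,\dots,\Gamma_n$, one obtains only $\mod_p\Gamma\le\bigl(\sum_i(\mod_p\Gamma_i)^{1/(1-p)}\bigr)^{1-p}$, an \emph{upper} bound. The parallel lower bound lets you sum over rows, but bounding the crossing modulus of a single row from below in terms of $M_{{\bf c}'}$ requires control on curves that must traverse $c_1^{-1}$ copies in series, and here the recursion stalls. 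The paper's argument is entirely different: it constructs explicit measured curve families by \emph{bending} an initial family of straight segments around each omitted square (Proposition~\ref{prop-weight-bound}), tracking the $L^q$ norm of the weight $w=d\nu_\Gamma/d\cH^2$. Each bend at level $i$ multiplies $\|w\|_q^q$ by at most $1+Ca_{i+1}^2$, and the product $\prod_i(1+Ca_i^2)$ converges precisely when $\ba\in\ell^2$. This is where the exponent $2$ genuinely enters; it does not fall out of a combinatorial modulus recursion.

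For $(b)\Rightarrow(c)$: your parenthetical claim that ``crossing families of $S_\ba$ can retain positive $p$-modulus even when $\ba\notin\ell^2$'' is false, and its negation is exactly how the paper proceeds. Proposition~\ref{noPIl2} constructs, by an explicit folding procedure, a set $X\subset S_\ba$ with $\mu(X)=0$ such that every rectifiable curve $\gamma$ joining the left and right edges satisfies $\cH^1(\gamma\cap X)\ge1$. Thus $\chi_X$ is admissible for the left--right crossing family and $\int\chi_X^p\,d\mu=0$, so that family has zero $p$-modulus for every $p$; the function $f(x)=\inf_\gamma\int_\gamma\chi_X\,ds$ is then Lipschitz, $0$ on the left edge and $\ge1$ on the right, with an upper gradient vanishing $\mu$-a.e., violating any Poincar\'e inequality. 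Your proposed obstruction via failure of Ahlfors $2$-regularity is not sufficient on its own: $\mu$ is always doubling (Proposition~\ref{properties-of-mu}(i)), so Keith's criterion applies formally, and one really must exhibit the degenerate curve family.
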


For $\ba \in \ell^2 \setminus \ell^1$, the $p$-modulus of all
horizontal paths in $S_\ba$ is equal to zero for any $p$. However, the
$p$-modulus ($p>1$) of all rectifiable paths is positive. In section
\ref{sec:validity-pi-2} we exhibit explicit path families with
positive modulus. This provides a first step towards our eventual
verification of the Poincar\'e inequality. Such verification in this
context relies on the same theorem of Keith and a similar
concatenation argument, starting from curve families of positive
$p$-modulus as constructed above.
%The existence of path families with
%positive $p$-modulus follows easily from Ziemer's duality principle
%\cite{zie:extremal1}, \cite{zie:extremal2}; see subsection
%\ref{subsec:Ziemer}. However, for our purposes such an abstract
%existence theorem is insufficient; we require an explicitly
%constructed and naturally parameterized curve family in order to
%implement our concatenation argument to verify Keith's criterion for
%the validity of Poincar\'e inequalities.

It is not unexpected, and seems to have been informally recognized, that
a generalized Sierpi\'nski carpet $S_\ba$ admits some Poincar\'e inequalities,
provided the sequence $\ba$ tends to zero sufficiently rapidly.
Indeed, if $\ba$ tends rapidly to zero then the omitted squares at each stage of the construction occupy a vanishingly small proportion of their parent square; this leaves plenty of room in the complementary region to construct well distributed curve families. The essential novelty of Theorems \ref{thmA} and \ref{thmB} lies in their sharp character; we identify the precise summability conditions necessary and sufficient for the validity of the $p$-Poincar\'e inequality for each choice of $p \in [1,\infty)$. Note that, by Theorems \ref{thmA} and \ref{thmB}, if $\ba \in \ell^2 \setminus \ell^1$, then $(S_\ba,d,\mu)$ supports a $p$-Poincar\'e inequality for each $p>1$, but does not support a $1$-Poincar\'e inequality. A significant recent result of Keith and Zhong \cite{kz:poincare} asserts that the set of values of $p$ for which a given complete PI space supports a $p$-Poincar\'e inequality, is necessarily a relatively open subset of $[1,+\infty)$.

Remarkably, the $\ell^2$ summability condition on the defining
sequence $\ba$ has recently arisen in a rather different (although related)
context. To wit, Dor\'e and Maleva \cite{dm:universal-differentiability-set}
show that when $\ba \in c_0 \setminus \ell^2$, the compact set $S_\ba$
is a {\it universal differentiability set}, i.e., it contains a
differentiability point for every real-valued Lipschitz function on $\R^2$.

Theorems \ref{thmA} and \ref{thmB} have a number of interesting consequences which we now enunciate.

\begin{corollary}\label{corA}
There exist compact planar sets of topological dimension one that are Ahlfors $2$-regular and $2$-Loewner when equipped with the Euclidean metric and the Lebesgue measure.
\end{corollary}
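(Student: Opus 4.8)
The plan is to obtain Corollary \ref{corA} as a direct consequence of Theorem \ref{thmB}, combined with the description of the canonical measure $\mu$ recorded in Proposition \ref{properties-of-mu} and with the characterization, due to Heinonen and Koskela \cite{hk:quasi}, of $Q$-Loewner spaces among locally compact Ahlfors $Q$-regular metric measure spaces ($Q>1$): such a space is $Q$-Loewner precisely when it supports a $Q$-Poincar\'e inequality.

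First I would fix any sequence $\ba$ of reciprocals of odd integers greater than one with $\ba \in \ell^2$; for definiteness one may take $\ba=(\tfrac13,\tfrac15,\tfrac17,\ldots)$, or indeed any $\ba\in\ell^2\setminus\ell^1$. Since $S_\ba$ is homeomorphic to the standard Sierpi\'nski carpet $S_{1/3}$, which has topological dimension one, the planar set $S_\ba$ has topological dimension one. By Proposition \ref{properties-of-mu}, the hypothesis $\ba\in\ell^2$ ensures that $\mu$ is Ahlfors $2$-regular and comparable, with constants depending only on $\|\ba\|_2$, to the restriction $\mathcal{L}^2|_{S_\ba}$ of planar Lebesgue measure to $S_\ba$; consequently $(S_\ba,d,\mathcal{L}^2|_{S_\ba})$ is itself Ahlfors $2$-regular.

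Next I would transfer the Poincar\'e inequality from $\mu$ to Lebesgue measure. Theorem \ref{thmB} applied with $\ba\in\ell^2$ yields a $p$-Poincar\'e inequality for $(S_\ba,d,\mu)$ for every $p>1$; specializing to $p=2$ gives a $2$-Poincar\'e inequality. The inequality \eqref{eq-pi} is insensitive to replacing $\mu$ by a comparable measure, since under such a replacement every average and every ball measure occurring in it changes only by a bounded multiplicative factor; hence $(S_\ba,d,\mathcal{L}^2|_{S_\ba})$ also supports a $2$-Poincar\'e inequality. Being a complete doubling space that supports a Poincar\'e inequality, $(S_\ba,d,\mathcal{L}^2|_{S_\ba})$ is moreover quasiconvex, so the Loewner condition is meaningful for it.

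Finally, $(S_\ba,d,\mathcal{L}^2|_{S_\ba})$ is a compact, connected, Ahlfors $2$-regular metric measure space that supports a $2$-Poincar\'e inequality, and $2>1$, so the Heinonen--Koskela characterization \cite{hk:quasi} identifies it as a $2$-Loewner space. I do not anticipate a real obstacle here: the only point requiring any care is the comparability of $\mu$ with $\mathcal{L}^2|_{S_\ba}$, which is exactly where the assumption $\ba\in\ell^2$ (rather than merely $\ba\in c_0$) enters, and this comparability is already supplied by Proposition \ref{properties-of-mu}.
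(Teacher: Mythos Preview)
Your proposal is correct and follows essentially the same route as the paper: choose any $\ba\in\ell^2$, invoke Theorem \ref{thmB} for the $2$-Poincar\'e inequality, and then apply the Heinonen--Koskela equivalence of $Q$-Loewner with $Q$-Poincar\'e on quasiconvex Ahlfors $Q$-regular spaces \cite{hk:quasi}. The paper's own argument is a one-line version of exactly this, while you have usefully spelled out the transfer from $\mu$ to Lebesgue measure via Proposition \ref{properties-of-mu}(iv) and the topological-dimension remark.
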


For each $\ba\in\ell^2$, the carpet $S_\ba$ verifies the conditions in Corollary \ref{corA}. This follows from Theorem \ref{thmB} and the equivalence of the $Q$-Loewner condition with the $Q$-Poincar\'e inequality in quasiconvex Ahlfors $Q$-regular spaces \cite{hk:quasi}. We remark that the examples of Bourdon--Pajot \cite{bp:buildings} and Laakso \cite{laa:regular} are $Q$-regular $Q$-Loewner metric spaces of topological dimension one, however, these examples admit no bi-Lipschitz embedding into any finite-dimensional Euclidean space.

\begin{corollary}\label{corB}
There exists a compact set $S \subset \R^2$, equipped with the Euclidean metric and a doubling measure, with the following properties: $S$ supports no $p$-Poincar\'e inequality for any finite $p$, yet every strict weak tangent of $S$ supports a $1$-Poincar\'e inequality with universal constants. Moreover, $S$ can be chosen to be quasiconvex and uniformly locally Gromov--Hausdorff close to planar domains.
\end{corollary}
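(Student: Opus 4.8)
The plan is to take $S=S_\ba$ for a sequence $\ba\in c_0\setminus\ell^2$ — concretely one may set $a_m=(2k_m+1)\inv$ with $k_m$ the least positive integer for which $2k_m+1\ge\sqrt m$, so that $a_m\to0$ while $\sum_m a_m^2=\infty$ — equipped with the Euclidean metric $d$ and the canonical doubling measure $\mu$ of Proposition \ref{properties-of-mu}. The failure of all Poincar\'e inequalities is then immediate from the main theorems: since $\ba\notin\ell^2$, Theorem \ref{thmB} rules out a $p$-Poincar\'e inequality for every $p>1$, and since $\ell^1\subseteq\ell^2$ we also have $\ba\notin\ell^1$, so Theorem \ref{thmA} rules out the $1$-Poincar\'e inequality. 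Quasiconvexity of $S_\ba$ with a constant independent of $\ba$ will follow by connecting two points of $S_\ba$ along the skeleton of cell boundaries: from any point, the boundary of the surrounding generation-$g$ cell is reachable at cost $O(s_g)$, where $s_g:=a_1\cdots a_g$, by a recursion whose error terms form a convergent geometric series (since $a_i\le\tfrac13$), and two such boundaries a bounded number of cells apart are joined along cell edges at cost $O(s_g)$.

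The substance of the proof is the analysis of the blow-ups, for which a uniform \emph{local flatness} estimate is the first ingredient. Let $m=m(r)$ be the generation with $s_m\le r<s_{m-1}$. Each omitted square of generation exceeding $m$ meeting $B(x,r)$ has size at most $r\,\eps(r)$, where $\eps(r):=\sup_{j>m(r)}a_j$, while among the omitted squares of generation at most $m$ that meet $B(x,r)$, together with the outer frame $\partial[0,1]^2$, at most one obstacle of size comparable to $r$ appears, and a comparable-size omitted square and the frame cannot occur simultaneously (distinct large omitted squares are separated by at least their own side length, and the omitted square inside the generation-$(m-1)$ cell of $x$ lies at distance $\gtrsim s_{m-1}$ from the outer frame). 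Since $\ba\in c_0$, $\eps(r)\to0$ as $r\to0$ \emph{uniformly in $x\in S$}, so $\tfrac1r\bigl(B(x,r)\cap S-x\bigr)$ is within Hausdorff distance $O(\eps(r))$ of $B(0,1)\cap\Omega_{x,r}$ for a planar domain $\Omega_{x,r}$. This gives at once that $S$ is uniformly locally Gromov--Hausdorff close to planar domains; and, by a diagonal/compactness argument over bounded regions (passing to a subsequence along which the bounded configuration of large obstacles converges), every strict weak tangent of $S$ — that is, every pointed measured Gromov--Hausdorff limit of rescalings $(S,\lam_i d,p_i,\mu_i)$ with $\lam_i\to\infty$ — is isometric to the closure of a planar domain $\Omega$ which is $\R^2$, a half-plane, a quarter-plane, or $\R^2$ with a single open square removed.

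The step I expect to be the main obstacle is identifying the limiting \emph{measure} on such a tangent, since $\mu$ is \emph{not} Ahlfors $2$-regular when $\ba\notin\ell^2$. Here the construction of $\mu$ is used to show $\mu(B(x,r))\asymp r^2\,\theta(r)\inv$, with $\theta(t):=\prod_{i:\,s_i\ge t}(1-a_i^2)$ and with comparison constants depending only on which large obstacle, if any, $B(x,r)$ meets. The function $\theta$ tends to $0$ as $t\to0$ — exactly the failure of $2$-regularity — but the decisive point is that $\theta$ is \emph{slowly varying} at $0$: for each fixed $\rho>0$, $\theta(\rho t)/\theta(t)\to1$ as $t\to0$, since for $\rho<1$ this ratio equals $\bigl(\prod_{i:\,\rho t\le s_i<t}(1-a_i^2)\bigr)\inv$, a product of at most $\log_3(1/\rho)+1$ factors each tending to $1$ because $a_m\to0$. (For the self-similar carpets $S_{1/(2k+1)}$ one has instead $\theta(t)\asymp t^{2-Q_k}$, which is not slowly varying; this is the mechanism behind Proposition \ref{factZ}.) Carrying this through a blow-up, the limiting measure $\nu$ on any strict weak tangent $\overline\Omega$ is a constant multiple of $\mathcal L^2|_\Omega$, so $(\overline\Omega,d,\nu)$ is Ahlfors $2$-regular with universal constants.

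It remains to recall that $\R^2$, half-planes and quarter-planes are convex, and that $\R^2$ minus an open square is a uniform domain with absolute constants; and that a convex set, or a uniform domain, in $\R^2$ carrying a measure comparable to $\mathcal L^2$ supports a $1$-Poincar\'e inequality with constants depending only on the relevant geometric data (see, e.g., \cite{hk:quasi}). Hence every strict weak tangent of $S$ supports a $1$-Poincar\'e inequality with universal constants, which together with the preceding paragraphs establishes all assertions of the corollary.
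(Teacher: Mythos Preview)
Your proof is correct and follows the same route as the paper: take $S=S_\ba$ with $\ba\in c_0\setminus\ell^2$, use Theorems \ref{thmA}--\ref{thmB} for the failure of all Poincar\'e inequalities, and identify each strict weak tangent as one of finitely many planar model domains carrying (a multiple of) Lebesgue measure; your slowly-varying argument for $\theta$ makes explicit what the paper only asserts in passing about the limiting measure. One small omission in your enumeration of tangent types: blowing up near a \emph{corner} of an omitted square, at a scale much smaller than that square, yields $\R^2$ minus an open quadrant (an L-shape), which is neither a quarter-plane nor $\R^2$ minus a bounded square---but it too is a uniform domain with absolute constants, so the conclusion is unaffected.
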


It is a general principle of analysis in metric spaces that quantitative geometric or analytic conditions often persist under Gromov--Hausdorff convergence. In particular, quantitative and scale-invariant conditions pass to weak tangent spaces. For instance, every weak tangent of a given doubling metric measure space satisfying a $p$-Poincar\'e inequality is again doubling and satisfies the same $p$-Poincar\'e inequality (see Theorem \ref{keith-theorem-2} for a version of this result used in this paper). Corollary \ref{corB} shows that weak tangent spaces can be significantly better behaved than the spaces from which they are derived, even in the presence of other good geometric properties.

The indicated example can be obtained by choosing $S=S_\ba$ for any
$\ba \in c_0 \setminus \ell^2$. This follows from Theorem \ref{thmB}
and Proposition \ref{propD} discussed in section
\ref{subsec:weak-tangents}, where further details of the proof of
Corollary \ref{corB} can be found.

A {\it carpet} is a metric measure space homeomorphic to $S_{1/3}$.
There has been considerable interest of late in the problem of
quasisymmetric uniformization of carpets by either round carpets or
slit carpets \cite{bonk:uniformization}, \cite{bkm:schottky},
\cite{bm:carpet}, \cite{mer:schottky}, \cite{mer:cohopf}. The
following results are additional consequences of Theorem~\ref{thmB}.

\begin{corollary}\label{corC}
There exist round carpets in $\R^2$ which are Ahlfors $2$-regular and
support a $p$-Poincar\'e inequality for some $p<2$.
\end{corollary}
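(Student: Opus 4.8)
The plan is to obtain the desired round carpet as the quasisymmetric image of one of the carpets $S_\ba$ with $\ba \in \ell^2 \setminus \{0\}$, using the known uniformization theory for Sierpi\'nski carpets. By Theorem \ref{thmB}, for such $\ba$ the carpet $(S_\ba,d,\mu)$ is Ahlfors $2$-regular (Proposition \ref{properties-of-mu}) and supports a $p$-Poincar\'e inequality for every $p>1$; in particular it supports a $p$-Poincar\'e inequality for some $p<2$. So it suffices to know that $S_\ba$ is quasisymmetrically equivalent to a round carpet. First I would invoke Bonk's uniformization theorem \cite{bonk:uniformization}: a metric carpet of Ahlfors $2$-regular conformal dimension $2$ whose peripheral circles are uniform quasicircles that are uniformly relatively separated is quasisymmetrically equivalent to a round carpet in $\widehat{\C}$. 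The peripheral circles of $S_\ba$ are the boundaries of the omitted squares, which are bi-Lipschitz to round circles with uniform constant, hence uniform quasicircles; the uniform relative separation is a direct consequence of the self-affine combinatorial structure of the construction (distinct omitted squares at a given generation are separated by a definite fraction of their diameter). Thus Bonk's hypotheses hold, and we get a quasisymmetric homeomorphism $f\colon S_\ba \to \Omega$ onto a round carpet $\Omega \subset \R^2$.

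Next I would transport the analytic structure. Equip $\Omega$ with the pushforward measure $f_*\mu$. Since $f$ is a quasisymmetry between Ahlfors $2$-regular spaces and $2$ is the Hausdorff dimension, a standard argument (see e.g.\ \cite{hk:quasi}) shows $f_*\mu$ is comparable to the Hausdorff $2$-measure $\cH^2$ on $\Omega$, so $(\Omega,\cH^2)$ is Ahlfors $2$-regular. For the Poincar\'e inequality one uses the fact that, in the critical-exponent case, a quasisymmetry between Ahlfors $Q$-regular spaces preserves the $Q$-Loewner property: by the equivalence of the $Q$-Loewner condition and the $Q$-Poincar\'e inequality in quasiconvex Ahlfors $Q$-regular spaces \cite{hk:quasi}, $S_\ba$ being $2$-Loewner (Corollary \ref{corA}) forces $\Omega$ to be $2$-Loewner as well, hence to support a $2$-Poincar\'e inequality. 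Finally, by the self-improvement theorem of Keith and Zhong \cite{kz:poincare}, a complete Ahlfors $2$-regular space supporting a $2$-Poincar\'e inequality in fact supports a $p$-Poincar\'e inequality for some $p<2$. This yields the conclusion.

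I expect the main obstacle to be verifying that $S_\ba$ actually satisfies the hypotheses of Bonk's uniformization theorem, specifically that its conformal dimension equals $2$ and that the peripheral circles are uniformly relatively separated uniform quasicircles. The conformal dimension statement is not automatic from Ahlfors $2$-regularity; however, since $S_\ba$ supports a $2$-Poincar\'e inequality and is $2$-regular, it is $2$-Loewner, and Loewner spaces of Hausdorff dimension $Q$ have conformal dimension exactly $Q$ (a lower bound on conformal dimension for Loewner spaces; the upper bound is $2$-regularity), so this follows from the results already in hand. The uniform relative separation and uniform quasicircle properties are geometric facts about the explicit construction, requiring a careful but routine estimate using the lower bound on the scaling ratios $a_m$ — this is where one must be slightly cautious, since $a_m \to 0$ is allowed, but the relevant separation is always measured relative to the size of the squares at the same generation, where the ratio is the fixed value $a_m^{-1} - 1 \ge 2$ of the subdivision grid, uniformly bounded below.
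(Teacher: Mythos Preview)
Your approach is essentially the paper's: apply Bonk's uniformization to $S_\ba$ with $\ba\in\ell^2$, then transport the Poincar\'e inequality across the quasisymmetry. Two points of divergence are worth flagging.

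First, Bonk's theorem (as stated in the paper, Theorem~\ref{bonk-theorem}) does \emph{not} require any conformal-dimension hypothesis; it only needs the peripheral circles to be uniformly relatively separated uniform quasicircles. Your ``main obstacle'' is therefore a non-obstacle, and the paragraph verifying conformal dimension via Loewner is unnecessary. Second, your Ahlfors $2$-regularity argument for the image is phrased circularly: you invoke ``$f$ is a quasisymmetry between Ahlfors $2$-regular spaces'' in order to conclude that the target is $2$-regular. The paper instead observes that the upper mass bound is trivial (since $T'\subset\R^2$) and obtains the lower bound from the quasisymmetry together with the Loewner property of the source, citing \cite[Corollary 3.10]{tys:analytic}. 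Finally, where you pass through $2$-Loewner invariance and then invoke Keith--Zhong self-improvement, the paper appeals directly to Koskela--MacManus \cite[Theorem~2.3]{km:sobolev}, which transfers a $p$-Poincar\'e inequality across a quasisymmetry between Ahlfors $Q$-regular spaces and yields some exponent below $Q$ in one stroke. Your route via Keith--Zhong is valid but longer.
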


\begin{corollary}\label{corD}
There exist parallel slit carpets which are Ahlfors $2$-regular and
support a $p$-Poincar\'e inequality for some $p<2$.
\end{corollary}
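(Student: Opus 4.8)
The plan is to derive Corollary~\ref{corD} from Theorem~\ref{thmB} by routing it through the quasisymmetric uniformization theory for planar carpets, the quasisymmetric invariance of conformal modulus, and the self-improvement theorem of Keith and Zhong \cite{kz:poincare}.

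First I would fix a sequence $\ba\in\ell^2$, for concreteness $\ba=(\tfrac13,\tfrac15,\tfrac17,\dots)$ as in Figure~\ref{fig:357}. By Proposition~\ref{properties-of-mu}, $(S_\ba,d,\mu)$ is Ahlfors $2$-regular with $\mu$ comparable to Lebesgue measure, and by Theorem~\ref{thmB} it supports a $p$-Poincar\'e inequality for every $p>1$, in particular a $2$-Poincar\'e inequality. Being a complete doubling Ahlfors $2$-regular space with a $2$-Poincar\'e inequality, $S_\ba$ is quasiconvex and $2$-Loewner \cite{hk:quasi}. I would then check that the peripheral circles of $S_\ba$ --- the boundaries of the central subsquares deleted in the construction --- are uniform quasicircles (they are boundaries of squares) and are uniformly relatively separated, with constants depending only on the bound $a_m\le\tfrac13$; this is a short computation with the sizes and positions of the squares of $\cT_m$. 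Consequently $S_\ba$ falls within the scope of the carpet uniformization results (see \cite{bonk:uniformization}, \cite{bm:carpet}, \cite{mer:schottky}, \cite{mer:cohopf}, \cite{bkm:schottky}), which in particular provide a quasisymmetric homeomorphism $f\colon S_\ba\to T$ onto a parallel slit carpet $T$.

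It then remains to transport the analytic structure across $f$. Equipping $T$ with its natural $2$-dimensional measure $\nu$, one verifies --- using the distortion estimates for $f$ together with the specific geometry of slit carpets --- that $(T,d_T,\nu)$ is Ahlfors $2$-regular. Since $f$ is then a quasisymmetry between Ahlfors $2$-regular spaces, it preserves the $2$-modulus of curve families up to a fixed multiplicative constant, so the $2$-Loewner property of $S_\ba$ passes to $T$ (the $Q$-Loewner condition is a quasiconformal invariant among Ahlfors $Q$-regular spaces; see \cite{hk:quasi}, cf.\ \cite{mt:confdimsurvey}). Thus $T$ is a complete, Ahlfors $2$-regular, $2$-Loewner space; such spaces are quasiconvex, so $T$ supports a $2$-Poincar\'e inequality by the Loewner--Poincar\'e equivalence \cite{hk:quasi}. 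Applying the Keith--Zhong self-improvement theorem \cite{kz:poincare} to $(T,d_T,\nu)$ then yields a $p$-Poincar\'e inequality on $T$ for some $p<2$, which proves the corollary. I expect the main obstacle to be arranging the target correctly: one must ensure the uniformizing parallel slit carpet can be taken Ahlfors $2$-regular, so that the quasisymmetric invariance of the $2$-modulus --- and hence the transfer of the $2$-Loewner property --- genuinely applies; depending on the precise uniformization statement invoked, this may force a careful choice of slit carpet (controlling the sizes and locations of the slits) rather than an arbitrary uniformizing carpet. The remaining steps are routine applications of the cited structural results.
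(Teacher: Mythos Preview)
Your overall architecture is right, but the step where you produce the slit carpet is not supported by the references you cite. Bonk's uniformization theorem \cite{bonk:uniformization} yields \emph{round} carpets, and the other carpet papers you list concern rigidity and specific constructions rather than a general ``uniformly relatively separated uniform quasicircles $\Rightarrow$ quasisymmetric to a parallel slit carpet'' theorem. No such off-the-shelf result is available to invoke directly on $S_\ba$. The paper instead manufactures the slit carpet by a limiting construction: for each $m$, the precarpet interior $S_{\ba,m}^o$ is a finitely connected planar domain, so Koebe's classical theorem conformally maps it to a parallel slit domain $(D_m,\delta)$; Heinonen's theorem \cite[Theorem 6.1]{hei:john} upgrades these conformal maps to quasisymmetries with uniform data (using that the precarpets are uniformly $2$-Loewner in the internal metric and the targets are LLC); and one then passes to the limit $m\to\infty$ to obtain a quasisymmetry from $S_\ba$ onto a parallel slit carpet which is Ahlfors $2$-regular. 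This is genuinely the missing content in your argument.

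Your transfer step is a legitimate alternative to the paper's. The paper quotes Koskela--MacManus \cite[Theorem 2.3]{km:sobolev} to push the Poincar\'e inequality across the quasisymmetry between Ahlfors $2$-regular spaces; you instead push the $2$-Loewner condition (which is indeed quasisymmetrically invariant among Ahlfors $2$-regular spaces), recover the $2$-Poincar\'e inequality via \cite{hk:quasi}, and then self-improve with Keith--Zhong. Both routes are fine and yield the same conclusion; the Koskela--MacManus citation is simply more direct here.
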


Recall that a planar carpet is said to be a {\it round carpet} if all
of its peripheral circles are round geometric circles. A {\it slit
carpet} is a carpet which is a Gromov--Hausdorff limit of a sequence
of planar slit domains equipped with the internal metric. Recall that
a domain $D\subset\C$ is a {\it slit domain} if $D =
D'\setminus\bigcup_{i \in I} \gamma_i$, where $D'$ is a simply
connected domain and $\{\gamma_i\}_{i\in I}$ is a collection (of
arbitrary cardinality) of disjoint closed arcs contained in $D'$. We
admit the possibility that some of these arcs are degenerate, i.e.,
reduce to a point. A slit domain, resp.\ a slit carpet, is {\it
parallel} if the nondegenerate arcs are parallel line segments, resp.\
if it is a limit of parallel slit domains.

Corollaries \ref{corC} and \ref{corD} are proved in section
\ref{sec:uniformization}. Corollary \ref{corC} follows from Theorem
\ref{thmB} and results of Bonk and Koskela--MacManus on quasisymmetric
uniformization of carpets and quasisymmetric invariance of Poincar\'e
inequalities on Ahlfors regular spaces. Corollary \ref{corD} follows
from Theorem \ref{thmB}, Koebe's uniformization theorem and the same
work of Koskela--MacManus. Indeed, every carpet $S_\ba$ with $\ba \in
\ell^2$ is quasisymmetrically equivalent to both a round carpet and
also to a slit carpet with the stated properties.

\subsection{Outline of the paper}

In section \ref{sec:preliminaries} we recall general facts about
analysis in metric spaces, particularly, facts about Poincar\'e
inequalities in the sense of Definition
\ref{Poincare-inequality-definition}. In section \ref{sec:sba} we
prove basic metric and measure-theoretic properties of the carpets
$S_\ba$. In particular, we show that the canonical measure on $S_\ba$
is always a doubling measure, and we indicate in which situations it
verifies upper or lower mass bounds.

Section \ref{sec:failure} is devoted to the necessity of the $\ell^2$
summability condition for the validity of Poincar\'e inequalities on
the carpets $S_\ba$. The main result of this section, Proposition \ref{noPIl2}, shows
that $\ell^2$ summability of $\ba$ is best possible for such conclusions.
We also describe in more detail the weak tangents of the carpets $S_\ba$ and substantiate Corollary \ref{corB}.

Our proofs of the sufficiency of the summability criteria in Theorems
\ref{thmA} and \ref{thmB} are contained in sections
\ref{sec:validity-pi-1} and \ref{sec:validity-pi-2}, respectively. In the setting of Theorem \ref{thmA}, where $\ba \in \ell^1$, the Cantor set corresponding to the thinnest part of the carpet has positive length. This enables us to give a combinatorial construction of parameterized curve
families that joins arbitrary pairs of points in $S_\ba$ and verifies a modulus lower estimate due
to Keith (Theorem \ref{keith-theorem-1}) known to be equivalent to the Poincar\'e inequality in a wide setting.
% Our proof of the existence of curve families of positive modulus in subsection \ref{modulus-subsection}
% is highly technical and detailed. The reader is invited to skim or omit this subsection on a first reading of the paper.

In the setting of Theorem \ref{thmB}, where $\ba$ is only assumed to be in $\ell^2$, a different technique is required. 
The key step is to perform, in the
special case of the carpets $S_\ba$, the following abstract procedure:
in a metric space $(X,d)$ endowed with a wide supply of rectifiable
curves (in our case, $\R^2$), deform a given curve family so as to
avoid a prespecified obstacle, at a small quantitative multiplicative
cost to the $p$-modulus. Iterating this procedure produces curve
families of positive $p$-modulus that avoid a countable family of
obstacles of prespecified geometric sizes. Our implementation, while
not completely general, covers a wider class of residual sets than
just carpets: see Theorem \ref{modulus-extension-theorem} for a
precise statement.

In both cases, our proof of the suitable Poincar\'e inequalities makes substantial use of the
precise rectilinear structure of carpets. Hence,
the validity of a Poincar\'e inequality on the more general class of
residual sets indicated in the preceding paragraph is less clear.

In the final section (Section \ref{sec:uniformization}) we discuss uniformization of the
carpets $S_\ba$ by either round carpets or slit carpets. In
particular, we establish Corollaries \ref{corC} and \ref{corD}.
%Section \ref{sec:final} contains concluding remarks and questions.

\subsection{Acknowledgements} We are grateful to Mario Bonk for
numerous discussions and especially for suggestions concerning
uniformization of metric carpets. We also thank Jasun Gong and Hrant Hakobyan
for very helpful remarks. We wish to extend particular thanks to the referee for an extremely
careful reading of the paper and for his or her detailed and constructive
input.

Research for this paper was performed while the first and third authors were at
the University of Illinois and during visits of all three authors to the
Mathematics Institute at the University of Bern. The hospitality of
both institutions is gratefully appreciated.

\section{Preliminaries}\label{sec:preliminaries}

\subsection{Basic definitions and notation}

If $B = B(x,r)$ denotes a ball in a metric space $X=(X,d)$, we write
$\lam B$ for the dilated ball $B(x, \lam r)$.

%A homeomorphism $f:(X,d)\to(Y,d')$ between metric spaces is {\it $\eta$-quasi\-symmetric}, where $\eta:[0,\infty) \to [0,\infty)$ is a %homeomorphism, if $d'(f(x),f(y)) \le \eta(t) d'(f(x),f(z))$ whenever $x,y,z\in X$ and $t>0$ satisfy $d(x,y) \le t d(x,z)$.

A {\it metric measure space} is a metric space $(X,d)$ equipped with a
Borel measure $\mu$ that  is finite and positive on balls. The measure
$\mu$ is {\it doubling} if there exists a constant $C>0$ so that
$\mu(B(x,2r)) \le C \mu(B(x,r))$ for all metric balls $B(x,r)$ in $X$.
It is {\it Ahlfors $Q$-regular} for some $Q>0$ if there exists a
constant $C>0$ so that $r^Q/C \le \mu(B(x,r)) \le C r^Q$ for all
metric balls $B(x,r)$ in $X$ with $0<r<\diam X$. We say that $\mu$ is
{\it Ahlfors regular} if it is Ahlfors $Q$-regular for some $Q>0$. It
is well known that any Ahlfors $Q$-regular measure on a metric space
is comparable to the Hausdorff $Q$-measure $\cH^Q$, and hence that
$\cH^Q$ is also Ahlfors $Q$-regular in that case. Ahlfors regular
measures are always doubling. Let us remark that we always denote by
$\cH^s$ the $s$-dimensional Hausdorff measure in any metric space; we
normalize these measures so that $\cH^n$ coincides with Lebesgue
measure in $\R^n$.

A metric space $(X,d)$ is said to be {\it quasiconvex} if there exists
a constant $C$ so that any pair of points $x,y\in X$ can be joined by
a rectifiable path $\gamma$ whose length is no more than $C d(x,y)$. A
metric space is quasiconvex if and only if it is bi-Lipschitz
equivalent to a length metric space.

Every doubling metric measure space admitting a Poincar\'e
inequality is quasiconvex, see for instance
\cite{haj:sobolev} or \cite{ch:lipschitz}. By making use of
quasiconvexity, we may assume that $\lam=1$ in \eqref{eq-pi}, at the
cost of increasing the value of $C$ \cite[Corollary 9.8]{haj:sobolev}.

\subsection{Poincar\'e inequalities and moduli of curve families}

The following result of Keith \cite[Theorem 2]{kei:modulus} will be of great importance in this paper.

\begin{theorem}[Keith]\label{keith-theorem-1}
Fix $p \ge 1$.  Let $(X,d,\mu)$ be a complete, doubling metric measure
space. Then $X$ admits a $p$-Poincar\'e inequality if and only if
there exist constants $C_1>0$ and $C_2\ge 1$ so that
\begin{equation}\label{keith-equation-1}
d(x,y)^{1-p} \le C_1 \mod_p(\Gamma_{xy}; \mu_{xy}^{C_2})
\end{equation}
for every pair of distinct points $x,y\in X$.
\end{theorem}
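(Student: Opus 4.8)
The plan is to prove the two implications of Theorem \ref{keith-theorem-1} separately, using throughout that $\Gamma_{xy}$ is the family of rectifiable curves joining $x$ to $y$ and that $\mu_{xy}^{C_2}$ is Keith's Riesz-potential-type modification of $\mu$ adapted to the pair $(x,y)$ (roughly, its density against $\mu$ is assembled from the quantities $d(x,z)/\mu(B(x,C_2 d(x,z)))$ and $d(y,z)/\mu(B(y,C_2 d(y,z)))$, summed over dyadic scales, so that the weight registers how the curves in $\Gamma_{xy}$ pile up near the two endpoints). The implication from the modulus lower bound \eqref{keith-equation-1} to the Poincar\'e inequality is the softer one and I would obtain it by a direct averaging estimate; the converse is the substantive one and I would deduce it from the telescoping pointwise estimate that a Poincar\'e inequality imposes on functions with an upper gradient, as in Haj\l{}asz--Koskela \cite{haj:sobolev} and Heinonen--Koskela \cite{hk:quasi}.

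For the implication \eqref{keith-equation-1}$\,\Rightarrow\,$\eqref{eq-pi}: let $u$ be continuous with upper gradient $\rho$ and fix a ball $B=B(x_0,r)$. The key is an \emph{admissibility trick}. For distinct points $z,w$ with $u(z)\ne u(w)$, the upper gradient inequality gives $\int_\gamma\rho\,ds\ge|u(z)-u(w)|$ for every $\gamma\in\Gamma_{zw}$, so $\rho/|u(z)-u(w)|$ is admissible for $\Gamma_{zw}$ and hence $\mod_p(\Gamma_{zw};\mu_{zw}^{C_2})\le|u(z)-u(w)|^{-p}\int\rho^p\,d\mu_{zw}^{C_2}$. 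Combined with \eqref{keith-equation-1} this yields $|u(z)-u(w)|^p\le C_1\,d(z,w)^{p-1}\int\rho^p\,d\mu_{zw}^{C_2}$ (trivially true when $u(z)=u(w)$). I would then bound $\dashint_B|u-u_B|\,d\mu\le\bigl(\dashint_B\dashint_B|u(z)-u(w)|^p\,d\mu(z)\,d\mu(w)\bigr)^{1/p}$ by convexity, insert the previous inequality with $d(z,w)\le 2r$, and interchange the order of integration. What remains is a Riesz-potential estimate: the average over $z,w\in B$ of the density of $\mu_{zw}^{C_2}$ is at most $Cr\,\mu(B)^{-1}$ times the indicator of a fixed dilate $\lambda B$, which follows from the doubling property by partitioning $B$ into dyadic annuli about the point of evaluation and summing a geometric series. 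This produces the $(p,p)$-Poincar\'e inequality $\dashint_B|u-u_B|\,d\mu\le Cr\bigl(\dashint_{\lambda B}\rho^p\,d\mu\bigr)^{1/p}$, hence in particular \eqref{eq-pi}. This implication requires only doubling, not completeness.

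For the converse: since a complete doubling space supporting a Poincar\'e inequality is quasiconvex (recalled in Section \ref{sec:preliminaries}), I may assume $X$ is a length space, so $\Gamma_{xy}$ is rich. Fix distinct $x,y$, set $R=d(x,y)$, and let $\rho$ be admissible for $\Gamma_{xy}$ with $\int\rho^p\,d\mu_{xy}^{C_2}<\infty$ (otherwise there is nothing to prove). Put $u(w)=\min\{1,\ell_\rho(x,w)\}$, where $\ell_\rho(x,w)$ is the infimum of $\int_\gamma\rho\,ds$ over rectifiable curves from $x$ to $w$; then $u(x)=0$, $u(y)=1$ by admissibility, $u$ takes values in $[0,1]$, and $\rho$ is an upper gradient of $u$. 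Telescoping \eqref{eq-pi} over the dyadic balls $B(x,2^{-j}\lambda R)$ and $B(y,2^{-j}\lambda R)$, $j\ge 0$ (and applying \eqref{eq-pi} once on a ball of radius $\asymp R$ containing both points to bridge the two chains), while using doubling to compare consecutive averages, gives the pointwise estimate
\[
1=|u(x)-u(y)|\le C\sum_{j\ge 0}2^{-j}R\Bigl(\dashint_{B(x,2^{-j}\lambda R)}\rho^p\,d\mu\Bigr)^{1/p}+C\sum_{j\ge 0}2^{-j}R\Bigl(\dashint_{B(y,2^{-j}\lambda R)}\rho^p\,d\mu\Bigr)^{1/p}.
\]
A H\"older inequality in the summation index then collapses each geometric sum and, after raising to the power $p$, leads to
\[
R^{1-p}\le C\sum_{j\ge 0}\frac{2^{-j}R}{\mu(B(x,2^{-j}\lambda R))}\int_{B(x,2^{-j}\lambda R)}\rho^p\,d\mu+C\sum_{j\ge 0}\frac{2^{-j}R}{\mu(B(y,2^{-j}\lambda R))}\int_{B(y,2^{-j}\lambda R)}\rho^p\,d\mu.
\]
The whole point of the definition of $\mu_{xy}^{C_2}$ (taking $C_2=\lambda$) is that the right-hand side is comparable to $\int\rho^p\,d\mu_{xy}^{C_2}$. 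Taking the infimum over admissible $\rho$ gives $R^{1-p}\le C\,\mod_p(\Gamma_{xy};\mu_{xy}^{C_2})$, which is \eqref{keith-equation-1}.

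The hard part will be this second implication, for two reasons. First, the match between the dyadic telescoping sum and $\int\rho^p\,d\mu_{xy}^{C_2}$ is precisely what dictates the shape of Keith's measure: a more naive weighting --- say the continuous Riesz kernel $d(x,z)/\mu(B(x,C_2 d(x,z)))\,d\mu$ --- fails to reproduce the sum in low-dimensional or borderline settings, where an additional logarithmic gain would be needed, and getting the bookkeeping right in every case (using, in particular, that PI spaces are connected and therefore satisfy a reverse doubling estimate) is where most of the care lies. Second, there is a regularity mismatch: the function $u$ built from a rough admissible $\rho$ need not be continuous, whereas \eqref{eq-pi} is phrased for continuous $u$; I would circumvent this by invoking the standard fact that a $p$-Poincar\'e inequality self-improves so as to hold for every function with a $p$-integrable upper gradient (alternatively, by regularizing $\rho$), and by working at Lebesgue points so that $u_{B(x,r)}\to u(x)$ and $u_{B(y,r)}\to u(y)$ and the telescoping series converges. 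The reduction to length spaces, which also guarantees that $\Gamma_{xy}$ and $\mu_{xy}^{C_2}$ are non-degenerate, is a routine preliminary.
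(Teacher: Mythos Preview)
The paper does not prove Theorem~\ref{keith-theorem-1}; it is quoted from Keith \cite[Theorem~2]{kei:modulus} and used as a black box. So there is no ``paper's own proof'' to compare against, and your proposal is not competing with anything in this paper.

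That said, two remarks on the proposal itself. First, your description of $\mu_{xy}^{C_2}$ as a dyadic sum does not match the definition actually given just after the statement: the paper (following Keith) sets
\[
\mu_{xy}^C(A)=\int_{A\cap B_{xy}^C}\Bigl(\frac{d(x,z)}{\mu(B(x,d(x,z)))}+\frac{d(y,z)}{\mu(B(y,d(y,z)))}\Bigr)\,d\mu(z),
\]
which is precisely the continuous Riesz kernel you call ``more naive'' and claim ``fails to reproduce the sum in low-dimensional or borderline settings''. In fact the continuous and dyadic versions are comparable by doubling (partition into dyadic annuli about $x$ and about $y$), so neither fails; but you should be proving the statement with the measure as written, and the parameter $C_2$ enters through the truncation ball $B_{xy}^{C_2}$, not through the kernel. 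Second, the overall strategy you sketch (admissibility trick plus averaging for one direction, telescoping chain argument for the other) is the standard route and is essentially how Keith's proof goes; the technical issues you flag (self-improvement to non-continuous $u$, Lebesgue points, reduction to length spaces) are the right ones to worry about.
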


Here $\mod_p(\Gamma_{xy}; \mu_{xy}^C)$ denotes the $p$-modulus of the curve
family $\Gamma_{xy}$ joining $x$ to $y$, where the measure $\mu_{xy}^C$ is the
symmetric Riesz kernel
\[
	\mu_{xy}^C(A) = \int_{A \cap B_{xy}^C} \frac{d(x,z)}{\mu(B(x,d(x,z)))} +
		\frac{d(y,z)}{\mu(B(y,d(y,z)))} d\mu(z),
\]
where $B_{xy}^C = B(x,Cd(x,y)) \cup B(y,Cd(x,y))$. We recall that
$$
\mod_p(\Gamma;\nu) := \inf \int \rho^p \, d\nu
$$
for a Borel measure $\nu$ on $(X,d)$. Here the infimum is taken over
all nonnegative Borel functions $\rho$ which are {\it admissible} for
$\Gamma$, i.e., for which $\int_\gamma \rho \, ds \ge 1$ for all
locally rectifiable curves $\gamma \in \Gamma$. When $(X,d)$ is
endowed with a fixed ambient measure $\mu$, we abbreviate
$\mod_p\Gamma = \mod_p(\Gamma;\mu)$.

\subsection{Poincar\'e inequalities and metric gluings}\label{subsec:gluings}

The Poincar\'e inequality \eqref{eq-pi} is maintained under metric gluings. The following is a special case of a more general theorem of Heinonen and Koskela \cite[Theorem 6.15]{hk:quasi}, see also \cite[Theorem 3.3]{hh:manifold}.

\begin{theorem}[Heinonen--Koskela]\label{gluing-theorem}
Let $X$ and $Y$ be locally compact Ahlfors $Q$-regular metric measure spaces, $Q>1$, let $A\subset X$ be a closed subset, and let $\iota:A\to Y$ be an isometric embedding. Let $p>1$. Assume that both $X$ and $Y$ support a $p$-Poincar\'e inequality and that the inequality
$$
\min \left\{ \cH^{Q-1}_\infty(A \cap B_X(x,r)), \cH^{Q-1}_\infty(\iota(A) \cap B_Y(y,r)) \right\} \ge c r^{Q-1}
$$
holds for all $x \in A$, $y\in \iota(A)$ and $0<r<\min\{\diam X,\diam Y\}$, where the constant $c>0$ is independent of $x$, $y$ and $r$. Then $X \cup_A Y$ supports a $p$-Poincar\'e inequality. The data for the $p$-Poincar\'e inequality on $X \cup_A Y$ depends quantitatively on the Ahlfors regularity and Poincar\'e inequality data of $X$ and $Y$, on $p$, and on the above constant $c$.
\end{theorem}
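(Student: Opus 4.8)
Write $Z := X\cup_A Y$ for the glued space, equipped with the gluing metric $d$ (which restricts to $d_X$ on $X$, to $d_Y$ on $Y$, and satisfies $d(x,y)=\inf_{a\in A}\big(d_X(x,a)+d_Y(\iota(a),y)\big)$ for $x\in X$, $y\in Y$) and with the natural glued measure $\mu$ built from $\mu_X$ and $\mu_Y$. Using the Ahlfors $Q$-regularity of $X$ and $Y$ one checks that $Z$ is again Ahlfors $Q$-regular, hence doubling, and that balls of $Z$ centered in $X$ (resp.\ in $Y$) are comparable to the corresponding balls of $X$ (resp.\ $Y$). One also reduces at once to the case that $X$ and $Y$ are complete, by passing if necessary to their completions, which remain Ahlfors $Q$-regular and retain their $p$-Poincar\'e inequalities. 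By Keith's criterion (Theorem \ref{keith-theorem-1}) it then suffices to produce $C_1>0$, $C_2\ge 1$ with $d(x,y)^{1-p}\le C_1\mod_p(\Gamma_{xy};\mu_{xy}^{C_2})$ for every pair of distinct points $x,y\in Z$.

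If $x$ and $y$ lie on the same side, say $x,y\in X$, this is immediate: $\Gamma_{xy}$ contains all curves that stay in $X$, and on $X$ the Riesz kernel $\mu_{xy}^{C_2}$ of $Z$ dominates --- up to a fixed constant, and after possibly enlarging $C_2$ --- the Riesz kernel of $X$ for the pair $x,y$, by the ball comparison above; hence the desired bound follows from Keith's criterion applied inside $X$, valid since $X$ supports a $p$-Poincar\'e inequality. The case $x\in X$, $y\in A$ is a special case of this, and symmetrically for $Y$.

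The substantial case is $x\in X\setminus A$, $y\in Y\setminus A$. Put $r=d(x,y)$, choose $a_0\in A$ with $d_X(x,a_0)+d_Y(\iota(a_0),y)\le 2r$, and set $F:=A\cap B_X(a_0,r/4)$. Since $\iota$ is an isometric embedding, $F$ lies within distance $\lesssim r$ of $x$ inside $X$ and of $y$ inside $Y$, and by hypothesis $\cH^{Q-1}_\infty(F)\gtrsim r^{Q-1}$. The idea is to produce curves in $Z$ from $x$ to $y$ by concatenating, over points $z\in F$, the rich curve families from $x$ to $z$ in $X$ and from $z$ to $y$ in $Y$ supplied by Keith's criterion. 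Given $\rho$ admissible for $\Gamma_{xy}$ in $Z$, for each $z\in F$ at least one of $2\rho|_X$, $2\rho|_Y$ must be admissible for the corresponding family (otherwise one could concatenate two curves of $\rho$-length $<\tfrac12$); passing to the subset of $F$ --- still of content $\gtrsim r^{Q-1}$ --- on which, say, the first alternative holds, Keith's criterion in $X$ gives $\int_X\rho^p\,d(\mu_X)^{C_2}_{xz}\gtrsim d_X(x,z)^{1-p}\gtrsim r^{1-p}$ for every such $z$ (here $p>1$ makes small $d_X(x,z)$ harmless). Averaging this estimate against a Frostman measure $\sigma$ on $F$ with $\sigma(F)\approx r^{Q-1}$ and $\sigma(B(w,t))\lesssim t^{Q-1}$, and using Fubini together with the $Q$-regularity of $\mu_X$, one seeks to bound the averaged kernel $\int_F(\mu_X)^{C_2}_{xz}\,d\sigma(z)$ by a constant multiple of $r^{Q-1}$ times the Riesz kernel $\mu^{C_2'}_{xy}$ of $Z$; cancelling the factor $r^{Q-1}\approx\sigma(F)$ then yields $\int_Z\rho^p\,d\mu^{C_2'}_{xy}\gtrsim r^{1-p}$, i.e.\ Keith's inequality for the pair $x,y$ in $Z$.

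The technical heart of the argument --- and the source of the main difficulty --- is exactly this kernel comparison near the gluing interface $A$. There the ``$z$-end'' contributions of the kernels $(\mu_X)^{C_2}_{xz}$, averaged against $\sigma$, combine into the Riesz $(Q-1)$-potential of the $(Q-1)$-dimensional measure $\sigma$, which is \emph{logarithmically critical}: the naive comparison produces, relative to $\mu^{C_2'}_{xy}$, a spurious factor $1+\log^+\!\big(r/\dist(\cdot,A)\big)$ that cannot simply be absorbed. The remedy is a multi-scale refinement of the concatenation: instead of routing all curves across the single set $F$ at scale $r$, one uses connecting subsets of $A$ at all dyadic scales $\approx 2^{-j}r$ --- legitimate precisely because the hypothesized content lower bound on $A$ holds at \emph{every} scale --- and routes each curve across $A$ at roughly the scale at which it first comes close to $A$, so that the logarithm is always evaluated at a bounded ratio and no loss accumulates. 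Carrying out this telescoping carefully, together with the routine verifications mentioned above (that $d$ restricts correctly, that balls of $Z$ and of $X,Y$ are comparable, and that the completion reduction is harmless), completes the proof, with the Poincar\'e data on $Z$ depending only on that of $X$ and $Y$, on $p$, and on $c$.
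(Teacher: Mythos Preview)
The paper does not prove this theorem at all: it is quoted from the literature (Heinonen--Koskela \cite[Theorem 6.15]{hk:quasi}, see also \cite[Theorem 3.3]{hh:manifold}) and used as a black box. So there is no ``paper's own proof'' to compare against; any proof you supply is supplementary.

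As to your sketch: the strategy of verifying Keith's modulus criterion (Theorem~\ref{keith-theorem-1}) by concatenating the rich curve families supplied by each side through the interface $A$, and of converting the content lower bound on $A$ into a Frostman measure to average over, is a natural and essentially correct plan. It is, incidentally, not the route taken in \cite{hk:quasi}: that paper predates Keith's criterion and proceeds instead via capacity/Loewner-type estimates directly. Your approach is in some sense cleaner conceptually, but it trades one technicality for another.

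The weak point is exactly where you flag it. The kernel comparison near $A$ is genuinely logarithmically critical, and your proposed remedy --- ``route each curve across $A$ at roughly the scale at which it first comes close to $A$'' --- is an appealing slogan but not yet an argument. In Keith's framework the curve families are not given to you as explicit geometric objects that you can reroute at will; what you actually have is a modulus lower bound, and it is not evident how to implement a scale-by-scale concatenation that respects admissibility for $\Gamma_{xy}$ while avoiding the logarithmic pile-up. One honest way to carry this through is to abandon the single pair $(x,y)$ and instead run a chaining argument: verify the Poincar\'e inequality on $Z$ by estimating $|u_B - u_{B'}|$ along a chain of balls that crosses $A$ once, using the $p$-Poincar\'e inequality on each side for the within-side links and using the content bound on $A$ together with a $p$-capacity estimate (this is where $Q>1$ and $p>1$ enter) for the single link that straddles $A$. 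This is closer in spirit to the original proof and sidesteps the kernel-averaging issue entirely. If you want to keep the Keith-criterion route, you would need to make the multi-scale concatenation precise --- for instance by building, for each dyadic annulus around $A$, an explicit transversal measure on curves that cross $A$ within that annulus, and then summing --- and check carefully that the resulting estimate closes. As written, that step is a genuine gap.
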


We recall that the metric gluing $X\cup_A Y$ is the quotient space
obtained by imposing on the disjoint union $X\coprod Y$ the
equivalence relation which identifies each $a\in A$ with its image
$\iota(a)$. We equip this space with a natural metric which extends
the metrics on $X$ and $Y$ as follows: for points $x \in X$ and $y \in
Y$, let $d(x,y) = \inf \{ d(x,a) + d(\iota(a),y) : a \in A \}$.
Observe that the $Q$-regular measures on $X$ and $Y$, respectively,
combine to give a measure on $X\cup_A Y$ which is also $Q$-regular.

\subsection{Gromov--Hausdorff convergence and weak tangents}\label{subsec:gh}

A metric space $(X,d)$ is {\it proper} if closed and bounded sets are
compact.

\begin{definition}
A sequence of pointed proper metric measure spaces
$$
\{(X_n,x_n,d_n,\mu_n)\}
$$
converges to a pointed metric measure space
$(X,x,d,\mu)$ if there exists a pointed proper metric space
$(Z,z,\rho)$ and isometric embeddings $f_n:X_n \ra Z$, $f:X \ra Z$ so
that $f_n(x_n) = f(x) = z$ for all $n$, $(f_n)_\#\mu_n \ra f_\#\mu$ weakly,
and $f_n(X_n) \ra f(X)$ in the following sense:
for all $R>0, \eps>0$ there exists $N$ so that for all $n \geq N$,
$f_n(X_n) \cap B(z,R)$ is contained in the $\eps$-neighborhood of $f(X)$,
and $f(X) \cap B(z,R)$ is contained in the $\eps$-neighborhood of $f_n(X_n)$.
\end{definition}

We emphasize that the spaces $X_n$, $X$ are not assumed to be compact.
For the notion of pointed Gromov--Hausdorff convergence, see \cite[\S
2.2]{kei:modulus} or \cite[Chapter 7]{bbi:metric}.

\begin{definition}
Let $(X,d,\mu)$ be a proper metric measure space. A pointed proper
metric measure space $(Y,y,\rho,\nu)$ is called a {\it weak tangent}
of $(X,d,\mu)$ if there exists a sequence of points $\{x_n\}\subset X$
and constants $\del_n>0$, $\lam_n >0$, so that the pointed proper
metric measure spaces
$\{(X,x_n,\frac{1}{\del_n}d,\frac{1}{\lam_n}\mu_n)\}$ converge to
$(Y,y,\rho,\nu)$.
	
We do not require that $\del_n \ra 0$.  In the event that this occurs,
we call the limit space a {\it strict weak tangent} of $(X,d,\mu)$. If
$x_n = x \in X$ for all $n$, we call $(Y,y,\rho,\nu)$ a {\it tangent}
to $X$ at $x$. The notion of {\it strict tangent} is defined similarly.
\end{definition}

Poincar\'e inequalities persist under Gromov--Hausdorff convergence; see Cheeger \cite[\S9]{ch:lipschitz}.
We state here a version of this result due to Keith \cite{kei:modulus}, in a form which is suitable for our setting. For
another version, see Koskela \cite{kos:ug-pi}.

\begin{theorem}[Cheeger, Koskela, Keith]\label{keith-theorem-2}
Suppose $X_1 \supset X_2 \supset \cdots$ are subsets of $\R^2$, and for each
$n\in\N$, $\mu_n$ is a doubling measure supported on $X_n$, with uniform
doubling constant. Let $X = \bigcap_{n\in\N} X_n$, and suppose that the
measures $\{\mu_n\}$ converge weakly to a measure $\mu$ supported on $X$. If each $(X_n,d,\mu_n)$ supports a $p$-Poincar\'e inequality with uniform constants, then $(X,d,\mu)$ also supports a $p$-Poincar\'e inequality.
%with constants depending only on the doubling and Poincar\'e constants for the spaces $X_n$.	
\end{theorem}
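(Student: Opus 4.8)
The plan is to derive the $p$-Poincar\'e inequality on $(X,d,\mu)$ from Keith's modulus criterion, Theorem \ref{keith-theorem-1}. Its hypotheses are met: $X$ is closed in $\R^2$ and hence complete, while the common doubling constant of the $\mu_n$ — and the positivity and finiteness of $\mu$ on balls — pass to the weak limit $\mu$ by a routine comparison of $\mu$- and $\mu_n$-measures of concentric balls using the portmanteau inequalities $\mu(U)\le\liminf_n\mu_n(U)$ for open $U$ and $\mu(K)\ge\limsup_n\mu_n(K)$ for compact $K$. It therefore suffices to produce $C_1>0$ and $C_2\ge 1$, depending only on the given data, with
\[
d(x,y)^{1-p}\le C_1\,\mod_p\bigl(\Gamma_{xy}^{X};\mu_{xy}^{C_2}\bigr)\qquad\text{for all distinct }x,y\in X.
\]
Since $X=\bigcap_n X_n$, each such pair lies in every $X_n$, and Theorem \ref{keith-theorem-1} applied to $(X_n,d,\mu_n)$ gives the same lower bound for $\mod_p(\Gamma_{xy}^{X_n};(\mu_n)_{xy}^{C_2})$, where $\Gamma_{xy}^{X_n}\supseteq\Gamma_{xy}^{X}$ is the family of curves joining $x$ to $y$ inside $X_n$. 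The obstruction is that $\Gamma_{xy}^{X}$ is a \emph{smaller} family, so these bounds do not descend by monotonicity; and in fact the plain modulus of curve families is not lower semicontinuous under this kind of convergence, so the Riesz-kernel normalization built into Keith's criterion must genuinely be exploited.

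I would perform the transfer using Fuglede's duality for $p$-moduli: the bound $\mod_p(\Gamma;\nu)\ge\delta$ is witnessed by a probability measure $\sigma$ on $\Gamma$ whose occupation measure $O_\sigma(A):=\int_\Gamma\bigl(\int_\gamma\mathbf{1}_A\,ds\bigr)\,d\sigma(\gamma)$ is absolutely continuous with respect to $\nu$ with $\|dO_\sigma/d\nu\|_{L^{p'}(\nu)}$ bounded above in terms of $\delta$. Applying this in each $X_n$ produces curve measures $\sigma_n$ on $\Gamma_{xy}^{X_n}$. The uniform Poincar\'e inequality makes the $X_n$ uniformly quasiconvex; using this, together with the fact that the Riesz kernel is supported in $B_{xy}^{C_2}$, so that curves wandering far from $x$ and $y$ are irrelevant (a refinement of Keith's criterion available in the presence of a Poincar\'e inequality, most transparently seen through Semmes's pencils of curves), one may arrange that $\sigma_n$ is carried by curves of length at most $L\,d(x,y)$ lying in the fixed compact set $\overline{B(x,L\,d(x,y))}\subset\R^2$. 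Reparametrized by arclength these curves are uniformly Lipschitz, so by Arzel\`a--Ascoli their images form a precompact set, and a subsequence of $\sigma_n$ converges weakly to a finite measure $\sigma$. Every limit curve joins $x$ to $y$, and being a uniform limit of curves contained in $X_m$ for all $m$ (the $X_n$ decrease) it lies in $\bigcap_m X_m=X$; hence $\sigma$ is supported on $\Gamma_{xy}^{X}$, with $\sigma(\Gamma_{xy}^{X})$ still bounded below.

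It then remains to pass the occupation-density estimate to the limit. The occupation measures $O_{\sigma_n}$ converge weakly to $O_\sigma$ (test against continuous functions, using the uniform convergence of the curves), $\mu_n\rightharpoonup\mu$, and the ball-normalizations $z\mapsto\mu_n(B(x,d(x,z)))$ and $z\mapsto\mu_n(B(y,d(y,z)))$ entering the Riesz kernels converge to their $\mu$-counterparts outside a negligible set of radii — a standard point handled by a Fubini argument over radii together with the doubling property — so that $(\mu_n)_{xy}^{C_2}$ converges to $\mu_{xy}^{C_2}$ in a sense adequate, up to universal comparison, for our purposes. Lower semicontinuity of $L^{p'}$-densities under this joint weak convergence of the occupation measures and the reference measures then yields a bound on $\|dO_\sigma/d\mu_{xy}^{C_2}\|_{L^{p'}}$ with the same quantitative constants; Fuglede's duality turns this back into $\mod_p(\Gamma_{xy}^{X};\mu_{xy}^{C_2})\ge\delta$, and Theorem \ref{keith-theorem-1} completes the proof. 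The hard part is precisely this last step: the total mass of the limiting curve measure and its occupation density are controlled against measures that themselves vary with $n$, so one must simultaneously rule out escape of mass — curves drifting off, or $\mu_n$ concentrating away from $X$ — and preserve the $L^{p'}$ bound, and it is here that the uniformity of the doubling data, of the Poincar\'e data, and of the resulting quasiconvexity is indispensable.
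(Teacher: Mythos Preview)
The paper does not prove this theorem. It is stated in the preliminaries (Section~\ref{subsec:gh}) as a known result, with attribution to Cheeger \cite[\S9]{ch:lipschitz}, Koskela \cite{kos:ug-pi}, and Keith \cite{kei:modulus}, and no argument is supplied; the authors merely record a version ``in a form which is suitable for our setting.'' There is therefore no paper proof against which to compare your attempt.

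For what it is worth, your outline is very much in the spirit of Keith's argument in \cite{kei:modulus}: he too proves persistence of the Poincar\'e inequality under convergence by working through the modulus characterization (the paper's Theorem~\ref{keith-theorem-1}), producing measures on curve families in the approximating spaces, extracting limits via Arzel\`a--Ascoli, and invoking a semicontinuity property of modulus. Your invocation of the duality between $p$-modulus and $L^{p'}$-bounded occupation densities (what you call Fuglede's duality; in the metric setting this is closer to the plan/modulus duality developed by Ambrosio--Di~Marino--Savar\'e and implicit in Keith's work) is the right mechanism, and you correctly flag the genuine difficulty: the reference measures $(\mu_n)_{xy}^{C_2}$ are themselves varying, so one must control the $L^{p'}$ density against a moving target. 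The restriction to curves of bounded length in a fixed ball is available, as you say, from the uniform quasiconvexity that the uniform Poincar\'e data provides. As a sketch your proposal is sound; the full details are exactly what the cited references supply.
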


\section{Definition and basic properties of the carpets $S_\ba$}\label{sec:sba}

We review the construction of the carpets $S_\ba$. Fix a sequence
$$
\ba = (a_1, a_2, \ldots)
$$
where each $a_m$ is an element of the set $\{\frac{1}{3}, \frac{1}{5},
\frac{1}{7}, \ldots\}$. Starting from the unit square $T_0=[0,1]^2$ we
set the {\it level parameter} $m=1$ and iteratively apply the
following two steps:
\begin{itemize}
\item Divide each current square into $a_m^{-2}$ essentially disjoint
  closed congruent subsquares, where $m$ denotes the current level parameter,
  and remove the central (concentric) subsquare from each square,
\item Increase the level parameter $m$ by $1$.
\end{itemize}
We let $\cT_m$ denote the collection of all remaining level $m$ squares. For
each $m$, $\cT_m$ consists of
\begin{equation*}
\prod_{j=1}^m \left( a_j^{-2}-1 \right)
\end{equation*}
essentially disjoint closed squares, each of side length
\begin{equation*}
s_m := \prod_{j=1}^m a_j.
\end{equation*}
The union of all squares in $\cT_m$ is the {\it level $m$ precarpet},
denoted $S_{\ba,m}$. A {\it peripheral square} is a connected
component of the boundary of a precarpet. Finally,
\begin{equation*}
S_\ba := \bigcap_{m\ge 0} S_{\ba,m} = \bigcap_{m\ge 0} \bigcup \cT_m.
\end{equation*}

Each carpet $S_{\ba}$ is quasiconvex; this can be demonstrated using
curves built by countable concatenations of horizontal and vertical
segments. It is well-known that the usual Sierpi\'nski carpet
$S_{1/3}$ contains other nontrivial line segments, neither horizontal
or vertical. Indeed, $S_{1/3}$ contains nontrivial line segments of
each of the following slopes: $0$, $1/2$, $1$, $2$ and $\infty$.
For an explicit description of the set of slopes of nontrivial line
segments in all carpets $S_\ba$ in terms of Farey fractions, see~\cite{dt:lines}.

\subsection{The natural measure on $S_\ba$}\label{sec:measure}

There is a natural probability measure on $S_\ba$.  Since each
precarpet $S_{\ba,m}$ has positive area, we define a measure
$\mu_m$ on $[0,1]^2$ which is the Lebesgue measure restricted to the set
$S_{\ba,m}$, renormalized to have total measure one.  The sequence of
measures $(\mu_m)$ converges weakly to a probability measure $\mu$
with support $S_\ba$.
To see this, note that on each (closed) square $T$ of scale $s_m$ that is not
discarded, we have $\mu_n(T) = \mu_m(T)$ for all $n \ge m$, since later
renormalizations merely redistribute mass within $T$. Therefore,
$$
\mu(T) = \mu_m(T) = \prod_{j=1}^m (a_j^{-2}-1)^{-1} =: v_m.
$$
Moreover, for fixed $Q>0$,
$$
\frac{\mu(T)}{s_m^Q} = \prod_{j=1}^m a_j^{2-Q}(1-a_j^2)^{-1}.
$$
%so
%\begin{equation}\label{eq-mass-bound}
%\log \frac{\mu(T)}{s_m^Q} = (Q-2) \sum_1^m \log(\frac1{a_j}) + \sum_1^m \log(\%frac1{1-a_j^2}).
%\end{equation}
Note that if all $a_m = 1/(2k+1)$, then $\mu(T) = s_m^{Q_k}$ for all
$T\in\cT_m$ and $\dim S_{1/(2k+1)} = Q_k$. Here $Q_k$ denotes the
value in \eqref{Qk}.

%It is straightforward to check that for any ball $B$ of radius $r$ in $S_\ba$, we have
%\begin{equation}\label{eq-mass-compare}
%\frac{\mu(B)}{r^Q} \asymp \frac{\mu(T)}{s_m^Q},
%\end{equation}
%where $T$ is a square of side $s_m$ and $s_m \asymp r$. ??  (Actually may have
%a little problem here, will check later.)

%From \eqref{eq-mass-bound} and the dubious \eqref{eq-mass-compare}, we should see the following:
The following proposition describes the basic properties of $\mu$.
We write $a \lesssim b$ to mean that there exists a constant $C>0$ so that
$a \leq C b$, where $C$ depends only on the relevant data.
Also, the notation $a \asymp b$ signifies that $a \lesssim b$ and $b \lesssim a$.

\begin{proposition}\label{properties-of-mu} The metric measure space $(S_\ba, d, \mu)$ has the following properties:
\begin{enumerate}
\item[(i)] For any $\ba$, $\mu$ is a doubling measure.
\item[(ii)] For any $\ba$, we have the lower mass bound $\mu(B(x,r))
  \gtrsim r^2$ for all $x$ and $r\le 1$.
\item[(iii)] If $\ba \in c_0$, then for any $Q<2$ we have $\mu(B(x,r))
  \lesssim r^Q$ for all $x$ and $r>0$, hence $\dim S_\ba = 2$.
\item[(iv)] If $\ba \in \ell^2$, then $\mu$ is comparable to Lebesgue
  measure with constant depending only on $||\ba||_2$. Moreover, in
  this case, $\mu$ is an Ahlfors $2$-regular measure on $S_{\bf a}$.
\item[(v)] If $\ba=(a_m)$ is eventually constant (and equal to
  $\frac1{2k+1}$), then $\mu$ is comparable to the Hausdorff
  measure $\cH^{Q_k}$ and is an Ahlfors $Q_k$-regular measure on
  $S_\ba$.
\end{enumerate}
\end{proposition}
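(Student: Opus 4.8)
The plan is to reduce all five assertions to a single two‑sided estimate for $\mu(B(x,r))$ and then extract them by elementary manipulations of the products $\prod_j(1-a_j^2)^{\pm1}$. Two structural facts from the construction will be used throughout: $\cT_m$ is a subfamily of the tiling of $[0,1]^2$ by congruent closed squares of side $s_m$, so a ball of radius $\le s_m$ meets at most nine members of $\cT_m$; and, by the computation preceding the proposition, $\mu(T)=v_m:=\prod_{j\le m}(a_j^{-2}-1)^{-1}$ for every $T\in\cT_m$, while the restriction of $\mu$ to such a $T$, rescaled by the similarity carrying $T$ onto $[0,1]^2$, is $v_m$ times the natural measure of the carpet $S_{(a_{m+1},a_{m+2},\dots)}$ built from the shifted sequence; note $v_m/s_m^2=\prod_{j\le m}(1-a_j^2)^{-1}\ge 1$. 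The estimate I aim for is: \emph{for $x\in S_\ba$ and $0<r\le 1$, writing $m=m(r)$ for the least index with $s_m\le r$,}
\begin{equation*}
\mu(B(x,r))\ \asymp\ r^2\prod_{j\le m}(1-a_j^2)^{-1},
\end{equation*}
\emph{with absolute implied constants} (for $r\ge\diam S_\ba$ one simply has $\mu(B(x,r))=1$).

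The upper bound is routine grid‑counting: $B(x,r)\subseteq B(x,s_{m-1})$ meets $O(1)$ squares of $\cT_{m-1}$, inside each of which $B(x,r)$ (of diameter $\le 2r$, with $r>s_m$) meets $\lesssim(r/s_m)^2$ squares of $\cT_m$, each of mass $v_m=s_m^2\prod_{j\le m}(1-a_j^2)^{-1}$; summing over the $O(1)$ squares and their $\cT_m$‑subsquares gives the bound. The lower bound is the one genuinely delicate point. Fixing $x$, $r$ and $m=m(r)$, and rescaling the square $T^\ast\in\cT_{m-1}$ containing $x$ onto $[0,1]^2$, one reduces the desired inequality $\mu(B(x,r))\gtrsim r^2 v_m/s_m^2$ to the following statement about an arbitrary carpet $S_{\ba'}$, with natural (probability) measure $\nu$ and first scaling ratio $b$: $\nu(B(y,\rho))\gtrsim\rho^2$ whenever $y\in S_{\ba'}$ and $b\le\rho\le1$. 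The underlying mechanism is that $\nu$ spreads its unit mass evenly over the $b^{-2}-1$ squares of $\cT'_1$, each of mass $b^2/(1-b^2)$. Thus, when $\rho$ exceeds a fixed multiple of $b$, $B(y,\rho)$ contains $\gtrsim(\rho/b)^2$ of these squares (pairwise separated, say) and so has $\nu$‑mass $\gtrsim(\rho/b)^2\cdot b^2/(1-b^2)=\rho^2/(1-b^2)\ge\rho^2$; and when $\rho$ is only comparable to $b$, a further rescaling into the $\cT'_1$‑square containing $y$ reduces to a ball of radius between $1$ and a fixed constant, which contains a closed sub‑square of $[0,1]^2$ of side $\tfrac12$ (the one containing the ball's center) and therefore carries a fixed fraction of the total mass. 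I expect the bulk of the work to lie in carrying out these reductions uniformly in $\ba$: since the ratios $a_m$, hence the gaps $s_{m-1}/s_m=a_m^{-1}$, are unbounded, $r$ need not be comparable to any $s_m$, so the case divisions require care.

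Once the estimate is established, the five parts follow quickly. (i) If $2r\ge\diam S_\ba$, doubling is immediate from $\mu(S_\ba)=1$ and $\mu(B(x,r))\gtrsim r^2$; otherwise $m(r)$ and $m(2r)$ differ by at most one (as $s_{m-1}/s_m\ge3>2$), so the two products differ by a factor $(1-a_j^2)^{\pm1}\in[\tfrac89,\tfrac98]$ and $\mu(B(x,2r))\lesssim\mu(B(x,r))$. (ii) Immediate, since $\prod_{j\le m}(1-a_j^2)^{-1}\ge1$. (iii) For $\ba\in c_0$ and $Q<2$, using $r<s_{m-1}=\prod_{j<m}a_j$ and $2-Q>0$,
\[
r^2\prod_{j\le m}(1-a_j^2)^{-1}=r^Q\, r^{2-Q}\prod_{j\le m}(1-a_j^2)^{-1}\le r^Q\,(1-a_m^2)^{-1}\prod_{j<m}a_j^{2-Q}(1-a_j^2)^{-1};
\]
since $a_j^{2-Q}(1-a_j^2)^{-1}\to0$, the partial products on the right stay bounded in $m$, so $\mu(B(x,r))\lesssim_Q r^Q$, and together with the lower bound this forces $\dimH S_\ba\ge Q$ for every $Q<2$, hence $\dimH S_\ba=2$. (iv) For $\ba\in\ell^2$, the relation $-\log(1-a_j^2)\asymp a_j^2$ (valid since $a_j\le\tfrac13$) gives $1\le\prod_j(1-a_j^2)^{-1}\le e^{C||\ba||_2^2}$, so $\mu(B(x,r))\asymp r^2$ with constants controlled by $||\ba||_2$, i.e.\ $\mu$ is Ahlfors $2$‑regular; moreover $\mu$ is the weak limit of the normalized restrictions of Lebesgue measure to the sets $S_{\ba,m}$, which decrease to $S_\ba$, and since $\Area(S_{\ba,m})\to\Area(S_\ba)=\prod_j(1-a_j^2)>0$ one concludes that $\mu$ equals $\Area(S_\ba)^{-1}$ times Lebesgue measure restricted to $S_\ba$, the constant depending only on $||\ba||_2$. (v) If $a_j=1/(2k+1)=:q$ for $j>m_0$, the identity $(2k+1)^{Q_k}=(2k+1)^2-1$ forces $q^{2-Q_k}(1-q^2)^{-1}=1$, so $v_m/s_m^{Q_k}=\prod_{j\le m_0}a_j^{2-Q_k}(1-a_j^2)^{-1}=:c_0$ for all $m\ge m_0$; for $r\le s_{m_0}$, rescaling the $\cT_{m_0}$‑square containing $x$ (on which $\mu$ becomes $v_{m_0}$ times the Ahlfors $Q_k$‑regular natural measure of $S_{1/(2k+1)}$) yields $\mu(B(x,r))\asymp v_{m_0}(r/s_{m_0})^{Q_k}=c_0\,r^{Q_k}$, and the finitely many larger scales are harmless, so $\mu$ is Ahlfors $Q_k$‑regular and hence comparable to $\cH^{Q_k}$ by the fact recalled in Section~\ref{sec:preliminaries}.
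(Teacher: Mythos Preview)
Your proposal is correct and follows essentially the same approach as the paper: both reduce everything to the two-sided estimate $\mu(B(x,r))\asymp r^2\prod_{j\le m(r)}(1-a_j^2)^{-1}$ (stated there as a separate proposition) and then read off (i)--(v). Your lower-bound argument via iterated rescaling into $\cT_{m-1}$- and $\cT_m$-squares, and your direct product bound for (iii) using $r<s_{m-1}=\prod_{j<m}a_j$, differ organizationally from the paper's use of the auxiliary index $m(x,r)$ (the least $m$ with some $T\in\cT_m$ satisfying $x\in T\subset B(x,r)$) and the cruder bound $m(r)\le -\log_2 r+1$, but the underlying ideas are equivalent.
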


%Recall that $\cT_m$ denotes the collection of all squares of level
%$m$, and that the side length of each $T\in \cT_m$ is
%$s_m = \prod_{1}^m a_j$.
% The
%(Lebesgue) measure of $T$ is
%$$
%v_m := \prod_{j=1}^m \left( \frac{a_j^2}{1-a_j^2} \right).
%$$
For $x\in S_\ba$ and $r>0$ define two integers $m(x,r)$ and $m(r)$ as
follows:
\begin{enumerate}
\item $m(x,r)$ is the smallest integer $m$ so that there exists $T \in
  \cT_m$ with $x\in T \subset B(x,r)$,
\item $m(r)$ is the smallest integer $m$ so that $s_m \le r$.
\end{enumerate}

%In other words,
%\begin{enumerate}
%\item there exists $T\in \cT_{m(x,r)}$ with $x\in T \subset B(x,r)$
%  but for every $T' \in \cT_{m(x,r)-1}$ with $x\in T'$ we have $B(x,r)
%  \setminus T' \ne \emptyset$, and
%\item $s_{m(r)} \le r < s_{m(r)-1}$.
%\end{enumerate}
%Note that $m(1)=0$.

First, an easy lemma:

\begin{lemma}\label{easy-lemma}
For any $x$ and $r$, $m(\sqrt2 r) \le m(x,r) \le
m(\frac{r}{\sqrt2}) + 1$.
\end{lemma}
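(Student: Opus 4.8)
The plan is to unwind both definitions and reduce each half of the inequality to an elementary comparison between the side length $s_m$ of a level-$m$ square and the radius $r$. The one geometric input needed is: if a closed square $T$ of side $s$ contains the point $x$, then $T$ has a corner at distance at least $s/\sqrt 2$ from $x$ (while, trivially, every point of $T$ is within $\sqrt 2\, s$ of $x$). Both facts follow by looking at the two coordinates separately.

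For the first inequality $m(\sqrt 2 r) \le m(x,r)$, I would set $m = m(x,r)$ and fix $T \in \cT_m$ with $x \in T \subset B(x,r)$. Writing $x=(x_1,x_2)$ and $T = [a,a+s_m]\times[b,b+s_m]$, one of the numbers $x_1-a$, $a+s_m-x_1$ is at least $s_m/2$, and likewise in the second coordinate; the corresponding corner $c$ of $T$ then has $|x_1-c_1|\ge s_m/2$ and $|x_2-c_2|\ge s_m/2$, hence $d(x,c)\ge s_m/\sqrt 2$. Since $c\in T\subset B(x,r)$, this forces $s_m\le\sqrt 2\,r$, so $m$ is admissible in the definition of $m(\sqrt 2 r)$, and therefore $m(\sqrt 2 r)\le m = m(x,r)$.

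For the second inequality $m(x,r)\le m(r/\sqrt 2)+1$, I would put $m' = m(r/\sqrt 2)$, so that $s_{m'}\le r/\sqrt 2$ by minimality. Because $x\in S_\ba = \bigcap_k \bigcup\cT_k$, there is some $T\in\cT_{m'+1}$ containing $x$. Its side length is $s_{m'+1}=a_{m'+1}s_{m'}\le\tfrac13 s_{m'}\le\tfrac{r}{3\sqrt 2}$, using that every $a_j\le\tfrac13$; hence $\diam T = \sqrt 2\,s_{m'+1}\le r/3 < r$, and therefore $T\subset B(x,r)$. Thus $m'+1$ is admissible in the definition of $m(x,r)$, giving $m(x,r)\le m'+1 = m(r/\sqrt 2)+1$.

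I do not anticipate a genuine obstacle here; the only points to watch are (i) the appeal to $x\in\bigcup\cT_{m'+1}$, valid precisely because $x\in S_\ba$, and (ii) consistency of strict versus non-strict inequalities with whatever convention (open or closed ball) is in force for $B(x,r)$ — working at level $m'+1$ rather than $m'$ in the upper bound produces the slack $\diam T\le r/3<r$ that renders this convention irrelevant, and the lower bound is equally insensitive to it. The degenerate range of large $r$, where $m(\sqrt 2 r)$ or $m(r/\sqrt 2)$ may equal $0$, is subsumed by the same computation.
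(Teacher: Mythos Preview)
Your proof is correct and follows essentially the same approach as the paper's: both halves reduce to showing $s_{m(x,r)}\le\sqrt2\,r$ for the lower bound and finding a level-$(m(r/\sqrt2)+1)$ square of diameter at most $r/3$ containing $x$ for the upper bound. The only cosmetic difference is that the paper obtains $s_m\le\sqrt2\,r$ directly from $\diam T\le\diam B(x,r)\le 2r$, whereas you argue via a farthest corner; both yield the same inequality.
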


\begin{proof}
If $T\in \cT_{m(x,r)}$ satisfies $x \in T \subset B(x,r)$, then
$\sqrt2 s_{m(x,r)} = \diam T \le \diam B(x,r) \le 2r$ which implies
that $s_{m(x,r)} \le \sqrt2 r$ and $m(\sqrt2 r) \le m(x,r)$. Since $x
\in T$ for some $T \in \cT_{m(r / \sqrt2)+1}$, and $\diam{T} \leq
\frac{r}{3}$, we have $m(x,r) \leq m(\frac{r}{\sqrt2})+1$.
\end{proof}

We will derive the various parts of Proposition \ref{properties-of-mu} from
the following

\begin{proposition}\label{prop-ball-measure}
For each $x\in S_\ba$ and $0<r\le 1$,
\begin{equation*}\label{P2E}
\mu(B(x,r)) \asymp h(r) := r^2
\prod_{j=1}^{m(r)}\left(\frac1{1-a_j^2}\right).
\end{equation*}
\end{proposition}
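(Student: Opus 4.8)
The plan is to estimate $\mu(B(x,r))$ from above and below by exploiting the nested structure of the precarpets together with the two integers $m(x,r)$ and $m(r)$ introduced above. Recall that for any square $T\in\cT_m$ that survives to level $m$ we have the exact formula $\mu(T) = v_m = \prod_{j=1}^m (a_j^{-2}-1)^{-1}$, and note that $v_m = s_m^2 \prod_{j=1}^m (1-a_j^2)^{-1}$, since $s_m^2 = \prod_{j=1}^m a_j^2$ and $a_j^{-2}-1 = (1-a_j^2)/a_j^2$. Thus $v_m / s_m^2 = \prod_{j=1}^m (1-a_j^2)^{-1}$, which is exactly the product appearing in $h(r)$ when $m=m(r)$. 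So the target quantity $h(r)$ is morally ``$r^2$ times the mass-distortion factor at the scale of $r$,'' and the proof amounts to showing that $\mu(B(x,r))$ is comparable to the mass of a single level-$m(r)$ square scaled up by the factor $(r/s_{m(r)})^2 \asymp 1$.

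For the \textbf{lower bound}, I would use the definition of $m(x,r)$: there is a square $T\in\cT_{m(x,r)}$ with $x\in T\subset B(x,r)$, so $\mu(B(x,r)) \ge \mu(T) = v_{m(x,r)}$. By Lemma~\ref{easy-lemma}, $m(x,r) \le m(r/\sqrt2)+1$, and since the sequence of products $\prod_{j=1}^m (1-a_j^2)^{-1}$ is increasing in $m$ while each individual factor $(1-a_j^2)^{-1}$ is bounded (as $a_j \le 1/3$, so $(1-a_j^2)^{-1}\le 9/8$), passing from level $m(r)$ to level $m(r/\sqrt2)+1$ changes the product by at most a bounded multiplicative constant; one also checks $s_{m(x,r)}^2 \gtrsim r^2$ using $s_{m(x,r)} \ge s_{m(r/\sqrt2)+1} \ge a_{\min}\, s_{m(r/\sqrt2)} \gtrsim r$ where $a_{\min}$ here just means the factor $a_{m(r/\sqrt2)+1}\ge 1/(2k_{\max}+1)$—actually more carefully, $s_{m(r)-1} > r$ by minimality, and $s_{m(r)} = a_{m(r)} s_{m(r)-1} \ge \tfrac13 r$, so in all cases $s_m \asymp r$ for the relevant levels $m$. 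Combining, $\mu(B(x,r)) \ge v_{m(x,r)} = s_{m(x,r)}^2 \prod_{j=1}^{m(x,r)}(1-a_j^2)^{-1} \gtrsim r^2 \prod_{j=1}^{m(r)}(1-a_j^2)^{-1} = h(r)$.

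For the \textbf{upper bound}, I would cover $B(x,r)$ by a bounded number of level-$m(r)$ squares: since each square in $\cT_{m(r)}$ has side length $s_{m(r)} \asymp r$, the ball $B(x,r)$ meets at most a universal constant number $N$ of squares from $\cT_{m(r)}$ (by a packing argument in the plane, using $s_{m(r)} \ge r/3$). Each such square $T$ contributes $\mu(B(x,r)\cap T) \le \mu(T) = v_{m(r)}$, so $\mu(B(x,r)) \le N v_{m(r)} = N s_{m(r)}^2 \prod_{j=1}^{m(r)}(1-a_j^2)^{-1} \asymp r^2 \prod_{j=1}^{m(r)}(1-a_j^2)^{-1} = h(r)$.

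The main obstacle—though it is more bookkeeping than genuine difficulty—is handling the comparisons between the two scales $m(x,r)$, $m(r)$, $m(r/\sqrt2)$, and $m(\sqrt2 r)$ cleanly: one must be careful that $s_m \asymp r$ for all of these indices and that the finite products $\prod (1-a_j^2)^{-1}$ only change by a bounded factor when the index shifts by a bounded amount, which relies crucially on the uniform bound $1 \le (1-a_j^2)^{-1} \le 9/8 < \infty$ (equivalently, $a_j \le 1/3$ for all $j$). Once this uniform control is in place, both bounds follow from the exact mass formula for surviving squares plus an elementary planar packing estimate, and Lemma~\ref{easy-lemma} does the work of relating $m(x,r)$ to $m(r)$.
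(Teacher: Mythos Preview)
Your argument has a genuine gap in the lower bound, rooted in the repeated assertion that $s_{m(r)} \asymp r$ (e.g.\ ``$s_{m(r)} = a_{m(r)} s_{m(r)-1} \ge \tfrac13 r$'' and ``in all cases $s_m \asymp r$ for the relevant levels $m$''). This is false: the sequence $\ba$ consists of reciprocals of odd integers greater than one, so each $a_j \le 1/3$, but there is no \emph{lower} bound on the $a_j$; indeed much of the paper concerns the case $\ba \in c_0$. If say $a_1 = 1/1001$ and $r=1/2$, then $m(r)=m(x,r)=1$, $s_1 = 1/1001 \ll r$, and $v_{m(x,r)} = (1001^2-1)^{-1} \approx 10^{-6}$, whereas $h(r) \approx 1/4$. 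So the single-square estimate $\mu(B(x,r)) \ge v_{m(x,r)}$ is far too weak here. Your correct observation that $(1-a_j^2)^{-1} \in [1,9/8]$ controls the \emph{product} under index shifts, but it says nothing about the ratio $s_m/s_{m-1}=a_m$.

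The paper closes exactly this gap by splitting the lower bound into two cases. When $r \lesssim s_{m(x,r)}$ (their Case~1) your single-square argument works. When $r \gg s_{m(x,r)}$ (their Case~2), one instead looks at the parent square $T\in\cT_{m(x,r)-1}$ containing $x$; since $T\not\subset B(x,r)$, its side length is at least $r/\sqrt2$, and inside $T\cap B(x,r)$ one can find a square of side $\asymp r$ filled by $\asymp (r/s_{m(x,r)})^2$ level-$m(x,r)$ squares. Summing their masses yields $(r/s_{m(x,r)})^2 v_{m(x,r)} \asymp h(r)$, with the $s_{m(x,r)}^2$ cancelling. Your upper bound has the same defect in its justification (the number $N$ of covering squares is not bounded), but there the fix is immediate: one simply takes $N \lesssim (r/s_{m(r)})^2$ and lets the $s_{m(r)}^2$ cancel, which is precisely what the paper does.
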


\begin{proof}[Proof of Proposition \ref{properties-of-mu}]
Note that $m(r)$ is a decreasing function of $r$. Part (i)
follows easily:
$$
\mu(B(x,2r)) \lesssim (2r)^2 \prod_{j=1}^{m(2r)}
\left(\frac1{1-a_j^2}\right) \le 4 r^2 \prod_{j=1}^{m(r)}
\left(\frac1{1-a_j^2}\right) \lesssim \mu(B(x,r)).
$$
Part (ii) is also clear, since the finite product term in the
definition of $h(r)$ is always greater than or equal to one.

Next, we assert that $m(r) \le m(2r) + 1$ for all $r>0$. If not, we
have $m(r) \geq m(2r)+2$, so $m(r) -1 \geq m(2r) + 1$, thus
\[
r < s_{m(r)-1} \leq s_{m(2r)+1} \leq \frac{1}{3}s_{m(2r)} \leq \frac{1}{3} \cdot 2r,
\]
a contradiction.

We now turn to part (iii). Assume that $\ba \in c_0$, i.e., $a_m \to
0$. We will show that $\limsup_{r\to 0} \frac{\mu(B(x,r))}{r^Q}$ is
finite for each $Q<2$, uniformly in $x$. It suffices to show that
$$
\limsup_{r\to 0} \frac{h(r)}{r^Q} < \infty.
$$

First we verify that $m(r) \leq - \log_2(r) +1$. Suppose that $n$ is
the largest integer so that $2^{n} r \leq 1$. Since $m(1)=0$,
$$
m(r) \leq m(2r)+1 \le \cdots \le m(2^{n+1} r) + n+1 \le m(1) + n + 1 =
n+1 \le -\log_2(r) + 1.
$$
Now, since $a_j \to 0$, for any $\eps > 0$ there exists some
$C=C(\eps)$ so that
\[
	\prod_{j=1}^{m(r)} \left(\frac1{1-a_j^2}\right)
	\leq C (1+\eps)^{m(r)}
	\leq C (1+\eps)^{-\log_2{r}+1}
	\lesssim r^{-\log_2(1+\eps)}.
\]
If we choose $\eps$ so that $2-Q > \log_2(1+\eps)$, then we are done.
From here part (iii) follows easily. Parts (iv) and (v) were discussed
in the introduction.
\end{proof}

\begin{proof}[Proof of Proposition \ref{prop-ball-measure}]
It is straightforward to bound $\mu(B(x,r))$ from above: cover
$B(x,r)$ by squares from $\cT_{m(r)}$.  Then, as $s_{m(r)} \le r$,
$$
\mu(B(x,r)) \le \frac{(3r)^2}{s_{m(r)}^2} \cdot v_{m(r)}
\le \frac{9r^2}{s_{m(r)}^2} \cdot s_{m(r)}^2
\prod_{j=1}^{m(r)} \left(\frac1{1-a_j^2}\right) \lesssim h(r).
$$
To bound $\mu(B(x,r))$ from below, we split the proof into two cases.

\

\paragraph{\bf Case 1.} $r \leq 100 s_{m(x,r)}.$
	
Since $B(x,r)$ contains a square of side $s_{m(x,r)}$, we use the
obvious bound $\mu(B(x,r)) \geq v_{m(x,r)}$. Note that $m(r) - 1 \le
m(\frac{r}{\sqrt2}) - 1 \leq m(\sqrt2 r) \leq m(x,r)$. Now,
$$
v_{m(x,r)} = s_{m(x,r)}^2 \prod_{j=1}^{m(x,r)} \left(
  \frac{1}{1-a_j^2} \right) \ge \left( \frac{1}{100} \right)^2 r^2
\cdot (1-a_{m(r)}^2) \prod_{j=1}^{m(r)} \left( \frac{1}{1-a_j^2}
\right)  \gtrsim h(r).
$$
	
\paragraph{\bf Case 2.} $r > 100 s_{m(x,r)}.$
	
Choose $T \in \cT_{m(x,r)-1}$ so that $x \in T$. Since $T \nsubseteq
B(x,r)$, the side length of $T$ is at least $\frac{r}{\sqrt2}$. Since $T$ is a square, $T \cap B(x,r)$ contains a (Euclidean) square
$V'$ of side $\frac{r}{4}$. Finally, since $s_{m(x,r)} \leq \frac{r}{100}$ and at most one square of
generation $m(x,r)$ is deleted in $T$, $V'$ contains a square $V$ of
side $s_v \in [\frac{r}{32}, \frac{r}{16}]$ consisting entirely of squares from $\cT_{m(x,r)}$.

From the preceding facts we conclude that
\begin{align*}
\mu(B(x,r))
& \ge \mu(V) = \left( \frac{s_v}{s_{m(x,r)}} \right)^2 v_{m(x,r)}
	= s_v^2 \cdot \prod_{j=1}^{m(x,r)} \left(  \frac{1}{1-a_j^2} \right)\\
% & \geq
% & \ge \frac{1}{32^2} \cdot r^2 \prod_{j=1}^{m(x,r)} \left(
%   \frac{1}{1-a_j^2} \right) \\
&\ge \frac{r^2}{32^2} \cdot (1-a_{m(r)}^2) \prod_{j=1}^{m(r)}
\left( \frac{1}{1-a_j^2} \right) \gtrsim h(r).
\end{align*}
The proof is finished.
\end{proof}

We make a final observation regarding the conformal dimension of $S_\ba$.
Recall that a metric space $(X,d)$ is \emph{minimal for conformal dimension} if its Hausdorff dimension is less than or equal to the Hausdorff dimension of any quasisymmetrically equivalent metric space. The self-similar carpets $S_{1/(2k+1)}$ are not minimal for conformal dimension. This result is a consequence of a theorem of Keith and Laakso \cite{kl:confdim},
see also \cite{mt:confdimsurvey} for a brief recapitulation of the proof.

\begin{corollary}\label{prop-conf-dim}
If $\ba \in c_0$, then $S_\ba$ is minimal for conformal dimension.
\end{corollary}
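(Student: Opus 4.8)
The plan is to locate, for $\ba \in c_0$, a weak tangent of $S_\ba$ carrying a curve family of positive modulus in the critical dimension, and then to appeal to the criterion of Keith and Laakso --- the very circle of ideas responsible for the non-minimality of the self-similar carpets $S_{1/(2k+1)}$ recalled above. By Proposition \ref{properties-of-mu}(iii) we have $\dim_H S_\ba = 2$ when $\ba \in c_0$, so minimality amounts to the assertion that $\dim_H Y \ge 2$ for every metric space $Y$ quasisymmetrically equivalent to $(S_\ba, d)$.

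The heart of the matter is the description of the strict weak tangents of $(S_\ba, d, \mu)$ when $\ba \in c_0$. Because $a_m \to 0$, the square removed at stage $m$ occupies the vanishing proportion $a_m^2$ of its parent, so at small scales $S_\ba$ is Gromov--Hausdorff close to a Euclidean square; making this quantitative --- by tracking a base point $x_n \in S_\ba$ and scales $\del_n \to 0$ --- one obtains that every strict weak tangent of $(S_\ba, d, \mu)$ is isometric to a closed domain in $\R^2$ (such as $\R^2$ itself, a half-plane, or a square) equipped with a constant multiple of Lebesgue measure. This is the content of Proposition \ref{propD} (see section \ref{subsec:weak-tangents}). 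In particular every such weak tangent is Ahlfors $2$-regular, quasiconvex, and supports a $1$-Poincar\'e inequality, hence a $2$-Poincar\'e inequality, and is therefore $2$-Loewner and carries a curve family of positive $2$-modulus.

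It then remains to invoke the criterion of Keith and Laakso \cite{kl:confdim} (see also \cite{mt:confdimsurvey}): if a space $X$ has Hausdorff dimension $Q$ and admits a weak tangent which is Ahlfors $Q$-regular and carries a curve family of positive $Q$-modulus, then $X$ is minimal for conformal dimension. Applying this with $Q = 2$ and $X = S_\ba$ completes the argument. (For the subclass $\ba \in \ell^2$ one can argue more directly: by Corollary \ref{corA} the carpet $S_\ba$ is itself $2$-Loewner and hence minimal; the weak-tangent argument is what handles the whole range $\ba \in c_0$ uniformly. If one prefers to use the minimality criterion only in its Ahlfors regular form, one may first replace $d$ by the quasisymmetrically equivalent Ahlfors $2$-regular metric obtained by straightening the doubling measure $\mu$; this alters neither the Hausdorff dimension nor the quasisymmetry gauge, and the strict weak tangents of the new metric are quasisymmetric images of planar domains, so they still carry positive $2$-modulus curve families.)

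The main obstacle is the weak-tangent analysis of the second step: one must verify carefully that the rescaled carpets, together with their normalized natural measures, converge in the pointed Gromov--Hausdorff sense to planar domains carrying Lebesgue measure. This requires controlling the discrepancy between the Euclidean metric and the internal length metric on the rescaled carpets (showing the relevant quasiconvexity constants tend to $1$), keeping track of the possible positions of the base points $x_n$ --- deep in the interior of a generation-$m$ square, or adjacent to a peripheral square of some generation, which in the limit produces half-planes, quarter-planes, or complements of quadrants --- and checking the weak convergence of the normalized measures $\mu_m$ to Lebesgue measure at the appropriate scales.
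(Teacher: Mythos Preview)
Your argument has a genuine gap at the final step. The ``criterion of Keith and Laakso'' you invoke is not correct as stated: it is easy to build a compact doubling space $X$ with $\dim_H X = 2$ that admits $\R^2$ as a strict weak tangent yet has conformal dimension zero. (Take $X = A \sqcup B$ with $A$ a Cantor set of Hausdorff dimension $2$---hence conformal dimension $0$---and $B$ a countable set assembled from finer and finer finite grids so that some pointed rescaling limit of $B$ is $\R^2$; the pieces are far apart, so a quasisymmetry can compress $A$ independently.) The actual theorem in \cite{kl:confdim} requires $X$ itself to be Ahlfors $Q$-regular, and its conclusion concerns the \emph{Ahlfors regular} conformal dimension (equivalently, the conformal Assouad dimension), not the conformal Hausdorff dimension used in this paper's definition of minimality. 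Your parenthetical fix---straightening $\mu$ to obtain an Ahlfors $2$-regular metric $\delta$ on $S_\ba$---is fine as far as it goes, but applying Keith--Laakso to $(S_\ba,\delta)$ then only yields that the Ahlfors regular conformal dimension equals $2$. Since this quantity dominates the conformal Hausdorff dimension rather than the other way around, you have not shown that every quasisymmetric image of $(S_\ba,d)$ has Hausdorff dimension at least $2$. For $\ba \in \ell^2$ your alternative via Corollary~\ref{corA} is valid, but the weak-tangent route does not close the argument for $\ba \in c_0 \setminus \ell^2$.

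The paper's proof takes a completely different and much shorter path: it observes that $S_\ba$ contains the product of a $1$-dimensional Cantor set $C_\ba$ with an interval (if $x$ avoids the central subinterval at every stage, then the entire segment $\{x\}\times[0,1]$ lies in $S_\ba$), and then invokes the result from \cite{bt:locmin} that any set containing a product of the form $E\times I$ with $\dim_H(E\times I)$ equal to the ambient Hausdorff dimension is minimal for conformal dimension. This uses only the monotonicity $\dim_C(S_\ba)\ge\dim_C(C_\ba\times I)$ together with Tyson's theorem that such products attain their conformal dimension---no weak-tangent analysis, no Ahlfors regularity hypothesis, and no distinction between the various flavors of conformal dimension is needed.
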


\begin{proof}
	By Proposition~\ref{properties-of-mu}(iii), $S_\ba$ has Hausdorff dimension 2.
	In a similar way, one shows that the Cantor set $C_\ba$ has Hausdorff dimension 1.
	Since $S_\ba$ contains the product of $C_\ba$ and an interval, which has Hausdorff dimension 2,
	by \cite[Section 5, Remark 1]{bt:locmin} the space $S_\ba$ is minimal for conformal dimension.
\end{proof}

The conformal dimensions of the carpets $S_\ba$ when $\ba \not \in c_0$ remain unknown. Determining the conformal dimension of $S_{1/3}$ is a longstanding open problem.

\section{Failure of the Poincar\'e inequality}\label{sec:failure}

In this section we provide conditions under which the $p$-Poincar\'e
inequality fails to be satisfied on $S_\ba$ for various choices of $p$
and $\ba$. In doing so we verify the necessity of the summability
criteria in Theorems \ref{thmA} and \ref{thmB}.

\begin{proposition}\label{noPIl1}
If $\ba \notin \ell^1$, then $S_\ba$ does not support a $1$-Poincar\'e inequality.
\end{proposition}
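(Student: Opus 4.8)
The plan is to test the $1$-Poincar\'e inequality against a family of ``ramp'' functions concentrated on the thinnest vertical slice of $S_\ba$, exploiting the fact that this slice has vanishing linear measure density precisely when $\ba\notin\ell^1$. The argument needs no case distinction: it uses only that $\sum_j a_j=\infty$.

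\textbf{Step 1: the central slice has small measure.} For $m\ge 1$ I would set $\eps_m=\tfrac12 s_m$ and consider $N_m=\{(x,y)\in S_\ba:|x-\tfrac12|<\eps_m\}$. The line $\{x=\tfrac12\}$ sits at the center of the $x$-range of the level-$m$ square of $\cT_m$ obtained by passing to the central column at each of the stages $1,\dots,m$; hence $N_m$ is exactly the union of the $\prod_{j=1}^m(a_j^{-1}-1)$ squares of $\cT_m$ lying in that iterated central column. Since each such square has $\mu$-measure $\prod_{j=1}^m(a_j^{-2}-1)^{-1}$,
\[
\mu(N_m)=\prod_{j=1}^m\frac{a_j^{-1}-1}{a_j^{-2}-1}=\prod_{j=1}^m\frac{a_j}{1+a_j}=\frac{s_m}{\prod_{j=1}^m(1+a_j)}.
\]
As $\ba\notin\ell^1$ we have $\sum_j a_j=\infty$, so $\prod_{j=1}^m(1+a_j)\to\infty$, whence $\mu(N_m)/\eps_m=2\,\prod_{j=1}^m(1+a_j)^{-1}\to 0$ as $m\to\infty$. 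In particular, since $C_\ba\subset N_m$ for every $m$, this gives $\mu(C_\ba)=0$.

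\textbf{Step 2: test functions.} Write $\pi_1(x,y)=x$. For each $m$ let $\phi_m:\R\to[0,1]$ be the piecewise-affine ramp that is $0$ on $(-\infty,\tfrac12-\eps_m]$, is $1$ on $[\tfrac12+\eps_m,+\infty)$, and has slope $\tfrac1{2\eps_m}$ in between, and set $u_m=\phi_m\circ\pi_1|_{S_\ba}$. Then $u_m$ is continuous, $[0,1]$-valued, and $u_m\to\mathbf 1_{\{x>1/2\}}$ pointwise off $\{x=\tfrac12\}$, hence $\mu$-a.e.\ by Step~1. Since $\pi_1$ is $1$-Lipschitz and $\phi_m$ has upper gradient $\tfrac1{2\eps_m}\mathbf 1_{[1/2-\eps_m,\,1/2+\eps_m]}$ on $\R$, the chain rule for upper gradients shows that $\rho_m:=\tfrac1{2\eps_m}\mathbf 1_{\{|x-1/2|\le\eps_m\}}$ is an upper gradient of $u_m$ on $S_\ba$.

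\textbf{Step 3: the contradiction.} Suppose $(S_\ba,d,\mu)$ supports a $1$-Poincar\'e inequality; since $S_\ba$ is quasiconvex we may take $\lam=1$ in \eqref{eq-pi}. Fix $p_0=(\tfrac12,0)\in S_\ba$ and $B=B(p_0,\tfrac14)$. The reflection $(x,y)\mapsto(1-x,y)$ is an isometry fixing $p_0$ and preserving $S_\ba$ and $\mu$, so, using $\mu(\{x=\tfrac12\})=\mu(C_\ba)=0$, we obtain $\mu(B\cap\{x>\tfrac12\})=\mu(B\cap\{x<\tfrac12\})=\tfrac12\mu(B)$. By dominated convergence $\dashint_B u_m\,d\mu\to\tfrac12$ and $\dashint_B|u_m-\dashint_B u_m\,d\mu|\,d\mu\to\dashint_B|\mathbf 1_{\{x>1/2\}}-\tfrac12|\,d\mu=\tfrac12$, so for all large $m$ the left side of \eqref{eq-pi} for $u_m$ on $B$ is at least $\tfrac14$, while its right side is at most
\[
C\,\diam(B)\dashint_B\rho_m\,d\mu\;\le\;\frac{C}{2\mu(B)}\cdot\frac{\mu(N_m)}{2\eps_m}\;=\;\frac{C}{2\mu(B)}\prod_{j=1}^m\frac1{1+a_j}\;\longrightarrow\;0,
\]
a contradiction; hence no $1$-Poincar\'e inequality holds.

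\textbf{Main obstacle.} The only non-routine point is the exact evaluation of $\mu(N_m)$ in Step~1 — equivalently, checking that the line $x=\tfrac12$ always lands at the center (never on a boundary) of the relevant level-$k$ squares, so that $\{|x-\tfrac12|<\tfrac12 s_m\}$ meets $S_\ba$ in precisely the iterated-central-column squares of $\cT_m$; this follows by an easy induction on the construction. Everything else is standard. I note that Steps~2--3 can be repackaged as the assertion that the $1$-modulus of the family of curves in $S_\ba$ joining its left and right edges is $0$ (with the $\rho_m$ as admissible metrics), which is already incompatible with the rich supply of short, evenly distributed curves forced by a $1$-Poincar\'e inequality.
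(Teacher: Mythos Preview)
Your proof is correct and follows essentially the same approach as the paper's: both construct ramp functions across the central vertical strip of width $s_m$, note that the associated upper gradient has $L^1(\mu)$-norm $\prod_{j\le m}(1+a_j)^{-1}\to 0$ while the mean oscillation of the functions stays bounded below, contradicting \eqref{eq-pi} for $p=1$. The only cosmetic differences are your use of a specific sub-ball $B(p_0,\tfrac14)$ and your more explicit treatment of the limit via the reflection symmetry; one tiny slip (using the closed strip for $\rho_m$ but the open strip $N_m$ when bounding $\int\rho_m\,d\mu$) is harmless since the open-strip indicator is already an upper gradient.
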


\begin{proof}
% It suffices to show that $\mod_1\Gamma=0$, where $\Gamma$ denotes the
% family of curves joining the left and right hand edges of $S_\ba$.
For each $m \in \N$, let $T_m \subset S_\ba$ be the vertical middle strip of width $s_m$.
Define $f_m: S_\ba \ra [0, 1]$ to be the function which is $0$ to the left of $T_m$,
$1$ to the right of $T_m$ and extend it linearly across $T_m$.
This function has upper gradient $\rho_m : S_\ba \ra [0,\infty]$
which is identically $1/{s_m}$ on $T_m$ and $\rho_m \equiv 0$ elsewhere.
%This $\rho_n$ integrates in the $x$-direction to
%give a function $u_n : S_\ba \ra \R$ whose value at $(x,y) \in S_\ba$ is
%independent of $y$.
%Since $u_n$ equals $0$ on the left hand side of the middle $s_n$ strip, and
%$1$ on the right hand side, from \eqref{eq-pi} we have
% Observe that $\rho_m$ is admissible for $\Gamma$.
We compute
$$
\int \rho_m \, d\mu = \prod_{i=1}^m \left( a_i^{-1} \cdot \frac{a_i -
    a_i^2}{1-a_i^2} \right) = \prod_1^m \frac{1}{1+a_i}.
$$
Since $\ba \notin \ell^1$, the right hand side goes to zero
as $m \ra \infty$.
Observe that $\dashint_{S_\ba} f_m d\mu = 1/2$, and $f_m$ takes values of $0$ and $1$ on a set of
measure bounded away from zero independently of $m$.
Therefore, \eqref{eq-pi} cannot be satisfied for $p=1$ and fixed constant $C$.
\end{proof}

We now consider $p$-Poincar\'e inequalities with $p>1$.
If $\ba \not\in \ell^3$, a careful adaptation of the proof of the previous proposition shows that $S_\ba$
does not support any Poincar\'e inequality.
However, the carpets considered in this paper have a very specific geometry that leads to the following sharp result.

\begin{proposition}\label{noPIl2}
If $\ba \notin \ell^2$, then $S_\ba$ does not support a $p$-Poincar\'e
inequality for any $p \geq 1$.
\end{proposition}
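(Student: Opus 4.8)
The plan is to refine the test-function argument from Proposition \ref{noPIl1}, using a telescoping sum of the horizontal slicing functions $f_m$ together with H\"older's inequality to convert the failure of an $\ell^2$ bound into the failure of the $p$-Poincar\'e inequality for every fixed $p\ge 1$. The key point is that the single cutoff $f_m$ alone only detects failure of $\ell^1$ (for $p=1$) or $\ell^3$ (for general $p$, via the crude adaptation mentioned in the text); to get the sharp threshold $\ell^2$ one must exploit a geometric feature special to these carpets, namely that the vertical middle strip $T_m$ at stage $m$ consists of $\prod_{i=1}^{m-1}(a_i^{-2}-1)$ disjoint sub-strips, each of width $s_m$, and that across each such sub-strip the obstruction to connectivity is concentrated on a set of small $\mu$-measure. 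Concretely, I would pick a point $x_0$ on (say) the left edge of $S_\ba$ and a point $y_0$ on the right edge, apply Theorem \ref{keith-theorem-1} in its contrapositive form: it suffices to show that for suitable $x,y$ one has $\mod_p(\Gamma_{xy};\mu_{xy}^{C_2})$ arbitrarily small (after rescaling, smaller than $C_1^{-1} d(x,y)^{1-p}$ for any prospective constants), which forces the Poincar\'e inequality to fail.

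The heart of the argument is the construction of an admissible density of small $p$-mass for the family $\Gamma_{x_0 y_0}$ of curves joining the two vertical edges. Any such curve must cross every vertical strip, and in particular must, for each scale $m$, traverse the middle band; within the middle band of a generation-$(m-1)$ square of side $s_{m-1}$, the only way across is through the two residual rectangles of dimensions roughly $s_m\times(s_{m-1}-s_m)$ flanking the deleted square. I would put mass $\rho$ equal to $c_m/s_m$ on the union of these residual passages at level $m$, distributed over the $m$ scales with weights $c_m$ chosen so that $\sum_m c_m = 1$ (guaranteeing $\int_\gamma\rho\,ds\ge 1$ for every crossing curve), and then compute $\int\rho^p\,d\mu$. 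Using $\mu$-measure of the level-$m$ passages $\asymp s_m^2\prod_{j\le m}(1-a_j^2)^{-1}\cdot(\text{number of passages})$, together with the scaling of $s_m$, the integral reduces after simplification to a sum of the shape $\sum_m c_m^p\,(\text{something})\prod_{i\le m}(1+a_i)^{-1}$ or similar; the free parameters $c_m$ can then be optimized (Lagrange/H\"older against the sequence $\ba$) so that the minimal value of $\int\rho^p\,d\mu$ is controlled by a power of $\sum a_i^2$ or becomes arbitrarily small exactly when $\ba\notin\ell^2$. The precise bookkeeping — matching powers of $a_i$ from the three sources $s_m^p$, $\mu$-weights, and passage counts — is routine but must be done carefully.

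The main obstacle I anticipate is getting the \emph{sharp} exponent: a naive density yields the threshold $\ell^{2p+1}$ or $\ell^3$ rather than $\ell^2$, so one must use the full modulus (allowing $\rho$ to vary between passages and levels and to be supported on thin sets) rather than a crude sub-strip indicator, and one must use the right choice of base measure in Keith's criterion. An alternative, possibly cleaner, route avoids Keith's theorem entirely: directly build Lipschitz functions $u_m$ on $S_\ba$ with upper gradients $\rho_m$ such that $\dashint_B|u_m-\dashint_B u_m|$ stays bounded below while $\diam(B)(\dashint_{\lambda B}\rho_m^p)^{1/p}\to 0$. Taking $u_m$ to be a weighted superposition of the cutoff functions $f_1,\dots,f_m$ (with the same weights $c_j$ as above), so that $u_m$ varies by a definite amount across $S_\ba$ but its gradient $\sum c_j/s_j\cdot\mathbf 1_{T_j}$ has small $L^p(\mu)$ norm precisely because $\sum c_j^p\int_{T_j}s_j^{-p}\,d\mu$ can be made small when $\ba\notin\ell^2$ after optimizing $(c_j)$ via H\"older — this is essentially the dual of the modulus computation and is where I would expect the cleanest proof to live. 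Either way, the crux is the same inequality chasing that identifies $\|\ba\|_2$ as the exact obstruction, and verifying that the bad behavior survives the normalization $\diam(B)\asymp 1$ and the averaging on balls.
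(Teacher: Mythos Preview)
Your proposal has a genuine gap: the optimization you sketch does \emph{not} achieve the sharp $\ell^2$ threshold, and the ``routine bookkeeping'' you defer is in fact the entire content of the proposition. Take your cleanest version, the superposition $u_m=\sum_{j\le m} c_j f_j$ with upper gradient $\rho=\sum_j c_j s_j^{-1}\mathbf 1_{T_j}$. Since the strips $T_j$ are nested and $s_j^{-1}$ grows at least geometrically (each $a_i\le\tfrac13$), on $T_j\setminus T_{j+1}$ one has $\rho\asymp c_j s_j^{-1}$, and
\[
\int \rho^p\,d\mu \;\asymp\; \sum_j c_j^{\,p}\,b_j,\qquad b_j:=\prod_{i\le j}\frac{a_i^{1-p}}{1+a_i},
\]
using $\mu(T_j)=\prod_{i\le j}\tfrac{a_i}{1+a_i}$. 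Minimizing $\sum c_j^{\,p} b_j$ subject to $\sum c_j=1$ (H\"older/Lagrange) gives the infimum $(\sum_j b_j^{-1/(p-1)})^{1-p}$. But for $p>1$ one has $b_j^{-1/(p-1)}=\prod_{i\le j} a_i(1+a_i)^{1/(p-1)}$, a geometrically decaying product, so $\sum_j b_j^{-1/(p-1)}$ converges to a \emph{fixed} finite number for every admissible $\ba$. The infimum is therefore bounded below by a positive constant depending only on $p$, and never tends to zero---regardless of whether $\ba\in\ell^2$. (For $p=1$ the same computation recovers exactly the $\ell^1$ threshold of Proposition~\ref{noPIl1}, and nothing better.) The text preceding the proposition already signals this: the direct adaptation of the $f_m$ argument yields only the $\ell^3$ threshold.

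The underlying reason is that the middle-strip functions $f_j$ see only a fraction $\asymp a_j$ of the mass at each step, whereas the sharp threshold requires detecting the removed squares themselves, which occupy a fraction $\asymp a_j^{2}$. The paper's proof exploits this via a completely different mechanism: it constructs, by an iterative folding-and-cutting procedure, a set $X\subset S_{\ba}$ with $\mu(X)=0$ such that \emph{every} rectifiable curve joining the two vertical edges satisfies $\length(\gamma\cap X)\ge 1$. The characteristic function $\rho=\mathbf 1_X$ is then admissible and has $\|\rho\|_{L^p(\mu)}=0$ for all $p$, so $\mod_p$ of the crossing family vanishes and every $p$-Poincar\'e inequality (indeed the $\infty$-Poincar\'e inequality) fails. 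The set $X$ is obtained by deleting, adjacent to each removed square of side $s_i$, a rectangle of relative measure $\asymp a_i^2$; the folding maps are what guarantee that crossing curves cannot avoid these rectangles, and this is precisely the geometric lemma your approach lacks.
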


\begin{proof}
Our goal is to build a set $X \subset S_\ba$ with $\mu(X) = 0$
so that for every rectifiable curve $\gamma$ joining the left and right hand edges
of $S_\ba$ we have $\int_\gamma \rho\, ds \geq 1$, where $\rho$ is the characteristic function of $X$.

% This will then show that $S_\ba$ does not support a $p$-Poincar\'e inequality for any $p \ge 1$.
This suffices to show the failure of the Poincar\'e inequality, for we can then
define a function $f$ on $S_\ba$ by letting $f(x)$ be the infimum of $\int_\gamma \rho\, ds$, where
$\gamma$ ranges over all rectifiable curves joining the left edge of $S_\ba$ to $x$.
As $\rho \leq 1$ and $S_\ba$ is quasi-convex, $f$ is a Lipschitz function which is zero on the left edge of $S_\ba$.
The property described above shows that $f \geq 1$ on the right edge of $S_\ba$.
Since $f$ has an upper gradient with essential supremum zero, we have a contradiction to \eqref{eq-pi}.

In the remainder of the proof we build the set $X$ and show it has the desired properties
for some fixed, arbitrary rectifiable curve $\gam$ in $S_\ba$ that joins the left and right hand
edges of $S_\ba$. By passing to a subcurve if necessary we may assume
that $\gam$ is an arc, i.e., that it is injective.

As part of our proof we shall build cut sets which disconnect $S_\ba$.
To simplify our discussion later, we define the set $H_\ba$ to be the union
of sets $[a, b] \times (c, d)$, for every deleted open square $(a, b) \times (c, d)$ in the construction
of $S_\ba$.

\

\paragraph{\bf Initial step.} Let $A_0 = S_\ba$, and $\Gamma_0 = \{\gamma\}$.
	
We divide $S_\ba$ into $m_1 = s_1^{-1}$ vertical strips of width $a_1$.
These strips are bounded by vertical cut sets
$V_0,V_1,\ldots,V_{m_1}$, where
$V_0 = (\{0\}\times[0,1]) \setminus H_\ba, V_1 = (\{a_1^{-1}\}\times[0,1]) \setminus H_\ba$, and so on.
In other words, each $V_j$ is a vertical line, with the exception that the interiors of vertical sides
of the deleted square of side $a_1$ are not contained in the appropriate $V_j$.
	
We now split $\Gamma_0 = \{\gamma\}$ into a disjoint family of curves.
We parametrize $\gamma$ by the interval $[0,1]$, with $\gamma(0) \in V_0$, and
$\gamma(1) \in V_{m_1}$.  Let $t_0^+ \geq 0$ be the last time $\gamma$ meets $V_0$.
Let $t_1^- > t_0^+$ be the next time after that $\gamma$ meets $V_1$.
Let $\gamma_1$ be the subpath of $\gamma$ given by restricting to $[t_0^+,t_1^-]$.
	
Continue inductively, letting $t_{j-1}^+ \geq t_{j-1}^-$ be the last time
$\gamma$ meets $V_{j-1}$, and $t_j^- > t_{j-1}^+$ be the next time $\gamma$
meets $V_j$. Let $\gamma_j$ be the subpath of $\gamma$ given by $[t_{j-1}^+,
t_j^-]$.
	
By construction, $\Gamma_1 = \{\gamma_1, \ldots, \gamma_{m_1}\}$ is a family
of curves, where each $\gamma_j$ joins $V_{j-1}$ to $V_j$, and is
contained between them.
(See Figure~\ref{fig-no-pi-chop}, where the deleted subpaths are
indicated by dotted lines.)

\begin{figure}[b]
	\centering
	\includegraphics[width=0.25\textwidth]{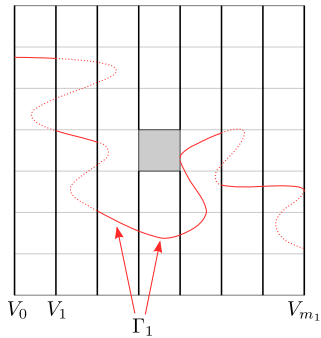}
	\caption{Curve splitting}
	\label{fig-no-pi-chop}
\end{figure}
	
Note that the length of $\Gamma_1$ (i.e., the sum of the lengths of $\gamma_1,\ldots,\gamma_{m_1}$), is at least one and at most the length of $\Gamma_0$, that is $\length(\gamma)$.

\

\paragraph{\bf Inductive step (fold in).}
Fix $i \ge 1$. We are given as input a collection $\Gamma_i = \{\gamma_j\}$ of $m_i =
s_i^{-1}$ curves and vertical slices $V_0, \ldots, V_{m_i}$, where
$\gamma_j$ joins $V_{j-1}$ to $V_j$ and is contained between them, for
each $j = 1, \ldots m_i$.
	
Choose the largest $l_i \in \N$ so that $l_i a_{i+1} \leq \frac{1}{3}$.  Note
that since $a_{i+1} \in \{ \frac{1}{3},\frac{1}{5}, \ldots \}$, we have $l_i
a_{i+1} > \frac{1}{3}-\frac{1}{5}$.
	
Let $\cD_i$ be the collection of open rectangles of width $l_i a_{i+1} s_i$ and
height $(1 - 2 l_i a_{i+1})s_i$ centered on and adjacent to either the left or right sides of
the deleted squares of side length $s_i$. Consider the squares of side length
$l_i a_{i+1} s_i$ above and below each of these rectangles. Each such square $S$ has a diagonal which meets the corner of a deleted square of side length $s_i$. This diagonal divides $S$ into two triangles; let $\cR_i$ be the collection of closures of those triangles
that share a side with a deleted square of side $s_i$.
See Figure~\ref{fig-no-pi-DRF} for part of an example where these regions are labeled.
We use the convention that the rectangles and triangles referred to are
actually the intersection of the corresponding planar set and the carpet $S_{\ba}$.

\begin{figure}[t]
	\centering
	\includegraphics[width=0.25\textwidth]{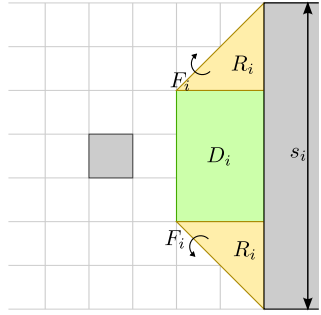}
	\caption{Unfolding}
	\label{fig-no-pi-DRF}
\end{figure}

We now define a folding map $F_i:S_\ba \ra S_\ba$ by declaring $F_i$
to be the identity except on $\cup\cR_i$, where the map folds each triangle $R \in \cR_i$
across the diagonal. Note that the horizontal edges of every rectangle in $\cD_i$ are mapped to vertical edges, and that $F_i$ is discontinuous along such edges.

Notice that in Figure~\ref{fig-no-pi-DRF}, if $a_{i+1} < \frac{1}{3}$ then the region $\cup \cD_i$ will not
overlap with $\cup \cD_{i+1}$. However, when $a_{i+1} = \frac{1}{3}$, $\cD_i$
may contain a square $Q$ of side length $s_{i+1}$ adjacent to the
left or right of a particular deleted square of side length $s_{i+1}$, but
it cannot contain both such squares. When this happens, we leave the
square $Q$ untouched at step $i+1$, and do not include any part of it in $\cD_{i+1}$.

We apply $F_i$ to the collection of curves $\Gamma_i$. Consider the resulting collection of curves.
(Note that some curves have been broken into smaller pieces along the discontinuities of $F_i$.)
We now build $\Gamma_{i+1}$ using the same inductive construction as we used in the initial step to
build $\Gamma_1$ from $\Gamma_0$.
As before, let $V_j = (\{j s_{i+1} \} \times [0,1]) \setminus H_\ba$, for $j=0,\ldots,m_{i+1}$, separate the unit square
into essentially disjoint vertical strips of width $s_{i+1}$. Here $m_{i+1} := s_{i+1}^{-1}$. We define times
$$
0 \le t_0^+ < t_1^- \le t_1^+ < \cdots
$$
as before, for the broken curve $F_i(\Gamma_i)$.

Let $\gamma_j$ be the broken curve in $F_i(\Gamma_i)$ given by the restriction to $[t_{j-1}^+,t_j^-]$. In fact, $\gamma_j$ is connected.

By construction, $\Gamma_{i+1} = \{\gamma_1,\ldots,\gamma_{m_{i+1}}\}$ is a family of curves, where each $\gamma_j$ joins $V_{j-1}$ to $V_j$, and is contained between them.

Moreover, $\Gamma_{i+1}$ will lie inside $A_i = \overline{A_{i-1} \setminus \cup (\cD_i \cup \cR_i)}$.
To see why this is so, consider (for example) $j$ so that $V_j$ lies on the left edge of a rectangle $D$ in $\cD_i$, where $D$ lies on the left of an omitted square of side length $s_i$. By definition, $t_j^+$ is the last time the broken curve $F_i(\Gamma_i)$ meets $V_j$. This corresponds to the last time that $\Gamma_i$ meets either $V_j$ or the horizontal edges of any element of $\cD_i$ above or below $D$. Consequently, the family $\Gamma_{i+1}$ is disjoint from $\cup\cD_i$ and (by the definition of $F_i$) it is also disjoint from the interior of $\cup\cR_i$. As before, the length of $\Gamma_{i+1}$ is at least one and at most the length of $\Gamma_i$.

\

\paragraph{\bf Conclusion.} We continue this construction until $i=n$, when we have a collection of curves $\Gamma_{n+1}$ that lies in
$A_n$, and have deleted the rectangles in the collections $\cD_1, \ldots, \cD_n$ from the sides of the deleted squares.
We let $X_n^{(n)}=A_n$, and now proceed to unfold this set and the curves $\Gamma_{n+1}$ back out into
the regions $\cup\cR_1,\ldots,\cup\cR_n$.

Define inductively
\[
X_n^{(i-1)} = F_{i}^{-1}(X_n^{(i)}),
\]
for $i=n,n-1, \ldots, 1$.   Observe that $X_n^{(0)}$ is all of $S_{\ba}$,
but with certain rectangles removed that are adjacent to sides of removed squares of $S_{\ba,n}$.

It is clear that $S_\ba = X_0^{(0)} \supset X_1^{(0)} \supset
X_2^{(0)} \supset \cdots$. Let $X = \bigcap_{i=0}^{\infty} X_i^{(0)}$.
For each $n$, and for any $\gamma$ in $\Gamma$, our construction implies that
\begin{align*}
	 \length(\gamma \cap X_n^{(0)}) & = \length(\Gamma_0 \cap X_n^{(0)})
		\geq \length(\Gamma_1 \cap X_n^{(0)}) \\
		& \geq \length(\Gamma_2 \cap X_n^{(1)})
		\geq \cdots
		\geq \length(\Gamma_{n+1} \cap X_n^{(n)}) \\
		& = \length(\Gamma_{n+1} \cap A_n)
		\geq 1,
\end{align*}
since $\Gamma_{n+1}$ is a chain of paths crossing each vertical strip of width
$s_{n+1}$ from the left to the right, and $\Gamma_{n+1}$ lives in $A_n$.

Therefore, $\length(\gamma \cap X) \geq 1$, since $\length = \cH^1$ is
a measure on arcs.

Let $\rho$ be the characteristic function of $X$.
Since $\gamma$ was an arbitrary rectifiable curve joining the left and right sides of $S_\ba$,
and $X$ was constructed independently of $\gamma$,
we have shown that
$\int_\gamma \rho\, ds \geq 1$ for every such curve $\gamma$.

It remains to prove that $\mu(X)=0$. Consider each deleted square of
side $s_i$. Out of the neighboring $(a_i^{-2}-1)$ boxes of side $s_i$,
from at least one (two if $a_i \neq \frac{1}{3}$) of these we will
delete a rectangle in $\cD_i$ whose $\mu$-measure, as a proportion of a
square of side $s_i$, is at least $(1-2 l_i a_{i+1}) l_i a_{i+1} \geq
\frac{2}{45}$. Since all the rectangles in $\cD_i$ are pairwise disjoint, we have
\[
\mu(X) \leq \prod_{i=1}^\infty \left( 1- \frac{1}{a_i^{-2}-1}
  \cdot \frac{2}{45} \right) = \prod_{i=1}^\infty \left( 1 - \frac{2}{45}
  a_i^2 + \cdots \right),
\]
which converges to zero since $\ba \notin \ell^2$.
This completes the proof.
\end{proof}

\begin{remark}
The argument also shows that $S_\ba$ does not support an
$\infty$-Poincar\'e inequality. See \cite{djs:infinity-poincare} for
the definition, which is weaker
than the $p$-Poincar\'e inequality for any finite $p$.
\end{remark}

\subsection{Weak tangents of Sierpi\'nski carpets}\label{subsec:weak-tangents}

Weak tangents of metric spaces describe infinitesimal behavior at a point or
along a sequence of points. In this section we characterize the strict
weak tangents of non-self-similar carpets. More precisely, we prove
the following proposition.

\begin{proposition}\label{propD}
Let $\ba \in c_0$. Then every strict weak tangent of $S_\ba$ is of the
form $(\R^2 \setminus T,d,\nu)$ where $T$ is a generalized square and
$\nu$ is proportional to Lebesgue measure restricted to $\R^2
\setminus T$.
\end{proposition}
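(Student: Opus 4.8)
The plan is to realize a strict weak tangent as a pointed blow-up limit of the carpet and to exploit the hypothesis $\ba \in c_0$ to show that the construction of $S_\ba$ is \emph{scale separated}: near any point and at any scale, at most one deleted square (or the outer boundary) is large enough and close enough to survive in the limit, while everything else either shrinks away or escapes to infinity. So suppose $(Y,y,\rho,\nu)$ is a strict weak tangent, witnessed by points $x_n \in S_\ba$, scales $\delta_n \to 0$, and constants $\lambda_n > 0$ with $(S_\ba, x_n, \delta_n^{-1} d, \lambda_n^{-1}\mu) \to (Y,y,\rho,\nu)$. I would first translate so that $x_n$ is the origin and work with the rescaled carpets $X_n := \delta_n^{-1}(S_\ba - x_n) \subset \R^2$, each obtained from the rescaled-and-translated unit square by removing the rescaled-and-translated deleted open squares. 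Set $k_n := m(\delta_n)$, so that $s_{k_n} \le \delta_n < s_{k_n - 1}$; then $k_n \to \infty$, and since $\ba \in c_0$ we have $a_{k_n-1}, a_{k_n}, a_{k_n+1} \to 0$. Consequently a level-$j$ deleted square, which is a square of side $s_j$, has rescaled side $s_j/\delta_n$ tending to $+\infty$ when $j \le k_n - 2$ and to $0$ when $j \ge k_n + 1$, so a feature of bounded nonzero rescaled size can come only from the two levels $j \in \{k_n-1, k_n\}$; moreover $(s_{k_n}/\delta_n)/(s_{k_n-1}/\delta_n) = a_{k_n} \to 0$, so at most one of those two levels contributes such a feature. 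To get a nonzero, locally finite limit measure one must take $\lambda_n \asymp h(\delta_n) \asymp \mu(B(x_n,\delta_n))$ (Proposition~\ref{prop-ball-measure}); I would fix this choice and, after passing to subsequences, also arrange that $\delta_n/s_{k_n}$, $s_{k_n-1}/\delta_n$, and the position of $x_n$ inside its level-$k_n$ square all converge.

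The central claim is that for each fixed $R > 0$ and all large $n$, the set $X_n \cap B(0,R)$ agrees, up to holes of diameter $o(1)$, with a Euclidean ball from which at most one ``feature'' has been removed. I would deduce this from the fact that a deleted square is concentric in its parent, via two elementary estimates. First, two distinct level-$j$ deleted squares lie at mutual distance $\ge \tfrac23 s_{j-1}$; so if both meet $B(x_n,R\delta_n)$ then $s_{j-1} \le 3R\delta_n$, and together with a lower bound $\gtrsim \delta_n$ on their sides this bounds $a_j$ away from $0$, which by the scale analysis above forces $j = k_n$ and so contradicts $a_{k_n} \to 0$ for large $n$. Second, if deleted squares $Q$ of level $j$ and $Q'$ of level $j' > j$ both meet $B(x_n,R\delta_n)$, then $Q'$ lies in a level-$j$ square $\widetilde T \ne Q$ with $\dist(Q',\partial\widetilde T) \ge \tfrac13 s_{j'-1}$; since $Q$ lies outside $\widetilde T$ this gives $\dist(Q,Q') \ge \tfrac13 s_{j'-1}$, hence $s_{j'-1} \le 6R\delta_n \to 0$, so $j' \to \infty$ and $\diam(Q')/\delta_n \le 6\sqrt2\,R\,a_{j'} \to 0$: the higher-level square shrinks to a point in the limit. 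The same concentricity bound rules out a surviving deleted square and a piece of $\partial[0,1]^2$ both lying within $O(\delta_n)$ of $x_n$. After a further subsequence it follows that the only feature removed from $B(0,R)$ in the limit is a single rescaled deleted square, or a rescaled neighbourhood of $\partial[0,1]^2$, or nothing at all.

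It then remains to identify the feature and the limiting measure. A surviving deleted square of bounded positive rescaled size gives $Y = \R^2 \setminus (\text{square})$; one of infinite rescaled size forces $x_n$ to lie within $O(\delta_n)$ of an edge or a corner of it (as $x_n$ is not in its interior), and so gives $Y = \R^2 \setminus (\text{half-plane})$ or $Y = \R^2 \setminus (\text{quadrant})$ according to the position of $x_n$; the outer boundary $\partial[0,1]^2$ is governed by the same trichotomy; and if nothing survives, $Y = \R^2$. In every case $Y = \R^2 \setminus T$ with $T$ a generalized square. For the measure, within the active level-$j$ square containing $x_n$ (where $j \in \{k_n-1, k_n\}$) the carpet is, after rescaling and translating, a copy of $S_{\sigma^j \ba}$ carrying its canonical measure $\mu_{\sigma^j\ba}$, with $\sigma$ the shift. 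Since $\ba \in c_0$ we have $\|\sigma^j\ba\|_\infty \to 0$ as $j \to \infty$, and I would invoke the elementary fact that $\|\ba^{(n)}\|_\infty \to 0$ implies $S_{\ba^{(n)}} \to [0,1]^2$ in the Hausdorff sense and $\mu_{\ba^{(n)}} \to \cH^2|_{[0,1]^2}$ weakly --- indeed $\mu_{\ba^{(n)}}$ agrees on level-one squares with the normalized Lebesgue measure on the first precarpet $S_{\ba^{(n)},1}$, whose defect from $\cH^2|_{[0,1]^2}$ is $O(\|\ba^{(n)}\|_\infty^2)$. The remaining holes, at levels $\ge k_n + 1$, are harmless: level-$(k_n+N)$ squares have diameter at most $3^{-N}$ times that of level-$k_n$ squares, uniformly in $n$. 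Unwinding the rescalings then yields $\nu = c\,\cH^2|_{\R^2 \setminus T}$ for the chosen $\lambda_n$, as required.

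I expect the main obstacle to be the bookkeeping underlying the last two paragraphs: deciding which scale is active, whether the active deleted square has bounded rescaled size, escapes to infinity, or is a ``big hole'' presenting a half-plane or a quadrant, and whether $x_n$ sits near an edge or near a corner --- all requiring coordinated passages to subsequences so that the relevant ratios converge. Each geometric ingredient on its own (concentricity of deleted squares; $\ba \in c_0 \Rightarrow$ scale separation; $\|\ba^{(n)}\|_\infty \to 0 \Rightarrow \mu_{\ba^{(n)}} \to$ Lebesgue) is short; the real work lies in assembling them into an exhaustive case analysis and in checking the pointed measured Gromov--Hausdorff convergence, including that the tiny holes at levels $\ge k_n+1$ indeed contribute nothing to the limit.
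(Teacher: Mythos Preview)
Your proposal is correct and follows essentially the same approach as the paper: both arguments exploit $\ba \in c_0$ to show that on any rescaled ball at most one removed square can have nonvanishing bounded size while all others either shrink to points or escape to infinity, and then identify the limiting measure via the fact that $\mu$ is a weak limit of renormalized Lebesgue measures. The paper packages the scale-separation step into a single short lemma (using the threshold $\delta_n \gtrless s_{m-1}\sqrt{a_m}$ to decide which of the two candidate levels is ``active''), whereas you derive the same conclusion via explicit pairwise separation estimates between deleted squares of equal and of different levels; the content is the same.
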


By a {\it generalized square} we mean a set $T \subsetneq \R^2$ of the
type
$$
T = (a,b) \times (c,d)
$$
where $-\infty \le a \le b \le \infty$, $-\infty \le c \le d \le
\infty$, and $b-a=d-c$ if one (hence both) of these values is finite.
(We interpret the degenerate interval $(a,b)$, $a=b$, as the empty
set.) Thus $T$ is either the empty set, an open square, a quadrant
or a half-space.

Suppose $W$ is a strict weak tangent arising as the limit of the
sequence of metric spaces $\{X_n = (S_\ba, x_n,\frac{1}{\del_n}d)\}$,
where $x_n \in S_\ba$, $\del_n \in (0, \infty)$, and $\del_n \ra 0$.

The following lemma indicates why $W$ can omit at most one large
square.
	
\begin{lemma}
There exist $R_n \ra \infty$ and $r_n \ra 0$ so that in the ball $B(x_n, R_n)
\subset X_n$ there is at most one square of side greater than $1$ removed, and
all other squares removed have size at most $r_n$.
\end{lemma}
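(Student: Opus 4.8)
The plan is to exploit the fact that the carpets $S_\ba$ have squares of only countably many sizes, namely the scales $s_m=\prod_{j=1}^m a_j$, and that these scales decay. Recall that in $X_n=(S_\ba,x_n,\tfrac1{\del_n}d)$ a deleted square of $S_\ba$ of Euclidean side $s_m$ has rescaled side $s_m/\del_n$. First I would control the \emph{large} squares: a deleted square of rescaled side $>1$ which meets the ball $B(x_n,R_n)\subset X_n$ must have Euclidean side $>\del_n$ and must lie within Euclidean distance $R_n\del_n$ of $x_n$. The key geometric observation is that, in the carpet construction, two distinct deleted squares of side $s_m$ lie at mutual distance at least $s_m$ (and a deleted square of side $s_m$ lies at distance $\geq s_{m-1}-2s_m \gtrsim s_{m-1}$ from any deleted square of side $s_{m-1}$ or larger). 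Consequently, among deleted squares of Euclidean side $>\del_n$, at most finitely many come within any fixed multiple of $\del_n$ of $x_n$; more carefully, for the \emph{smallest} admissible large scale $s_{m_0}$ with $s_{m_0}>\del_n$, the point $x_n$ lies in a unique square $T\in\cT_{m_0-1}$, and inside $T$ at most one deleted square of side $s_{m_0-1}$-level appears, while deleted squares of side $s_{m_0}$ inside $T$ are separated by $\gtrsim s_{m_0}>\del_n$, so only boundedly many of them (a bound independent of $n$, coming from the local geometry) are within distance $R_n\del_n$ once $R_n$ grows slowly enough. I would then choose $R_n\ra\infty$ slowly enough (e.g. $R_n = \min\{\log(1/\del_n), \text{something depending on the gap structure at }x_n\}$) that in fact \emph{at most one} deleted square of rescaled side $>1$ meets $B(x_n,R_n)$: this works because, having fixed the one ``large'' scale, the next scale down either is already $\leq\del_n$ (nothing to do) or, if $a_m\ra 0$, is much smaller than $\del_n$ for $n$ large, since $s_m/s_{m-1}=a_m\ra 0$.

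Next I would handle the \emph{small} squares. Set $r_n$ to be, say, the rescaled side of the largest deleted square that is \emph{not} the exceptional one from the first part and that meets $B(x_n,R_n)$; equivalently I want to show $r_n\ra 0$. The point is that $\ba\in c_0$ forces $a_m\ra0$, so for the relevant range of scales $s_m$ that could contribute squares near $x_n$ at the scale $\del_n$, the ratio $s_m/\del_n$ is either bounded by the exceptional large square or is forced to be small: once we pass below the one exceptional scale, the very next nonexceptional deleted square visible in $B(x_n,R_n)$ has Euclidean side at most $a_{m}\cdot(\text{previous scale})$, and since $a_m\ra 0$ while $\del_n\ra 0$ in a controlled way, this rescaled side tends to $0$. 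Concretely, if $m_1=m_1(n)$ denotes the largest level with $s_{m_1}/\del_n>1$ that is not the exceptional square, then the next visible scale is $\leq s_{m_1}\cdot a_{m_1+1}$, so its rescaled size is $\leq (s_{m_1}/\del_n)\,a_{m_1+1} \leq R_n\,a_{m_1+1}$ (using that $s_{m_1}/\del_n\leq R_n$, which holds because the square meets $B(x_n,R_n)$), and choosing $R_n$ to grow slowly compared to the decay rate of the $a_m$ along the relevant levels makes $R_n a_{m_1+1}\ra 0$. Thus I can take $r_n := \sup\{s_m/\del_n : s_m/\del_n \le 1 \text{ or the square is nonexceptional and meets } B(x_n,R_n)\}\ra 0$, after possibly shrinking $R_n$ further.

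The main obstacle I expect is the simultaneous, interlocking choice of the sequences $R_n$ and $r_n$: each constraint (``exactly one large square,'' ``all others have size $\le r_n\ra 0$,'' ``$R_n\ra\infty$'') pulls in a different direction, and the decay of $a_m$ is not uniform, so the rate at which $R_n$ may grow depends on where $x_n$ sits relative to the scales $s_m$. The clean way to organize this is a diagonal/extraction argument: for each fixed $R$, show that for all $n$ large (depending on $R$) the ball $B(x_n,R)$ in $X_n$ contains at most one deleted square of rescaled side $>1$ and all others have rescaled side $<\eta(R,n)$ with $\eta(R,n)\ra0$ as $n\ra\infty$; then a standard diagonalization produces $R_n\ra\infty$ and $r_n\ra 0$ with the stated property. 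The separation estimates between deleted squares of comparable and of different generations — which follow immediately from the rectilinear self-referential construction of $S_\ba$ and were already used implicitly in Section~\ref{sec:sba} — are what make each fixed-$R$ statement go through, and $\ba\in c_0$ is exactly the hypothesis that converts ``boundedly many intermediate scales'' into ``at most one surviving large scale'' in the limit.
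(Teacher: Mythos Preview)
Your proposal has the right ingredients but a real gap at the ``intermediate'' scale. Write $m=m(\delta_n)$ for the unique integer with $s_m \le \delta_n < s_{m-1}$. You correctly observe that removed squares of level $\le m-1$ (rescaled side $>1$) are separated by $\gtrsim 1/a_{m-1}\to\infty$, so at most one appears, and that squares of level $\ge m+1$ have rescaled side $\le a_{m+1}\to 0$. The difficulty is level $m$: such squares have rescaled side $s_m/\delta_n\in(a_m,1]$ and rescaled separation $\gtrsim s_{m-1}/\delta_n\in(1,1/a_m]$, and neither quantity is forced to any limit as $n\to\infty$. Your claim that the ``next visible scale'' below the exceptional one has rescaled size $\le R_n\,a_{m_1+1}\to 0$ fails here: when $\delta_n$ is near $s_m$ the level-$m$ squares have rescaled side near $1$, so they are neither the exceptional large square nor bounded by your $r_n$. (Also, the inequality ``$s_{m_1}/\delta_n\le R_n$ because the square meets $B(x_n,R_n)$'' is false --- a square meeting a ball can have arbitrarily large side.) Your diagonal argument could be repaired, but only after you supply a mechanism that handles level $m$; as written it does not.

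The paper avoids extraction entirely by a direct dichotomy on $\delta_n$. If $\delta_n<s_{m-1}\sqrt{a_m}$, then all removed squares of level $\le m$ are separated by $\ge s_{m-1}/(2\delta_n)\ge 1/(2\sqrt{a_m})$ in the rescaled metric, so at most one appears in a ball of that radius, while all others have rescaled side $\le a_{m+1}$. If $\delta_n\ge s_{m-1}\sqrt{a_m}$, then level-$m$ squares already have rescaled side $\le\sqrt{a_m}$ and can be absorbed among the small squares, while level-$\le m-1$ squares are separated by $\ge 1/(2a_{m-1})$. Setting $R_n=\tfrac14\min\{1/a_{m-1},1/\sqrt{a_m}\}$ and $r_n=\max\{\sqrt{a_m},a_{m+1}\}$ then gives the lemma at once. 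The splitting point $\sqrt{a_m}$ is precisely the missing idea: it balances the two competing ratios $s_m/\delta_n$ and $\delta_n/s_{m-1}$, whose product is $a_m\to 0$, so that in each case one of them is small.
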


\begin{proof}
Fix $n$, and let $m = m(\del_n)$, i.e., $s_m \leq \del_n < s_{m-1}$.

Either $\del_n \in [s_{m},s_{m-1} \sqrt{a_{m}})$, or $\del_n \in [s_{m-1} \sqrt{a_{m}}, s_{m-1})$. In the first case, removed squares of size at least $\frac{s_{m}}{\del_n}$ are $\frac{s_{m-1}}{2\del_n} \geq \frac{1}{2\sqrt{a_{m}}}$ separated, while all others have size at most $\frac{s_{m+1}}{\del_n} \leq a_{m+1}$. In the second case, removed squares of size at least $\frac{s_{m-1}}{\del_n} \geq 1$	are $\frac{s_{m-2}}{2\del_n} \geq \frac{1}{2a_{m-1}}$ separated, and all others have size at most $\frac{s_{m}}{\del_n} \leq \sqrt{a_{m}}$.

Setting $R_n = \tfrac14\min\{\frac{1}{a_{m-1}},\frac{1}{\sqrt{a_{m}}}\}$ and $r_n = \max\{\sqrt{a_{m}},a_{m+1}\}$, we have proved the lemma.
\end{proof}

\begin{proof}[Proof of Proposition \ref{propD}]	
Using the preceding lemma, we can reduce the proof of Proposition \ref{propD} to
consideration of limits of $\R^2 \setminus T_n$ where $T_n$ is either
a square of side at least one, or the empty set.  It is easy to see
that any strict weak tangent $W$ as above will be isometric to $\R^2 \setminus T$, where $T$ is either
a square (of side at least one), a quarter-plane, half-plane or the
empty set.
	
Since the measure $\mu$ on $S_\ba$ agrees with the weak limit of
renormalized Lebesgue measure on the domains $S_{\ba,m}$, by the
lemma, if we look at measures of balls in $X_n$ of size much larger
than $r_n$, they will agree with a constant multiple of Lebesgue
measure up to small error. Consequently, the possible measures on $W$
will arise as limits of rescaled Lebesgue measure on $\R^2 \setminus
T_n$. The only possible non-trivial Radon measure of this type is
a constant multiple of Lebesgue measure restricted to $\R^2 \setminus T$. This finishes the
proof of Proposition \ref{propD}.
\end{proof}

Note that all of the weak tangent spaces identified in the conclusion of Proposition \ref{propD}
support a $1$-Poincar\'e inequality, with uniform constants (i.e.,
independent of the choice of such a weak tangent space). This is
because we have only a finite number of similarity types of spaces
(full space, half space, quarter space, or the complement of a
square), and the Poincar\'e inequality data is invariant under
similarities. The quasiconvexity of the original carpets $S_\ba$ is a
standard fact. Indeed, arbitrary pairs of points can be joined by
quasiconvex curves which are comprised of countable unions of
horizontal and vertical line segments.

\begin{definition}
A metric space $(X,d)$ is {\it locally Gromov--Hausdorff close to planar domains} if for each $x\in X$ and each $\eps>0$, there exists $r>0$ and a domain $\Omega\subset\R^2$ so that the Gromov--Hausdorff distance between the metric ball $B(x,r)\subset X$ and $\Omega$ is at most $\eps$. Furthermore, $(X,d)$ is {\it uniformly locally Gromov--Hausdorff close to planar domains} if $\eps$ can be chosen independently of $x$.
\end{definition}

The fact that $S_\ba$ is uniformly locally Gromov--Hausdorff close to planar domains follows easily from the construction and the condition $\ba \in c_0$.

The preceding discussion and Theorem~\ref{thmB} understood, the proof of Corollary \ref{corB} is complete
by the choice of $\ba \in c_0 \setminus \ell^2$.

%%%%%%%%%%%%%%%%%%%%%%%%%%%%%%%%%%%%%%%%%%%%%%%%%%%%%%%%%%%%%%%%%%%%%%%%%%%%%

\section{Validity of the Poincar\'e inequality: the case $\ba \in \ell^1$}\label{sec:validity-pi-1}

In this section, we make the standing assumption that $\ba \in \ell^1$, and show that $S_\ba$, equipped with the Euclidean metric and the canonical measure
described in subsection \ref{sec:measure}, admits a $1$-Poincar\'e inequality.
Recall that whenever $\ba$ is in $\ell^2 \subset \ell^1$, Proposition~\ref{properties-of-mu}(iv)
states that $\mu$ is comparable to Hausdorff $2$-measure $\cH^2$ restricted to $S_\ba$.
For simplicity we will work with $\cH^2$ in this section and the next.

According to Theorem \ref{keith-theorem-1}, the validity of a
Poincar\'e inequality is equivalent to the existence of curve families
of uniformly and quantitatively large weighted modulus joining
arbitrary pairs of points. The desired curve family must spread out as it escapes from the endpoints. This divergence is
measured via transversal measures on the edges of squares in the precarpets.

We explicitly construct this family using the structure of the carpet
$S_{\ba}$. In subsection \ref{subsec:modular} we state and prove four
lemmas providing the `building blocks' of the construction. Each of
these building blocks consists of families of disjoint curves joining
edges of certain squares in the carpet. These families are each
equipped with a natural transversal measure, and concatenated to
produce the desired family connecting the given endpoints.

By Theorem \ref{keith-theorem-2}, to demonstrate that $S_\ba$ admits a
$1$-Poincar\'e inequality, it suffices to prove
that the precarpets $\{(S_{\ba,m},d,\mu_m)\}_{m \in \nats}$ support a
$1$-Poincar\'e inequality with constants independent of $m$.
In order to simplify the discussion, we work in a fixed precarpet
$S_{\ba, M}$. In subsection \ref{subsec:pi} we use the `building
block' lemmas of subsection \ref{subsec:modular} to build the desired
path family in $S_{\ba, M}$, and complete the proof of our main
theorem.

To simplify the argument we will impose the requirement
\begin{equation}\label{aia}
a_i \le a_* := \frac1{20} \qquad \mbox{for all $i$.}
\end{equation}
This requirement entails no loss of generality, as the carpet $S_\ba$  is the finite union of similar copies of some carpet $S_{\ba'}$, where $\ba' \in \ell^1$ and
all entries of $\ba'$ are less than $a_*$, glued along their boundaries. By Theorem \ref{gluing-theorem}, if each of these smaller
carpets supports a $1$-Poincar\'e inequality, then the original carpet
will also. We note that the gluing procedure in Theorem \ref{gluing-theorem}
differs from the union considered here, however, the resulting metrics
are bi-Lipschitz equivalent and the validity of Poincar\'e
inequalities is unaltered by this change of metric. The constants for the overall Poincar\'e
inequality will depend on the number of copies which are glued
together, which in turn depends on how far out in the sequence $\ba$
we must go to ensure condition \eqref{aia}. If $\ba$ is monotone
decreasing, this data depends only on $\|\ba\|_1$.

If \eqref{aia} is not satisfied, the algorithmic construction in the proof of Lemma \ref{lem-block-chain}
becomes slightly more complicated, however, the rest of the argument is unchanged. 
We leave such modifications to the industrious reader.

\subsection{Building block lemmas}\label{subsec:modular}

Recall that $\cT_m$ denotes the collection of all level $m$ squares
in the construction of the carpet $S_\ba$.

\begin{definition}
Fix $m \in \N$ and a square $T' \in \cT_{m-1}$.
For non-negative integers $a, b, k$ and $l$, a set $\cC=[a s_m,(a+k)s_m]\times [b s_m,(b+l)s_m]$
is called a \emph{$k$ by $l$ block in $T'$}
if it is contained in $T'$ and does not contain the removed central subsquare of $T'$.
We will often choose a preferred edge $L$ of a block $\cC$ that does not contain the boundary
of the removed central subsquare of $T'$, % nor is contained in the boundary of $T'$,
and declare it to be the \emph{leading edge} of $\cC$.
The pair $(\cC,L)$ is called a \emph{directed block in $T'$}.
A directed $1$ by $1$ block in $T'$ is called a \emph{directed square in $T'$}.
We will suppress reference to $T'$ if it the dependence is clear or unimportant.

Note that the choice of a leading edge of a block gives rise to an outward-pointing unit
normal vector $(1,0), (0, 1), (-1,0)$ or $(0,-1)$.

Directed blocks (possibly in different squares or even of different generations) are
\emph{coherent} if the corresponding outward-pointing unit normal vectors coincide.

We say that the directed block $(\cC_2,L_2)$ \emph{follows} the directed block $(\cC_1,L_1)$
if $\cC_1 \cap \cC_2 = L_1$ and $L_1 \nsubseteq L_2$.
\end{definition}

We introduce a distinguished set $\pi_M$ which will parameterize certain curve
families. Let $\pi_M$ be the set of all $x \in [0,1]$ with the property that the line
$\{x\} \times \R$ does not meet the interior or left hand side of a peripheral
square removed in the construction of $S_{\ba, M}$.
Let $(\cC,L)$ be a directed block in a square $T' \in \cT_{m-1}$. There is a unique
orientation preserving isometry $i:\R^2\ra\R^2$ so that $(0,0) \in i(\cC) \subset [0,1]^2$,
and so that $i(L)$ is contained in the $x$-axis. We define $\pi_M(L)$ to be the
union of $L \cap i^{-1}(\pi_M)$ with the endpoints of $L$. It follows from the assumption $\mathbf{a} \in \ell^1$ that
\begin{equation}\label{H1-of-pi-M-L}
\cH^1(\pi_M(L)) \asymp \diam(L).
\end{equation}
Given two such sets $\pi_M(L_1)$ and $\pi_M(L_2)$ arising from isometries $i_1$
and $i_2$, there is a unique bijection $h\colon \pi_M(L_1) \to \pi_M(L_2)$ so
that $i_2 \circ h \circ i_1^{-1}$  is an order-preserving, piecewise linear bijection from
$i_1(\pi_M(L_1))\subset \R$ to $i_2(\pi_M(L_2)) \subset \R$ with a.e.\ constant derivative.
We call $h$ the natural \emph{ordered bijection}.
	
\begin{definition}
Let $E$ be a Borel subset of a side of a block $\cC$ such that $0<\cH^1(E)<\infty$.
A \emph{path family on $E$ (in $\cC$)} is
a collection of disjoint curves $\Gamma= \{\gamma_z\}_{z \in E}$ in $\cC \cap S_{\ba,M}$
with the property that $\gamma_z(0)=z$ for all $z \in E$. We also require that
the measure
$$
\nu_\Gamma(A) := \frac{1}{\cH^1(E)} \int_{E} \cH^1(A \cap \gamma_z) \ d\cH^1(z),
$$
is Borel. %Note that $\nu_\Gamma = \nu_{(\Gamma,\sigma)}$ where
% We note that $\nu_\Gamma(A) = \nu_\Gamma(A \cap \spt_\Gamma)$ for each Borel set $A$.
%$\sigma$ is the measure $(\cH^1(E))^{-1} \cH^1|_E$.
\end{definition}

%(The convention $1/\infty = 0$ is used frequently in this section.)

As previously discussed, we will construct curve families of uniformly and quantitatively large weighted modulus joining
arbitrary pairs of points in $S_\ba$. The following notion of $\infty$-connection quantifies the degree to which
these curve families must spread out as they escape from the endpoints, measured with respect to the $L^\infty$ norm. The $L^\infty$ norm arises here by H\"older duality, as we are proving the $1$-Poincar\'e inequality. In the following section, we will introduce the analogous notion of $q$-connection for finite $q$ in order to address the case of the $p$-Poincar\'e inequality for $p>1$.

\begin{definition}
Suppose that the directed block $(\cC_2,L_2)$ follows the directed block $(\cC_1,L_1)$, and let $h \colon \pi_M(L_1) \to \pi_M(L_2)$ be the natural ordered bijection.
A path family $\Gamma$ on $\pi_M(L_1)$ in $\cC_2$ is called an
\emph{$\infty$-connection} (with constant $C$)
if $\gamma(1) = h(\gamma(0))$ for each $\gamma \in \Gamma$,
and if $\nu_\Gamma \ll \cH^2{\restrict\spt(\Gamma)}$ with
\begin{equation}\label{good h def 1}
\left\| \frac{d\nu_\Gamma}{d\cH^2} \right\|_{L^\infty(\cC_2; \cH^2)} \leq \frac{C}{\cH^1(\pi_M(L_1))} \,.
\end{equation}
\end{definition}

For the remainder of this subsection, we fix $0<m \leq M$, and
directed blocks $(\cC_2,L_2)$ following $(\cC_1,L_1)$ in a directed square $(T',L') \in \cT_{m-1}$.
We declare the \emph{central column} of $T'$ to be the central row or column of $T'$ that intersects $L'$.

We now state our building block lemmas; see figures \ref{fig:doubling-1} and \ref{fig:turning-straight}.

\begin{figure}%[t]
% \centering
% \psfrag{C1}{$\mathcal{C}_1$}
% \psfrag{C2}{$\mathcal{C}_2$}
% \psfrag{L1}{$L_1$}
% \psfrag{L2}{$L_2$}
% \includegraphics[width=0.3\textwidth]{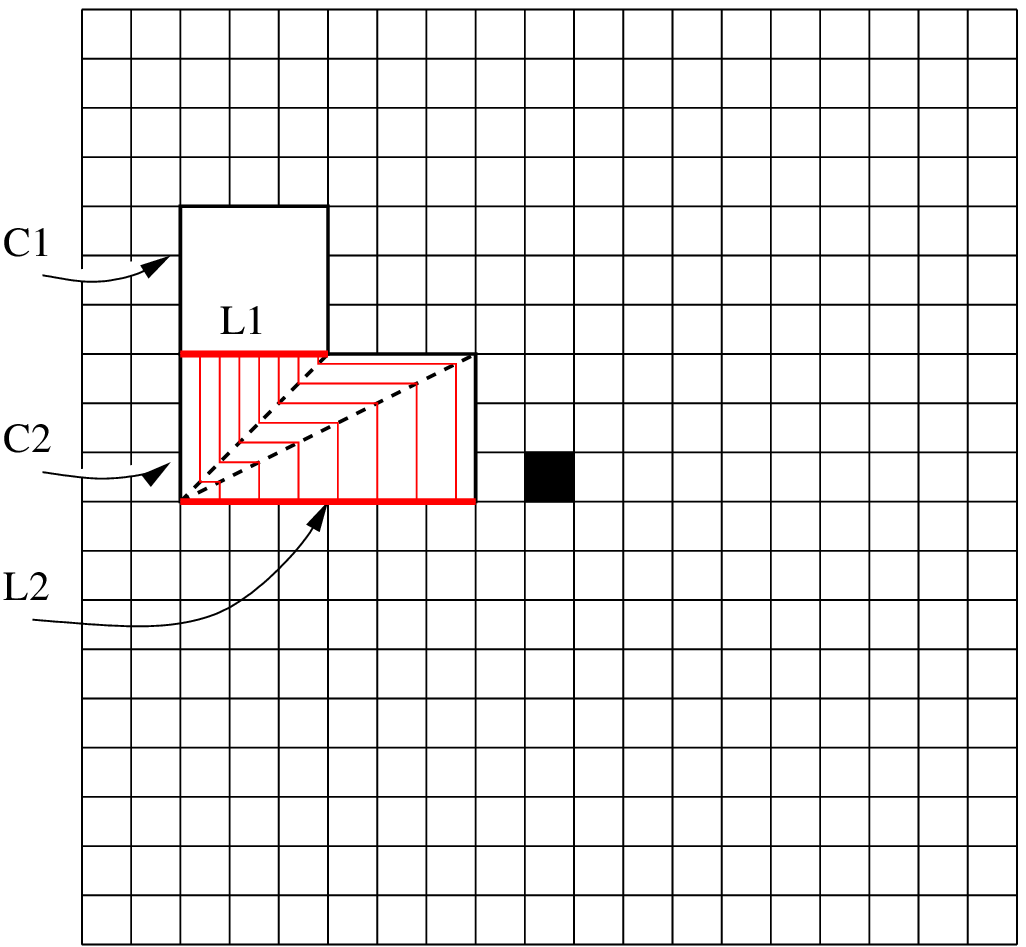}
% \hspace{50pt}
% \includegraphics[width=0.3\textwidth]{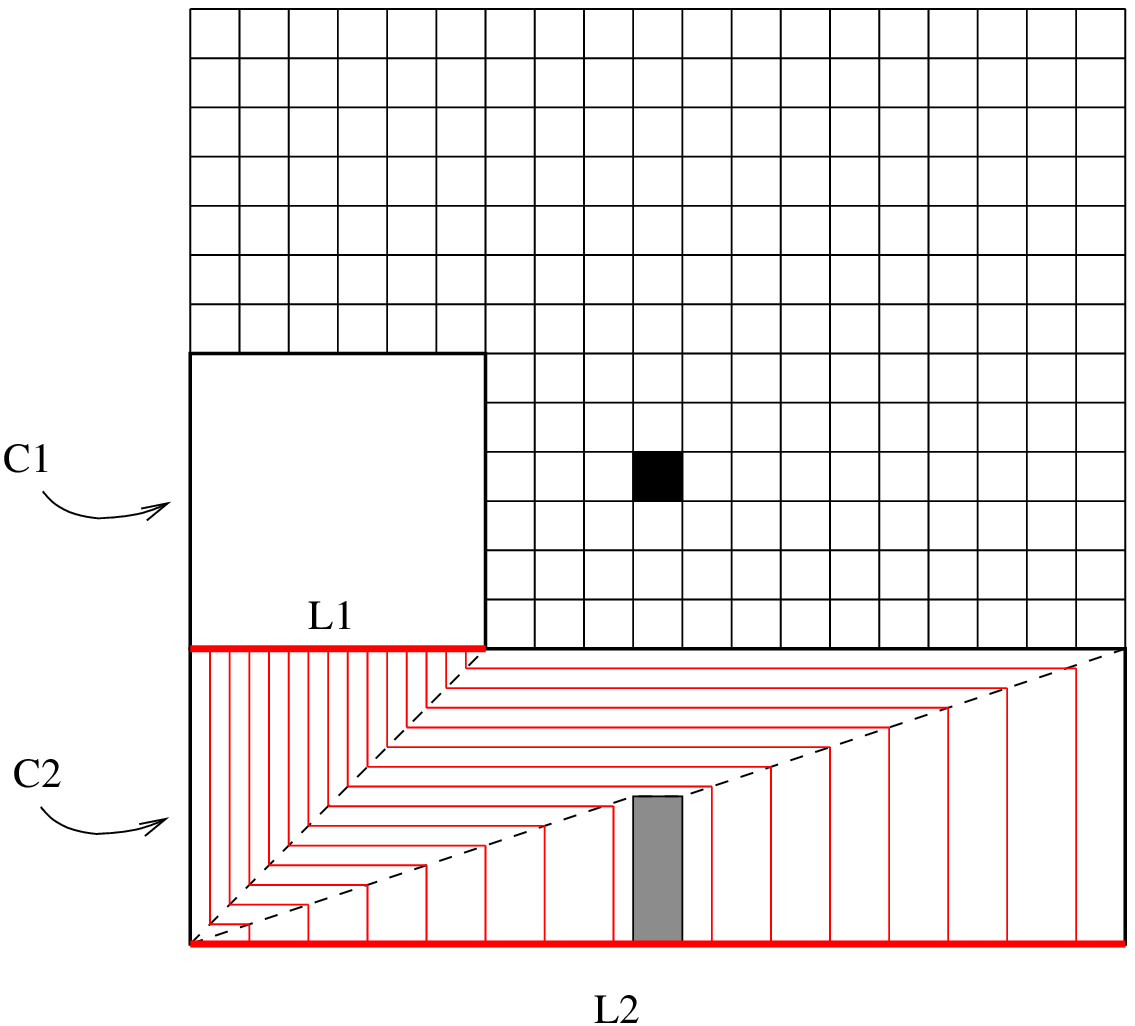}
\begin{minipage}{\textwidth}
\centering
\psfrag{C1}{$\mathcal{C}_1$}
\psfrag{C2}{$\mathcal{C}_2$}
\psfrag{L1}{$L_1$}
\psfrag{L2}{$L_2$}
\raisebox{-\height}{\includegraphics[width=0.3\textwidth]{Expand.eps}}
\hspace{50pt}
\raisebox{-\height}{\includegraphics[width=0.32\textwidth]{NextGen.eps}}
\end{minipage}
\caption{(a) Expanding to a larger interval; (b) Expanding to the parent generation} % while avoiding the central column
\label{fig:doubling-1}
\end{figure}

\begin{lemma}[Expanding]\label{doubling}  Suppose that
\begin{itemize}
\item $(\cC_1,L_1)$ and $(\cC_2,L_2)$ are coherent with $(T',L')$,
% \item neither $\cC_1$ nor $\cC_2$ intersects the central column of $T'$,
\item the sides of $\cC_2$ perpendicular to $L_2$ have length equal to
	that of $L_1$, and
\item it holds that $\cH^1(L_2)/\cH^1(L_1) \leq 10$.
\end{itemize}
Then there is an $\infty$-connection $\Gamma$ in $\cC_2$ with constant $C=C(\| \ba \|_{1})$.
\end{lemma}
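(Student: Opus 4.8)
To prove Lemma~\ref{doubling} the plan is to construct $\Gamma$ in two stages: first an explicit affine \emph{model} family inside the solid rectangle underlying $\cC_2$, and then a deformation of the model that avoids the subsquares of $S_{\ba,M}$ removed inside $\cC_2$, at a multiplicative cost to the $L^\infty$-density controlled by $\|\ba\|_1$. For the setup, fix the orientation-preserving isometry $i_2$ associated to $(\cC_2,L_2)$ and identify $\cC_2$ with $[0,w_2]\times[0,w_1]$, where $w_j:=\cH^1(L_j)$; by the hypothesis on the sides of $\cC_2$ perpendicular to $L_2$, $L_2$ is the bottom edge and $L_1$ a sub-segment of length $w_1$ of the top edge, and $w_1,w_2$ are integer multiples of $s_m$ with $w_1\le w_2\le 10\,w_1$ by the ratio hypothesis. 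Coherence of $(\cC_1,L_1)$ and $(\cC_2,L_2)$ with $(T',L')$ forces the rotation parts of $i_1$ and $i_2$ to coincide, so $i_1$ and $i_2$ differ only by a translation; in particular the linear coordinate on $L_1$ induced by $i_1$ agrees, up to an additive constant, with the horizontal coordinate of $\cC_2$. Since $\ba\in\ell^1$, \eqref{H1-of-pi-M-L} gives $\cH^1(\pi_M(L_j))\asymp w_j$; hence, in the horizontal coordinate of $\cC_2$, the maps $z\mapsto x_z$ (the horizontal coordinate of $z\in\pi_M(L_1)$) and $z\mapsto y_z$ (the horizontal coordinate of $h(z)\in\pi_M(L_2)$) are both monotone in the same direction, piecewise linear with $\cH^1$-a.e.\ constant speed, and the ratio of their speeds is comparable to $w_2/w_1\in[1,10]$, with comparison constants depending only on $\|\ba\|_1$.

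The model family is $\gamma_z^0:=$ the straight segment from $(x_z,w_1)$ to $(y_z,0)$, for $z\in\pi_M(L_1)$. These segments lie in $\cC_2$ since $x_z,y_z\in[0,w_2]$, and are pairwise disjoint since $z\mapsto x_z$ and $z\mapsto y_z$ are order-preserving. As the horizontal displacement $|x_z-y_z|\le w_2\le 10\,w_1$ occurs over a vertical drop $w_1$, each $\gamma_z^0$ has length comparable to $w_1$ and slope of absolute value at least $\tfrac1{10}$; and for each fixed height $t\in[0,w_1]$ the horizontal position $(1-t/w_1)y_z+(t/w_1)x_z$ depends on $z$ with $\cH^1$-a.e.\ constant speed comparable to $1$. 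Slicing the measure $\nu^0$ associated to $\{\gamma_z^0\}$ by horizontal lines and combining these facts with Fubini's theorem yields that $\nu^0$ is Borel, absolutely continuous with respect to $\cH^2$, and $\|d\nu^0/d\cH^2\|_{L^\infty}\lesssim 1/w_1\asymp 1/\cH^1(\pi_M(L_1))$; thus $\{\gamma_z^0\}$ is an $\infty$-connection of the solid rectangle with constant depending only on $\|\ba\|_1$.

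It remains to deform the model to avoid the obstacles. Inside $\cC_2$ these are the central subsquares of generations $m+1,\dots,M$ of the level-$m$ squares comprising $\cC_2$ (there is no generation-$m$ obstacle, as $\cC_2$ is a block), and by \eqref{aia} the generation-$(k+1)$ obstacle inside a level-$k$ square has relative side $a_{k+1}\le\tfrac1{20}$, so it lies in a narrow middle band and leaves two channels of relative width $\tfrac{1-a_{k+1}}{2}$. I would process the generations $k=m,m+1,\dots,M-1$ in order: at stage $k$, push the portion of each curve meeting a generation-$(k+1)$ obstacle laterally into the two adjacent channels, using a fixed left/right rule so that the family stays disjoint, and leaving the endpoints — which lie on $L_1\cup L_2$ and hence away from every obstacle — fixed. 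Such a lateral squeeze multiplies the $L^\infty$-density of the transversal measure by at most $(1-a_{k+1})^{-1}$, so after all the stages the density has been multiplied by at most $\prod_{j=m+1}^{M}(1-a_j)^{-1}\le\prod_{j\ge1}(1-a_j)^{-1}=:C(\|\ba\|_1)$, which is finite precisely because $\ba\in\ell^1$. The resulting family $\Gamma$ lies in $\cC_2\cap S_{\ba,M}$, connects $\pi_M(L_1)$ to $\pi_M(L_2)$ via $h$, and is an $\infty$-connection with constant depending only on $\|\ba\|_1$, as required.

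The main difficulty is the deformation stage. One must arrange the generation-by-generation lateral pushes so that the curves stay pairwise disjoint, keep their prescribed endpoints, sweep only points of $S_{\ba,M}$, and accrue at most the factor $(1-a_{k+1})^{-1}$ in $L^\infty$-density at each generation — in particular one must check that the coarse affine spreading and the fine channel-by-channel squeezing do not interact so as to concentrate curves somewhere, and that the transversal measure remains Borel and absolutely continuous throughout. The bounded expansion ratio $w_2/w_1\le10$, the bounded aspect ratio of $\cC_2$, and the generous room afforded by \eqref{aia} are exactly what make these estimates work; carrying out the recursion and the slicing bounds in detail is the technical heart of the proof.
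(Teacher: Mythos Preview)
Your approach differs substantially from the paper's and leaves its hardest step as an unexecuted sketch. The paper exploits the definition of $\pi_M$ directly: after normalizing so that $\cC_2=[0,as_m]\times[0,bs_m]$ with $L_1$ on top and $L_2$ on the bottom, for each $z=(u,bs_m)\in\pi_M(L_1)$ it takes the three-segment staircase
\[
(u,bs_m)\ \longrightarrow\ (u,u)\ \longrightarrow\ h(z)+(0,u)\ \longrightarrow\ h(z),
\]
i.e.\ vertical, then horizontal at height $u$, then vertical again. These axis-parallel segments lie in $S_{\ba,M}$ \emph{automatically}: a vertical line at abscissa $u\in\pi_M$ misses every removed square by the very definition of $\pi_M$, the diagonal symmetry of the carpet guarantees that the horizontal line at ordinate $u$ does too, and the third segment has its abscissa in $\pi_M(L_2)$ for the same reason. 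No deformation is needed, and Fubini gives $\|d\nu_\Gamma/d\cH^2\|_\infty\le 1/\cH^1(\pi_M(L_1))$ in two lines, so the $\infty$-connection constant is simply $1$; the dependence on $\|\ba\|_1$ in the lemma's statement enters only through \eqref{H1-of-pi-M-L}.

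Your route---slanted model segments followed by a generation-by-generation lateral squeeze---is in the spirit of the bending machinery of Section~\ref{sec:validity-pi-2}. It may be salvageable, but as written it is a genuine gap: you identify the deformation as the technical heart and then do not carry it out. Controlling an $L^\infty$ density under such pushes is delicate, since curves that barely clip an obstacle and curves passing through its centre must be rerouted by very different amounts while remaining disjoint and evenly spread (so that the density grows by at most $(1-a_{k+1})^{-1}$ pointwise rather than piling up near a corner), and successive pushes must not shove curves back onto obstacles of coarser generations already processed. The paper's insight that $\pi_M$ was introduced precisely so that axis-parallel staircase paths avoid all obstacles for free sidesteps all of this.
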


% See Figure \ref{fig:doubling-1}.

\begin{lemma}[Expanding to the parent generation]\label{doubling2}  Suppose that
\begin{itemize}
\item $(\cC_1,L_1)$ is coherent with $(T',L_2)$, where $L_2=L'$,
\item the length of $L_1$ is equal to the length of an edge of $\cC_2$ perpendicular to $L_2$,
\item $L_1$ intersects an edge of $T'$ perpendicular to $L'$, and
\item it holds that $\cH^1(L_2)/\cH^1(L_1) \leq 10$.
\end{itemize}
Then there is an $\infty$-connection $\Gamma$ in $\cC_2$ with constant $C=C(\| \ba \|_{1} )$.
\end{lemma}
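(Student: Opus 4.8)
The plan is to construct $\Gamma$ explicitly inside $\cC_2$, in three stages: choose a ``model'' foliation of the solid rectangle $\cC_2$ by segments joining $L_1$ to $L_2$; deform this foliation so as to dodge the removed peripheral squares lying inside $\cC_2$; and finally verify the transversal bound \eqref{good h def 1}. After applying the normalizing isometry $i$ of $\cC_2$ we may assume $\cC_2=[0,w]\times[0,\ell]$, where $L_2=[0,w]\times\{0\}$ is the full bottom side, $\ell=\cH^1(L_1)$ is the common length of the two sides perpendicular to $L_2$, and $w=\cH^1(L_2)$, so that $\ell\le w\le 10\ell$ by hypothesis. Since $(\cC_1,L_1)$ is coherent with $(T',L_2)$ and $L_1$ meets an edge of $T'$ perpendicular to $L'$, one checks that $L_1$ is forced to be the length-$\ell$ segment $[0,\ell]\times\{\ell\}$ (or its mirror image) of the top side of $\cC_2$, with $\cC_1$ sitting directly above it; the natural ordered bijection then has the form $h(x,\ell)=(\phi(x),0)$, where by \eqref{H1-of-pi-M-L} and $w\le 10\ell$ the increasing map $\phi$ scales $\cH^1$ on $\pi_M(L_1)$ by a constant factor comparable to $1$. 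Finally, since $\cC_2$ is a block in $T'$ omitting the central $s_m$-subsquare, every removed peripheral square meeting $\cC_2$ has side at most $s_{m+1}$; such squares occur at the scales $s_{m+1},s_{m+2},\dots$, and those of any fixed scale are pairwise separated by a definite multiple of that scale.

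For each $z=(x,\ell)\in\pi_M(L_1)$ let $\tilde\gamma_z$ be the straight segment from $z$ to $(\phi(x),0)$. The $\tilde\gamma_z$ are pairwise disjoint and sweep out $\cC_2$, and $(x,t)\mapsto\tilde\gamma_z(t)$ is a bi-Lipschitz map of $\pi_M(L_1)\times[0,1]$ onto $\cC_2$ with constants depending only on the slopes of $\phi$; hence its transversal measure $\tilde\nu$ satisfies $\tilde\nu\ll\cH^2$ with $\|d\tilde\nu/d\cH^2\|_{L^\infty(\cC_2)}\le C(\|\ba\|_1)/\cH^1(\pi_M(L_1))$, which is exactly \eqref{good h def 1}. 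I would then deform: whenever $\tilde\gamma_z$ would cross a removed square $R$, replace the crossing chord by a short detour running, at a small distance, along the two sides of $R$ facing away from the direction of travel, organizing all detours around a common $R$ so that they slide monotonically apart, remain pairwise disjoint, and stay inside the annulus $\{0<\dist(\cdot,R)<\tfrac1{100}\side(R)\}$ --- the separation of removed squares making these annuli pairwise disjoint. The key length estimate is that a monotone curve of length $L$ in $\cC_2$ meets only $O(L/s_{m+j-1})$ squares of $\cT_{m+j-1}$, so it acquires at most $O(L/s_{m+j-1})\cdot O(s_{m+j})=O(a_{m+j}L)$ extra length when routed around the removed squares of side $s_{m+j}$; processing the scales in turn, the model length $\asymp\ell$ is multiplied by at most $\prod_{j\ge1}(1+O(a_{m+j}))=e^{O(\|\ba\|_1)}$, so each deformed $\gamma_z$ still has length comparable to $\ell$ with constant depending only on $\|\ba\|_1$. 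This is precisely where the hypothesis $\ba\in\ell^1$ enters.

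To finish I would verify \eqref{good h def 1} for $\Gamma=\{\gamma_z\}$ by splitting $\cC_2$ into the ``bulk'', away from every removed square, and the disjoint annuli around the removed squares. On the bulk $\Gamma$ agrees with the model family, so the density of $\nu_\Gamma$ there is $\le C(\|\ba\|_1)/\cH^1(\pi_M(L_1))$. Inside the annulus around a removed square $R$ of side $s'$, only the parameters $z$ whose model segment crossed $R$ contribute --- an $\cH^1$-set of measure $O(s')$ --- each contributing a detour arc of length $O(s')$ inside a region of area $O(s'^2)$; since the detour arcs are disjoint, the density is again bounded by $C(\|\ba\|_1)/\cH^1(\pi_M(L_1))$. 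Borel-measurability of $\nu_\Gamma$ is clear from the construction, and reparametrizing each $\gamma_z$ on $[0,1]$ with $\gamma_z(0)=z$, $\gamma_z(1)=h(z)$ gives the desired $\infty$-connection.

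The genuine difficulty, I expect, is the deformation stage: carrying it out so that the curves remain \emph{simultaneously} pairwise disjoint, of controlled length (the $\ell^1$ input above), and with uniformly bounded transversal density near the accumulation of removed squares --- in particular, ruling out pile-up of detours on the downstream side of a removed square, and handling coherently those squares straddling the interface between two consecutive detoured curves. A cleaner, more bookkeeping-heavy alternative --- which would also prove Lemma \ref{doubling} --- is an induction on $M-m$: for $M=m$ the block $\cC_2$ is solid and the model family works verbatim; for $M>m$ one cuts $\cC_2$ along the $s_m$-grid, routes across each $s_m$-cell around its central $s_{m+1}$-subsquare using the finer building-block lemmas, and concatenates, the aspect hypothesis $\cH^1(L_2)/\cH^1(L_1)\le 10$ keeping the combinatorics --- and the accumulated constant --- under control.
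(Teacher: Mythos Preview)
Your approach is genuinely different from the paper's, and the difference is instructive. The paper omits the proof of Lemma~\ref{doubling2}, stating it is ``nearly identical'' to that of Lemma~\ref{doubling}; looking at that proof, the curves are built as concatenations of \emph{axis-parallel} segments (vertical, then horizontal, then vertical). The whole point of the index set $\pi_M(L_1)$ is that if $z\in\pi_M(L_1)$, then the vertical line through $z$ misses every removed peripheral square of $S_{\ba,M}$; the same holds for the horizontal leg along the diagonal $y=x$ and for the final vertical leg through $h(z)\in\pi_M(L_2)$. Hence the curves lie in $\cC_2\cap S_{\ba,M}$ \emph{by construction}, with no deformation at all, and the transversal density is exactly $1/\cH^1(\pi_M(L_1))$ on each leg by Fubini. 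The dependence on $\|\ba\|_1$ enters only through \eqref{H1-of-pi-M-L}.

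You instead draw \emph{diagonal} model segments---which do hit removed squares---and then propose to bend around them. This forfeits the exact reason $\pi_M$ was introduced, and what remains is essentially the bending machinery of Section~\ref{sec:validity-pi-2}, which the paper develops only for $q<\infty$ and explicitly notes does \emph{not} apply to the $\infty$-connection (see the parenthetical remark there about Proposition~\ref{prop-weight-bound} requiring finite $q$). Your sketch does not establish the crucial point: that the $L^\infty$ density stays bounded after infinitely many deformations. Each detour compresses a packet of curves of transversal width $\asymp s'$ into an annulus of comparable width, so the local density multiplier at a single scale is $1+O(1)$, not $1+O(a_j)$; to get a convergent product you would need to show that a fixed point lies in at most $O(1)$ such annuli---or, more carefully, that the density multipliers at that point form an $\ell^1$-summable sequence. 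You have not argued this, and your length estimate (which is correct) does not imply it. The ``pile-up on the downstream side'' concern you raise is exactly where the $L^\infty$ bound could fail.

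In short: the paper sidesteps the entire deformation problem by exploiting the rectilinear structure of the carpet through $\pi_M$ and axis-parallel curves. Your approach could perhaps be made to work, but as written the deformation step is a genuine gap, and the $L^\infty$ control you need is precisely what the paper's bending machinery does not provide.
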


% 
% See Figure \ref{fig:doubling-2}.
% \begin{figure}[h]
% \centering
% \psfrag{C1}{$\mathcal{C}_1$}
% \psfrag{C2}{$\mathcal{C}_2$}
% \psfrag{L1}{$L_1$}
% \psfrag{L2}{$L_2$}
% \includegraphics[width=0.3\textwidth]{Expand.eps}
% \caption{Expanding to a larger interval % while avoiding the central column
% 	}\label{fig:doubling-1}
% \end{figure}
% 
% \begin{figure}%[h]
% \centering
% \psfrag{C1}{$\mathcal{C}_1$}
% \psfrag{C2}{$\mathcal{C}_2$}
% \psfrag{L1}{$L_1$}
% \psfrag{L2}{$L_2$}
% \includegraphics[width=0.3\textwidth]{NextGen.eps}
% \caption{Expanding to the parent generation}\label{fig:doubling-2}
% \end{figure}

\begin{lemma}[Turning]\label{turning}
Suppose that
\begin{itemize}
\item $L_1$ and $L_2$ are perpendicular, and
\item all edges of $\cC_2$ have length equal to the length of $L_1$.
\end{itemize}
Then there is an $\infty$-connection $\Gamma$ in $\cC_2$ with constant $C=C(\| \ba \|_{1})$.
\end{lemma}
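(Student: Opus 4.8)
\textbf{Proof proposal for Lemma \ref{turning} (Turning).}

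The plan is to construct the $\infty$-connection explicitly by quarter-turn circular arcs, exactly as one would in a solid Euclidean square, and then check that these arcs actually live in the precarpet $S_{\ba,M}$ because of the smallness assumption \eqref{aia}. Write $s = \cH^1(L_1)$, the common side length of all edges of $\cC_2$, so that $\cC_2$ is a square of side $s$ with two adjacent sides equal to $L_1$ and $L_2$; place coordinates so that the common corner of $L_1$ and $L_2$ is the origin, $L_1$ lies along the positive $x$-axis and $L_2$ along the positive $y$-axis. For $z = (t,0) \in \pi_M(L_1)$ with $t \in [0,s]$, let $\gamma_z$ be the circular arc of radius $t$ centered at the origin, running from $(t,0)$ to $(0,t)$; the degenerate case $t=0$ gives the constant curve at the corner. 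These curves are pairwise disjoint, each $\gamma_z$ joins $z \in \pi_M(L_1)$ to the point $(0,t)$, and since the ordered bijection $h\colon \pi_M(L_1) \to \pi_M(L_2)$ is the order-preserving piecewise-linear map with a.e.\ constant derivative and both sets have the same image structure under the natural isometries, one checks that $(0,t) = h((t,0))$ for every $z \in \pi_M(L_1)$. (If one prefers to be careful: $h$ is characterized by $i_2 \circ h \circ i_1^{-1}$ being order-preserving PL with a.e.\ constant derivative; the radial correspondence $t \mapsto t$ has exactly this property after composing with the respective isometries, so it coincides with $h$.)

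Next I would verify the measure estimate \eqref{good h def 1}. In polar coordinates on the quarter-disk of radius $s$, a curve $\gamma_z$ with $z=(t,0)$ is the circle of radius $t$, parametrized by arclength, and as $t$ ranges over $[0,s]$ these foliate the quarter-disk. The transversal measure is
\[
\nu_\Gamma(A) = \frac{1}{\cH^1(\pi_M(L_1))} \int_{\pi_M(L_1)} \cH^1(A \cap \gamma_z) \, d\cH^1(z).
\]
Using the coarea/Fubini change of variables $(t,\theta) \mapsto (t\cos\theta, t\sin\theta)$, whose Jacobian is $t$, a point at distance $t$ from the origin with the arclength element $d\theta$ along $\gamma_z$ contributes; the density of $\nu_\Gamma$ with respect to $\cH^2$ at a point $p$ at distance $t = |p|$ from the origin is $\frac{1}{\cH^1(\pi_M(L_1))} \cdot \frac{1}{t} \cdot \mathbf{1}_{\{t \in \pi_M(L_1)\}}$ — that is, the density blows up like $1/t$ near the corner. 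This is \emph{not} bounded, so the naive construction fails \eqref{good h def 1} and some modification near the corner is needed. The fix is to not send the curves all the way into the corner: for $t$ small (say $t \le s/2$) replace the circular arc by a path that goes radially in to radius $\sim t$ only near angle near $0$ and near $\pi/2$ but traverses the ``middle'' at a bounded-below radius — equivalently, use arcs of the form: segment from $(t,0)$ inward, then an arc at radius comparable to $\max\{t, c\}$... Actually the cleanest approach is to split $\cC_2$ into a boundary layer of width $s/2$ and route each $\gamma_z$ so that it stays in the annular region $\{|p| \ge \mathrm{const}\cdot s\}$ for the portion near $L_1$ it leaves $L_1$, crosses to the diagonal, then comes back to $L_2$; this is precisely what Lemmas \ref{doubling} and \ref{doubling2} are for. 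So I would instead \emph{reduce} the turning lemma to the expanding lemmas: first apply (an expanding-type family) to push the measure $\pi_M(L_1)$ out to the middle edge of $\cC_2$ at a definite distance from the corner while keeping the $L^\infty$ density controlled, then perform the turn in the ``bulk'' of $\cC_2$ where all curves stay at distance $\gtrsim s$ from the corner so the Jacobian $1/t$ is bounded by $C/s \asymp C/\cH^1(\pi_M(L_1))$, then apply an expanding family again to bring the curves down onto $\pi_M(L_2)$.

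The main obstacle, then, is the corner singularity: a literal foliation of the square by concentric circular arcs through the corner has unbounded transversal density, so the lemma is false for that family, and the real content is organizing the turn so that all curves stay a definite fraction of $s$ away from the pivot corner. Concretely I expect the proof to: (1) subdivide $\cC_2$ into a bounded number of subblocks at scale $s_m$ (allowed since $s/s_m = s/\cH^1(L_1)$ corresponds to finitely many generation-$m$ cells because $\cC_2$ has side $\cH^1(L_1)$, i.e.\ it is a single directed square, so $s = s_m$ up to the factor coming from $L_1$ being an edge of a generation-$(m-1)$ cell... in fact here $L_1$ is a \emph{side of $\cC_2$} and $\cC_2$ is a square, so we are turning within a square whose side equals $\cH^1(L_1)$); (2) route the arcs through an L-shaped or diagonal corridor that avoids a disk of radius $s/4$ about the corner; (3) check the routed family is admissible, disjoint, matches the ordered bijection $h$ at the endpoints, and lies in $S_{\ba,M}$ — the last point uses only that within a single generation-$m$ square the only deleted subsquare is the central one, which \eqref{aia} keeps far from the boundary region where we route; and (4) compute $\|d\nu_\Gamma/d\cH^2\|_\infty \le C/\cH^1(\pi_M(L_1))$ from the bounded Jacobian away from the corner together with \eqref{H1-of-pi-M-L}. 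The constant $C$ depends on $\|\ba\|_1$ only through \eqref{H1-of-pi-M-L} and through the fixed number of subblocks used.
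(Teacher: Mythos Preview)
Your density computation for circular arcs is incorrect, and this sends the rest of the proposal in the wrong direction. In polar coordinates the arclength element along $\gamma_z$ at radius $t$ is $t\,d\theta$, while the area element is $t\,dt\,d\theta$; the two factors of $t$ cancel and one gets
\[
\frac{d\nu_\Gamma}{d\cH^2}(p) \;=\; \frac{1}{\cH^1(\pi_M(L_1))}\,\mathbf{1}_{\{|p|\in \pi_M(L_1)\}},
\]
a bounded (in fact constant) density. There is no corner singularity, so the elaborate rerouting scheme you propose is solving a non-problem.

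The genuine obstacle, which you pass over, is that circular arcs do not stay in $S_{\ba,M}$. Restricting the parameter $t$ to $\pi_M(L_1)$ only guarantees that the \emph{vertical} line $\{t\}\times\R$ avoids all deleted squares up to level $M$; it says nothing about a circular arc of radius $t$. Your remark that ``within a single generation-$m$ square the only deleted subsquare is the central one'' forgets that $\cC_2$ carries deletions at every level $m,m+1,\ldots,M$, and assumption \eqref{aia} does not make these vanish.

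The paper's proof is exactly the adaptation of Lemma~\ref{doubling} it claims to be: for $z=(u,0)\in\pi_M(L_1)$ one takes the L-shaped path consisting of a vertical segment followed by a horizontal segment (meeting, say, on the diagonal), landing at $h(z)\in\pi_M(L_2)$. Because both legs are axis-parallel and indexed by $\pi_M$, they lie in $S_{\ba,M}$ automatically; Fubini then gives $d\nu_\Gamma/d\cH^2 = 1/\cH^1(\pi_M(L_1))$ on each leg with no further work. The dependence on $\|\ba\|_1$ enters only through \eqref{H1-of-pi-M-L}.
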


\begin{lemma}[Going straight]\label{doubling4}
Suppose that
\begin{itemize}
\item $(\cC_1,L_1)$ and $(\cC_2,L_2)$ are coherent with $(T',L')$,
\item $L_1$ and $L_2$ are of equal length, and
\item the sides of $\cC_2$ perpendicular to $L_2$ have length in $[\cH^1(L_1)/2, 10 \cH^1(L_1)]$.
\end{itemize}
Then there is an $\infty$-connection $\Gamma$ in $\cC_2$ with constant $C=C(\| \ba \|_{1})$.
\end{lemma}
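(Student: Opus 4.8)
The plan is to take for $\Gamma$ the family of straight segments that cross $\cC_2$ perpendicularly, running from $L_1$ to $L_2$. Since nothing has to spread out or turn, essentially all of the work is bookkeeping with the sets $\pi_M(L_1)$, $\pi_M(L_2)$ together with a single Fubini estimate for the transversal measure, and one expects the resulting constant to be absolute (in particular $\le C(\|\ba\|_1)$).

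First I would normalize. Applying the canonical orientation-preserving isometry $i_2$ attached to $(\cC_2,L_2)$ and working in its image, we may assume that $\cC_2 = [0,\cH^1(L_1)]\times[0,w]$, with $L_2 = [0,\cH^1(L_1)]\times\{0\}$ the bottom edge, $L_1 = [0,\cH^1(L_1)]\times\{w\}$ the top edge, and $w\in[\tfrac{1}{2}\cH^1(L_1),10\,\cH^1(L_1)]$ the width of $\cC_2$; here I use the hypotheses that $L_1$ and $L_2$ are parallel of equal length (so that they are opposite edges of $\cC_2$) and that the width is comparable to that length. Reading off the definition of $\pi_M(\cdot)$, both the isometry $i_1$ defining $\pi_M(L_1)$ and the isometry $i_2$ place their respective edge along the segment $[0,\cH^1(L_1)]\times\{0\}$ with the very same admissible set $\pi_M\cap[0,\cH^1(L_1)]$ of abscissae; hence $i_1(\pi_M(L_1))$ and $i_2(\pi_M(L_2))$ are one and the same subset of $\R$, so the natural ordered bijection $h$ — the order-preserving, piecewise linear, a.e.\ constant-derivative bijection between those two sets, conjugated back by $i_1$ and $i_2$ — is the identity. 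Unwinding this using the coherence of the outward normals shows that, in the normalized picture, $h$ is the vertical translation $(x,w)\mapsto(x,0)$; in particular $\pi_M(L_1) = \{(x,w):x\in\pi_M\cap[0,\cH^1(L_1)]\}$, $\pi_M(L_2) = \{(x,0):x\in\pi_M\cap[0,\cH^1(L_1)]\}$, and $h$ matches each point of $\pi_M(L_1)$ to the point of $\pi_M(L_2)$ directly across $\cC_2$; this is the sense in which the connection ``goes straight''.

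Next I would put, for each $x\in\pi_M\cap[0,\cH^1(L_1)]$, $\gamma_x := \{x\}\times[0,w]$, traversed from $(x,w)\in\pi_M(L_1)$ to $(x,0) = h(x,w)\in\pi_M(L_2)$. These curves lie on distinct parallel lines, hence are pairwise disjoint, they start and end at the prescribed points, and since $x\in\pi_M$ the line $\{x\}\times\R$ meets no deleted square of $S_{\ba,M}$, so $\gamma_x\subset S_{\ba,M}\cap\cC_2$ (for the passage back through $i_2^{-1}$ one uses that $i_2$ respects the carpet structure on $\cC_2$; see below). To verify that $\Gamma=\{\gamma_x\}$ is an $\infty$-connection it then suffices, as $i_2$ is an isometry, to bound $\nu_\Gamma$ in the normalized picture: for any Borel set $A$, Fubini gives
\[
\nu_\Gamma(A) = \frac{1}{\cH^1(\pi_M(L_1))}\int_{\pi_M\cap[0,\cH^1(L_1)]}\cH^1\!\big(A\cap(\{x\}\times[0,w])\big)\,dx \;\le\; \frac{\cH^2(A\cap\cC_2)}{\cH^1(\pi_M(L_1))},
\]
so that $\big\|\tfrac{d\nu_\Gamma}{d\cH^2}\big\|_{L^\infty(\cC_2;\cH^2)}\le 1/\cH^1(\pi_M(L_1))$, which is exactly \eqref{good h def 1} with $C=1$. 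Here $\ba\in\ell^1$ enters only through \eqref{H1-of-pi-M-L}, which guarantees $0<\cH^1(\pi_M(L_1))<\infty$ so that $\Gamma$ is indexed by a set of finite positive $\cH^1$-measure.

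The two assertions in the normalization step are the only substantive points, and the hard part — routine rather than deep — is checking them. For ``$i_2$ respects the carpet structure on $\cC_2$'': $\cC_2$ is a union of level-$m$ squares of $T'\in\cT_{m-1}$ avoiding its deleted central square, $i_2$ carries these onto level-$m$ squares of $[0,1]^2$ inside $[0,s_{m-1}]^2$, those squares are again present because $i_2(\cC_2)$ misses the deleted central square of $[0,s_{m-1}]^2$ (which follows, after a short computation, from the fact that $\cC_2$ misses that of $T'$: one of the two side lengths of $\cC_2$ is forced to be at most $(s_{m-1}-s_m)/2$), and the construction inside any level-$m$ square is invariant under the dihedral group of the square. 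For ``$h$ is the straight-across translation'': one tracks how $i_1$ and $i_2$ act relative to the $s_m$-grid, using that their linear parts agree since $L_1$ and $L_2$ have the same coherent outward normal. Once these are settled, the lemma reduces to the displayed Fubini estimate.
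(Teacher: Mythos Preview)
Your proof is correct and takes essentially the same approach as the paper: the paper omits the proof of this lemma, saying it is ``nearly identical to that of Lemma~\ref{doubling}'', and your straight vertical segments are precisely what the three-segment (vertical--horizontal--vertical) construction of Lemma~\ref{doubling} degenerates to when $\cH^1(L_1)=\cH^1(L_2)$, with the same Fubini estimate yielding constant $C=1$. Your normalization via $i_2$ differs slightly from the paper's (which places $T'$ itself at $[0,s_{m-1}]^2$ and specifies $\cC_2$'s position within it), and you are correspondingly more explicit than the paper about why the isometry carries the carpet structure on $\cC_2$ to that of $[0,1]^2$; this is a cosmetic difference, not a genuinely different route.
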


% See Figure \ref{fig:turning-straight}.

\begin{figure}%[t]
\begin{minipage}{\textwidth}
\centering
\psfrag{C1}{$\mathcal{C}_1$}
\psfrag{C2}{$\mathcal{C}_2$}
\psfrag{L1}{$L_1$}
\psfrag{L2}{$L_2$}
\raisebox{-\height}{\includegraphics[height=0.3\textwidth]{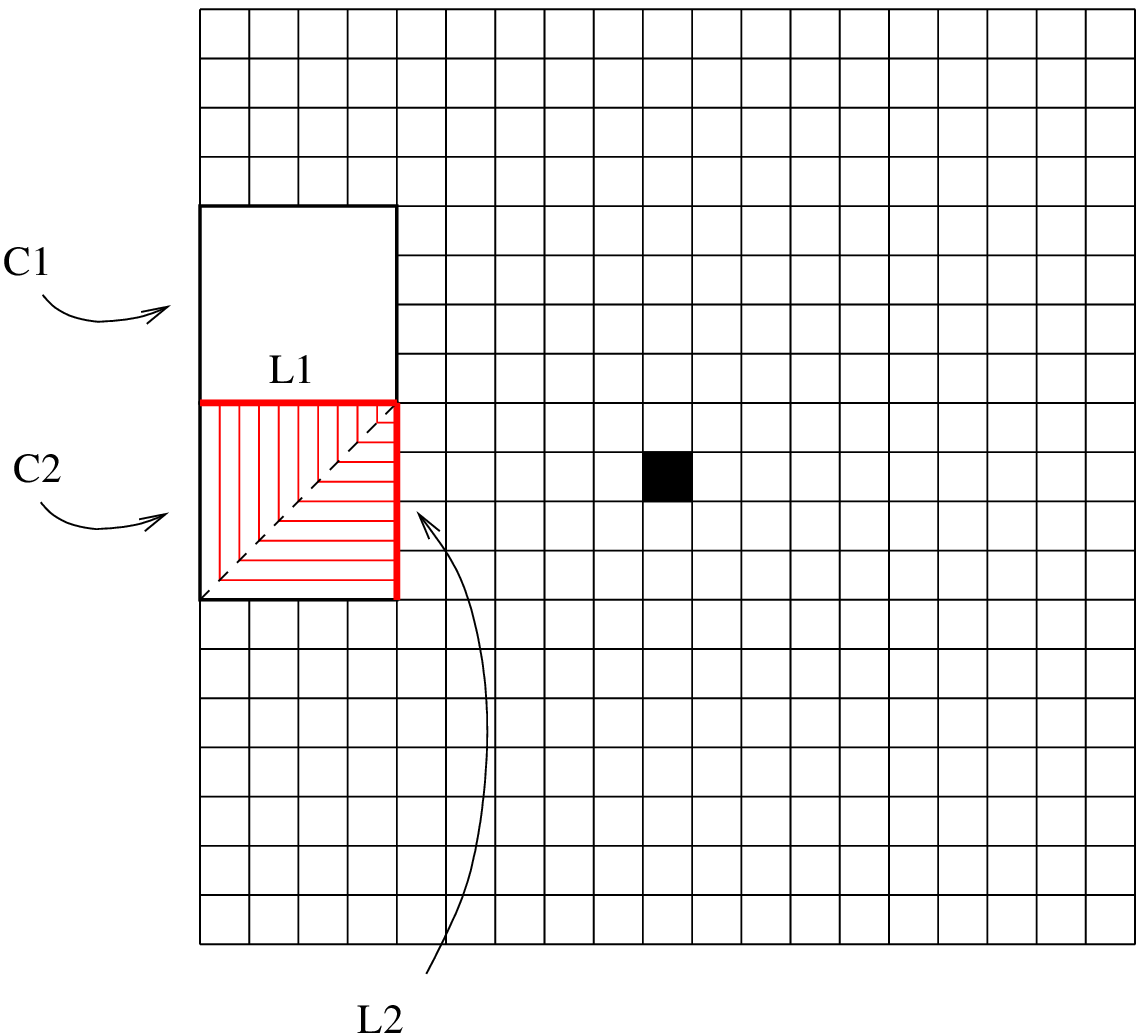}}
\hspace{50pt}
\raisebox{-\height}{\includegraphics[height=0.3\textwidth]{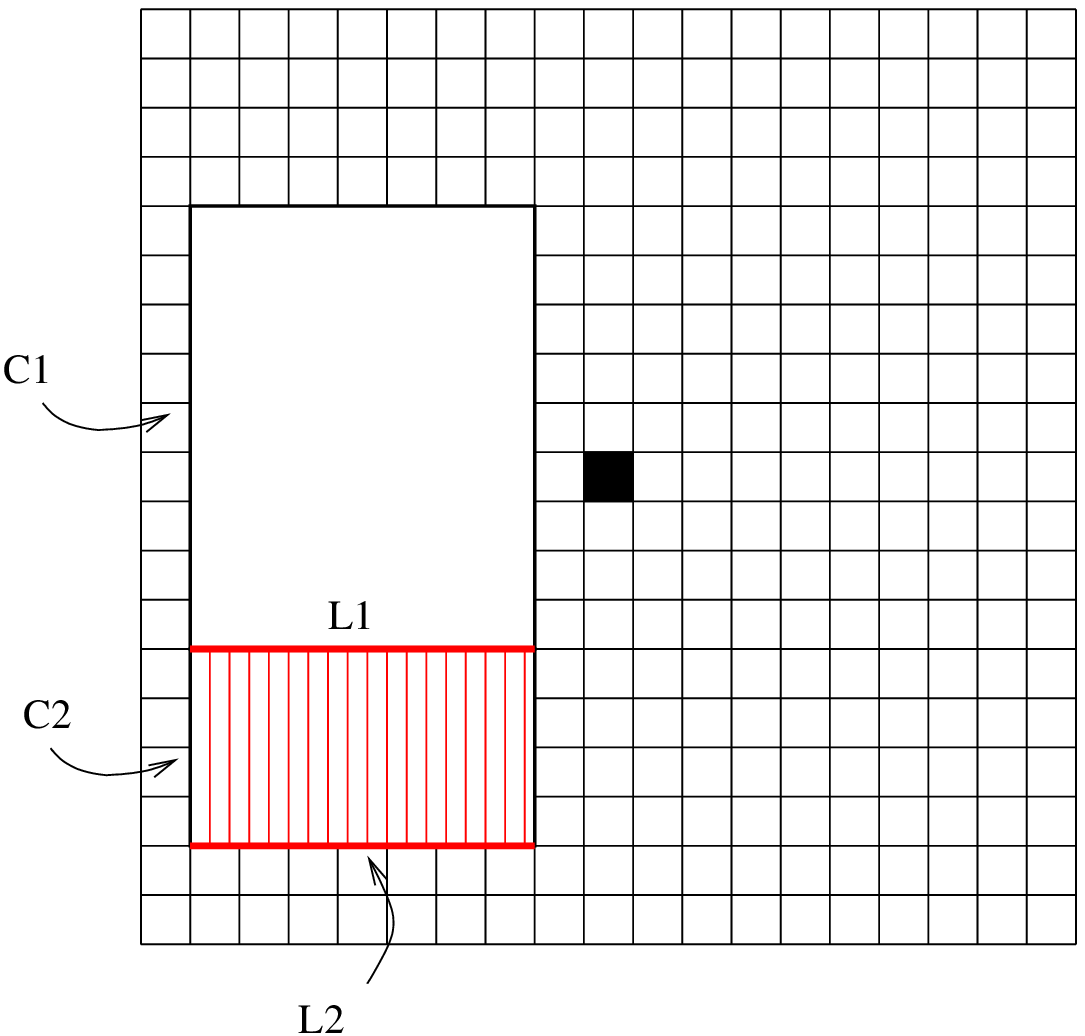}}
\end{minipage}
\caption{(a) Turning; (b) Going straight}\label{fig:turning-straight}
\end{figure}

\begin{proof}[Proof of Lemma~\ref{doubling}]
We assume that $T'=[0,s_{m-1}]^2$ and  $L'=[0,s_{m-1}]\times\{0\}$.
We further assume that $\cC_2 = [0,as_m]\times[0,bs_m]$ and $\cC_1=[0,bs_m]\times[bs_m,cs_m]$, as well as that $L_2= [0,as_m] \times \{0\}$ and $L_1 = [0,as_m]\times \{bs_m\}$.
Here $a$, $b$, and $c$ are positive integers with $a \geq b$ and $c > b$.
For ease of notation, set $E=\pi_M(L_1)$ and $F=\pi_M(L_2)$.
Note that the natural ordered bijection
$h \colon E \to F$ satisfies
$$h'(z)=\frac{\cH^1(F)}{\cH^1(E)} \geq 1 $$
for every interior point $z$ of $E$, i.e., for all but finitely many points.
In the case that $m=M$, the function $h$ is affine.

We now define a path family $\Gamma$ on $E$. Given $z=(u,bs_m) \in E$, let
$\gamma^1_{z}$ be the vertical line segment connecting $(u,bs_m)$ to
$(u,u)$, let $\gamma^2_z$ be the horizontal line segment connecting
$(u,u)$ to $h(z)+(0,u)$, and let $\gamma^3_z$ be the vertical line
segment connecting $h(z)+(0,u)$ to $h(z)$. Let $\gamma_{z}$ be the
concatenation of $\gamma_{z}^1$, $\gamma_{z}^2$ and $\gamma_{z}^3$;
then $\gamma_{z} \subseteq S_{\ba,M}\cap \cC_2$.  Let
$\Gamma=\{\gamma_{z}:z \in E\}$.

We may write the support of $\Gamma$ as the union of the supports of
the curve families
$$
\Gamma^i= \{\gamma^i_z\}_{z \in E}, \qquad i=1,2,3.
$$
Given a set $A$ contained in the support of $\Gamma$, we write $A^i=A
\cap \spt \Gamma^i$.

For $i=1$ or $2$, Fubini's theorem yields $\cH^2(A^i) = \cH^1(E)\nu_\Gamma(A^i)$.
For $i=3$, a simple change of variables shows that
\[
	\cH^2(A^3) = \cH^1(F) \nu_\Gamma(A^3) \geq \cH^1(E) \nu_\Gamma(A^3).
\]
Together, this shows that $\Gamma$ is an $\infty$-connection with constant $1$.
\end{proof}

We omit the proofs of Lemmas \ref{doubling2}--\ref{doubling4} as they are nearly identical to that of Lemma~\ref{doubling}.

\subsection{Verification of the $1$-Poincar\'e inequality}\label{subsec:pi}

We are now ready to prove the following proposition.

\begin{proposition}\label{yesPIl1}
Suppose that $\ba \in \ell^1$. Then $S_\ba$ admits a $1$-Poincar\'e inequality.
\end{proposition}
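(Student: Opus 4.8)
By Theorem~\ref{keith-theorem-2} it suffices to prove a $1$-Poincar\'e inequality with constants independent of $M$ for each precarpet $S_{\ba,M}$, equipped with its normalized Lebesgue measure; since that measure is comparable to $\cH^2{\restrict S_{\ba,M}}$ with uniform constants under $\ba\in\ell^1$, we work with $\cH^2$ throughout. By the reduction following \eqref{aia} we may assume $a_i\le a_*=\tfrac1{20}$ for all $i$. Finally, by Theorem~\ref{keith-theorem-1} (Keith's characterization) it is enough to produce, for each pair of distinct points $x,y\in S_{\ba,M}$, a curve family $\Gamma_{xy}$ joining $x$ to $y$ in $S_{\ba,M}$ together with an admissible-type transversal measure whose $L^\infty$ density against $\cH^2$ near a point $z$ is controlled by $d(x,z)/\cH^2(B(x,d(x,z)))$ (and symmetrically at $y$); feeding this density bound into the definition of $\mod_1(\Gamma_{xy};\mu_{xy}^{C_2})$ gives \eqref{keith-equation-1} with $p=1$.

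\textbf{Construction of the curve family.} The plan is to build $\Gamma_{xy}$ as a concatenation of the four building-block families of subsection~\ref{subsec:modular}. First I would reduce to the case where $x$ and $y$ lie on leading edges of directed squares of comparable, controlled generation: choose $m(x,d(x,y))$ and $m(y,d(x,y))$ as in section~\ref{sec:sba}, and start with directed squares at those generations containing $x$ and $y$. The core is a combinatorial routing argument (this is Lemma~\ref{lem-block-chain}, invoked above): produce a finite chain of directed blocks $(\cC_0,L_0),(\cC_1,L_1),\dots$ where each block follows the previous one, starting from a small square at $x$, each step being one of ``Expanding'' (Lemma~\ref{doubling}), ``Expanding to the parent generation'' (Lemma~\ref{doubling2}), ``Turning'' (Lemma~\ref{turning}), or ``Going straight'' (Lemma~\ref{doubling4}); the chain first expands the relevant edge lengths geometrically (at most a bounded multiplicative factor per step) from scale $\asymp d(x,z)$ up to scale $\asymp d(x,y)$ while migrating toward $y$ through the rectilinear channels of $S_{\ba,M}$, crossing the ``thin'' Cantor-like cut sets using the positivity of $\cH^1(\pi_M(L))$ guaranteed by \eqref{H1-of-pi-M-L}, then contracts symmetrically down to a small square at $y$. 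Concatenating the $\infty$-connections supplied on each link (matching them up via the natural ordered bijections $h$, which is exactly what makes the pieces compose into genuine curves from $x$ to $y$) yields $\Gamma_{xy}$ and its transversal measure $\nu_{\Gamma_{xy}}$.

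\textbf{The modulus estimate.} Since each building-block lemma gives an $\infty$-connection with constant $C=C(\|\ba\|_1)$, and the edge length roughly doubles (up to a factor $\le 10$) at each link as the chain moves away from $x$, a geometric-series argument shows that at a point $z$ lying on the portion of $\spt(\Gamma_{xy})$ at ``distance scale'' $t\asymp d(x,z)$ from $x$, the density $d\nu_{\Gamma_{xy}}/d\cH^2$ is $\lesssim C(\|\ba\|_1)/(t\cdot\cH^1(\pi_M(L_{\text{relevant}})))\asymp C(\|\ba\|_1)\, t/\cH^2(B(x,t))$, using \eqref{H1-of-pi-M-L} and the lower mass bound $\cH^2(B(x,t))\gtrsim t^2$ (Proposition~\ref{properties-of-mu}(ii)); near $y$ one uses the symmetric estimate. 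Thus for any $\rho$ admissible for $\Gamma_{xy}$,
\begin{equation*}
d(x,y)\;=\;d(x,y)\int_{\gamma}\rho\,ds \;\text{ on average, so }\;
d(x,y) \;\lesssim\; \int \rho \, d\nu_{\Gamma_{xy}} \;\lesssim\; C(\|\ba\|_1)\int \rho\, d\mu_{xy}^{C_2},
\end{equation*}
which rearranges to $d(x,y)^{0} = 1 \lesssim \mod_1(\Gamma_{xy};\mu_{xy}^{C_2})$, i.e.\ \eqref{keith-equation-1} with $p=1$; by Theorem~\ref{keith-theorem-1} this gives the $1$-Poincar\'e inequality on $S_{\ba,M}$ with uniform constants, and Theorem~\ref{keith-theorem-2} transfers it to $S_\ba$.

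\textbf{Main obstacle.} The delicate point is the combinatorial routing: arranging a \emph{bounded-length} chain of directed blocks that always ``follows'' the previous one inside $S_{\ba,M}$, respects the coherence/length hypotheses of the four lemmas at every step, and — crucially — only needs to pass through the thinnest cut sets $C_\ba$ (where $\ba\in\ell^1$ keeps $\cH^1(\pi_M(L))\asymp\diam L$) a controlled number of times, so that the accumulated constant stays $C(\|\ba\|_1)$ rather than degenerating. Handling the case $a_i=\tfrac13$ (where consecutive deleted-square neighborhoods can collide) and the bookkeeping of which edge is ``leading'' at each generation jump is where the argument becomes technical; this is isolated in Lemma~\ref{lem-block-chain}, and the remainder of the proof is the geometric-series density bookkeeping sketched above.
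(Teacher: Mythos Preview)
Your proposal is correct and follows essentially the same approach as the paper: reduce to precarpets via Theorem~\ref{keith-theorem-2}, apply Keith's criterion (Theorem~\ref{keith-theorem-1}), concatenate the building-block $\infty$-connections along the chain of directed blocks supplied by Lemma~\ref{lem-block-chain}, and bound $\|d\nu_{\Gamma_{xy}}/d\mu_{xy}\|_{L^\infty}$ block by block to obtain \eqref{keith-equation-1}. One small remark: the phrase ``geometric-series argument'' is a slight mis-emphasis---since the blocks are essentially disjoint (condition~(\ref{no overlap}) of Lemma~\ref{lem-block-chain}), the global $L^\infty$ density bound is simply the supremum over blocks, and no summation or accumulation of constants occurs (the geometric sum $\sum_i\diam(\cC_i)\asymp|x-y|$ only becomes relevant in the $p>1$ case).
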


\begin{proof}
As mentioned in the introduction to this section, it is enough to prove that
for fixed $M$, the precarpet $(S_{\ba,M},d,\mu_M)$ supports a $1$-Poincar\'e inequality with
constant independent of $M$. Towards this end, we take advantage of Theorem
\ref{keith-theorem-1}. The constant $C_2$ in Keith's condition \eqref{keith-equation-1}
will be an absolute quantity which could in principle be computed explicitly as
a fixed multiple of the implicit multiplicative constant in conditions (5) and (6) of
Lemma~\ref{lem-block-chain} below.
On the other hand, the constant $C_1$ in \eqref{keith-equation-1} depends on
$C_0$ in Lemma~\ref{lem-block-chain}, which in turn depends on the constants
in Lemmas~\ref{doubling}--\ref{doubling4} above.
In particular, $C_1$ will depend heavily on $||\ba||_{1}$.

In order to verify the condition in Theorem \ref{keith-theorem-1}, let us fix $x,y \in S_{\ba,M}$ with $x \neq y$.
If $|x-y| < 10s_M$, then we are in the Euclidean situation with possibly a square removed nearby, so \eqref{keith-equation-1} holds with uniform constants. Let us assume that for some $m \leq M$ we have $10s_m \leq |x-y| < 10s_{m-1}$.

The implicit multiplicative constants in conditions (5) and (6) below
are fixed, universal quantities which could be explicitly computed; to
simplify the story we have spared the reader any explicit calculation.
For instance, both of these multiplicative constants can be chosen to be $100$.

\begin{lemma}\label{lem-block-chain}
There exist integers $K_- < 0 < K_+$,
a sequence of directed blocks $\{(\cC_i,L_i)\}_{i=K_-}^{K_+}$,
and path families $\Gamma_i$ each supported on $\cC_i$,
with the following properties, for some uniform constant $C_0$.
\begin{enumerate}
\item $\cC_{K_-}$ and $\cC_{K_+}$ are $2$ by $1$ blocks (or $1$ by $2$ blocks)
	on scale $s_M$ containing $x$ and $y$ respectively.
\item $\dist(x, L_{K_-}) \geq s_M/2$ and $\dist(y, L_{K_+}) \geq s_M/2$.
\item $\Gamma_{K_-}$ and $\Gamma_{K_+}$ consist of the collection of straight lines joining $x$ to $L_{K_-}$ and $y$ to $L_{K_+}$ respectively.
\item\label{good con exist} For each $i=(K_-+1), \ldots, -1$, $(\cC_i,L_i)$ follows $(\cC_{i-1}, L_{i-1})$;
	for each $i=1, \ldots, K_+-1$, $(\cC_i,L_i)$ follows $(\cC_{i+1}, L_{i+1})$;
	$(\cC_0,L_1)$ follows $(\cC_{-1},L_{-1})$, and
	$(\cC_0,L_{-1})$ follows $(\cC_1, L_1)$.
	In each case, $\Gamma_i$ is an $\infty$-connection with constant $C_0$.
\item\label{not far} For each $i=(K_-+1), \ldots, (K_+-1)$,
\begin{equation*}%\label{NUMBER1}
\min\{\dist(x,\cC_i), \dist(y,\cC_i)\} \asymp \diam(\cC_i) \asymp \diam(L_i).
\end{equation*}
\item $\sum_{i=K_-}^{K_+} \diam(\cC_i) \asymp |x-y|$.
\item\label{no overlap} The blocks $\cC_{K_-},\hdots,\cC_{K_+}$ are essentially disjoint.
\end{enumerate}
\end{lemma}

We postpone the proof of this lemma.

The path families $\Gamma_{K_-}, \ldots, \Gamma_{K_+}$ concatenate together by gluing paths using the natural
ordered bijection on each block.  This gives a path family $\Gamma$
consisting of pairwise disjoint, rectifiable curves joining $x$ to $y$, carrying a
probability measure $\sigma = \sigma_\Gamma$ on $\Gamma$ which agrees with $\sigma_{\Gamma_i}$ for each $i$ on $\cC_i$.
The measure $\nu = \nu_\Gamma$ on the support of $\Gamma$ defined by
\begin{equation}\label{nu-Gamma}
\nu(A) = \int_\Gamma \cH^1(A \cap \gamma) d\sigma
\end{equation}
restricts to $\nu_{\Gamma_i}$ on each $\cC_i$.

This measure $\nu$ is absolutely continuous with respect to $\mu_{xy}$.
For $i = {K_-}, \ldots, {K_+}$, we have the following bound on the
Radon--Nikodym derivative $\frac{d\nu}{d\mu_{xy}}$.
We have $\diam(\cC_i) \asymp \min\{\dist(x,\cC_i), \dist(y,\cC_i)\}$, and so
$\mu_{xy} \asymp \frac1{\diam(\cC_i)} \cH^2$ on $\cC_i$.  Therefore
\begin{align*}
	\left\| \frac{d\nu}{d\mu_{xy}} \right\|_{L^\infty(\cC_i; \mu_{xy})}
		& \asymp \left\| \frac{d\nu}{d\cH^2} \right\|_{L^\infty(\cC_i; \cH^2)} \diam(\cC_i) \\
		& \lesssim \left( \cH^1(\pi_M(L_i)) \right)^{-1} \diam(\cC_i) \lesssim 1.
\end{align*}
This bound also holds on $\cC_{K_-}$ and $\cC_{K_+}$: note that on $\spt(\Gamma_{K_-}) \subset \cC_{K_-}$, both
$\nu$ and $\mu_{xy}$ are comparable to the measure $A \mapsto \int_A  1/|x-z| \ d\cH^2(z)$.
An elementary calculation gives
\[
	\left\| \frac{d\nu}{d\mu_{xy}} \right\|_{L^\infty(\cC_{K_-}; \mu_{xy})} \lesssim 1.
\]
An analogous argument proves the bound for $\cC_{K_+}$.

Let $\rho$ be admissible for $\Gamma$. Then
\begin{align*}
1 &\le \int_{\Gamma} \int_\gamma \rho \, ds \, d\sigma(\gamma)
 = \int_{\spt\Gamma} \rho \ d\nu \\
 &= \int_{S_{\ba,M}} \rho \, \frac{d\nu}{d\mu_{xy}} \, d\mu_{xy}
 \le \| \rho \|_{L^1(\mu_{xy})}    \    \left\| \frac{d\nu}{d\mu_{xy}} \right\|_{L^\infty(\mu_{xy})}
 \lesssim \| \rho \|_{L^1(S_{\ba,M},\mu_{xy})} \, .
\end{align*}
%Consequently,
%\[%
%	\int_{S_{\ba,M}} \rho^p \, d\mu_{xy} \gtrsim 1.
%\]
Thus \eqref{keith-equation-1} holds.  This completes the proof that $S_{\ba}$ admits a $1$-Poincar\'e inequality.
\end{proof}

It remains to construct the block family described in the statement of Lemma~\ref{lem-block-chain}.

\begin{proof}[Proof of Lemma~\ref{lem-block-chain}]
Recall that $m \leq M$ is chosen so that $10s_m \leq |x-y| < 10s_{m-1}$.

We construct the sequence of blocks and path families by induction.
To make the proof more readable, we outline the basic steps,
and leave the details to the reader.
The basic idea is as follows: we use the expanding and turning lemmas
(Lemmas~\ref{doubling}--\ref{doubling4})
to build a sequence of blocks which grow in size at a linear rate as they
to travel away from $x$ until reaching size $\sim |x-y|/100$.
We do the same for $y$, and then join up the two sequences using the same lemmas.

We now describe the construction in more detail, assuming \eqref{aia} in order to simplify
the argument.

First, $x \in T \subset T'$ for some $T \in \cT_M$ and $T' \in \cT_{M-1}$,
and we can find a $1$ by $2$ (or $2$ by $1$) directed block $(\cC_0, L_0)$ so that $T \subset \cC_0 \subset T'$,
and $L_0$ is the short edge of $\cC_0$ furthest from $x$, and $L_0$ does not meet the boundary of $T'$ or the
square of side $s_M$ removed from $T'$ in more than one point.
This gives us our first directed block $(\cC_0, L_0)$, which satisfies conditions (1) and (2), and
we define $\Gamma_0$ according to condition (3).

The induction step is as follows.  We assume that we have a sequence of blocks contained in a
$1$ by $2$ (or $2$ by $1$) directed block $(\cC_-,L_-)$ on scale $s_n$,
which is contained in some $T' \in \cT_{n-1}$ in such a way that the short edge $L_-$ does not meet the
the boundary of $T'$, or of the central removed square of $T'$, in more than one point.

We choose a $1$ by $2$ (or $2$ by $1$) directed block $(\cC_+,L_+)$ on scale $s_{n-1}$ so
that $T' \subset \cC_+$, and $L_+$ is the short edge of $\cC_+$ furthest from $\cC_-$,
and $L_+$ does not meet the boundary or centrally removed square of the square $T'' \in \cT_{s_{n-2}}$
with $T' \subset T''$.

We now build a sequence of directed blocks
$$
(\cC_-,L_-)=(\cC_0',L_0'), (\cC_1',L_1'), \ldots, (\cC_t', L_t')
$$
inside $\cC_+$, where $L_t' = L_+$, and where for $i=1,\ldots,t$,
$(\cC_i,L_i)$ follows $(\cC_{i-1},L_{i-1})$. Moreover, these blocks
satisfy conditions (4),(5) and (7), and their diameters sum to $\asymp
\dist(\cC_-, L_+)$.

The sequence of directed blocks is constructed using the following
algorithm. See figures \ref{fig:blockchain1} and \ref{fig:blockchain2}
for an illustration.

\begin{enumerate}
	\item Use turning Lemma \ref{turning} and straight Lemma
          \ref{doubling4} between zero and six times to build a chain
          of blocks so that the last block is coherent with $L_+$,
          closer to $L_+$ than $\cC_-$, and is not contained in the
          central column of $(\cC_+,L_+)$.
	\item Use expanding Lemma \ref{doubling} repeatedly to double
          away from the central column until, with an expansion by a
          factor between two and four, the long edge of~$\cC_+$ is
          reached.
	\item Use expanding Lemma \ref{doubling} repeatedly to double
          towards the central column, until of size $\geq
          \diam(L_+)/5$.
	\item Go straight (Lemma \ref{doubling4}) until past the last
          removed square of the central column.
	\item Expand by a factor less than five (Lemma
          \ref{doubling2}), with the last edge $L_+$.
\end{enumerate}

\begin{figure}
\begin{center}
\includegraphics[width=.25\textwidth]{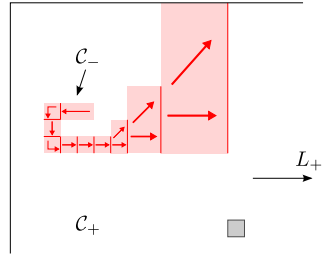}
\caption{Steps (1) and (2) of the algorithm}
\label{fig:blockchain1}
\end{center}
\end{figure}

\begin{figure}
\begin{center}
\includegraphics[width=.5\textwidth]{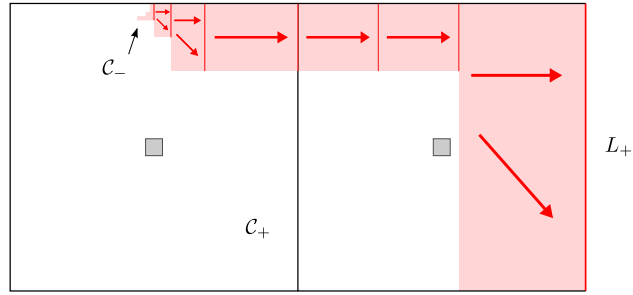}
\caption{Steps (3)--(5) of the algorithm}
\label{fig:blockchain2}
\end{center}
\end{figure}

We repeat this construction, growing in scale each time, until we are
on scale $s_m$, then again until we have a block of size $|x-y|/100$,
at a distance less than $|x-y|/10$ from $x$.  This gives us most of
the sequence of blocks with negative index.

We do the same for $y$, getting most of the (reverse) sequence of
blocks with positive index, then join the two chains together using
Lemmas \ref{turning} and \ref{doubling4}.
%It is straightforward to check that this construction is valid, and
%that the resulting chain of blocks satisfies conditions (1)--(7). We
%leave the details to the reader.
\end{proof}

%%%%%%%%%%%%%%%%%%%%%%%%%%%%%%%%%%%%%%%%%%%%%%%%%%%%%%%%%%%%%%%%%

\section{Validity of the Poincar\'e inequality: the case $\ba \in \ell^2 \setminus \ell^1$}\label{sec:validity-pi-2}

In this section we address the case $\ba \in \ell^2 \setminus \ell^1$. Our goal (Proposition \ref{yesPIl2}) is to prove that in this case the carpet $S_\ba$ verifies the $p$-Poincar\'e inequality for each $p>1$.

The overall structure of the proof is similar to that in the previous section. According to Theorems \ref{keith-theorem-1} and \ref{keith-theorem-2}, it suffices to verify the weighted modulus lower bound \eqref{keith-equation-1} on the precarpets $S_{\ba,M}$ with constants independent of $M$. To this end, we construct suitable curve families joining pairs of points in $S_{\ba,M}$ by concatenating curve families in suitable blocks. The construction of these curve families relies on `building block' lemmas analogous to Lemmas \ref{doubling}--\ref{doubling4} from the previous section. However, the proofs of the `building block' lemmas in the cases $\ba \in \ell^1$ and $\ba \in \ell^2\setminus\ell^1$ are quite different, and require two completely different methods.

Recall that in the case $\ba \in \ell^1$, the thinnest part of the carpet has positive length, so we were able to use combinatorial constructions to build modular curve families avoiding all omitted squares in the precarpet $S_{\ba,M}$.

In the case $\ba \in \ell^2$, considering only curves which pass through a thin part of the carpet cannot suffice. We instead
construct a larger curve family using ``bending'' machinery which we will develop in subsection~\ref{subsec:Bending}. We use this bending machinery to verify the $p$-Poincar\'e inequality for $p>1$ via Keith's theorem \ref{keith-theorem-1}. (This machinery does not apply to the previous case, as the H\"older conjugate exponent $q$ must be finite in order to use Proposition \ref{prop-weight-bound}.)

The bending machinery, and hence our building block lemmas for $p>1$, require the following condition:
\begin{equation}\label{aia2}
a_i < a_{**} \qquad \mbox{for all $i$,}
\end{equation}
where $a_{**} \in (0,a_*)$ is a fixed, universal constant whose value is determined in subsection~\ref{subsec:Bending}.
Here as always we have $\ba = (a_1, a_2, \ldots) \in \ell^2$.

As before, Theorem \ref{gluing-theorem} permits us to reduce to the case that \eqref{aia2} holds,
because any $S_\ba$ with $\ba \in \ell^2$ is the finite union of
similar copies of some carpet $S_{\ba'}$, where $\ba' \in \ell^2$ and
all entries of $\ba'$ are less than $a_{**}$, glued along their
boundaries. For the remainder of this section, we impose assumption \eqref{aia2}.

Fix $p>1$ with H\"older conjugate $q<\infty$. We first define a notion of $q$-connection which generalizes the previous notion of $\infty$-connection. As we will employ bending machinery in this section rather than restricting to paths in the thin part of the carpet, the choice of the distinguished set $\pi_M$ is simpler in this setting. For a directed block $(\cC,L)$ in a square $T'$, we let $\pi_M(L)=L$ and we let $h$ be the corresponding bijection (which now works out to be the restriction of an affine map).

\begin{definition}
Suppose that the directed block $(\cC_2,L_2)$ follows the directed block $(\cC_1,L_1)$, and let $h \colon \pi_M(L_1) \to \pi_M(L_2)$ be the natural ordered bijection.
A path family $\Gamma$ on $\pi_M(L_1)$ in $\cC_2$ is called an
\emph{$q$-connection} (with constant $C$)
if $\gamma(1) = h(\gamma(0))$ for each $\gamma \in \Gamma$,
and if $\nu_\Gamma \ll \cH^2{\restrict\spt(\Gamma)}$ with
\begin{equation}\label{good h def 2}
\left\| \frac{d\nu_\Gamma}{d\cH^2} \right\|_{L^q(\cC_2; \cH^2)} \leq C \left( \cH^1(\pi_M(L_1)) \right)^{-1+2/q}.
\end{equation}
% then we say that $\Gamma$ is a \emph{good} $h$-connection.
\end{definition}

The mysterious exponent on the right hand side of \eqref{good h def 2} can be justified by a dimensional analysis. Note that the measure $\nu_\Gamma$ is homogeneous of degree $1$ relative to the scalings $x\mapsto\lambda x$, $\lambda>0$, of $\R^2$. Hence the Radon--Nikodym derivative $\tfrac{d\nu_\Gamma}{d\cH^2}$ is homogeneous of degree $-1$. Since the $L^q$ norm is computed with respect to Lebesgue measure, it follows that the left hand side of \eqref{good h def 2} is homogeneous of degree $-1+\tfrac2q$.

In this setting of $\ba \in \ell^2$, our building block lemmas take the following form.  Recall that we fix $0<m \leq M$, and
are given directed blocks $(\cC_2,L_2)$ following $(\cC_1,L_1)$ in a directed square $(T',L') \in \cT_{m-1}$.

\begin{lemma}[Expanding]\label{doubling-q}  Suppose that 
\begin{itemize}
\item $(\cC_1,L_1)$ and $(\cC_2,L_2)$ are coherent with $(T',L')$,
% \item neither $\cC_1$ nor $\cC_2$ intersects the central column of $T'$,
\item the sides of $\cC_2$ perpendicular to $L_2$ have length equal to
	that of $L_1$, and
\item it holds that $\cH^1(L_2)/\cH^1(L_1) \leq 10$.
\end{itemize}
Then there is a $q$-connection $\Gamma$ in $\cC_2$ with constant $C=C(\| \ba \|_{2})$.
\end{lemma}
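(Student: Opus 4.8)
The plan is to reduce, exactly as in the proof of Lemma~\ref{doubling}, to an explicit construction in a \emph{solid} Euclidean rectangle, and then to invoke the bending machinery of subsection~\ref{subsec:Bending} to deflect the resulting curves off the omitted squares at a cost controlled by $\|\ba\|_2$. First I would normalize coordinates as in the proof of Lemma~\ref{doubling}: $T'=[0,s_{m-1}]^2$, $L'=[0,s_{m-1}]\times\{0\}$, $\cC_2=[0,as_m]\times[0,bs_m]$ with $L_2=[0,as_m]\times\{0\}$ and $L_1=[0,as_m]\times\{bs_m\}$ for suitable positive integers $a\ge b$. Since we are in the regime $\ba\in\ell^2$, here $\pi_M(L_1)=L_1$, $\pi_M(L_2)=L_2$, and the natural ordered bijection $h\colon L_1\to L_2$ is the restriction of an affine map of slope $\cH^1(L_2)/\cH^1(L_1)\le 10$. \emph{Forgetting} the carpet structure, I would build the ``naive'' path family $\Gamma_0=\{\gamma_z^0\}_{z\in L_1}$ in the solid rectangle $\cC_2$ precisely as in Lemma~\ref{doubling}, using for each $z$ a vertical--horizontal--vertical concatenation joining $z$ to $h(z)$. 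The Fubini and change-of-variables computation given there shows that $\nu_{\Gamma_0}\ll\cH^2{\restrict\spt\Gamma_0}$ with $\|d\nu_{\Gamma_0}/d\cH^2\|_{L^\infty(\cC_2;\cH^2)}\le 1/\cH^1(L_1)$. As $\cH^2(\cC_2)=\cH^1(L_1)\,\cH^1(L_2)\le 10\,\cH^1(L_1)^2$, H\"older's inequality then yields
\begin{equation*}
\left\|\frac{d\nu_{\Gamma_0}}{d\cH^2}\right\|_{L^q(\cC_2;\cH^2)}\le\frac{\cH^2(\cC_2)^{1/q}}{\cH^1(L_1)}\le 10^{1/q}\bigl(\cH^1(L_1)\bigr)^{-1+2/q},
\end{equation*}
so $\Gamma_0$ is a $q$-connection in the \emph{solid} rectangle with an absolute constant; this also confirms the exponent predicted by the dimensional analysis following \eqref{good h def 2}. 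Its only defect is that its curves cross the omitted squares of $S_{\ba,M}$ that meet $\cC_2$.

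Next I would catalogue those obstacles and bend the family off them. The omitted open squares contained in $\cC_2$ are, at each generation $j\ge m$, the central subsquares of the generation-$(j-1)$ squares meeting $\cC_2$; such an obstacle has side $a_j$ times that of its parent --- hence relative size $a_j<a_{**}$ --- and is at positive distance from its parent's boundary, so in particular it touches neither $L_1$ nor $L_2$. Applying Theorem~\ref{modulus-extension-theorem}, the bending/deformation theorem of subsection~\ref{subsec:Bending}, to $\Gamma_0$ and this countable family of obstacles --- this is the step that consumes the standing hypotheses $\ba\in\ell^2$ and $a_i<a_{**}$ --- I would obtain a path family $\Gamma$ on $L_1$ in $\cC_2\cap S_{\ba,M}$ which still satisfies $\gamma(1)=h(\gamma(0))$ (the deformation may be taken to fix the edges $L_1,L_2$), with $\nu_\Gamma\ll\cH^2{\restrict\spt\Gamma}$ and
\begin{equation*}
\left\|\frac{d\nu_\Gamma}{d\cH^2}\right\|_{L^q(\cC_2;\cH^2)}\le C(q,\|\ba\|_2)\left\|\frac{d\nu_{\Gamma_0}}{d\cH^2}\right\|_{L^q(\cC_2;\cH^2)}\le C(\|\ba\|_2)\bigl(\cH^1(L_1)\bigr)^{-1+2/q}.
\end{equation*}
By definition, this exhibits $\Gamma$ as a $q$-connection in $\cC_2$ with constant $C(\|\ba\|_2)$, which is the assertion of the lemma.

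The substance of the argument, and the reason the two regimes ``require two completely different methods,'' is concentrated entirely in the cost bookkeeping inside the bending step --- i.e.\ in Theorem~\ref{modulus-extension-theorem} and Proposition~\ref{prop-weight-bound}, which I am treating here as black boxes. What must be extracted there is: (i) that obstacles of a common generation $j$ lie in essentially disjoint subregions of $\cC_2$, so that by additivity of $\|\cdot\|_{L^q}^q$ over disjoint sets they incur a \emph{single} multiplicative factor at generation $j$ rather than one per obstacle; and (ii) that this per-generation factor has the form $1+O(a_j^2)$ --- this is where finiteness of the conjugate exponent $q$ is used, via Proposition~\ref{prop-weight-bound}, and where the smallness requirement $a_i<a_{**}$ guarantees enough room on both sides of each omitted square to reroute the deflected curves at this cost. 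The accumulated cost is then $\prod_{j\ge m}(1+O(a_j^2))$, which converges to a bound depending only on $\|\ba\|_2$ precisely because $\ba\in\ell^2$; had the per-generation factor instead been $1+O(a_j)$ one would need $\ba\in\ell^1$, consistent with the dichotomy between Theorems~\ref{thmA} and~\ref{thmB} and with the remark following Proposition~\ref{noPIl1}. Granting this input, the remaining building-block lemmas of this subsection are proved by the identical recipe: write down the naive Euclidean curve family exactly as in the corresponding $\ell^1$ lemma, estimate its density in $L^q$ by H\"older as above, and then bend.
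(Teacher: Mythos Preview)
Your overall strategy---build a naive curve family in the solid rectangle, then bend it off the omitted squares at multiplicative cost $\prod_j(1+Ca_j^2)$---is exactly the paper's, and the bookkeeping in your final paragraph is on the mark. But there is a genuine gap in your choice of initial family. The tool you need is Proposition~\ref{prop-weight-bound} (Theorem~\ref{modulus-extension-theorem} is a \emph{consequence} of the bending machinery, not the machinery itself, and does not accept a given curve family as input), and that proposition applies only to a $\delta_0$-good family of $C^2$ curves in the sense of Definitions~\ref{def-c2-family-curves} and~\ref{def-goodcurves}. Your vertical--horizontal--vertical concatenations have right-angle corners, so the unit tangent field $\dot\Gamma_0$ jumps by $\pi/2$ there; condition~\eqref{eq-good-curves-gamma-dot} fails outright, and Proposition~\ref{prop-weight-bound} cannot be invoked on this family.

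The paper sidesteps this by taking the initial family $\Gamma_m$ to consist of the \emph{straight line segments} joining each $z\in L_1$ to $h(z)\in L_2$ (see Figure~\ref{fig:modular-lemma-redux-figure}). These are smooth, with smoothly varying slope and weight, and one checks that $\Gamma_m$ is $\delta_1$-good on scales below $s_m$ for some universal $\delta_1>1$. A short sublemma then shows that passing to scales below $s_m':=\delta_1^{-3}\delta_0^3 s_m$ upgrades this to $\delta_0$-good; the choice $a_{**}=a'\delta_1^{-3}\delta_0^3$ ensures $s_{m+1}/s_m'<a'$, so Proposition~\ref{prop-weight-bound} applies at the first step, and thereafter one iterates exactly as you describe. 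Your $L^q$ bound on the initial weight via the $L^\infty$ bound and H\"older is fine and matches the paper's \eqref{eq-initial-weight-bound}; only the underlying family needs to be replaced.
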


\begin{lemma}[Expanding to the parent generation]\label{doubling2-q}  Suppose that
\begin{itemize}
\item $(\cC_1,L_1)$ is coherent with $(T',L_2)$, where $L_2=L'$,
\item the length of $L_1$ is equal to the length of an edge of $\cC_2$ perpendicular to $L_2$,
\item $L_1$ intersects an edge of $T'$ perpendicular to $L'$, and
\item it holds that $\cH^1(L_2)/\cH^1(L_1) \leq 10$.
\end{itemize}
Then there is a $q$-connection $\Gamma$ in $\cC_2$ with constant $C=C(\| \ba \|_{2})$.
\end{lemma}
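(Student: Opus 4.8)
The plan is to run the same two-step scheme one expects for Lemma~\ref{doubling-q}: first build, in the \emph{solid} rectangle $\cC_2$, an explicit family of pairwise disjoint curves realizing the ordered bijection $h$ and satisfying \eqref{good h def 2} with an absolute constant; then deform this family, using the bending procedure of subsection~\ref{subsec:Bending}, so that it avoids every removed square meeting $\cC_2$, paying a total multiplicative cost to the $L^q$-weight which is finite precisely because $\ba\in\ell^2$. To set up a model configuration, apply the orientation-preserving isometry bringing $(T',L')$ into standard position and rescale so that $T'=[0,1]^2$ and $L_2=L'=[0,1]\times\{0\}$; the central removed subsquare of $T'$ is then $Q_0=\bigl(\tfrac{1-a_m}{2},\tfrac{1+a_m}{2}\bigr)^2$. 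Since $\cC_2$ is a block in $T'$ whose leading edge $L_2$ is all of $L'$, it spans the full width, so $\cC_2=[0,1]\times[0,t]$ with $t=\cH^1(L_1)$; the hypothesis $\cH^1(L_2)/\cH^1(L_1)\le 10$ gives $t\ge\tfrac1{10}$, while $\cC_2\not\supseteq Q_0$ forces $t<\tfrac{1+a_m}{2}$, and by \eqref{aia2} we may assume $t\le\tfrac{1-a_m}{2}$, so that $\cC_2$ lies strictly below $Q_0$ (if $\tfrac{1-a_m}{2}<t<\tfrac{1+a_m}{2}$, the single small notch $\cC_2\cap Q_0$ at the top of $\cC_2$ is simply treated as one further obstacle below). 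The hypothesis that $L_1$ meets an edge of $T'$ perpendicular to $L'$ forces, up to the reflection $x\mapsto 1-x$, that $L_1=[0,t]\times\{t\}$; in this section $\pi_M(L_1)=L_1$, and $h\colon L_1\to L_2$ is the affine map $(u,t)\mapsto(u/t,0)$, expanding by the factor $1/t\in[\tfrac2{1-a_m},10]$.

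For the preliminary family, for $z=(u,t)\in L_1$ let $\gamma_z^{0}$ be the straight segment from $z$ to $h(z)=(u/t,0)$. At height $y$ the $x$-coordinate of $\gamma_z^{0}$ is $u\bigl((1-\tfrac yt)\tfrac1t+\tfrac yt\bigr)$ and the bracketed factor stays in $[1,\tfrac1t]$, so these segments are pairwise disjoint and all remain in $[0,1]\times[0,t]$. Since $h$ is affine, a single linear change of variables gives $\bigl\|\tfrac{d\nu_{\Gamma_0}}{d\cH^2}\bigr\|_{L^\infty(\cC_2)}\lesssim \cH^1(L_1)^{-1}$, whence, using $\cH^2(\cC_2)\asymp\cH^1(L_1)^2$,
\[
\Bigl\|\tfrac{d\nu_{\Gamma_0}}{d\cH^2}\Bigr\|_{L^q(\cC_2;\cH^2)}\;\lesssim\;\cH^1(L_1)^{-1}\,\cH^2(\cC_2)^{1/q}\;\asymp\;\cH^1(L_1)^{-1+2/q},
\]
so that $\Gamma_0=\{\gamma_z^{0}\}_{z\in L_1}$ satisfies \eqref{good h def 2} with an absolute constant. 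Because we stayed below $Q_0$, the only obstacles that $\Gamma_0$ can cross are the removed squares of $S_{\ba,M}$ of generations $m+1,m+2,\dots$ meeting $\cC_2$; a direct count shows that the ones of generation $m+j$ meeting $\cC_2$ have total area $\asymp a_{m+j}^{2}$.

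Next I would invoke the bending machinery of subsection~\ref{subsec:Bending}, in the form of Theorem~\ref{modulus-extension-theorem} together with the weight bound of Proposition~\ref{prop-weight-bound}, to deform $\Gamma_0$ generation by generation into a path family $\Gamma$ in $\cC_2\cap S_{\ba,M}$ which avoids every removed square meeting $\cC_2$, still realizes $h$ (the deformation fixes the endpoints on $L_1\cup L_2$), and has $\nu_\Gamma\ll\cH^2\restrict\spt(\Gamma)$. Deforming past the generation-$(m+j)$ squares multiplies $\bigl\|\tfrac{d\nu}{d\cH^2}\bigr\|_{L^q}$ by a factor $1+O(a_{m+j}^{2})$; the product of these factors over $j\ge1$ converges to a bound depending only on $\|\ba\|_2$. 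This is the only place where $\ba\in\ell^2$ is used, and it mirrors the convergent product $\prod_i\bigl(1-\tfrac2{45}a_i^{2}+\cdots\bigr)$ appearing in the proof of Proposition~\ref{noPIl2}. The resulting $\Gamma$ is the required $q$-connection with constant $C(\|\ba\|_2)$.

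This argument is a minor variant of the (expected) proof of Lemma~\ref{doubling-q}: the only new features are that $\cC_2$ has a bounded non-unit aspect ratio and must be kept below the large hole $Q_0$, both of which are absorbed by the preliminary family chosen above, and the proofs of the other building-block lemmas for $p>1$ are similar. Consequently the genuinely delicate step is not internal to this lemma but lies in subsection~\ref{subsec:Bending}: near the top of $\cC_2$ the family $\Gamma_0$ fans out of the length-$t$ segment $L_1$ and is therefore narrow, yet removed squares of every generation are present there with density that does not thin out, and one must verify that the cumulative cost of bending around all of them stays bounded in terms of $\|\ba\|_2$ alone. Granting that machinery, the present lemma follows by the specialization above.
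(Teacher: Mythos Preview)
Your approach is essentially the same as the paper's. The paper treats all four building-block lemmas \ref{doubling-q}--\ref{doubling4-q} in one stroke: it takes as initial family $\Gamma_m$ the straight segments in $\cC_2$ joining each interior point of $L_1$ to its image under the affine ordered bijection $h$, observes this is a $\delta_1$-good family of curves on scales below $s_m$ for a universal $\delta_1$, uses a sublemma to pass to $\delta_0$-goodness on a smaller scale, and then applies Proposition~\ref{prop-weight-bound} generation by generation around the removed squares of sides $s_{m+1},\dots,s_M$, accumulating a total multiplicative cost $\prod_{i>m}(1+Ca_i^2)^{1/q}\le\exp\bigl(\tfrac{C}{q}\|\ba\|_2^2\bigr)$ on the $L^q$-norm of the weight. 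Your normalization and your straight-line family $(u,t)\mapsto(u/t,0)$ are exactly this construction in the particular geometry of Lemma~\ref{doubling2-q}.

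Two small remarks. First, your worry about the ``narrow'' region near $L_1$ is a non-issue: Proposition~\ref{prop-weight-bound} is applied on balls of radius $s_i/5$ that are pairwise disjoint (the removed squares of a given generation are well separated), so the estimate $\int|w_{i+1}|^q\le(1+Ca_{i+1}^2)\int|w_i|^q$ holds globally, not just per ball; the narrowness never enters. Second, you omit the verification that the initial straight-line family is $\delta_0$-good in the sense of Definition~\ref{def-goodcurves}; the paper handles this via the universal constant $\delta_1$ and the sublemma that trades scale for goodness. Otherwise your argument matches the paper's.
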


\begin{lemma}[Turning]\label{turning-q}
Suppose that
\begin{itemize}
\item $L_1$ and $L_2$ are perpendicular, and
\item all edges of $\cC_2$ have length equal to the length of $L_1$.
\end{itemize}
Then there is a $q$-connection $\Gamma$ in $\cC_2$ with constant $C=C(\| \ba \|_{2})$.
\end{lemma}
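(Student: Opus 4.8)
The plan is to prove the three $q$-connection building-block lemmas (Lemmas~\ref{doubling-q}, \ref{doubling2-q}, \ref{turning-q}) by constructing explicit path families built out of \emph{bent} curves, and then estimating the $L^q$-norm of the associated transversal density via the bending machinery of subsection~\ref{subsec:Bending}. First I would recall the naive construction from the $\ba\in\ell^1$ case: for the Expanding lemma one takes, for each $z=(u,bs_m)\in L_1$, the concatenation $\gamma_z$ of a vertical segment, a horizontal segment, and a vertical segment connecting $z$ to $h(z)\in L_2$, exactly as in the proof of Lemma~\ref{doubling}. These curves are pairwise disjoint and sweep out $\cC_2\cap S_{\ba,M}$ with a transversal density that, by Fubini, is essentially $\cH^1(\pi_M(L_1))^{-1}$ times the characteristic function of the swept region --- but only \emph{if the region contains no omitted squares}. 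Since $\ba\notin\ell^1$, the thin part has zero length and such curves necessarily run into omitted squares at all scales, so the naive family is not contained in $S_{\ba,M}$.

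The key step is therefore to apply the bending construction to deform each naive curve around the omitted squares it meets. This is where I would invoke Theorem~\ref{modulus-extension-theorem} (or the more hands-on version developed in subsection~\ref{subsec:Bending}): starting from the straight-line family, one processes the omitted squares $Q$ inside $\cC_2$ in order of decreasing size, and each time a bundle of curves would cross $Q$ one bends them to run along $H_\ba$-type channels past $Q$, at the cost of multiplying the $L^q$-norm of the transversal density by a factor of the form $(1+C\,(\side Q/\ell)^{?})$ where $\ell$ is the local scale and the exponent is chosen so that, after taking the product over all omitted squares of all generations, one gets a finite bound. Because each parent square contributes exactly one omitted child of relative side $a_i$, the resulting infinite product is controlled by $\prod_i (1+C a_i^{2})<\infty$ --- this is precisely where $\ba\in\ell^2$ enters, and it dovetails with the dimensional-analysis remark after \eqref{good h def 2}, since each bending step is scale-invariant and the degree of homogeneity $-1+2/q$ is preserved. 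The condition \eqref{aia2} ($a_i<a_{**}$) is used to ensure the channels are wide enough that the bent bundles remain disjoint and the per-step multiplicative cost is genuinely close to $1$.

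For the Turning lemma (Lemma~\ref{turning-q}) and the parent-generation Expanding lemma (Lemma~\ref{doubling2-q}) I would argue that the same bending procedure applies: in each case there is an obvious naive family of piecewise-axis-parallel curves realizing the required ordered bijection $h\colon L_1\to L_2$ (for turning, an L-shaped or staircase family across a square block; for the parent generation, the analogue of the family in Lemma~\ref{doubling2}), the transversal density of the naive family is comparable to $\cH^1(L_1)^{-1}$ on its (bounded, scale-$\diam\cC_2$) support, and hence its $L^q$-norm is $\asymp \cH^1(L_1)^{-1}\cdot(\diam\cC_2)^{2/q}\asymp\cH^1(\pi_M(L_1))^{-1+2/q}$ as required by \eqref{good h def 2}; then bending multiplies this by a bounded factor depending only on $\|\ba\|_2$. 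I would remark (as the paper does elsewhere) that since $\pi_M(L)=L$ in this section and $h$ is affine, several of the bookkeeping complications of the $\ell^1$ case disappear here.

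The main obstacle I anticipate is \textbf{controlling the cumulative cost of the bending over infinitely many omitted squares while keeping the bent curves pairwise disjoint}. One must set up the channels and the order of processing carefully so that (i) bending around a large square does not later force unbounded additional bending around its descendants, (ii) the multiplicative cost per square genuinely scales like $a_i^{2}$ (not $a_i$, which would only give $\ell^1$) so that the infinite product converges exactly under $\ba\in\ell^2$, and (iii) the $L^q$ estimate is robust enough to survive the countable iteration --- this last point is precisely why the argument needs $q<\infty$ (so that $\|\cdot\|_{L^q}$ is finite on the bent supports and the product of $(1+Ca_i^2)$-type factors can be absorbed), and it explains the parenthetical remark that the bending machinery does not apply to the $p=1$ case. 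All of this is exactly the content of subsection~\ref{subsec:Bending} and Theorem~\ref{modulus-extension-theorem}, so once that machinery is in place the three lemmas follow by the scheme above; I would write out the Expanding case in full and indicate that Lemmas~\ref{doubling2-q} and \ref{turning-q} follow by the same argument with cosmetic changes, in parallel with the treatment in the previous section.
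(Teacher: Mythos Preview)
Your overall scheme is right and matches the paper's: start with a naive curve family in $\cC_2$ realizing the ordered bijection $h$, then iteratively apply Proposition~\ref{prop-weight-bound} to bend around the omitted squares of generations $m+1,\dots,M$, picking up a multiplicative factor $(1+Ca_i^2)^{1/q}$ at each step so that the total cost is bounded by $\exp(C\|\ba\|_2^2/q)$. You also correctly identify why $q<\infty$ is needed and how the homogeneity exponent $-1+2/q$ arises.

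There is one concrete mismatch worth flagging. You propose to take as initial family the piecewise-axis-parallel curves from the $\ell^1$ proof (vertical--horizontal--vertical for Expanding, L-shaped for Turning). The paper does \emph{not} do this: for all four lemmas it takes $\Gamma_m$ to be the family of \emph{straight line segments} from each interior point of $L_1$ to its image in $L_2$ under $h$ (now affine, since $\pi_M(L)=L$ in this section); see Figure~\ref{fig:modular-lemma-redux-figure}. The reason is that the bending machine (Proposition~\ref{prop-weight-bound}) applies only to $\delta_0$-good families in the sense of Definition~\ref{def-goodcurves}: the curves must be $C^2$ and the unit tangent field $\dot\Gamma$ must be $\tfrac23$-H\"older on every small ball. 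An L-shaped family fails this near the diagonal of the turning square --- arbitrarily close points carry perpendicular tangent vectors --- and the three-segment Expanding family has corners where $\dot\Gamma$ is not even continuous. By contrast, the straight-segment family is trivially an open measured family of $C^2$ curves with constant (for Turning) or slowly varying (for Expanding) direction field, hence $\delta_1$-good on scales below $s_m$ for a universal $\delta_1$; the sublemma then downgrades this to $\delta_0$-good on scales below $a_{**}s_m$, and iteration proceeds. So your plan would need an extra smoothing step (or a block-by-block application of the bending to each straight piece, with separate bookkeeping at the junctions) that the paper simply avoids by choosing diagonal initial curves. Also, the relevant input is Proposition~\ref{prop-weight-bound} itself, not Theorem~\ref{modulus-extension-theorem}; the latter is a corollary about modulus and does not directly produce the $q$-connection estimate~\eqref{good h def 2}.
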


\begin{lemma}[Going straight]\label{doubling4-q}
Suppose that
\begin{itemize}
\item $(\cC_1,L_1)$ and $(\cC_2,L_2)$ are coherent with $(T',L')$,
\item $L_1$ and $L_2$ are of equal length, and
\item the sides of $\cC_2$ perpendicular to $L_2$ have length in $[\cH^1(L_1)/2, 10 \cH^1(L_1)]$.
\end{itemize}
Then there is a $q$-connection $\Gamma$ in $\cC_2$ with constant $C=C(\| \ba \|_{2})$.
\end{lemma}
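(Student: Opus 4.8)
The plan is to establish Lemma~\ref{doubling4-q} --- and, by the same two-step scheme, Lemmas~\ref{doubling-q}--\ref{turning-q} --- by first producing an explicit \emph{model} path family that realizes a $q$-connection inside the Euclidean block $\cC_2$ (ignoring the removed squares of $S_{\ba,M}$), and then deforming this family, obstacle by obstacle, so that it comes to lie in $S_{\ba,M}\cap\cC_2$, controlling the multiplicative cost of each deformation by means of the bending machinery developed in subsection~\ref{subsec:Bending} (Theorem~\ref{modulus-extension-theorem} and Proposition~\ref{prop-weight-bound}).

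For the model family in the situation of Lemma~\ref{doubling4-q}, normalize the leading edge $L_2$ by the isometry used to define $\pi_M$ so that $\cC_2$ becomes a rectangle with $L_1$ and $L_2$ a pair of opposite sides of common length $\cH^1(L_1)$ and the perpendicular sides of length in $[\cH^1(L_1)/2,\,10\,\cH^1(L_1)]$. Since $h\colon\pi_M(L_1)\to\pi_M(L_2)$ is now a translation, let $\Gamma$ consist of the straight segments $\gamma_z$ running perpendicularly from $z\in L_1$ to $h(z)\in L_2$. Fubini's theorem identifies $\nu_\Gamma$ with $\cH^1(L_1)^{-1}$ times the restriction of $\cH^2$ to $\cC_2$, so $\|d\nu_\Gamma/d\cH^2\|_{L^q(\cC_2)}\asymp \cH^1(L_1)^{-1}\cH^2(\cC_2)^{1/q}\asymp \cH^1(L_1)^{-1+2/q}$; that is, $\Gamma$ is a $q$-connection in $\R^2$ with absolute constant. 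The model families for the other three lemmas --- two vertical segments joined by a horizontal one for the expanding lemmas, a family of corner curves for the turning lemma --- are treated by the identical Fubini/change-of-variables computation already carried out in the proof of Lemma~\ref{doubling}, the only role of the hypotheses being to keep the relevant aspect ratios bounded.

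It remains to bend the model family off of the removed squares. Because $\cC_2$ sits on scale $s_m$ inside $T'\in\cT_{m-1}$, the removed squares of $S_{\ba,M}$ meeting its interior are exactly the concentric subsquares discarded at generations $m+1,\dots,M$; an obstacle of generation $m+j$ has side $s_{m+j}$ and is concentric in a surviving square of side $s_{m+j-1}$, hence has relative size $a_{m+j}<a_{**}$. Enumerate these obstacles and push the family off of them one at a time using the construction of subsection~\ref{subsec:Bending}. Each deformation takes place in the interior of $\cC_2$ and fixes endpoints --- the ``follows'' hypothesis and the choice of leading edges guarantee that no obstacle is adjacent to $L_1\cup L_2$ --- so the resulting family still joins $z$ to $h(z)$ through $S_{\ba,M}\cap\cC_2$. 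By Proposition~\ref{prop-weight-bound}, the deformations at generation $m+j$ alter $d\nu_\Gamma/d\cH^2$ only on a set of $\cH^2$-measure $\lesssim a_{m+j}^2\,\cH^2(\cC_2)$, and there only by a bounded factor, so a short $L^q$ estimate shows that $\|d\nu_\Gamma/d\cH^2\|_{L^q}$ grows by a factor at most $1+C' a_{m+j}^2$ at each generation (it is exactly here that finiteness of $q$ is used). Multiplying over generations, the total cost of reaching $S_{\ba,M}$ is at most $\prod_{i\ge 1}(1+C' a_i^2)=:C(\|\ba\|_2)<\infty$, since $\ba\in\ell^2$. This produces the desired $q$-connection in $\cC_2$ with constant $C(\|\ba\|_2)$; as in the previous section it suffices to write out Lemma~\ref{doubling-q} in full, the others following by the same procedure applied to their model families.

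The main obstacle is the quantitative bending estimate itself: one must route a $q$-connection around a square obstacle of relative size $\delta$ at a cost of only $1+O(\delta^2)$ in the $L^q$ norm of the transversal measure. Supplying this is the entire point of subsection~\ref{subsec:Bending}, and the delicate issues there are (i) choosing $a_{**}$ in \eqref{aia2} small enough that each obstacle together with a definite collar still fits inside its parent square, leaving room to bend even after all earlier generations have been treated; (ii) arranging the estimate so that the obstacles of a given generation contribute jointly a factor $1+O(a_i^2)$ rather than $1+O(a_i)$ --- this $\delta^2$, rather than $\delta$, scaling is precisely why $\ell^2$, and not $\ell^1$, is the correct hypothesis, and it mirrors the measure computation behind Proposition~\ref{noPIl2}; and (iii) confining every deformation to the interior of $\cC_2$ and away from $L_1\cup L_2$, so that the prescribed affine correspondence between leading edges is preserved.
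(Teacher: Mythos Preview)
Your approach for Lemma~\ref{doubling4-q} itself is essentially the paper's: start from the family of straight segments joining $L_1$ to $L_2$, note that this gives a $q$-connection in $\R^2$ with absolute constant, then apply Proposition~\ref{prop-weight-bound} generation by generation to bend around the removed squares, picking up a multiplicative factor of at most $1+Ca_{m+j}^2$ at each generation and taking the product.

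Two points deserve correction. First, your parenthetical suggestion for the other three lemmas --- using the piecewise-linear ``two verticals and a horizontal'' or corner families from Section~\ref{sec:validity-pi-1} --- would not work: Proposition~\ref{prop-weight-bound} requires a $\delta_0$-good family of $C^2$ curves (Definition~\ref{def-goodcurves}), and families with corners fail this. The paper instead uses, for \emph{all four} lemmas, the family of straight (generally diagonal) segments connecting each point of $L_1$ to its image under the affine bijection $h$; see Figure~\ref{fig:modular-lemma-redux-figure}. These are $\delta_1$-good on scales below $s_m$ for some universal $\delta_1$, and a short sublemma then shows they are $\delta_0$-good on a slightly smaller scale, which is what Proposition~\ref{prop-weight-bound} needs. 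Second, your explanation of the $1+Ca_{m+j}^2$ factor is not quite right: the deformation at generation $m+j$ is \emph{not} supported on a set of measure $\asymp a_{m+j}^2\,\cH^2(\cC_2)$ --- it is supported on balls of radius comparable to $s_{m+j-1}$, which together cover essentially all of $\cC_2$. The $a_{m+j}^2$ gain instead comes from the fact that in each such ball the obstacle has relative size $a_{m+j}$, and Proposition~\ref{prop-weight-bound} bounds the increase in $\int w^q$ on that ball by $(1+Ca_{m+j}^2)$. You are citing the correct proposition, but the heuristic you give for it is misleading.
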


To produce the desired $q$-connection in these lemmas, we will use the bending machinery which we develop in
subsection~\ref{subsec:Bending}. We therefore postpone the proof of these lemmas until that time.
Assuming for the moment their validity, we complete the proof of the $p$-Poincar\'e inequality.

\subsection{Verification of the $p$-Poincar\'e inequality for $p>1$}\label{subsec:pi2}

We prove the following proposition.

\begin{proposition}\label{yesPIl2}
Suppose that $\ba \in \ell^2 $ and $p>1$. Then $S_\ba$ admits a $p$-Poincar\'e inequality.
\end{proposition}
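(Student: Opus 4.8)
The plan is to follow exactly the template of the proof of Proposition~\ref{yesPIl1}, replacing the $\infty$-connection building blocks by the $q$-connection building blocks of Lemmas~\ref{doubling-q}--\ref{doubling4-q} and replacing the $L^\infty$/$L^1$ duality step by an $L^q$/$L^p$ duality step. By Theorems~\ref{keith-theorem-1} and \ref{keith-theorem-2}, it suffices to check the weighted modulus lower bound \eqref{keith-equation-1} on each precarpet $(S_{\ba,M},d,\mu_M)$ with constants independent of $M$; as before we reduce via Theorem~\ref{gluing-theorem} to the situation where \eqref{aia2} holds, and we work with $\cH^2$ in place of $\mu$. Fix $x\ne y$ in $S_{\ba,M}$. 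If $|x-y|<10s_M$ we are in an essentially Euclidean situation (a single removed square at worst) and \eqref{keith-equation-1} holds with universal constants, so we may assume $10 s_m\le |x-y|<10 s_{m-1}$ for some $m\le M$.

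Next I would invoke the analogue of Lemma~\ref{lem-block-chain}: the combinatorial construction of the chain of directed blocks $\{(\cC_i,L_i)\}_{i=K_-}^{K_+}$ satisfying properties (1)--(7) is purely geometric and does not refer to the value of $p$, so the same algorithm (steps (1)--(5) in the proof of Lemma~\ref{lem-block-chain}) produces the block chain here, now using the $q$-connection building block lemmas to populate each block with a path family $\Gamma_i$ that is a $q$-connection with constant $C_0=C_0(\|\ba\|_2)$. Concatenating the $\Gamma_i$ along the natural ordered bijections on the successive leading edges yields a path family $\Gamma$ of pairwise disjoint rectifiable curves joining $x$ to $y$, together with a probability measure $\sigma$ on $\Gamma$ and the associated measure $\nu=\nu_\Gamma$ defined by \eqref{nu-Gamma}, which restricts to $\nu_{\Gamma_i}$ on each $\cC_i$ (and to the straight-line families near the endpoints on $\cC_{K_-}$, $\cC_{K_+}$).

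The heart of the argument, and where it differs from the $\ell^1$ case, is the estimate on $\|d\nu/d\mu_{xy}^{C_2}\|_{L^q(\mu_{xy}^{C_2})}$. On each interior block $\cC_i$ one has $\diam(\cC_i)\asymp\min\{\dist(x,\cC_i),\dist(y,\cC_i)\}$ by property (5), hence $\mu_{xy}^{C_2}\asymp \diam(\cC_i)^{-1}\cH^2$ on $\cC_i$; combining this with the defining bound \eqref{good h def 2} for a $q$-connection and with $\cH^1(\pi_M(L_i))=\cH^1(L_i)\asymp\diam(\cC_i)$ gives
\begin{align*}
\left\|\frac{d\nu}{d\mu_{xy}^{C_2}}\right\|_{L^q(\cC_i;\mu_{xy}^{C_2})}
&\asymp \left\|\frac{d\nu}{d\cH^2}\right\|_{L^q(\cC_i;\cH^2)}\diam(\cC_i)^{1-1/q}\\
&\lesssim \diam(\cC_i)^{-1+2/q}\diam(\cC_i)^{1-1/q}=\diam(\cC_i)^{1/q}.
\end{align*}
The bound on the endpoint blocks $\cC_{K_-},\cC_{K_+}$ is handled as in the $\ell^1$ case by a direct computation, since there both $\nu$ and $\mu_{xy}^{C_2}$ are comparable to the Riesz-type measure $A\mapsto\int_A |x-z|^{-1}d\cH^2(z)$ (respectively with $y$), giving a contribution $\lesssim s_M^{1/q}\lesssim |x-y|^{1/q}$. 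Summing the $q$-th powers over $i=K_-,\dots,K_+$ and using property (6), $\sum_i\diam(\cC_i)\asymp|x-y|$, together with the fact that the $\cC_i$ are essentially disjoint (property (7)) so the $L^q$ norms add in $q$-th power, yields
$$
\left\|\frac{d\nu}{d\mu_{xy}^{C_2}}\right\|_{L^q(\mu_{xy}^{C_2})}^q \lesssim \sum_{i=K_-}^{K_+}\diam(\cC_i)\asymp |x-y|,
$$
so that $\|d\nu/d\mu_{xy}^{C_2}\|_{L^q(\mu_{xy}^{C_2})}\lesssim |x-y|^{1/q}=|x-y|^{1-1/p}$. Finally, for any $\rho$ admissible for $\Gamma$, Fubini and H\"older give
\begin{align*}
1 &\le \int_\Gamma\int_\gamma\rho\,ds\,d\sigma(\gamma)=\int_{\spt\Gamma}\rho\,d\nu
=\int_{S_{\ba,M}}\rho\,\frac{d\nu}{d\mu_{xy}^{C_2}}\,d\mu_{xy}^{C_2}\\
&\le \|\rho\|_{L^p(\mu_{xy}^{C_2})}\left\|\frac{d\nu}{d\mu_{xy}^{C_2}}\right\|_{L^q(\mu_{xy}^{C_2})}
\lesssim |x-y|^{1-1/p}\,\|\rho\|_{L^p(\mu_{xy}^{C_2})},
\end{align*}
so $|x-y|^{1-p}\lesssim \int\rho^p\,d\mu_{xy}^{C_2}$ after raising to the $p$-th power. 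Taking the infimum over admissible $\rho$ gives \eqref{keith-equation-1}, which by Theorem~\ref{keith-theorem-1} (applied to the precarpets) and Theorem~\ref{keith-theorem-2} (to pass to $S_\ba$) completes the proof, modulo the building block Lemmas~\ref{doubling-q}--\ref{doubling4-q}. The main obstacle in this argument is precisely the postponed verification of those building block lemmas via the bending machinery of subsection~\ref{subsec:Bending}; the concatenation and duality bookkeeping above is routine once the correct scaling exponent $-1+2/q$ in the definition of $q$-connection is in hand, as confirmed by the dimensional-analysis remark following \eqref{good h def 2}.
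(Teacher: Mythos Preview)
Your proposal is correct and follows essentially the same approach as the paper: you reduce to the precarpets via Theorems~\ref{keith-theorem-1} and~\ref{keith-theorem-2}, reuse the block-chain construction of Lemma~\ref{lem-block-chain} with condition~(4) replaced by its $q$-connection analogue, and then carry out the $L^p$/$L^q$ duality estimate block-by-block to obtain $\|d\nu/d\mu_{xy}\|_{L^q}^q \lesssim \sum_i \diam(\cC_i) \asymp |x-y|$, exactly as the paper does. The paper likewise defers the verification of Lemmas~\ref{doubling-q}--\ref{doubling4-q} to the bending machinery in subsection~\ref{subsec:Bending}.
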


\begin{proof}

This proof is virtually identical to that of Proposition \ref{yesPIl1}, 
using the new building block lemmas \ref{doubling-q}--\ref{doubling4-q}. 
Lemma \ref{lem-block-chain} remains the same, except that condition (\ref{good con exist}) is replaced by
\begin{enumerate}
\item[(4')]\label{good con exist 2} For each $i=(K_-+1), \ldots, -1$, $(\cC_i,L_i)$ follows $(\cC_{i-1}, L_{i-1})$;
	for each $i=1, \ldots, K_+-1$, $(\cC_i,L_i)$ follows $(\cC_{i+1}, L_{i+1})$;
	$(\cC_0,L_1)$ follows $(\cC_{-1},L_{-1})$, and
	$(\cC_0,L_{-1})$ follows $(\cC_1, L_1)$.
	In each case, $\Gamma_i$ is a $q$-connection with constant $C_0$.
\end{enumerate}
This condition follows by using Lemmas \ref{doubling-q}--\ref{doubling4-q} and exactly the same argument as before.

We now complete the proof of Proposition \ref{yesPIl2}. First, concatenate the path families $\Gamma_{K_-}, \ldots, \Gamma_{K_+}$ by gluing paths using the natural ordered bijection on each block.  This gives a path family $\Gamma$
consisting of pairwise disjoint, rectifiable curves joining $x$ to $y$, carrying a
probability measure $\sigma = \sigma_\Gamma$ on $\Gamma$ which agrees with $\sigma_{\Gamma_i}$ for each $i$ on $\cC_i$.
The measure $\nu = \nu_\Gamma$ on the support of $\Gamma$ defined as in \eqref{nu-Gamma} restricts to $\nu_{\Gamma_i}$ on each $\cC_i$, and is absolutely continuous with respect to $\mu_{xy}$. For $i = {K_-}, \ldots, {K_+}$, we have the following bound on the
Radon--Nikodym derivative $\frac{d\nu}{d\mu_{xy}}$:
\begin{align*}
	\left\| \frac{d\nu}{d\mu_{xy}} \right\|_{L^q(\cC_i; \mu_{xy})}
		& \asymp \left\| \frac{d\nu}{d\cH^2} \right\|_{L^q(\cC_i; \cH^2)} \diam(\cC_i)^{1-1/q} \\
		& \lesssim \left( \cH^1(\pi_M(L_i)) \right)^{-1+2/q} \diam(\cC_i)^{1-1/q} \\
		& \lesssim \diam(\cC_i)^{-1+2/q} \diam(\cC_i)^{1-1/q} = \diam(\cC_i)^{1/q}.
\end{align*}
This bound also holds on $\cC_{K_-}$ and $\cC_{K_+}$: note that on $\spt(\Gamma_{K_-}) \subset \cC_{K_-}$, both
$\nu$ and $\mu_{xy}$ are comparable to the measure $A \mapsto \int_A  1/|x-z| \ d\cH^2(z)$.
An elementary calculation gives
\[
	\left\| \frac{d\nu}{d\mu_{xy}} \right\|_{L^q(\cC_{K_-}; \mu_{xy})} \lesssim s_M^{1/q} \asymp \diam(\cC_{K_-})^{1/q}.
\]
An analogous argument proves the bound for $\cC_{K_+}$. Summing over $i$ gives
$$
\left\| \frac{d\nu}{d\mu_{xy}} \right\|_{L^q(\mu_{xy})}^q = \sum_{i={K_-}}^{K_+} \left\| \frac{d\nu}{d\mu_{xy}} \right\|_{L^q(\cC_i, \mu_{xy})}^q \lesssim \sum_{i={K_-}}^{K_+} \diam(\cC_i) \lesssim |x-y|.		
$$
If $\rho$ is admissible for $\Gamma$, then
\begin{align*}
1 &\le \int_{\Gamma} \int_\gamma \rho \, ds \, d\sigma(\gamma)
 = \int_{\spt\Gamma} \rho \ d\nu \\
 &= \int_{S_{\ba,M}} \rho \, \frac{d\nu}{d\mu_{xy}} \, d\mu_{xy}
 \le \| \rho \|_{L^p(\mu_{xy})}    \    \left\| \frac{d\nu}{d\mu_{xy}} \right\|_{L^q(\mu_{xy})}
 \lesssim \| \rho \|_{L^p(\mu_{xy})}  \  |x-y|^{1/q}.
\end{align*}
Consequently
\[
	\int_{S_{\ba,M}} \rho^p \, d\mu_{xy} \gtrsim |x-y|^{-p/q} = |x-y|^{1-p}
\]
and so \eqref{keith-equation-1} holds. This completes the proof of Proposition \ref{yesPIl2}, 
modulo the building block lemmas~\ref{doubling-q}--\ref{doubling4-q}.
\end{proof}

% We now introduce the required bending machinery.

\subsection{Bending curve families}\label{subsec:Bending}

In this section, we introduce the bending machinery needed to prove Lemmas \ref{doubling-q}--\ref{doubling4-q}.
The methods used allow us to build curve families of positive modulus in a wide class of compact planar sets,
as illustrated by the following theorem, which gives a general sufficient condition for such curve families.
% we show that the carpets $S_\ba$ support curve families of positive $p$-modulus for $p>1$, when $\ba \in \ell^2$.
% In fact, we verify this conclusion for a significantly wider class of compact planar sets.  

\begin{theorem}\label{modulus-extension-theorem}
Let $D\subset\R^2$ be the closure of a domain and let $\ba = (a_1,a_2,\ldots) \in \ell^2$ with $a_m \in (0,1)$ for all $m$. For
each $m$, let $s_m = \prod_{j=1}^m a_j$, and let $\cU_m$ be a family of disjoint open subsets of $D$. Assume that
the following two conditions are satisfied:
\begin{itemize}
\item for all $U \in \cU_m$, $\diam U \le 2 s_m$, and
\item for all $U\in \cU_m$ and all $V\in \{\partial D \} \cup \cU_1\cup\cU_2\cup\cdots\cup\cU_m$ with $V \neq U$, we have  $\dist(U,V) \ge \tfrac25s_{m-1}$.
\end{itemize}
Let $S_M := D \setminus \cup\{U : U \in \cU_1 \cup \cdots \cup \cU_M\}$
and
$$
S = \bigcap_{M\ge 0} S_M.
$$
Then for all $p>1$ and all relatively open balls $B\subset S$, there exists a curve family contained in $B$ with positive $p$-modulus
with respect to the measure $\cH^2$ restricted to $S$.
\end{theorem}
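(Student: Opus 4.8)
The plan is to construct, starting from any curve family in a ball $B \subset S$ that has positive $p$-modulus as a family in $S_0 = D$ (e.g. a family of parallel line segments in $B$, provided $B$ is small enough to sit inside $D$ up to controlled distortion), a decreasing sequence of curve families $\Gamma_0 \supset \Gamma_1 \supset \cdots$ where $\Gamma_M$ avoids all the obstacles $\cU_1 \cup \cdots \cup \cU_M$, and to show that the $p$-modulus (with respect to $\cH^2$) does not collapse to zero in the limit. The engine is a single \emph{bending step}: given a curve family $\Gamma$ in $S_{m-1}$ and one obstacle $U \in \cU_m$, one pushes the curves of $\Gamma$ off $U$ by a local homeomorphism supported in a collar of $U$ of size comparable to $s_{m-1}$, producing a new family $\Gamma'$ in $S_{m-1} \setminus U$ whose $p$-modulus satisfies $\mod_p(\Gamma'; \cH^2) \ge (1 - C a_m^2)\, \mod_p(\Gamma; \cH^2)$ for a universal constant $C$. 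Since the obstacles at level $m$ are disjoint and $\tfrac25 s_{m-1}$-separated from one another and from $\partial D$, the bending maps for different $U \in \cU_m$ have disjoint supports and can be performed simultaneously; and the collar of a level-$m$ obstacle, having size $\lesssim s_{m-1}$, is separated from all lower-level obstacles, so earlier bendings are not disturbed. Iterating over $m = 1, 2, \ldots$ gives
$$
\mod_p(\Gamma_M; \cH^2{\restrict S_M}) \ge \prod_{m=1}^M (1 - C a_m^2)\, \mod_p(\Gamma_0; \cH^2) \ge c\, \mod_p(\Gamma_0; \cH^2),
$$
where the infinite product converges to a positive constant $c$ precisely because $\ba \in \ell^2$. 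Finally, a semicontinuity argument (curves in $\Gamma_M$ converge, after reparametrization, to curves in the limit family supported in $S$, and admissible functions for the limit family pull back to admissible functions for $\Gamma_M$ up to small error) transfers the uniform lower modulus bound to a curve family inside $B \cap S$.

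The key steps, in order, are: (1) normalize via Theorem \ref{gluing-theorem}-style reductions to assume $a_m < a_{**}$ so that the collars are genuinely thin, and fix the ball $B$ small enough that the initial family $\Gamma_0$ of parallel segments lies in the interior of $D$; (2) prove the \textbf{single bending lemma}: construct the explicit bending homeomorphism $\phi_U$ that is the identity outside an annular collar $N_U$ of $U$ with $\diam N_U \asymp s_{m-1}$, maps $S_{m-1}$ onto $S_{m-1} \setminus U$, and has bi-Lipschitz constant $1 + O(a_m)$ on each curve — then estimate how $\mod_p$ changes by pushing forward admissible metrics and controlling the Jacobian of $\phi_U$, isolating the multiplicative loss as $1 - O(a_m^2)$ (the square is what makes $\ell^2$, rather than $\ell^1$, the right condition, matching the dimensional analysis used elsewhere in the paper); (3) check the \textbf{disjointness/non-interference} of the level-$m$ bendings with each other and with all lower levels, using the two hypotheses $\diam U \le 2 s_m$ and $\dist(U,V) \ge \tfrac25 s_{m-1}$; (4) take the product over all levels, using $\sum a_m^2 < \infty$ to conclude $\prod (1 - C a_m^2) > 0$; (5) pass to the limit $M \to \infty$ and produce the final curve family in $S \cap B$ by a compactness/lower-semicontinuity argument for modulus under the Gromov--Hausdorff-type convergence $S_M \downarrow S$ (this is the same mechanism behind Theorem \ref{keith-theorem-2}).

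The main obstacle is step (2): making the bending map precise enough that the loss in $p$-modulus is genuinely quadratic in $a_m$ and not merely linear. The naive radial push-off of a disk of radius $\sim a_m s_{m-1}$ inside a collar of radius $\sim s_{m-1}$ distorts lengths of transiting curves by a factor $1 + O(a_m)$; a first-order count would then only give $1 - O(a_m)$ per step, which is fatal since $\ba$ need not be in $\ell^1$. Obtaining the square requires a careful design of the collar geometry — the curves entering the collar are nearly straight and nearly perpendicular to the excised set, so the \emph{net} change in length over entry-plus-exit cancels to first order, leaving an $O(a_m^2)$ residual — together with an $L^p$-type (rather than $L^1$-type) bookkeeping of the modulus that exploits the convexity of $t \mapsto t^p$. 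This is precisely the ``small quantitative multiplicative cost'' advertised in the introduction, and getting the exponent right is the technical heart of the argument; everything else is geometric verification using the two hypotheses on $\cU_m$.
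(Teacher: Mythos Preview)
Your overall architecture matches the paper's: start from parallel segments, iteratively bend around obstacles level by level, control the multiplicative cost per level by $1+Ca_m^2$, and pass to the limit using upper semicontinuity of modulus. But step (2), the single bending lemma, is where the paper's real work lies, and your proposed mechanism for extracting the quadratic loss is not the one that succeeds.

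First, a homeomorphism $\phi_U$ of the plane cannot map $S_{m-1}$ onto $S_{m-1}\setminus U$: you cannot open a hole by a homeomorphism. The paper handles this by \emph{deleting one curve} $\gamma_0$ from the family and then applying a $C^2$ diffeomorphism of the complement of $\gamma_0$; the gap opens along the deleted curve. More importantly, your proposed source of the $a_m^2$ factor---first-order cancellation of length over entry-plus-exit, combined with convexity of $t\mapsto t^p$---does not survive the modulus computation. A map with bi-Lipschitz constant $1+O(a_m)$, even if supported in a collar, changes $p$-modulus by a factor $1+O(a_m)$ via the standard push-forward of admissible densities, and the length cancellation you describe controls $\int_\gamma\rho\,ds$ but not the Jacobian factor in $\int\rho^p\,d\cH^2$.

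The paper's mechanism is different and genuinely more delicate. The bending is performed in a box $Q$ of side $\asymp s_m$ (comparable to the \emph{obstacle}, not to the separation $s_{m-1}$), and inside $Q$ the natural weight $w=d\nu_\Gamma/d\cH^2$ is allowed to change by a \emph{bounded} factor (up to $2$). The quadratic gain comes from the area ratio $\cH^2(Q)/\cH^2(R)\asymp a_m^2$, where $R$ is a box of side $\asymp s_{m-1}$, together with the fact that $w$ is \emph{approximately constant on $R$}; then $\int_Q w^q\lesssim a_m^2\int_R w^q$ and hence $\int_R w_{new}^q\le(1+Ca_m^2)\int_R w^q$. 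The heart of the proof is Proposition~\ref{prop-weight-bound}: propagating the ``$\delta_0$-good'' condition (approximate straightness of curves and approximate constancy of $w$ at the current scale) through each bending step. This forces the $C^2$ local compression map built from a cubic spline and the $\tfrac23$-H\"older estimates of Section~\ref{modulus-subsection}. Without maintaining approximate constancy of the weight across scales, the area-ratio argument collapses and nothing better than $1+O(a_m)$ is available.
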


The coefficients $2$ and $\tfrac25$ in Theorem \ref{modulus-extension-theorem} have been fixed for the sake of definiteness and can be varied without changing the result.

In the setting of the carpets $S_\ba$, our arguments give the following corollary, independent of Theorem~\ref{thmB}.

\begin{corollary}\label{thm-l2-mod}
For any $p>1$ and $\ba \in \ell^2$, there exists a positive constant $C = C(p,\ba)$ so that the $p$-modulus of the curve family joining the left hand edge to the right hand edge of $S_\ba$ is at least $C$. If $\ba$ is monotone decreasing, then $C = C(p,\|\ba\|_{2})$.
\end{corollary}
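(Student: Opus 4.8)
Corollary~\ref{thm-l2-mod} is in essence Theorem~\ref{modulus-extension-theorem} applied to $S_\ba$, together with the observation that the curve family produced by the bending iteration can be arranged to join the left and right edges of $S_\ba$; no appeal to Keith's theorem or to Proposition~\ref{yesPIl2} is needed.

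\emph{Reduction to \eqref{aia2}.} As at the start of this section, write $S_\ba$ as a finite union --- glued along shared boundaries --- of similar copies of a carpet $S_{\ba'}$, where $\ba' = (a_{j_0+1}, a_{j_0+2}, \ldots) \in \ell^2$ has all entries below $a_{**}$ and the copies are the squares of $\cT_{j_0}$, rescaled by $s_{j_0}$. The vertical edges $\{0\}\times[0,1]$ and $\{1\}\times[0,1]$ of $[0,1]^2$ survive every stage of the construction, hence lie in $S_\ba$; consequently the topmost row of squares of $\cT_{j_0}$ forms a chain of $s_{j_0}^{-1}$ squares joining the left edge to the right edge. Concatenating, via the natural ordered bijections on the shared vertical edges, a positive-$p$-modulus left-to-right family inside each square of this chain (each being a rescaled copy of $S_{\ba'}$) produces a left-to-right curve family in $S_\ba$ of positive $p$-modulus with respect to $\cH^2{\restrict S_\ba}$; being a subfamily of the full left-to-right family, a lower bound for it suffices. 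If $\ba$ is monotone decreasing then $j_0 = \#\{j : a_j \ge a_{**}\}$ obeys $j_0 a_{**}^2 \le \|\ba\|_2^2$ and $s_{j_0}^{-1} = \prod_{j\le j_0} a_j^{-1} \le a_{**}^{-j_0}$, so all constants below depend only on $p$ and $\|\ba\|_2$.

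\emph{The bending iteration with the edge constraint.} Assume now \eqref{aia2}. Take $D = [0,1]^2$ and let $\cU_m$ be the collection of open central subsquares removed at level $m$: each has diameter $\sqrt2\, s_m \le 2 s_m$, and a level-$m$ removed square is concentric in a surviving square $T'\in\cT_{m-1}$ of side $s_{m-1}$, so its distance to $\partial T'$ --- hence to $\partial D$ and to every removed square of level at most $m$ --- equals $\tfrac{1-a_m}{2} s_{m-1} > \tfrac25 s_{m-1}$, using $a_m < a_{**} < \tfrac15$; so the hypotheses of Theorem~\ref{modulus-extension-theorem} hold with $S = S_\ba$. Rather than invoking the theorem as a black box I would re-run the bending iteration of subsection~\ref{subsec:Bending} starting from the global initial family $\Gamma_0$ of all horizontal segments $[0,1]\times\{t\}$, $t\in[0,1]$, which joins the left edge to the right edge, is supported on all of $[0,1]^2$, and carries the transversal measure $\cH^2{\restrict[0,1]^2}$. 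At stage $m$, bend $\Gamma_{m-1}$ around the obstacles of $\cU_m$ to get a family $\Gamma_m$ supported in the precarpet $S_{\ba,m}$; since every obstacle of $\cU_m$ remains at distance $\ge \tfrac25 s_{m-1}$ from $\partial D$, and in particular from the (obstacle-free) left and right edges of $[0,1]^2$, the bending leaves the endpoints on those edges undisturbed, so $\Gamma_m$ still joins the left edge to the right edge. In the limit one obtains a family $\Gamma_\infty$ of rectifiable curves in $S_\ba$ joining its left and right edges, with a transversal measure $\nu_{\Gamma_\infty} \ll \cH^2{\restrict S_\ba}$, and by H\"older duality (as in the proof of Proposition~\ref{yesPIl2}), $\mod_p(\Gamma_\infty; \cH^2{\restrict S_\ba}) \ge \|\, d\nu_{\Gamma_\infty}/d\cH^2 \,\|_{L^q}^{-p}$.

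\emph{Main obstacle.} The quantitative content --- and the only real obstacle --- is the per-stage estimate furnished by subsection~\ref{subsec:Bending}: because the obstacles of $\cU_m$ have size comparable to $a_m s_{m-1}$ and are $\tfrac25 s_{m-1}$-separated, a single bending step multiplies the optimal $L^q$-norm of the transversal density by a factor $1 + O(a_m^2)$, uniformly in $m$. This is exactly the estimate that degenerates to a factor $1 + O(a_m)$ when $p = 1$, which is why the $1$-modulus statement requires $\ba \in \ell^1$; and it is the estimate underlying the building-block Lemmas~\ref{doubling-q}--\ref{doubling4-q}. Granting it, $\ba\in\ell^2$ makes $\prod_{m\ge1}(1+O(a_m^2))$ converge, so $\|d\nu_{\Gamma_\infty}/d\cH^2\|_{L^q}$ stays bounded and $\mod_p(\Gamma_\infty;\cH^2{\restrict S_\ba}) \ge C(p,\ba) > 0$, with $C$ depending only on $p$ and $\|\ba\|_2$ in the monotone case. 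I expect no new difficulty beyond subsection~\ref{subsec:Bending} itself: the work specific to this corollary is only checking that the iteration respects the left-to-right constraint (done above) and tracking constants through the reduction to \eqref{aia2}.
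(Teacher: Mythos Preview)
Your approach is correct and rests on the same bending machinery as the paper's; the only substantive difference is how you dispose of the finitely many initial terms $a_1,\dots,a_{j_0}$ that may exceed the threshold. You reduce by concatenating left-to-right families across the top row of $\cT_{j_0}$ and then bending inside each square. The paper instead simply restricts to the thin strip $T=[0,1]\times[0,s_M]$, where $M$ is chosen so that $a_i<a'$ for $i\ge M$: this strip is precisely the bottom row of $\cT_M$, contains no removed squares from levels $\le M$, and already spans from the left edge to the right edge. One then starts the bending iteration directly from the horizontal lines in $T$ (with initial weight $w_M\equiv s_M^{-1}$, hence $\|w_M\|_q=s_M^{-1/p}$) and bends only at levels $M+1,M+2,\dots$; the estimates \eqref{eq-absoluteweightbound} and \eqref{here-is-q} give the bound immediately, with no concatenation or endpoint-matching needed. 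Both routes yield the same quantitative conclusion, and both require the same care about the ``no curves stopping in $B(z,s_i)$'' hypothesis of Proposition~\ref{prop-weight-bound} for obstacles near the vertical boundaries (handled, as in the building-block lemmas, by taking $s_i$ a fixed small multiple of $s_{m-1}$). The paper's strip trick just buys a cleaner write-up.
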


% Our proof of Theorem \ref{modulus-extension-theorem} is constructive: we build an explicit, nicely parameterized, curve family of positive modulus. In section \ref{sec:validity-pi-1} we concatenate several of these families in order to verify the Poincar\'e inequality for $p>1$; this concatenation uses the explicit parameterization.

The basic idea of the construction in this section is as follows. We
present an algorithm which accepts as input a family of curves in the
plane and which yields as output a new family of curves which avoids a
prespecified obstacle at a small quantitative multiplicative cost to
the $p$-modulus. We apply this algorithm recursively to avoid all of
the omitted sets. The algorithm in question works by splitting the
family of input curves in two pieces which are deformed to pass on
either side of the obstacle. (Similar ideas appear in a paper
of Chris Bishop \cite{bis:a1} on $A_1$ deformations of the plane.)

The curve families that we consider are axiomatized in the following
definition.

\begin{definition}\label{def-c2-family-curves}
An \emph{open measured family of $C^2$ curves} is a collection $\Gamma$ of
disjoint, oriented $C^2$ curves in a set $X \subset \R^2$, together with a
probability measure $\sigma$ on $\Gamma$, such that the union of all the curves in
$\Gamma$, denoted $\spt\Gamma$, is an open subset of $X$.
We will denote such a pair by $(\Gamma,\sigma)$, or just by $\Gamma$ if the measure $\sigma$ is understood.
\end{definition}

There is a natural measure $\nu_\Gamma=\nu_{(\Gamma,\sigma)}$ defined on $\spt\Gamma$ by
\begin{equation}\label{eq-def-nu}
\nu_\Gamma(V) = \int_\Gamma \mathcal{H}^1(V \cap \gamma)
d\sigma(\gamma).
\end{equation}
At this point, the integral in \eqref{eq-def-nu} should be interpreted
as an upper integral with value in $[0,\infty]$. However, under the
conditions of Definition~\ref{def-goodcurves}, $\nu_\Gamma$ will
be a finite Borel measure.

We assume that each curve in $\Gamma$ is parameterized with nonzero speed, consistent with the specified orientation.
Since each curve in $\Gamma$ is $C^2$, there is a vector field $\dot{\Gamma}$ defined on $\spt\Gamma$ such that $\dot\Gamma(x)$ coincides with the unit tangent vector to the unique curve $\gamma_x \in \Gamma$ passing through $x$ at time $t_x$. In fact,
\begin{equation}\label{dotGamma}
\dot\Gamma(x) = \frac{\gamma_x'(t_x)}{|\gamma_x'(t_x)|}.
\end{equation}

\begin{definition}\label{def-goodcurves}
Fix $\delta_0 \geq 0$ and $r_0 > 0$.  Suppose $(\Gamma,\sigma)$ is an open measured family
of $C^2$ curves, with $\nu_\Gamma$ defined as in \eqref{eq-def-nu}.
We say that $(\Gamma,\sigma)$ is a \emph{$\delta_0$-good family
of curves on scales less than $r_0$} if
for any ball $B(z,r)$, $z \in X$, $0 < r \leq r_0$ we have the following properties:
\begin{itemize}
\newcounter{custref}
\renewcommand{\thecustref}{\Alph{custref}}

\refstepcounter{custref}\label{eq-good-curves-fin-many}
\item[\eqref{eq-good-curves-fin-many}] % (A)
If $B(z,r)$ does not contain any endpoint of any $\gamma \in \Gamma$, then
the complement of the closure of $\spt\Gamma$ in $B(z,r)$ is a
connected open set.

\refstepcounter{custref}\label{eq-good-curves-gamma-dot}
\item[\eqref{eq-good-curves-gamma-dot}] % (B)
For any $x,y \in B(z,r)$,
\begin{equation*}
\angle(\dot{\Gamma}(x), \dot{\Gamma}(y)) \leq \delta_0 \left(
\frac{|x-y|}{2r_0} \right)^{2/3},
\end{equation*}
where $\angle(\mathbf{v}, \mathbf{w})$ denotes the angle between vectors $\mathbf{v}$ and $\mathbf{w}$.

\refstepcounter{custref}\label{eq-good-curves-weight}
\item[\eqref{eq-good-curves-weight}] % (C)
There is a constant $A_{z,r} \in (0,\infty)$ so that on $B(z,r) \cap \spt \Gamma$
the Radon-Nikodym derivative $w_\Gamma=d\nu_\Gamma/d\mathcal{H}^2$
exists, is $\frac{2}{3}$-H\"older continuous with constant
$(2r_0)^{-2/3}A_{z,r}\delta_0$, and satisfies
\begin{equation*}
w_\Gamma(B(z,r) \cap \spt \Gamma) \subset [(1+\delta_0)^{-1}A_{z,r},(1+\delta_0)A_{z,r}].
\end{equation*}
\end{itemize}
\end{definition}
The first condition ensures that $\overline{\spt\Gamma} \cap B(z,r)$
is either the empty set, one half of the ball, all of the ball except
for one open gap, or all of the ball.
Figure \ref{fig:Gamma-in-B} illustrates typical instances of this.

\begin{figure}[t]
\centering
\includegraphics[width=0.75\textwidth]{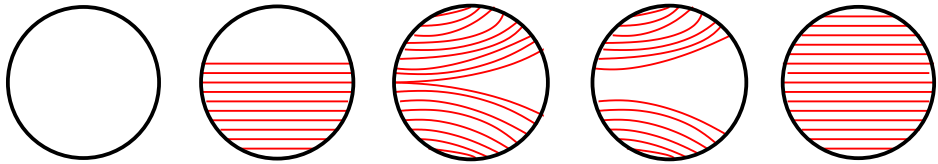}
\caption{Structure of $\overline{\spt\Gamma} \cap
  B(z,r)$}\label{fig:Gamma-in-B}
\end{figure}

The last two conditions guarantee that the vector field $\dot\Gamma$
is $\frac23$-H\"older continuous (with suitable constant) and that the
Radon-Nikodym derivative $w_\Gamma=\tfrac{d\nu_\Gamma}{d\cH^2}$ exists
and is locally close to constant on $\spt\Gamma$.

Why are the vector fields only H\"older continuous?  If they were
Lipschitz continuous, then by the uniqueness of solutions to ODE with
Lipschitz coefficients, the curves could not split to bend round an
obstacle. The choice of $\frac{2}{3}$ is fixed in view of the cubic
spline which we construct in Lemma~\ref{lem-bump-fn}(4). This choice
is merely a convenience; any H\"older exponent strictly less than one
would serve our purposes equally well.\footnote{Note that we do not explicitly
solve the ODE corresponding to the vector field $\dot\Gamma$. In the actual
proof of Proposition \ref{prop-weight-bound},
the H\"older continuity assumption in Definition \ref{def-goodcurves}\eqref{eq-good-curves-weight} arises from Lemma \ref{lem-bump-fn}.}

The following proposition provides the key inductive step in bending curve families
with control on their modulus.  We postpone its proof until subsection \ref{modulus-subsection}.

\begin{proposition}\label{prop-weight-bound}
For any sufficiently small $\delta_0 > 0$, there exist positive constants $a'$ and $C$ with the
following property.

Suppose $(\Gamma_i,\sigma_i)$ is a $\delta_0$-good family of curves on
scales smaller than $s_i$ in $\R^2$, for some $s_i>0$, and we are given $a_{i+1} \in (0,a']$.
Let $\nu_i = \nu_{\Gamma_i}$ be the natural
measure, let $w_i=d\nu_i/d\cH^2$ be the corresponding weight, and set $s_{i+1} = a_{i+1}s_i$.
Then given any $z \in \spt \Gamma_i$, with no curves in $\Gamma_i$ stopping
inside $B(z,s_i)$, we can deform $(\Gamma_i,\sigma_i)$ inside
$B(z,s_i/5)$ into a new open measured curve family
$(\Gamma_{i+1},\sigma_{i+1})$ that is $\delta_0$-good on scales
smaller than $s_{i+1}$, so that $\spt \Gamma_{i+1}$ does not meet
$B(z,2 s_{i+1})$ and
\begin{equation}\label{eq-prop-weight-gain}
\int_{B(z,s_i/5)} |w_{i+1}|^q d\mathcal{H}^2 \le (1+ C a_{i+1}^2)
\int_{B(z,s_i/5)} |w_i|^q d\mathcal{H}^2.
\end{equation}
\end{proposition}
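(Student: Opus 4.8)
The plan is to obtain $(\Gamma_{i+1},\sigma_{i+1})$ by pushing the curves of $\Gamma_i$ out of the ball $B(z,2s_{i+1})$ via an explicit $C^2$ deformation $\Phi$ of the plane that is the identity off $B(z,s_i/5)$, that splits the family along a level through $z$, and that is supported on a thin region of area $\asymp\delta_0^{-1/2}s_{i+1}^2$. First I would normalize: applying Definition~\ref{def-goodcurves} to $\Gamma_i$ on $B(z,s_i)$, condition \eqref{eq-good-curves-gamma-dot} places all unit tangents there within angle $\delta_0$ of one another, so after a rotation and translation we may take $z$ to be the origin and assume that inside $B(0,s_i/5)$ every curve of $\Gamma_i$ is the graph $x\mapsto(x,\gamma(x))$ with $|\gamma'|\lesssim\delta_0$, while by \eqref{eq-good-curves-weight} the weight $w_i$ is comparable to a constant $A:=A_{z,s_i}$ and $\tfrac23$-Hölder with the small constant $(2s_i)^{-2/3}A\delta_0$. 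Since the curves are so close to horizontal, over the horizontal width $\rho\asymp\delta_0^{-1/2}s_{i+1}\ll s_i$ of the deformation region each curve moves vertically by at most $\delta_0\rho\ll s_{i+1}$, so it stays on one side of the line $\{y=0\}$ there; this lets us partition $\Gamma_i$ into the curves lying above and below $\{y=0\}$ within the deformation region.

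Next I would build the bending map $\Phi(x,y)=(x,f(x,y))$, where for each fixed $x$ the function $y\mapsto f(x,y)$ is an increasing $C^2$ diffeomorphism equal to the identity for $|x|\ge\rho$ and for $|y|\ge 4s_{i+1}$, and for $x$ near $0$ carries $(0,4s_{i+1})$ onto $(2s_{i+1},4s_{i+1})$ and $(-4s_{i+1},0)$ onto $(-4s_{i+1},-2s_{i+1})$ (essentially affinely, up to a $C^2$ taper near $\pm 4s_{i+1}$). The $x$-transition from the ``full push'' profile at $x=0$ to the identity at $|x|=\rho$ is made with the cubic-spline bump of Lemma~\ref{lem-bump-fn}, which keeps $f$ genuinely $C^2$ and makes the relevant derivatives of $f$ Lipschitz (hence $\tfrac23$-Hölder) with the constants needed below. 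Setting $\Gamma_{i+1}:=\Phi(\Gamma_i)$ with $\sigma_{i+1}$ the pushforward of $\sigma_i$ — $\Phi$ being applied by one formula on the upper curves and its mirror image on the lower ones — the deformed curves remain disjoint $C^2$ arcs, $\spt\Gamma_{i+1}$ misses $B(z,2s_{i+1})$ by construction, and $\Phi=\mathrm{id}$ off $B(z,s_i/5)$ provided $\rho\le s_i/5$ and $4s_{i+1}\le s_i/5$, i.e.\ provided $a_{i+1}\le a':=a'(\delta_0)$.

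For the $\delta_0$-goodness of $\Gamma_{i+1}$ at scales $<s_{i+1}$: off the deformation region $\Gamma_{i+1}$ agrees with $\Gamma_i$, which is $\delta_0$-good at scales $<s_i$; on the deformation region the curvature introduced is $\asymp s_{i+1}/\rho^2$, since a curve is displaced vertically by at most $2s_{i+1}$ over horizontal length $\rho$ and the cubic profile controls slope and its derivative. Choosing $\rho\asymp\delta_0^{-1/2}s_{i+1}$ makes this curvature $\lesssim\delta_0/s_{i+1}$, which with $r_0=s_{i+1}$ yields exactly \eqref{eq-good-curves-gamma-dot} with the same $\delta_0$; Lipschitz control of $\partial_yf$ and of the length–stretch factor (again with $\rho$ this large) gives \eqref{eq-good-curves-weight}, because locally $w_{i+1}$ is $w_i\circ\Phi^{-1}$ times a $C^1$ factor lying in $1\pm\delta_0$ of its center value on any ball of radius $<s_{i+1}$; and \eqref{eq-good-curves-fin-many} holds because the set removed from $\spt\Gamma_i$ is $B(z,2s_{i+1})$ together with a thin lens around it, which meets any small ball in a connected set. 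For the modulus gain, change of variables in $\Phi(x,y)=(x,f(x,y))$ together with the length–stretch factor $\ell$ along the nearly horizontal curves gives $w_{i+1}=(\ell\,w_i/\partial_yf)\circ\Phi^{-1}$, hence
\[
\int_{B(z,s_i/5)}|w_{i+1}|^q\,d\cH^2=\int_{B(z,s_i/5)}\ell^q(\partial_yf)^{1-q}\,|w_i|^q\,d\cH^2 .
\]
The integrand of $\int(\ell^q(\partial_yf)^{1-q}-1)|w_i|^q$ is supported on the deformation region, of area $\asymp\rho\,s_{i+1}\asymp\delta_0^{-1/2}s_{i+1}^2$, and there $\partial_yf\in[\tfrac12,1]$, $|\partial_xf|\lesssim s_{i+1}/\rho\lesssim\delta_0^{1/2}$ and $w_i\asymp A$, so the integrand is bounded by a constant depending only on $q$. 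Thus this difference is at most a $q$-dependent constant times $A^q\delta_0^{-1/2}s_{i+1}^2$, while $\int_{B(z,s_i/5)}|w_i|^q\,d\cH^2\asymp A^qs_i^2$, and the ratio is $\lesssim\delta_0^{-1/2}(s_{i+1}/s_i)^2=\delta_0^{-1/2}a_{i+1}^2$, giving \eqref{eq-prop-weight-gain} with $C=C(q,\delta_0)$.

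The main obstacle is the simultaneous reconciliation of two competing demands on the horizontal scale $\rho$ of the bend: it must be small enough (of order $s_{i+1}$) that the deformation region has area $O(\delta_0^{-1/2}s_{i+1}^2)$, so that the unavoidable $O(1)$ local distortion of the weight costs only a factor $1+O(a_{i+1}^2)$ once divided by the total mass $\asymp s_i^2$; yet large enough (again of order $s_{i+1}$, but with the correct $\delta_0$-dependent constant) that the induced curvature $\asymp s_{i+1}/\rho^2$ stays below $\delta_0/s_{i+1}$ so that $\delta_0$-goodness is preserved at scales $<s_{i+1}$. Engineering a single cubic-spline profile that achieves both bounds while keeping $f$ genuinely $C^2$ — so that $\Gamma_{i+1}$ really consists of $C^2$ curves and all three conditions of Definition~\ref{def-goodcurves} survive uniformly in the base scale — is where the bulk of the technical work lies.
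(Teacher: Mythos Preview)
Your outline is the same as the paper's --- an explicit $C^2$ compression built from the cubic bump of Lemma~\ref{lem-bump-fn}, with the weight gain coming from the small area of the compression region relative to $B(z,s_i/5)$ --- but two specific choices you make do not work.

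First, you cannot split along the fixed line $\{y=0\}$. Since $z=0\in\spt\Gamma_i$, some curve $\gamma_z\in\Gamma_i$ passes through the origin, but $\gamma_z$ need not lie on $\{y=0\}$: it can have slope up to $\delta_0$, so $\gamma_z(x)$ ranges over $[-\delta_0\rho,\delta_0\rho]$ on $|x|\le\rho$. A curve $\gamma$ lying strictly above $\gamma_z$ can therefore still cross $\{y=0\}$ within the deformation region, and your two-piece map $\Phi$ would break it. Your claim that ``each curve moves vertically by at most $\delta_0\rho\ll s_{i+1}$, so it stays on one side'' is a non sequitur: the vertical oscillation bound says nothing about which side of a fixed line a curve starting at height $\epsilon\ll\delta_0\rho$ stays on. The paper's fix is to take the splitting curve $\gamma_0$ from the family itself (or from the boundary of an existing gap), so that every other curve is entirely on one side of it and only the single curve $\gamma_0$ must be deleted; this choice must also be made carefully relative to any existing gap in order to preserve condition~\eqref{eq-good-curves-fin-many}.

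Second, a thin strip $\{|x|\le\rho,\ |y|\le4s_{i+1}\}$ with $\rho\asymp\delta_0^{-1/2}s_{i+1}$ does not yield condition~\eqref{eq-good-curves-gamma-dot}. Your curvature argument $\kappa\asymp s_{i+1}/\rho^2\lesssim\delta_0/s_{i+1}$ controls the direction variation \emph{along} a single deformed curve; but \eqref{eq-good-curves-gamma-dot} must hold for any two points in a ball of radius $s_{i+1}$, including on different curves. A curve pushed up from near $y=0$ acquires slope $\asymp s_{i+1}/\rho\asymp\delta_0^{1/2}$ at its steepest, while a curve near $|y|=4s_{i+1}$ is essentially unmoved and has slope $\lesssim\delta_0$. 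These two deformed curves are vertically within $\asymp s_{i+1}$ of each other, yet their directions differ by $\asymp\delta_0^{1/2}\gg\delta_0$, violating \eqref{eq-good-curves-gamma-dot}. The paper spreads the compression over a square $Q$ of side $10^Ns_{i+1}$ with $10^N\asymp\delta_0^{-1}$, so that the $y$-transition is as gradual as the $x$-transition; the resulting compression area is $\asymp\delta_0^{-2}s_{i+1}^2$ rather than your $\delta_0^{-1/2}s_{i+1}^2$, but this still gives \eqref{eq-prop-weight-gain} with $C=C(q,\delta_0)$, which is all that is needed.
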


By ``deform'', % deforming 
we mean that there exists a $C^2$ homeomorphism of the plane which restricts to the identity outside $B(z,s_i/5)$,
and, up to discarding finitely many curves, induces a well defined, measure preserving bijection between 
$(\Gamma_i, \sigma_i)$ and  $(\Gamma_{i+1}, \sigma_{i+1})$.
We emphasize that the numbers $s_i>0$ and $a_{i+1} \in (0,a']$ in the statement of Proposition \ref{prop-weight-bound}
are arbitrary and are not assumed to be arising from a specific sequence $\mathbf{a}$ under consideration.

Intuitively, Proposition \ref{prop-weight-bound} asserts that we can deform $\Gamma_i$
inside a ball $B$ on a given scale $s_i$ so as to avoid a prespecified
obstacle of size $s_{i+1}$ (in this case, a ball of radius $2s_{i+1}$
concentric with $B$) and so that the $\ell^q$ norm of the associated
weight increases multiplicatively by at most a factor of $1+Ca_{i+1}^2$,
where $C$ is independent of $\ba$. The point is that we can repeatedly apply the proposition (on smaller and smaller scales)
without losing control of $\delta_0$.

\subsection{Using the bending machinery}
We now prove our building block lemmas, Theorem~\ref{modulus-extension-theorem} and Corollary~\ref{thm-l2-mod}.
\begin{proof}[Proof of Lemmas \ref{doubling-q}--\ref{doubling4-q}]
Recall that $(\cC_2,L_2)$ is a directed block following $(\cC_1,L_1)$ in a directed square $(T',L') \in \cT_{m-1}$.
Let $\Gamma_m$ be the open, measured curve family consisting of straight line segments 
in $\R^2$ connecting each point of the interior of $L_1$ to the
corresponding point of $L_2$ under the natural ordered bijection,
equipped with the measure $\sigma_m$ induced from normalized linear
measure on $L_1$.  See Figure~\ref{fig:modular-lemma-redux-figure}.

\begin{figure}
\centering
\psfrag{C1}{$\mathcal{C}_1$}
\psfrag{C2}{$\mathcal{C}_2$}
\psfrag{L1}{$L_1$}
\psfrag{L2}{$L_2$}
\includegraphics[width=0.2\textwidth]{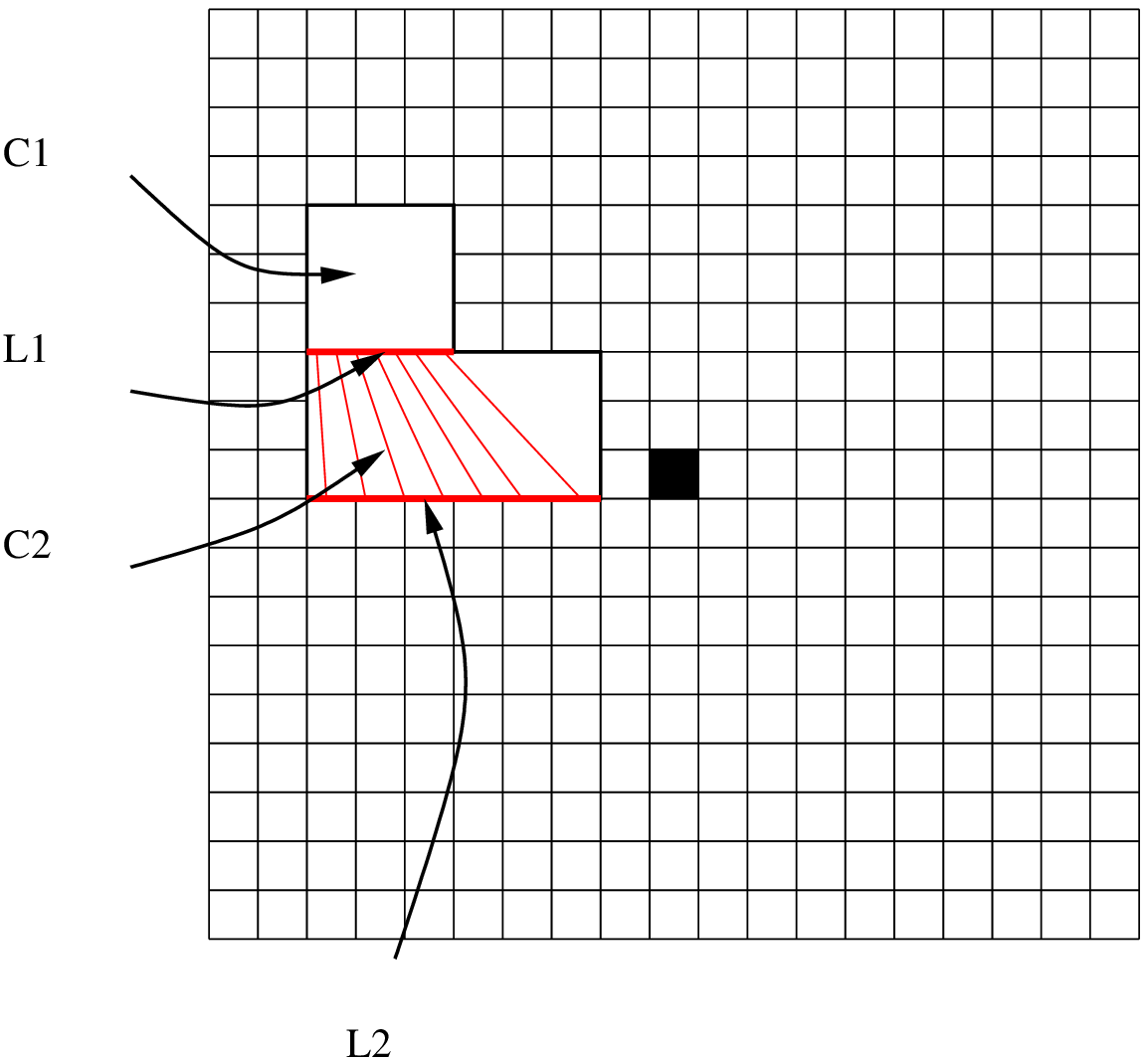}
\hspace{10pt}
\includegraphics[width=0.2\textwidth]{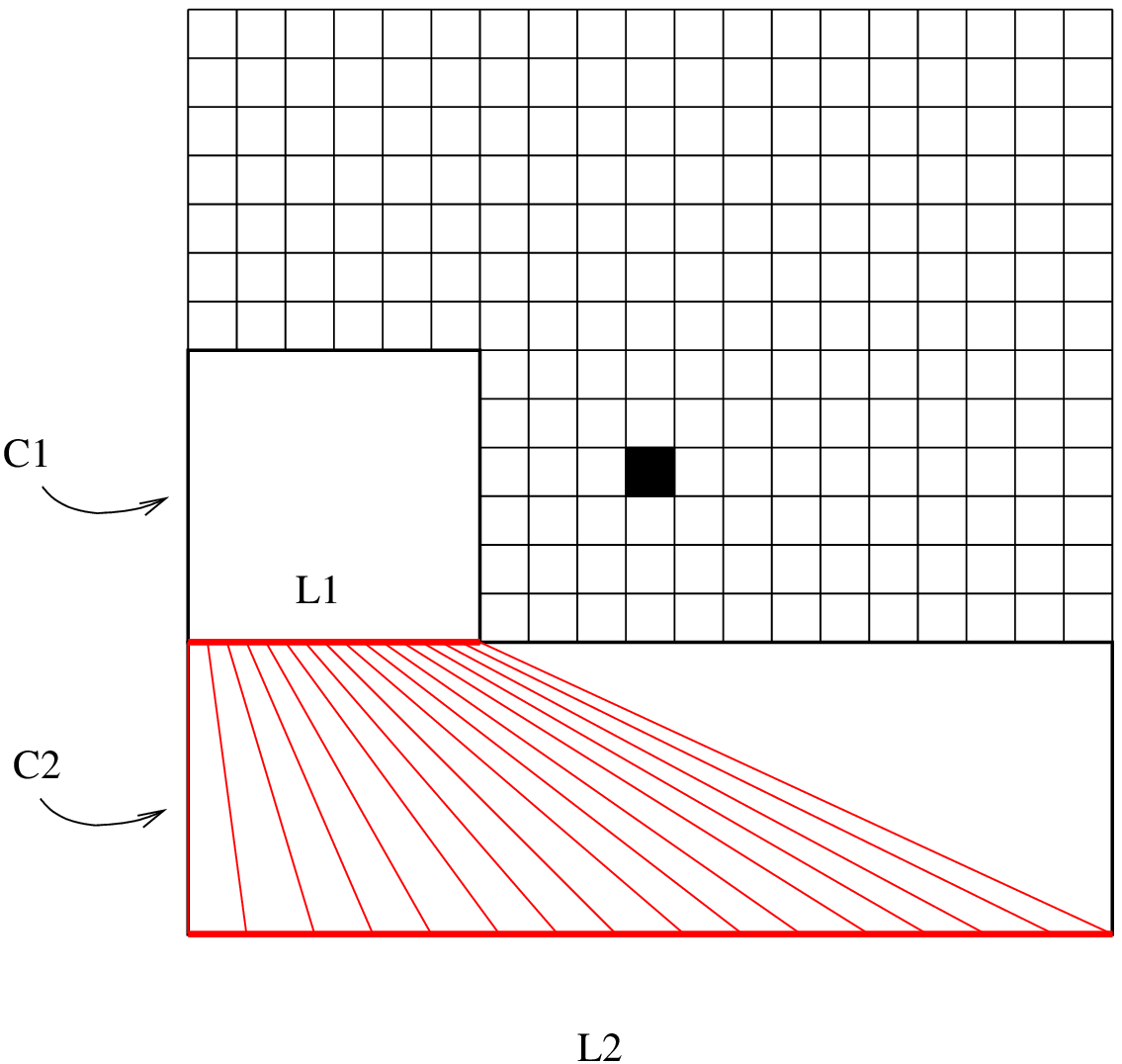}
\hspace{10pt}
\includegraphics[width=0.2\textwidth]{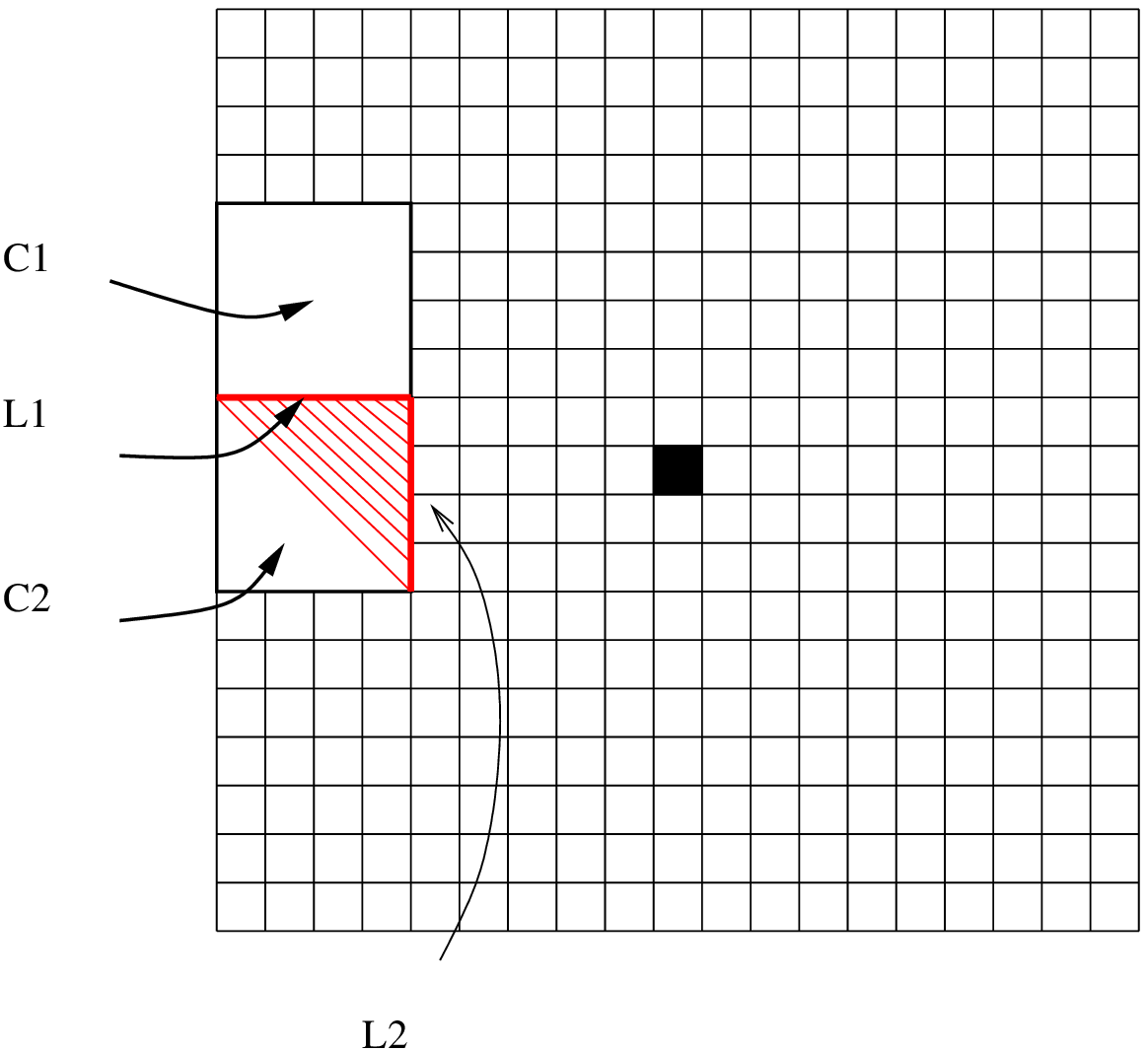}
\hspace{10pt}
\includegraphics[width=0.2\textwidth]{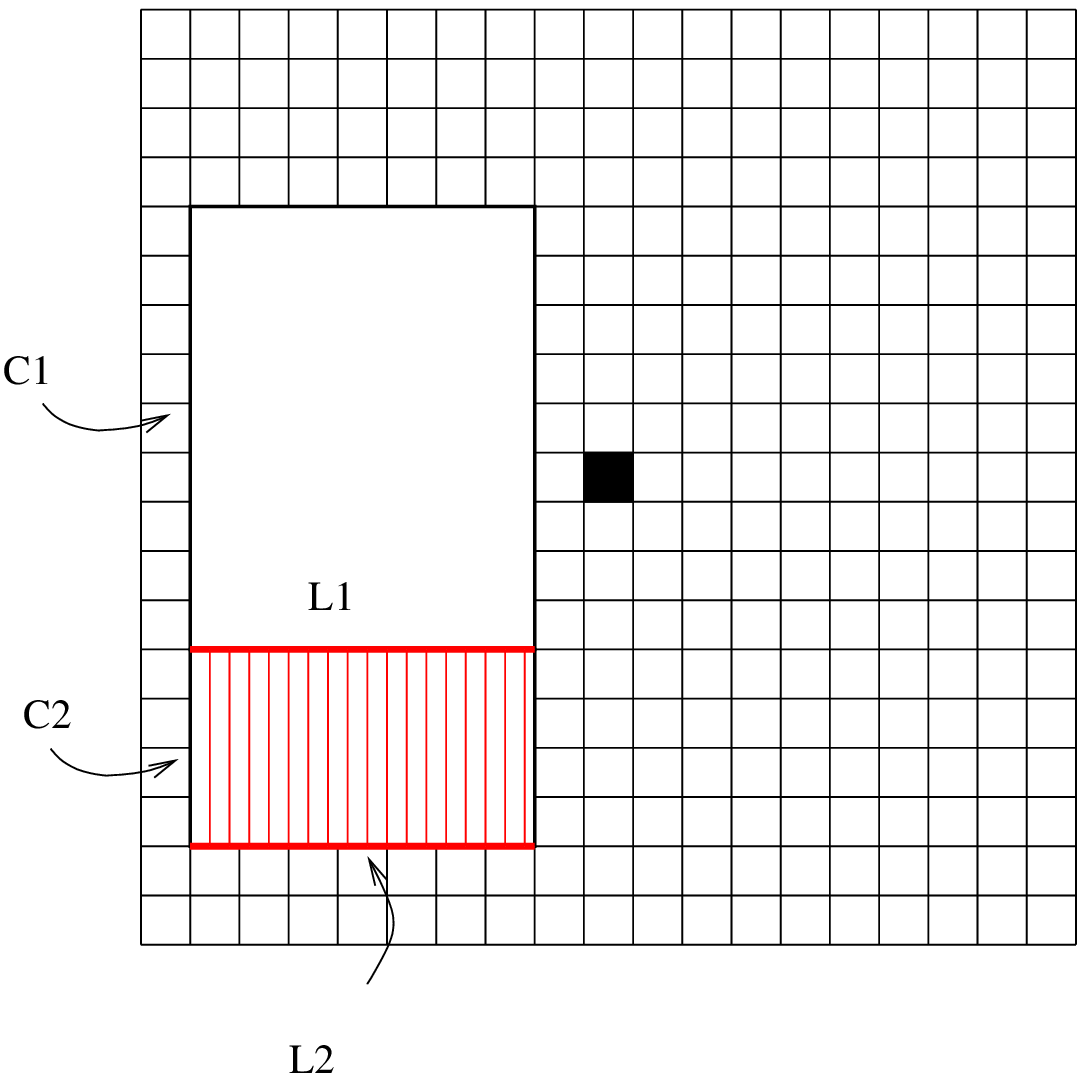}
\caption{Initial curve families for the $p>1$ case of Lemmas \ref{doubling} - \ref{doubling4}}
\label{fig:modular-lemma-redux-figure}
\end{figure}

Let $w_m = d\nu_{\Gamma_m} / d\cH^2$ be the natural weight function associated to $\Gamma_m$.
Observe that on $\spt \Gamma_m$, we have $\nu_{\Gamma_m} \asymp \cH^1(\pi_M(L_1))^{-1} \cH^2$,
and that $\cH^2(\spt \Gamma_m) \asymp (\cH^1(\pi_M(L_1)))^2$, so
\begin{equation}\label{eq-initial-weight-bound}
	\| w_m \|_q \asymp (\cH^1(\pi_M(L_1)))^{-1+2/q}.
\end{equation}

Since a $q$-connection must lie in $\mathcal{C}_2
\cap S_{\ba, M}$, we `bend' this initial family around the subsquares
of $\mathcal{C}_2$ that were removed in the construction of $S_{\ba,M}$.
This construction is inductive, building open measured curve families
$(\Gamma_i, \sigma_i)$ for $m \leq i \leq M$.

Observe that there exists a universal constant $\delta_1>1$ so that any such initial curve family 
$\Gamma_m$ is a $\delta_1$-good curve family on scales below $s_m$.
We use the following sublemma.
\begin{sublemma}
	For any $\eps \in (0,1)$, a $\delta_1$-good curve family on scales below $r_1$
	is also a $(\delta_1 \eps^{1/3})$-good curve family on scales below $\eps r_1$.
\end{sublemma}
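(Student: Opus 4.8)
The plan is to take an arbitrary $\delta_1$-good curve family $(\Gamma,\sigma)$ on scales below $r_1$ and verify directly that it satisfies the three conditions of Definition~\ref{def-goodcurves} with the new parameters $\delta_0 := \delta_1\eps^{1/3}$ and $r_0 := \eps r_1$. Fix a ball $B(z,r)$ with $0<r\le\eps r_1$; since $\eps<1$ we have $r<r_1$, so the hypotheses guaranteed by $\delta_1$-goodness on scales below $r_1$ are available for this ball.

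Condition~\eqref{eq-good-curves-fin-many} refers to neither $\delta_0$ nor $r_0$, hence is inherited verbatim. For condition~\eqref{eq-good-curves-gamma-dot}, the idea is to rewrite the known bound using the identity $|x-y|/(2r_1)=\eps\cdot|x-y|/(2r_0)$: this turns $\delta_1\bigl(|x-y|/(2r_1)\bigr)^{2/3}$ into $\delta_1\eps^{2/3}\bigl(|x-y|/(2r_0)\bigr)^{2/3}$, which is at most $\delta_1\eps^{1/3}\bigl(|x-y|/(2r_0)\bigr)^{2/3}=\delta_0\bigl(|x-y|/(2r_0)\bigr)^{2/3}$ because $\eps^{2/3}\le\eps^{1/3}$.

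The only condition requiring real work is~\eqref{eq-good-curves-weight}. Let $A:=A_{z,r}$ be the constant from $\delta_1$-goodness and let $m$, $M$ denote the infimum and supremum of $w_\Gamma$ over $B(z,r)\cap\spt\Gamma$. The function $w_\Gamma$ and its $\tfrac23$-H\"older continuity are intrinsic to $w_\Gamma$ and transfer unchanged; what has to be exhibited is a replacement constant $A'\in(0,\infty)$. Since $w_\Gamma$ is $\tfrac23$-H\"older continuous with constant $(2r_1)^{-2/3}A\delta_1$ and $\diam B(z,r)\le 2\eps r_1$, the oscillation obeys $M-m\le(2r_1)^{-2/3}A\delta_1(2\eps r_1)^{2/3}=A\delta_1\eps^{2/3}$; together with $m\ge(1+\delta_1)^{-1}A$ this gives $M/m\le 1+(1+\delta_1)\delta_1\eps^{2/3}$, and an expansion shows the right side is at most $(1+\delta_1\eps^{1/3})^2=(1+\delta_0)^2$ precisely because $\eps^{2/3}\le 2\eps^{1/3}$. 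Taking $A':=(1+\delta_0)m$ then yields $(1+\delta_0)^{-1}A'=m$ and $(1+\delta_0)A'=(1+\delta_0)^2m\ge M$, so the values of $w_\Gamma$ lie in $[(1+\delta_0)^{-1}A',(1+\delta_0)A']$. It remains to confirm that the H\"older constant already in hand is no larger than the one now demanded, i.e.\ that $(2r_1)^{-2/3}A\delta_1\le(2r_0)^{-2/3}A'\delta_0$; expanding both sides this amounts to $(1+\delta_1)^{-1}\eps^{-1/3}(1+\delta_1\eps^{1/3})\ge1$, which holds since $\eps^{-1/3}(1+\delta_1\eps^{1/3})=\eps^{-1/3}+\delta_1\ge1+\delta_1$.

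I expect this last condition to be the only genuine obstacle. Because $\delta_0<\delta_1$, the admissible multiplicative window about $A'$ is strictly narrower than the original one, so the constant $A_{z,r}$ cannot simply be reused; one must take $A'$ to be the correct endpoint $(1+\delta_0)m$ of the admissible interval and then check that the H\"older-constant bookkeeping stays consistent with that choice. All the remaining estimates are immediate consequences of $0<\eps<1$.
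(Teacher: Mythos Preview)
Your argument is correct and follows the same route as the paper's proof: both compute the ratio $M/m$ on a small ball via the H\"older bound and the lower bound $m\ge(1+\delta_1)^{-1}A$, arriving at $M/m\le 1+(1+\delta_1)\delta_1\eps^{2/3}\le(1+\delta_1\eps^{1/3})^2$. You are in fact more thorough than the paper, which declares only ``the last part'' of condition~\eqref{eq-good-curves-weight} nontrivial and checks just the range estimate; you also explicitly choose $A'=(1+\delta_0)m$ and verify that the H\"older constant transfers, a detail the paper leaves implicit.
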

\begin{proof}
% 	Condition \eqref{eq-good-curves-fin-many} is automatic.
% 	Condition \eqref{eq-good-curves-gamma-dot} is satisfied with $\delta_0 = \delta_1 \eps^{2/3} \leq \delta_1 \eps^{1/3}$,
% 	and $r_0 = \eps r_1$.
	The only non-trivial estimate is the last part of Definition \ref{def-goodcurves}\eqref{eq-good-curves-weight}.
	On a ball of radius $\eps r_1$, with associated constant $A$, the ratio of maximum to minimum values of $w$ is at most
	\[
		\frac{(1+\delta_1)^{-1}A + (2r_1)^{-2/3}A \delta_1 (2 \eps r_1)^{2/3}}{(1+\delta_1)^{-1}A}
			\leq 1+ (1+\delta_1)\delta_1 \eps^{2/3} \leq (1+\delta_1\eps^{1/3})^2.\qedhere
	\]
\end{proof}
Fix the constant $a_{**}$ in \eqref{aia2} so that \[ a_{**}  = a' \del_1^{-3} \del_0^3 \leq a', \]
where $\delta_0$ and $a'$ are chosen by Proposition~\ref{prop-weight-bound}.
The sublemma above shows that $\Gamma_m$ is a $\delta_0$-good curve family on scales below $s_{m}' := \del_1^{-3} \del_0^3 s_{m}$.

The removed squares of side $s_{m+1}$ in $\mathcal{C}_2$ are all at least $s_m/2$ apart,
so we apply Proposition~\ref{prop-weight-bound} to $(\Gamma_m, \sigma_m)$ independently for some $z$ in each such removed square,
with values $s_i = s_{m}'$ and $s_{i+1}=s_{m+1}$; this is valid since $s_{m+1} / s_{m}' < a'$.
Denote the resulting open, measured curve family by $(\Gamma_{m+1}, \sigma_{m+1})$,
which is $\delta_0$-good on scales below $s_{m+1}$.

By applying \eqref{eq-prop-weight-gain} around each removed square,
we have that the natural weight function $w_{m+1}$ associated to $\Gamma_{m+1}$
satisfies 
\begin{equation}\label{eq-weight-blocks}
\| w_{m+1} \|_q \leq (1+ C a_{m+1}^2)^{1/q} \| w_m \|_q.
\end{equation}
Similarly, for $i = m+1, \ldots, M-1$, we build $(\Gamma_{i+1}, \sigma_{i+1})$ from $(\Gamma_i, \sigma_i)$ by
applying Proposition~\ref{prop-weight-bound} indepenently for each removed square of side $s_{i+1}$;
this is valid since $s_{i+1}/s_i = a_{i+1} \leq a_{**} < a'$.
We obtain a open, measured
curve family $(\Gamma, \sigma) = (\Gamma_M, \sigma_M)$, with $\spt \Gamma$ in $\mathcal{C}_2 \cap S_{\ba, M}$.
Iterating the weight bound of \eqref{eq-weight-blocks}, we see that
\begin{equation*}\label{eq-absoluteweightbound-blocks}
\left\| \frac{d\nu_{\Gamma}}{d\cH^2} \right\|_{L^q(\mathcal{C}_2,\cH^2)} 
 = \| w_M \|_q \leq \left( \prod_{i=m+1}^M (1+Ca_i^2) \right)^{1/q} \|
w_{m} \|_q 
\le \exp \left( \frac{C}{q} ||\ba||_2^2 \right) \cH^1(\pi_M(L_1))^{-1+2/q},
\end{equation*}
where this last inequality uses \eqref{eq-initial-weight-bound}.
This implies that the output family is a
$q$-connection.
\end{proof}

A similar argument extends to give modulus bounds.

\begin{proof}[Proof of Theorem \ref{modulus-extension-theorem}]

We fix $a' < \frac{1}{100}$, $\del_0$, and $C$ as in Proposition \ref{prop-weight-bound} and choose $M$
so that $a_i < a'$ when $i\ge M$.
Note that if $\ba$ is monotone decreasing, then $M$ depends only on
$\| \ba \|_2$.

% Fix a ball $B\subset D$ so that $B\cap S$ has positive measure.
% (Such balls exist by the $\ba \in \ell^2$ assumption, similarly to Proposition~\ref{properties-of-mu}(iv).)
If $S$ has nonempty interior we are
done, since open sets in $\R^2$ certainly contain curve families with
positive $p$-modulus.
Otherwise there exists $\cU_{M'} \neq \emptyset$, for some $M' \geq M$.
Choose $U \in \cU_{M'}$ so that $\dist(U, \partial D) \geq \frac{2}{5} s_{M'-1} \geq 10s_{M'}$.
We choose a square $W \subset D$ of side $s_{M'}$ so that $\dist(W, U)$ is
between $2s_{M'}$ and $3 s_{M'}$.
By the assumptions of the theorem, and choice of $M$, $W$ only meets sets from
$\cup_{m \geq (M' +1)} \cU_m$, which have diameter much smaller than $W$.

We choose coordinates so that $W = [0, s_{M'}]^2$.
Let $\Gamma_{M'}$ be the family of curves $\{ \gamma_u : u \in (0, s_{M'}) \}$, where $\gamma_u:(0,s_{M'}) \ra W$ is
defined as $\gamma_u(t)=(t, u)$.
We equip $\Gamma_{M'}$ with the probability measure $\sigma_{M'}$ given by scaling Lebesgue measure on $(0, s_{M'})$ by $s_{M'}^{-1}$.
Observe that $\Gamma_{M'}$ is a $0$-good family of curves on scales below $s_{M'}$,
with $w_{M'} \equiv s_{M'}^{-1}$.

We now build measured curve families $(\Gamma_m,\sigma_m)$ for $m \geq M'$ that are
$\delta_0$-good on scales below $s_m$, with the additional property that
\begin{equation}\label{eq-avoids-required-sets}
	\spt \Gamma_m \cap \big([s_{M'}/4, 3s_{M'}/4] \times \R\big) \subset S_m.
\end{equation}
Let $\nu_m = \nu_{\Gamma_m}$ be the natural measure on
$\spt\Gamma_m$ and let $w_m=d\nu_m/d\cH^2$ be the corresponding
weight.

The construction is inductive. Assume that we have constructed a
measured curve family $(\Gamma_m,\sigma_m)$ that is
$\delta_0$-good on scales below $s_m$.
We apply Proposition~\ref{prop-weight-bound}
to bend $\Gamma_m$ around each set $V \in \cU_{m+1}$ which meets
$[s_{M'}/8, 7s_{M'}/8] \times [-s_{M'}, 2s_{M'}]$.
As the sets in $\cU_{m+1}$ are all at least $s_m$ apart, we can apply
Proposition~\ref{prop-weight-bound} at each location independently
to create a new measured curve family $(\Gamma_{m+1}, \sigma_{m+1})$,
which is $\delta_0$-good on scales below $s_{m+1}$.
(Proposition \ref{prop-weight-bound} requires that no curve ends near where we bend;
this is why we restrict the obstacles that we bend around.)
These curve families satisfy \eqref{eq-avoids-required-sets} for $m \geq M'$.

As in the proof of Lemmas \ref{doubling-q}--\ref{doubling4-q}, we conclude that for all
$m \geq M'$ the natural weight function $w_m$ associated to $\Gamma_m$ satisfies
% By applying \eqref{eq-prop-weight-gain} near each location,
% we have that the natural weight function $w_{m+1}$ associated to $\Gamma_{m+1}$
% satisfies
% \[
% \| w_{m+1} \|_q \leq (1+ C a_{m+1}^2)^{1/q} \| w_m \|_q.
% \]
% Iterating this bound, we see that for all $m$,
\begin{equation}\label{eq-absoluteweightbound}
\| w_m \|_q \leq \left( \prod_{i=M'+1}^\infty (1+Ca_i^2) \right)^{1/q} \|
w_{M'} \|_q \le \exp \left( \frac{C}{q} ||\ba||_2^2 \right) \|w_{M'}\|_q
\end{equation}

If $\rho: B \rightarrow [0,\infty]$ is admissible for
$\mod_p\,\Gamma_m$, that is, $1 \le \int_\gamma \rho \,
d\mathcal{H}^1$ for each $\gamma \in \Gamma_m$, then by averaging over
$\Gamma_m$ with respect to $\sigma_m$, we see that
\begin{equation}\begin{split}\label{here-is-q}
1 \le \int_{\Gamma_m} \int_\gamma \rho \ d\mathcal{H}^1 d\sigma_m(\gamma)
= \int_{\spt\Gamma_m} \rho \ d\nu_m = \int_{\spt\Gamma_m} \rho \, w_m \
d\mathcal{H}^2 \le \| \rho \|_p \ \| w_m \|_q,
\end{split}\end{equation}
where $\frac{1}{p}+\frac{1}{q} = 1$.
Combining \eqref{eq-absoluteweightbound}, \eqref{here-is-q} and $\ba \in \ell^2$,
we see that $\mod_p\,\Gamma_m$ is uniformly bounded from below independent of $m$.

Each curve $\gam \in \Gamma_m$ has a subcurve $\gam'$ which joins
the set $\{s_{M'}/4\} \times \R$ to $\{3s_{M'}/4\} \times \R$, and $\gam' \subset S_m$.
Let $\Gamma_m'$ be the collection of all such curves.
By basic properties of the modulus, $\mod_p \, \Gamma_m' \geq \mod_p \, \Gamma_m$.

Thus for every $m \geq M'$, there is a curve family $\Gamma_m'$ in $S_{m}$
with $p$-modulus bounded from below independent of $m$.
By the upper semicontinuity of modulus (see, for instance, Heinonen--Koskela \cite[\S3]{hk:quasi} or Keith~\cite[Theorem 1]{kei:modulus}),
this bound will continue to the limit.
\end{proof}

\begin{proof}[Proof of Corollary \ref{thm-l2-mod}]
Again, choose $a'$ as in Proposition \ref{prop-weight-bound} and
choose $M$ so that $a_i<a'$ when $i\ge M$. 
The $p$-modulus of the family of curves joining the left and right edges of
the carpet $S_\ba$ is bounded from below by the $p$-modulus of the family
of curves $\Gamma$ joining the left and right edges of the strip 
$([0,1] \times [0, s_M]) \cap S_\ba$.

Let $\Gamma_M$ be the family of horizontal lines in $T = [0,1]\times [0,s_M]$,
with induced natural weight $w_M$, equal to $s_M^{-1}$ on $T$, and so having
$\|w_M \|_q = s_M^{-1/p}$.
The argument in the proof of Theorem~\ref{modulus-extension-theorem}, in particular
\eqref{eq-absoluteweightbound} and \eqref{here-is-q}, give that any admissible function $\rho$ for $\Gamma$
satisfies
\[
	1 \leq \| \rho \|_p \exp\left( \frac{C}{q} \| \ba \|_2^2 \right) \| w_M \|_q.
\]
Therefore, as $\|w_M \|_q \leq C(\ba) < \infty$, we have $\mod_p(\Gamma)  > 0$.
Finally, we note that in the case when $\ba$ is monotone decreasing,
then $M$ can be chosen only depending on $||\ba||_2$ (and not on the
actual sequence $\ba$).  This implies that $\|w_M \|_q \leq C(\|\ba\|_2)$, and this
establishes the final claim of the corollary.
\end{proof}

It remains to establish Proposition \ref{prop-weight-bound}. This is
the goal of the following subsection. The argument is rather technical
although essentially elementary. The reader is invited to skip the remainder of this section on a first reading of the paper.

\subsection{Compressing curve families: the proof of Proposition
  \ref{prop-weight-bound}}\label{modulus-subsection}

The following construction is standard. For the convenience of the reader we provide a short proof.

\begin{lemma}\label{lem-bump-fn}
There is a $C^2$ function $\varphi:[-1,1] \ra[0,1]$ which satisfies
\begin{enumerate}
\item $\varphi|_{[-0.1,0.1]} \equiv 1$,
\item the support of $\varphi$ lies in $[-0.9,0.9]$,
\item $|\varphi'| \le 5/2$, $|\varphi''| \leq 25/2$, and
\item $|\varphi'| \le 14 \varphi^{2/3}$.
\end{enumerate}
\end{lemma}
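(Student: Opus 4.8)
The plan is to construct $\varphi$ explicitly by integrating a suitable bump, arranging the growth condition (4) by hand near the edges of the support. First I would produce a $C^1$ function with the rough shape, then smooth and rescale.

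Here is the construction I have in mind. Fix a small parameter and build $\varphi$ to be even, equal to $1$ on $[-0.1,0.1]$, and supported in $[-0.9,0.9]$, so it suffices to describe $\varphi$ on $[0.1,0.9]$ where it decreases from $1$ to $0$. The delicate requirement is (4): $|\varphi'|\le 14\varphi^{2/3}$, which forces $\varphi$ to vanish no faster than a cubic near the endpoints of its support. Indeed, if $\psi(t)=c\,t^3$ then $\psi'=3ct^2$ and $\psi^{2/3}=c^{2/3}t^2$, so $|\psi'|=3c^{1/3}\psi^{2/3}$; choosing $c$ with $3c^{1/3}\le 14$, i.e.\ $c\le(14/3)^3$, gives the bound. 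So I would set, for $t\in[0.8,0.9]$, $\varphi(t) = c\,(0.9-t)^3$ with $c$ chosen so that this matches smoothly (value and derivative) to a middle piece, and symmetrically reason near $t=0.1$ won't be needed since $\varphi\equiv 1$ there — the cubic only matters at the support boundary $\pm 0.9$. Concretely: on $[0.1,0.2]$ let $\varphi$ decrease from $1$ using a function of the form $1 - c'(t-0.1)^3$ (or its smooth reflection), on $[0.8,0.9]$ let $\varphi = c(0.9-t)^3$, and on the middle interval $[0.2,0.8]$ interpolate with, say, a linear function or another polynomial, adjusting constants so the whole thing is $C^2$. Since on the middle piece $\varphi$ is bounded below by a positive constant, (4) there reduces to a crude bound on $|\varphi'|$.

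The cleanest way to guarantee $C^2$ and to control $|\varphi'|,|\varphi''|$ simultaneously is: choose a nonnegative $C^1$ function $g$ on $[0.1,0.9]$ with $\int_{0.1}^{0.9} g = 1$, $g$ supported away from the point $0.9$ by a cubic-vanishing profile, and set $\varphi(t) = \int_t^{0.9} g(s)\,ds$ on $[0.1,0.9]$ (and $\varphi\equiv1$ left of $0.1$, $\varphi\equiv0$ right of $0.9$). Then $\varphi'=-g$, so $\varphi\in C^2$ iff $g\in C^1$, and (4) becomes $g\le 14\bigl(\int_t^{0.9}g\bigr)^{2/3}$. Taking $g(s) = c(0.9-s)^2$ near $s=0.9$ makes $\int_t^{0.9} g = \frac{c}{3}(0.9-t)^3$, so $g/\varphi^{2/3} = c(0.9-t)^2 \big/ (c/3)^{2/3}(0.9-t)^2 = c^{1/3}3^{2/3}$, a constant, which is $\le 14$ provided $c\le 14^3/9\approx 304$. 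On the bulk of $[0.1,0.9]$ one takes $g$ to be a fixed smooth bump with total integral $1$ and with $\|g\|_\infty$ and $\|g'\|_\infty$ small (of order $1$ over the length $0.8$), which forces $|\varphi'|=g\le 5/2$ and $|\varphi''|=|g'|\le 25/2$ after checking the explicit numerics; there is plenty of room since $0.8$ is order one. Near $t=0.1$ one simply pastes $g$ smoothly down to $0$ using another $(t-0.1)^2$-type profile so that $\varphi\equiv1$ to the left with matching first and second derivatives (all zero).

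The main obstacle — really the only thing requiring care — is the bookkeeping that all three derivative bounds $|\varphi'|\le 5/2$, $|\varphi''|\le 25/2$, and the nonlinear bound $|\varphi'|\le 14\varphi^{2/3}$ hold simultaneously with the \emph{specified} numerical constants, rather than just "some constants." This is purely a matter of choosing $c$ and the middle bump explicitly and verifying inequalities; the tension is that making $g$ vanish like $(0.9-s)^2$ to get (4) for free forces a particular shape near the endpoint, and one must check the transition region does not violate (3). I would handle this by keeping $g$ piecewise of the simple forms above and doing the elementary estimates on each piece. Since the paper explicitly says "the following construction is standard" and only asks for a short proof, I would present the integral construction, exhibit $g$ concretely, and verify (1)–(4) by the direct computations sketched here, noting that the constants $5/2$, $25/2$, $14$ are comfortably met with room to spare.
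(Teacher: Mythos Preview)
Your proposal is correct and is essentially the paper's approach in slightly different packaging: the paper specifies $\varphi''$ as an explicit even piecewise-linear function (zero outside $[-0.9,0.9]$, with nodes at $\pm0.9,\pm0.7,\pm0.3,\pm0.1$ and height $b=25/2$) and integrates twice, which makes $\varphi$ a piecewise cubic spline with $\varphi(-0.9+h)=\tfrac{5}{6}bh^3$ near the edge---exactly your $c(0.9-t)^3$ profile---and then checks (4) on the cubic piece and on the region where $\varphi\ge\tfrac1{12}$ separately, just as you outline. The only difference is that the paper prescribes $\varphi''$ directly rather than $g=-\varphi'$, which makes the numerical verification of (3) immediate; you would arrive at the same spline and the same computations.
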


\begin{proof}
Choose $\varphi'':[-1,1]\ra\R$ to be the simplest piecewise linear
function whose graph passes through the points $(\pm 1,0)$, $( \pm
0.9,0)$, $(\pm 0.7,b)$, $(\pm0.3,-b)$, $(\pm0.1,0)$, and $(0,0)$,
where $b > 0$ is a constant to be determined.
	
Assuming that $\varphi'(-1)=\varphi(-1)=0$, we integrate to find
$\varphi$. Note that $\varphi|_{[-0.1,0.1]} \equiv 0.08b$, so we
choose $b = 1/0.08 = 25/2$. With this choice, $|\varphi'| \leq 0.2b =
5/2$ and $|\varphi''| \le b = 25/2$. Hence conditions (1), (2) and (3)
are satisfied.
	
Finally, note that for $0 \le h \le 0.2$, $\varphi''(-0.9+h) = 5bh =
\frac{125}2h$, $\varphi'(-0.9+h) = \frac{5}{2}bh^2 = \frac{125}4h^2$
and $\varphi(-0.9+h) = \frac{5}{6}bh^3 = \frac{125}{12}h^3$, so
\[
|\varphi'(-0.9+h)| = 5 \left( \frac32 \right)^{2/3}
|\varphi(-0.9+h)|^{2/3} \le 10 |\varphi(-0.9+h)|^{2/3}.
\]
This bound also applies for $x \in [0.7,0.9]$. On the other hand, for
$x \in [-0.7,0.7]$ we have $\varphi(x) \geq \varphi(-0.7) =
\frac{1}{12}$ and $|\varphi'(x)| \le \frac52$, so
$|\varphi'(x)| \le \frac52 = \frac52 \cdot 12^{2/3} \left( \frac1{12} \right)^{2/3} \le
14 |\varphi(x)|^{2/3}$.
\end{proof}

We now begin the proof of Proposition \ref{prop-weight-bound}.

\begin{proof}[Proof of Proposition \ref{prop-weight-bound}]
We fix a positive constant $\delta_0\le\tfrac1{200}$. We will choose a
large positive integer $N=N(\delta_0)\geq 5$; the precise choice will be
made later in the proof. Finally, we assume that $a' \le 10^{-2-N}$; we
only consider $a_i < a'$.

Let $(\Gamma_i,\sigma_i)$ be a $\delta_0$-good family of curves on
scales smaller than $s_i$, let $\nu_i$ be the corresponding measure as defined in \eqref{eq-def-nu}, and suppose that
$z \in \spt \Gamma_i$.
We can apply an isometry of $\R^2$ to reduce to the case when $z=0$,
\begin{align*}
B(z,2s_{i+1}) & \subset P := [-2s_{i+1},2s_{i+1}]^2 \\
& \subset Q := [-10^Ns_{i+1},10^Ns_{i+1}]^2 \\
& \subset R := [-s_i/10,s_i/10]^2 \subset B(z,s_i/5),
\end{align*}
and $\dot{\Gamma_i}(0)$ has horizontal slope.
This last assertion, in conjunction with
Definition \ref{def-goodcurves}\eqref{eq-good-curves-gamma-dot},
implies that all curves in $R$ have slopes
within $(1/5)\delta_0$ of zero. In particular,
each curve $\gamma \in \Gamma_i$ is a graph over the $x$-axis inside
$R$. Henceforth we will assume that each curve is given in graph form:
$y=\gamma(x)$.  Nevertheless, we continue to denote by $\gamma =
\{(x,\gamma(x))\}$ the graph itself.

We choose a curve $\gamma_0$ which passes near $P$ and either bounds an existing gap in $Q$, or is far from an existing
gap in $Q$.
To be precise, let $U$ be the complement of $\overline{\spt \Gamma_i}$ in $B(z,s_i/5)$ which, by condition \eqref{eq-good-curves-fin-many}, is a connected open set.
Moreover, $\bdry U \cap B(z,s_i/5)$ lies in one or two $C^2$ curves whose slopes satisfy, along with $\Gamma_i$,
condition \eqref{eq-good-curves-gamma-dot}.
If $U$ meets $L = \{0\} \times [-3s_{i+1},3s_{i+1}]$,
choose $\gamma_0$ which bounds an edge of $U$ meeting $L$ (as $(0,0) \notin U$, such a $\gamma_0$ exists).
If $U$ does not meet $L$, choose $\gamma_0 \in \Gamma_i$ which passes
through $(0,-3s_{i+1})$ or $(0,3s_{i+1})$, chosen so that
\begin{equation}\label{eq-gamma0-condition}
	\dist(\gamma_0(0), U) \geq 5s_{i+1}.
\end{equation}
(Recall that $\gamma_0$ is a graph over the $x$-axis and we have normalized so that $z=0$.)

We will compress the curves inside $Q$ into the complement of $P$, leaving everything unchanged in $R \setminus Q$.
To build $\Gamma_{i+1}$ we will delete $\gamma_0$ from $\Gamma_i$ if necessary,
and apply a diffeomorphism on $X \setminus
\gamma_0$ to compress the remaining curves around~$P$.
The two options in the choice of $\gamma_0$ correspond to either
enlarging an existing gap in $\Gamma_i$, or creating a new gap at least
$4s_{i+1}$ from any previous gap.

We rescale the function $\varphi$ from Lemma~\ref{lem-bump-fn} to the
scale of $Q$ by defining
$$
\tilde{\varphi}(x) = 6 s_{i+1} \varphi\left(\frac{x}{10^Ns_{i+1}}\right).
$$
Note that $|\tilde{\varphi}'| \leq 10^{2-N}$, $|\tilde{\varphi}''|\leq
10^{2-2N} s_{i+1}^{-1}$, and
\begin{equation}\label{eq-bumpderiv-control}
|\tilde{\varphi}'| \leq 10^{2-N} \left(
  \frac{\tilde{\varphi}}{2s_{i+1}} \right)^{2/3}.
\end{equation}
We now define the {\it local compression map} $H:Q\setminus\{\gamma_0\} \ra Q$. Let $g:Q\setminus \{\gamma_0\} \ra [-1,1]$ be given by
\[
g(x,y) = \begin{cases}
\varphi\left(\frac{y-\gamma_0(x)}{10^Ns_{i+1}-\gamma_0(x)}\right)
	& \text{if } y \in (\gamma_0(x),10^Ns_{i+1}) \\
-\varphi\left(\frac{\gamma_0(x)-y}{10^Ns_{i+1}+\gamma_0(x)}\right)
	& \text{if } y \in (-10^Ns_{i+1},\gamma_0(x)).
\end{cases}
\]
Since the functions $\gamma_0$ and $\varphi$ are $C^2$ and
$10^Ns_{i+1}-\gamma_0$ and $10^Ns_{i+1}+\gamma_0$ take values in $[0.99\cdot
10^Ns_{i+1},1.01 \cdot 10^N s_{i+1}]$, $g$ is $C^2$. The function $g$
varies from $0$ near the top of $Q$ to $1$ just above $\gamma_0$, and from $-1$
just below $\gamma_0$ to $0$ near the bottom of $Q$. Next let
\[
h(x,y) = y + \tilde\varphi(x) \cdot g(x,y),
\]
and define
\[
H(x,y) = \big( x, h(x,y) \big).
\]
Both $h$ and $H$ are $C^2$, moreover, $H$ is a diffeomorphism onto its image.
	
Extend $g$ to be zero outside $Q$ and extend $H$ to be the identity
outside $Q$. The new collection of curves is defined by pushing
forward by the local compression map $H$:
\begin{equation}\label{Gamma-recursion}
\Gamma_{i+1} = \{H(\gamma) : \gamma\in \Gamma_i, \gamma \neq \gamma_0\}.
\end{equation}
The probability measure $\sigma_i$ on $\Gamma_i$ pushes forward in the obvious way
to a probability measure on $\Gamma_{i+1}$ that we denote by $\sigma_{i+1}$. We
define $\dot{\Gamma}_{i+1}$ and $\nu_{i+1}$ as in \eqref{dotGamma} and \eqref{eq-def-nu}.
Since $H(Q\setminus\{\gamma_0\})$ and $P$ are disjoint, so are $\spt\Gamma_{i+1}$ and $P$.
	
\begin{proposition}\label{good-prop}
$(\Gamma_{i+1},\sigma_{i+1})$ is a $\delta_0$-good family of curves on
scales below $s_{i+1}$, and $\| w_{i+1} \|_q \leq 2 \| w_i \|_q$ on $Q$.
\end{proposition}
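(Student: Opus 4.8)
The plan is to verify, one at a time, each of the three defining conditions \eqref{eq-good-curves-fin-many}, \eqref{eq-good-curves-gamma-dot}, \eqref{eq-good-curves-weight} of Definition~\ref{def-goodcurves} for $(\Gamma_{i+1},\sigma_{i+1})$ \emph{at the new scale $s_{i+1}$}, and then to read off the $L^q$ bound on $w_{i+1}$. Three structural facts will organize everything. First, $H$ is the identity off $Q$, so away from $Q$ nothing changes. Second, the nontrivial part $\tilde\varphi(x)g(x,y)$ of $h$ lives on the coarse scale $10^N s_{i+1}$, so — via Lemma~\ref{lem-bump-fn}(3) and the definitions of $\tilde\varphi$ and $g$ (recall $|\tilde\varphi'|\le 10^{2-N}$, $|\tilde\varphi''|\le 10^{2-2N}s_{i+1}^{-1}$, $|\partial g|\lesssim 10^{-N}s_{i+1}^{-1}$, $|\partial^2 g|\lesssim 10^{-2N}s_{i+1}^{-2}$) — the perturbation $H$ introduces to slopes, Jacobians and the weight is $O(10^{-N})$ in the natural scaling. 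Third, passing from scale $s_i$ to $s_{i+1}=a_{i+1}s_i$ with $a_{i+1}\le a'$ makes every constant inherited from $\Gamma_i$ (the $\tfrac23$-H\"older constant of $\dot\Gamma_i$, the oscillation bound for $w_i$) pick up a factor $(s_{i+1}/s_i)^{2/3}=a_{i+1}^{2/3}$, hence negligible on scale $s_{i+1}$. Thus I would fix the free integer $N=N(\delta_0)$ (and hence $a'=a'(N)$) at the end so that all the $O(10^{-N})$ errors are a small fraction of $\delta_0$.

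First, for \eqref{eq-good-curves-fin-many}: since $H(Q\setminus\gamma_0)$ is disjoint from $P\supset B(z,2s_{i+1})$, the condition is immediate on balls inside $B(z,2s_{i+1})$; on other balls $B(z',r)$, $r\le s_{i+1}$, the complement of $\overline{\spt\Gamma_{i+1}}$ is that of $\overline{\spt\Gamma_i}$ either with the single pre-existing gap (the one meeting $L$) enlarged, or — when $U$ misses $L$ — with a neighbourhood of the single deleted curve $\gamma_0$ excised, and by \eqref{eq-gamma0-condition} this new gap sits at distance $\ge 5s_{i+1}$ from any old one; in either case a ball of radius $\le s_{i+1}$ still sees at most one gap, with $C^2$ boundary graphs of slope $\le\tfrac15\delta_0+O(10^{-N})$, so \eqref{eq-good-curves-fin-many} persists. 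Next, for \eqref{eq-good-curves-gamma-dot}: writing the surviving curves as graphs, the image of $y=\gamma(x)$ has slope $\gamma'(x)+[\tilde\varphi'g+\tilde\varphi(\partial_xg+\gamma'\partial_yg)]$; by the chain rule and the derivative bounds above, together with $|\gamma_0'|\le\tfrac15\delta_0$, the bracketed error is $O(10^{-N})$ and its modulus of continuity on $B(z',r)$, $r\le s_{i+1}$, is controlled — and the crucial point is that along $\gamma_0$ and along the edges $\{|x|\approx 0.9\cdot 10^N s_{i+1}\}$ of the bump, where the gap created by removing $\gamma_0$ closes up and curves from the two sides come back together, the estimate $|\tilde\varphi'|\lesssim \tilde\varphi^{2/3}$ of \eqref{eq-bumpderiv-control} (i.e.\ Lemma~\ref{lem-bump-fn}(4)) keeps the $\tfrac23$-H\"older constant of $\dot\Gamma_{i+1}$ proportional to $\delta_0$ rather than blowing up (a Lipschitz cutoff would force the split curves to cross). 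Adding the negligible inherited $a_{i+1}^{2/3}$ term then gives $\angle(\dot\Gamma_{i+1}(x),\dot\Gamma_{i+1}(y))\le\delta_0(|x-y|/(2s_{i+1}))^{2/3}$ once $N$ is large. This step — preserving the \emph{same} constant $\delta_0$ uniformly in the scale, so that the proposition can be iterated — is the main obstacle; everything else is bookkeeping with the chain rule.

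Finally, for \eqref{eq-good-curves-weight} and the $L^q$ estimate: $H$ fixes $x$ and has $y$-Jacobian $\partial_yh=1+\tilde\varphi\,\partial_yg$ with $|\tilde\varphi\,\partial_yg|\le C10^{-N}<\tfrac12$, so $H$ is a $C^2$ diffeomorphism of bi-Lipschitz constant $1+C10^{-N}$, and since $\partial_yg\le 0$ on $Q\setminus\gamma_0$ it contracts in the $y$-direction. Transporting $\nu_\Gamma$ as in \eqref{eq-def-nu} through $H$ — the conditional density of curves scales by $(\partial_yh)^{-1}$ and each graph's arc-length element by a factor within $1+O(\delta_0^2+10^{-N})$ of $1$ — I would obtain $w_i\le w_{i+1}\circ H\le (1+C10^{-N})w_i$ pointwise on $Q\setminus\gamma_0$, with $w_{i+1}=w_i$ off $Q$ and no change from discarding the (at most one, $\sigma$-null) curve $\gamma_0$; hence $w_{i+1}=d\nu_{i+1}/d\cH^2$ exists on $\spt\Gamma_{i+1}$. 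Its $\tfrac23$-H\"older constant on $B(z',r)$, $r\le s_{i+1}$, is at most (the H\"older constant of $w_i$)$\,\cdot(1+C10^{-N})$ plus $\|w_i\|_\infty$ times the $\tfrac23$-H\"older constant of the Jacobian/arc-length factor, the former being $a_{i+1}^{2/3}(2s_{i+1})^{-2/3}A_{z',r}\delta_0$ (negligible) and the latter $O(10^{-N})(2s_{i+1})^{-2/3}$, again using \eqref{eq-bumpderiv-control} near the bump edges; so for $N$ large this is $\le(2s_{i+1})^{-2/3}A_{z',r}\delta_0$, and similarly $w_{i+1}\in[(1+\delta_0)^{-1}A_{z',r},(1+\delta_0)A_{z',r}]$ locally, since $w_i$ oscillates by at most $A_{z',r}\delta_0 a_{i+1}^{2/3}$ on scale $s_{i+1}$. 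A short change-of-variables computation along $H$ on $Q$, using $\partial_yh\le 1$ and the pointwise bound $w_{i+1}\circ H\le(1+C10^{-N})w_i$, then gives $\|w_{i+1}\|_{L^q(Q)}\le (1+C10^{-N})\|w_i\|_{L^q(Q)}\le 2\|w_i\|_{L^q(Q)}$ for $N$ large, completing the proposal.
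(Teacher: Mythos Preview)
Your proposal is correct and follows essentially the same route as the paper: the same three-part verification of \eqref{eq-good-curves-fin-many}--\eqref{eq-good-curves-weight}, the same identification of \eqref{eq-bumpderiv-control} as the device that keeps the $\tfrac23$-H\"older constant of $\dot\Gamma_{i+1}$ bounded where curves from the two sides of $\gamma_0$ come back together, the same change-of-variables formula $w_{i+1}\circ H=(JH)^{-1}\|D_{\dot\Gamma_i}H\|\,w_i$ for the weight, and the same absorption of all $O(10^{-N})$ errors into the choice of $N=N(\delta_0)$.

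One point you gloss over in \eqref{eq-good-curves-fin-many} deserves a word. You invoke \eqref{eq-gamma0-condition} to say the new gap along $\gamma_0$ stays $\ge 5s_{i+1}$ from the old gap $U$, but \eqref{eq-gamma0-condition} is only the separation at $x=0$, and the slope bound $|\gamma'|\le \tfrac15\delta_0$ is far too weak to propagate a $5s_{i+1}$ separation across the full horizontal extent $2\cdot 10^N s_{i+1}$ of $Q$ (the drift is of order $\delta_0\cdot 10^N s_{i+1}\gg s_{i+1}$). The paper closes this with a short measure argument using condition~\eqref{eq-good-curves-weight} for $\Gamma_i$: if $\gamma_1$ bounds $U$ on the side of $\gamma_0$, then the $\sigma_i$-mass of the bundle between $\gamma_0$ and $\gamma_1$ is independent of $x$, while the near-constancy of $w_i$ forces this mass to be comparable (within a factor $(1+\delta_0)^2$) to $A\cdot(\gamma_1(x)-\gamma_0(x))$; hence $\gamma_1(x)-\gamma_0(x)\ge 4s_{i+1}$ for every $x\in[-10^N s_{i+1},10^N s_{i+1}]$, and after applying $H$ the two gaps stay too far apart to be seen by a common $s_{i+1}$-ball.
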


Assuming for the moment the validity of Proposition \ref{good-prop} we
quickly complete the proof of Proposition \ref{prop-weight-bound}.
By
condition \eqref{eq-good-curves-weight}, we have, for $A = A_{z, s_i}$,
\begin{equation}\label{eq-weight-bound-Q}
\int_Q w_i^q \,d\mathcal{H}^2 \leq (1+\delta_0)^qA^q \cH^2(Q)
	\leq (1+\delta_0)^q A^q \cdot 10^{2N}s_{i+1}^2.
\end{equation}
As $z \in \spt \Gamma_i$, by condition \eqref{eq-good-curves-fin-many}, we know that
$\cH^2(\spt\Gamma_i\cap R) \geq \frac13 \cH^2(R) = 75^{-1} s_i^2$.  So we bound
\begin{equation}\label{eq-weight-bound-R}
\int_R w_i^q \,d\mathcal{H}^2 \geq (1+\delta_0)^{-q}A^q \cH^2(\spt\Gamma_i\cap R) \geq 75^{-1} (1+\delta_0)^{-q}A^q s_i^2.
\end{equation}
Note $w_{i+1}=w_i$ on $R\setminus Q$, and Proposition~\ref{good-prop} controls $w_{i+1}$ on $Q$.
Therefore, by \eqref{eq-weight-bound-Q}
and \eqref{eq-weight-bound-R} we have
\begin{align*}\label{eq-weight-rq}
	\int_R w_{i+1}^q \,d\mathcal{H}^2
	& \leq \int_{R\setminus Q} w_{i}^q \,d\mathcal{H}^2 + 2^q \int_Q w_{i}^q \,d\mathcal{H}^2 \\
	& \leq \int_{R} w_{i}^q \,d\mathcal{H}^2 + C a_{i+1}^2 \int_R w_{i}^q \,d\mathcal{H}^2 \\
	& = (1+C a_{i+1}^2) \int_R w_{i}^q \,d\mathcal{H}^2,
\end{align*}
where $C = 75 \cdot 2^q (1+\del_0)^{2q} 10^{2N}$.
This completes the proof of Proposition \ref{prop-weight-bound}.
\end{proof}

The proof of Proposition \ref{good-prop} is divided into three lemmas. In all three of these lemmas, the context is the modified measured curve family $(\Gamma_{i+1},\sigma_{i+1})$ defined in \eqref{Gamma-recursion}.

\begin{lemma}
Condition \eqref{eq-good-curves-fin-many} of Definition \ref{def-goodcurves} is satisfied.
\end{lemma}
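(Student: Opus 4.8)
The plan is to exploit that the passage $\Gamma_i \rightsquigarrow \Gamma_{i+1}$ is built from two operations, each of which is essentially harmless for condition \eqref{eq-good-curves-fin-many}: the (possible) deletion of the single arc $\gamma_0$, and the push-forward by the local compression map $H$, which equals $\id$ outside the square $Q$. I would begin by recording two elementary facts. First, every point of $\gamma_0$ lies in the \emph{open} set $\spt\Gamma_i$, and an arc does not locally disconnect an open subset of the plane, even at its endpoints; hence $\overline{\spt\Gamma_i\setminus\gamma_0}=\overline{\spt\Gamma_i}$, so deleting $\gamma_0$ does not change $\overline{\spt(\cdot)}$ at all. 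Second, from the definition of $\tilde\varphi$ and the normalization $a_{i+1}\le a'\le 10^{-2-N}$, the set $\{H\neq\id\}$ is a compact subset of the interior of $Q$ at distance at least $2s_{i+1}$ from $\partial Q$; consequently any ball $B=B(w,r)$ with $0<r\le s_{i+1}$ either misses $\{H\neq\id\}$ altogether, or is contained in $Q$.

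In the first alternative, $\spt\Gamma_{i+1}$ agrees with $\spt\Gamma_i$ near $B$ (up to deletion of $\gamma_0$), so by the first fact $\overline{\spt\Gamma_{i+1}}\cap B=\overline{\spt\Gamma_i}\cap B$. Since $r\le s_{i+1}\le s_i$ and $B$ carries no endpoint of $\Gamma_{i+1}$ — hence none of $\Gamma_i$, either away from $B(z,s_i)$, where endpoints of $\Gamma_{i+1}$ and $\Gamma_i$ coincide, or inside $B(z,s_i)$, where the standing hypothesis of Proposition~\ref{prop-weight-bound} forbids curves of $\Gamma_i$ from stopping; the only residual point, a ball meeting an endpoint of the deleted arc $\gamma_0$, is handled by the first fact together with the fact that $\Gamma_i$ is orderly (nearly parallel, nearly constant weight) there — condition \eqref{eq-good-curves-fin-many} for $\Gamma_{i+1}$ at $B$ is inherited directly from that for $\Gamma_i$.

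In the second alternative $B\subseteq Q$, and I would analyze the gap structure inside $Q$ explicitly. For each fixed $x$ the map $y\mapsto h(x,y)$ is strictly increasing (the bound on $|\tilde\varphi'|$, $|\tilde\varphi|$ keeps $\partial_y h$ near $1$) and carries $(\gamma_0(x),10^Ns_{i+1})$ onto $(\gamma_0(x)+\tilde\varphi(x),10^Ns_{i+1})$, and symmetrically below; hence $H$ is a homeomorphism of $Q\setminus\gamma_0$ onto $Q\setminus G$, where $G=\{(x,y):|y-\gamma_0(x)|<\tilde\varphi(x)\}$ is the ``lens'' around $\gamma_0$. Using the first fact once more, the complement of $\overline{\spt\Gamma_{i+1}}$ in the interior of $Q$ is therefore the disjoint union of $G$ and $H(U\cap Q)$, where $U$ is the \emph{unique} gap of $\spt\Gamma_i$ inside $B(z,s_i/5)$ (uniqueness being condition \eqref{eq-good-curves-fin-many} for $\Gamma_i$, valid since $B(z,s_i/5)$ carries no endpoints). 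If $U\cap L\neq\emptyset$ then $\gamma_0$ is a boundary curve of $U$ (and is not deleted), $G$ abuts $U$ along $\gamma_0$, and $G\cup H(U)$ is a single connected gap containing $P$, so we are done. In the remaining case, $G$ and $H(U)$ are disjoint connected open sets and one must check that $\overline{G}$ and $\overline{H(U)}$ are more than $2s_{i+1}$ apart, so that no ball of radius $\le s_{i+1}$ meets both: by \eqref{eq-good-curves-gamma-dot} all relevant curves — $\gamma_0$ and the boundary curves of $U$ — have slope a small multiple of $\delta_0$ throughout $Q$, hence are nearly flat; together with the choice \eqref{eq-gamma0-condition} ($\dist(\gamma_0(0),U)\ge 5s_{i+1}$) this keeps the strip of $\spt\Gamma_i$ trapped between $\gamma_0$ and $\partial U$ of width at least, say, $3s_{i+1}$ over the $x$-range where $G$ is fat, while $G$ only tapers outside that range; since $H$ translates this strip and $U$ by the common amount $\tilde\varphi(x)g$ (which is $\approx\pm\tilde\varphi(x)$ right up to $\partial U$), the strip survives between $G$ and $H(U)$ and provides the separation.

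The step I expect to be the real obstacle is exactly this last separation estimate: one must keep the newly created lens $G$ apart from the transported pre-existing gap $H(U)$ uniformly over the \emph{entire} large square $Q$, and this is a somewhat delicate interplay among $N$, $\delta_0$ and $a'$ — it is precisely the sort of estimate that forces the quantitative choices $\delta_0\le\tfrac1{200}$, $N\ge 5$, $a'\le 10^{-2-N}$. By comparison, the first alternative and the $C^2$/openness bookkeeping are routine.
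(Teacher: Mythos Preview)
Your decomposition into the two alternatives (ball away from $\{H\ne\id\}$ versus ball inside $Q$) is the right framework, and your identification of the crux --- separating the new lens $G$ from the transported pre-existing gap $H(U)$ --- is exactly correct. However, the method you propose for that separation estimate does not work.

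You argue that because $\gamma_0$ and the boundary curve $\gamma_1$ of $U$ have slopes at most a small multiple of $\delta_0$ (condition \eqref{eq-good-curves-gamma-dot}), and because $\gamma_1(0)-\gamma_0(0)\ge 5s_{i+1}$, the vertical separation $\gamma_1(x)-\gamma_0(x)$ stays at least $3s_{i+1}$ across $Q$. But $Q$ has horizontal extent $2\cdot 10^N s_{i+1}$, so the drift in $\gamma_1-\gamma_0$ over $Q$ can be as large as a constant times $\delta_0\cdot 10^N s_{i+1}$. The catch is that $N=N(\delta_0)$ is forced to be \emph{large} by the proofs of conditions \eqref{eq-good-curves-gamma-dot} and \eqref{eq-good-curves-weight} for $\Gamma_{i+1}$ (roughly $10^N\gtrsim 10^8/\delta_0$), so $\delta_0\cdot 10^N$ is enormous, not small. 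There is no choice of constants that rescues the slope argument: the very requirement that makes $\Gamma_{i+1}$ good on scales $\le s_{i+1}$ forces $Q$ to be so wide that near-flatness says nothing useful about the width of the strip.

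The paper's argument for the separation is qualitatively different and does not use condition \eqref{eq-good-curves-gamma-dot} at all. Instead it uses condition \eqref{eq-good-curves-weight}: let $\sigma_0$ be the $\sigma_i$-mass of the curves trapped between $\gamma_0$ and $\gamma_1$. Since these curves cross every vertical line $\{x\}\times\R$ exactly once inside $Q$, and the weight $w_i$ lies in $[(1+\delta_0)^{-1}A,(1+\delta_0)A]$, one gets
\[
(1+\delta_0)^{-1}A\,(\gamma_1(x)-\gamma_0(x))\;\lesssim\;\sigma_0\;\lesssim\;(1+\delta_0)A\,(\gamma_1(x)-\gamma_0(x))
\]
for every $x$, whence $\gamma_1(x)-\gamma_0(x)$ is constant up to a factor $(1+\delta_0)^2\cdot 1.01$ across all of $Q$. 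Since it is $\ge 5s_{i+1}$ at $x=0$, it is $\ge 4s_{i+1}$ everywhere. This ``conservation of curve mass'' argument is robust in $N$ and is the missing idea in your sketch.
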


\begin{proof}
Recall that the curve $\gamma_0$ was chosen in one of two ways.
First, suppose that $\gamma_0$ was chosen to contain part of the boundary of $U = B(z,s_i/5) \setminus \overline{\spt \Gamma_i}$.  It is clear that the deformation $H$ has only enlarged this set, and it is easy to see that $\Gamma_{i+1}$ will satisfy condition \eqref{eq-good-curves-fin-many}.

Now in the remainder of this proof, we suppose that $\gamma_0 \in \Gamma_i$ was chosen so that
$\dist(\gamma_0(0),U) \geq 5s_{i+1}$. We must check that the new
open set opened up along $\gamma_0$ will not result in two open gaps in $\spt\Gamma_{i+1}$ in a common $s_{i+1}$-ball.

Denote the curve which bounds the edge of $U$ closest to $\gamma_0$
by $\gamma_1$.  Without loss of generality, we may assume that
$\gamma_0(x) \leq \gamma_1(x)$ for all $x \in I:=[-10^Ns_{i+1},10^Ns_{i+1}]$.
To complete the proof of this lemma, it suffices to show that
$\gamma_1(x)-\gamma_0(x) \geq 4s_{i+1}$ for all $x \in I$,
since then in the image they will remain sufficiently far apart.

Let $\sigma_0$ be the $\sigma_i$ measure of those curves of $\Gamma_i$
which lie between $\gamma_0$ and $\gamma_1$.
For $x_1 \leq x_2$, let
\[
	T[x_1,x_2] = \{ (x,y) \in Q \cap \spt\Gamma_i : x_1 \leq x \leq x_2,\
		\gamma_0(x) < y < \gamma_1(x) \}.
\]
By \eqref{eq-def-nu}, for any $x \in I$, $h>0$ we have
$\sigma_0 h \leq \nu_i(T[x,x+h])$.
On the other hand, by condition \eqref{eq-good-curves-weight} we have
$\nu_i(T[x,x+h]) \leq (\gamma_1(x)-\gamma_0(x)+h/100)h(1+\delta_0)A$,
for the appropriate value of the constant $A=A_{z,r}$.
Combining these and letting $h \to 0$, we see that
\begin{equation}\label{eq-Acheck-1}
	\sigma_0 \leq (1+\delta_0)A(\gamma_1(x)-\gamma_0(x)).
\end{equation}
Likewise, by considering $T[0,h]$, we see that
\[
	(\gamma_1(0)-\gamma_0(0)-h/100)h(1+\delta_0)^{-1}A \leq \nu_i(T[0,h]) \leq 1.01\sigma_0h,
\]
and therefore
\begin{equation}\label{eq-Acheck-2}
	(\gamma_1(0)-\gamma_0(0))(1+\delta_0)^{-1}A \leq 1.01\sigma_0.
\end{equation}
Combining \eqref{eq-gamma0-condition}, \eqref{eq-Acheck-1} and \eqref{eq-Acheck-2}, we see that
\[
	5s_{i+1} \leq \gamma_1(0)-\gamma_0(0)
		\leq (1+\delta_0)A^{-1}1.01\sigma_0
		\leq (1+\delta_0)^2 1.01 (\gamma_1(x)-\gamma_0(x)).
\]
Therefore $\gamma_0$ and $\gamma_1$ are always at least $4s_{i+1}$ apart in $Q$.
\end{proof}

\begin{lemma}\label{Blemma}
Condition \eqref{eq-good-curves-gamma-dot} of Definition \ref{def-goodcurves} is satisfied.
\end{lemma}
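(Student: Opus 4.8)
The plan is to control $\dot\Gamma_{i+1}$ by following how the compression map $H$ acts on tangent vectors, thereby reducing the angular oscillation of $\dot\Gamma_{i+1}$ to that of $\dot\Gamma_i$ together with the oscillation of the differential $DH$.

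\emph{Reduction.} First I would dispose of all but the essential case. Since $\varphi$ is supported in $[-0.9,0.9]$, and correspondingly $g$ vanishes near $\partial Q$, the map $H$ is the identity outside a sub-box $Q'\subset Q$ whose distance to $\partial Q$ is at least $10^{N-1}s_{i+1}$, which for $N\ge 5$ is far larger than $s_{i+1}$. Hence any ball $B(w,r)$ with $r\le s_{i+1}$ that meets $\spt(H-\id)$ is contained in $Q$; and if $B(w,r)$ misses $\spt(H-\id)$ then $\dot\Gamma_{i+1}=\dot\Gamma_i$ where both are defined on $B(w,r)$, so \eqref{eq-good-curves-gamma-dot} for $\Gamma_i$ at scale $s_i\ge s_{i+1}$ already gives the claim, using $(2s_i)^{-2/3}=a_{i+1}^{2/3}(2s_{i+1})^{-2/3}\le(2s_{i+1})^{-2/3}$. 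So I would henceforth work inside $Q$, in the normalized coordinates from the proof of Proposition~\ref{prop-weight-bound}: each curve of $\Gamma_i$ meeting $Q$, and each of the one or two $C^2$ curves bounding $U$ (in particular $\gamma_0$), is a graph $y=\gamma(x)$ over $[-10^Ns_{i+1},10^Ns_{i+1}]$ with $|\gamma'|\le\tfrac15\delta_0$, and $[\gamma_0']_{C^{0,2/3}}\lesssim\delta_0(2s_i)^{-2/3}=\delta_0 a_{i+1}^{2/3}(2s_{i+1})^{-2/3}$.

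\emph{Estimating $DH$.} The differential $DH$ has rows $(1,0)$ and $(h_x,h_y)$ with $h_x=\tilde\varphi'g+\tilde\varphi g_x$ and $h_y=1+\tilde\varphi g_y$. From $|g|\le 1$, the lower bound $10^Ns_{i+1}\pm\gamma_0\ge 0.99\cdot 10^Ns_{i+1}$ on the denominators in the definition of $g$, $|\varphi'|,|\varphi''|\lesssim 1$, and $|\gamma_0'|\le\tfrac15\delta_0$, one gets $|\nabla g|\lesssim 10^{-N}s_{i+1}^{-1}$; combined with $|\tilde\varphi|\le 6s_{i+1}$, $|\tilde\varphi'|\le 10^{2-N}$ this yields $\|h_x\|_\infty\lesssim 10^{2-N}$ and $\|h_y-1\|_\infty\lesssim 10^{-N}$, so $H|_Q$ is bi-Lipschitz with constant $1+O(10^{2-N})$ and every $DH_p$ distorts angles by a factor $1+O(10^{2-N})$. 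For the oscillation I would factor each $x$-derivative appearing in $h_x,h_y$ through $\gamma_0'(x)$ times a factor that is genuinely $C^1$ on $Q$ with all first derivatives $\lesssim 10^{-N}s_{i+1}^{-1}$ (the $10^N$-scale denominators prevent any blow-up), and use $|\tilde\varphi''|\le 10^{2-2N}s_{i+1}^{-1}$; this shows $h_x$ and $h_y$ are Hölder-$\tfrac23$ on $Q$ with seminorm $\lesssim 10^{2-2N}s_{i+1}^{-2/3}$, i.e.\ $[DH]_{C^{0,2/3}(Q)}\lesssim 10^{2-2N}s_{i+1}^{-2/3}$.

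\emph{Conclusion.} For $p,q\in\spt\Gamma_i\cap Q$ not on $\gamma_0$, the vector $\dot\Gamma_{i+1}(H(p))$ points along $DH_p\,\dot\Gamma_i(p)$, so the triangle inequality for angles gives
\[
\angle\big(\dot\Gamma_{i+1}(H(p)),\dot\Gamma_{i+1}(H(q))\big)\le\angle\big(DH_p\dot\Gamma_i(p),DH_p\dot\Gamma_i(q)\big)+\angle\big(DH_p\dot\Gamma_i(q),DH_q\dot\Gamma_i(q)\big).
\]
The first term is at most $(1+O(10^{2-N}))\,\angle(\dot\Gamma_i(p),\dot\Gamma_i(q))\le(1+O(10^{2-N}))\,\delta_0(|p-q|/2s_i)^{2/3}$ by \eqref{eq-good-curves-gamma-dot} for $\Gamma_i$ at scale $s_i$; the second is $\lesssim\|DH_p-DH_q\|\lesssim 10^{2-2N}s_{i+1}^{-2/3}|p-q|^{2/3}$. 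Substituting $(2s_i)^{-2/3}=a_{i+1}^{2/3}(2s_{i+1})^{-2/3}$ and $|p-q|\le(1+O(10^{2-N}))\,|H(p)-H(q)|$, the bound becomes
\[
\angle\big(\dot\Gamma_{i+1}(H(p)),\dot\Gamma_{i+1}(H(q))\big)\le\big(C\delta_0 a_{i+1}^{2/3}+C\,10^{2-2N}\big)\Big(\tfrac{|H(p)-H(q)|}{2s_{i+1}}\Big)^{2/3},
\]
and since $a_{i+1}\le a'\le 10^{-2-N}$, choosing $N=N(\delta_0)$ large makes the bracket $\le\delta_0$; this is exactly \eqref{eq-good-curves-gamma-dot} for $\Gamma_{i+1}$ at scales below $s_{i+1}$. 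The step I expect to be the main obstacle is the Hölder estimate for $DH$: pushing its $C^{0,2/3}$-seminorm down to $10^{2-2N}s_{i+1}^{-2/3}$ without a second-derivative bound on the separating curve $\gamma_0$ — which the definition of a $\delta_0$-good family does not provide — forces one to route every $x$-derivative of $g$ through $\gamma_0'$ (Hölder-$\tfrac23$ with the small constant inherited from $\Gamma_i$) multiplied by non-degenerate $C^1$ factors coming from $\varphi$ and from the $10^N$-scale denominators.
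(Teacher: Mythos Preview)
Your approach has a genuine gap: the global H\"older estimate $[DH]_{C^{0,2/3}(Q)}\lesssim 10^{2-2N}s_{i+1}^{-2/3}$ is false, because $DH$ is discontinuous across $\gamma_0$. Recall $h_x=\tilde\varphi' g+\tilde\varphi g_x$. While $g_x$ and $g_y$ both tend to $0$ at $\gamma_0$ from either side (since $\varphi'\equiv 0$ near $0$), the factor $g$ itself jumps from $+1$ to $-1$. Hence $h_x$ has a jump of size $2\tilde\varphi'(x)$ across $\gamma_0$, and no bound of the form $|h_x(p)-h_x(q)|\lesssim|p-q|^{2/3}$ can hold as $p,q$ approach $\gamma_0$ from opposite sides. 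Your diagnosis that the obstacle is the absence of a bound on $\gamma_0''$ (and the remedy of routing $x$-derivatives of $g$ through $\gamma_0'$) addresses the wrong term: it is $\tilde\varphi' g$, not $\tilde\varphi g_x$, that fails.

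The paper handles exactly this opposite-sides case separately, and the key tool you are missing is property~(4) of Lemma~\ref{lem-bump-fn}, namely $|\varphi'|\le 14\varphi^{2/3}$, equivalently $|\tilde\varphi'|\le 10^{2-N}(\tilde\varphi/2s_{i+1})^{2/3}$ as in \eqref{eq-bumpderiv-control}. The point is that the new gap in $\spt\Gamma_{i+1}$ around $\gamma_0$ has width $\sim\tilde\varphi(x)$ on each side, so if $u_1=H(v_1)$ and $u_2=H(v_2)$ lie on opposite sides then $|u_1-u_2|\gtrsim\tilde\varphi(x_1)+\tilde\varphi(x_2)$; property~(4) then converts the jump $|\tilde\varphi'(x_1)|+|\tilde\varphi'(x_2)|$ into $\lesssim 10^{2-N}(|u_1-u_2|/2s_{i+1})^{2/3}$, which is what is needed. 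This is the entire reason the bump function was built with property~(4), and your argument never invokes it. Your conceptual decomposition via the angle triangle inequality and $DH$-oscillation is otherwise sound and close in spirit to the paper's computation; it can be repaired by isolating the opposite-sides case and bounding $\|DH_p-DH_q\|$ there not by a global H\"older seminorm but by $|\tilde\varphi'(x_1)|+|\tilde\varphi'(x_2)|$ together with \eqref{eq-bumpderiv-control}.
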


\begin{proof}
Let us consider $u_1,u_2 \in R\cap (\spt\Gamma_{i+1})$ so that
$|u_1-u_2|\le 2s_{i+1}$. Note that $u_k = H(v_k)$ for some $v_k \in
\spt\Gamma_i \cap R$, $k\in\{1,2\}$.
If $v_1$ and $v_2$ lie on different sides of $\gamma_0$ then one can calculate that $|v_1-v_2| \leq 1.02 |u_1-u_2|$,
as $v_k$ and $u_k$ lie on the same vertical line and the slope of $\gamma_0$ is close to zero.
When $v_1$ and $v_2$ are on the same side of $\gamma_0$, the same estimate follows from
\eqref{eq-distancecontrol} below.
Write
$$
v_k = (x_k,\gamma_k(x_k)) = (\id \otimes \gamma_k)(x_k)
$$
for some $\gamma_k \in \Gamma_i$. We calculate the differential of the function $h\circ(\id\otimes\gamma_k)$ as follows:
\begin{equation}\label{c123}
(h \circ (\id \otimes \gamma_k))'(x_k) = C_{1k} + C_{2k} + C_{3k},
\end{equation}
where $C_{1k} = \gamma'_k(x_k)$, $C_{2k} = \tilde\varphi'(x_k)
g(v_k)$, and
$$
C_{3k} = \tilde\varphi(x_k) (g\circ(\id\otimes\gamma_k))'(x_k).
$$
Eventually, we want to estimate the difference between
$(h\circ(\id\otimes\gamma_1))'(x_1)$ and
$(h\circ(\id\otimes\gamma_2))'(x_2)$. In view of \eqref{c123}, we
write $\Delta C_1$, $\Delta C_2$, and $\Delta C_3$ for the differences
of the summands. We will estimate everything in terms of the scale-invariant quantity
\begin{equation}\label{ALPHA}
\alpha(u_1,u_2) := \left( \frac{|u_1-u_2|}{2s_{i+1}} \right)^{2/3}.
\end{equation}

To estimate $\Delta C_1$ we use the following elementary fact: for any
$v,w \in [-1/4,1/4]$, the vectors $\mathbf{v}=(1,v)$ and
$\mathbf{w}=(1,w)$ satisfy
\begin{equation}\label{eq-geom-lemma}
\tfrac12|v-w| \leq \Bigl| |\mathbf{v}|^{-1}\mathbf{v} -
  |\mathbf{w}|^{-1}\mathbf{w} \Bigr| \leq 2 |v-w|,
\end{equation}
as one quickly sees from the identity
\[
	\frac{\big| |\mathbf{v}|^{-1}\mathbf{v} - |\mathbf{w}|^{-1}\mathbf{w} \big|^2}{|v-w|^2}
	=\frac{2}{(1+v^2)(1+w^2) + (1+vw)\sqrt{1+v^2}\sqrt{1+w^2}}\ .
\]

Recall that $\dot\Gamma_i(v_k) =
\frac{(1,\gamma_k'(x_k))}{\sqrt{1+\gamma_k'(x_k)^2}}$ for
$k\in\{1,2\}$. Since $\dot\Gamma_i(v_k)$ is a unit vector,
\begin{equation}\label{arclength}
|\dot\Gamma_i(v_1)-\dot\Gamma_i(v_2)| = 2 \sin \left( \tfrac12
  \angle(\dot\Gamma_i(v_1),\dot\Gamma_i(v_2)) \right) \le
\angle(\dot\Gamma_i(v_1),\dot\Gamma_i(v_2)).
\end{equation}
An application of \eqref{eq-geom-lemma} gives
\begin{equation}\label{eq-deform-c1}\begin{split}
|\Delta C_1| &= |\gamma_1'(x_1) - \gamma_2'(x_2)|
\le 2 |\dot\Gamma_i(v_1)-\dot\Gamma_i(v_2)|
\le  2\angle(\dot{\Gamma}_i(v_1), \dot{\Gamma}_i(v_2)) \\
& \le 2 \delta_0 \left( \frac{|v_1-v_2|}{2s_i} \right)^{2/3} \le 3 \delta_0 a_{i+1}^{2/3} \alpha(u_1,u_2),
\end{split}\end{equation}
where $\alpha(u_1,u_2)$ is defined as in \eqref{ALPHA}.

If one of the points $v_1$ and $v_2$ lies outside $Q$, then both are
close to the edge of $Q$, where $H$ is the identity, thus
\begin{equation}\label{eq-deform-c23}
|\Delta C_2|=|\Delta C_3 |=0.
\end{equation}
We therefore assume that both $v_1$ and $v_2$ are in~$Q$.

Suppose $v_1$ and $v_2$ lie on opposite sides of $\gamma_0$.
Since $|v_1-v_2| \le 3 s_{i+1}$, we have that
$| \tilde\varphi(x_1) | +| \tilde\varphi(x_2) |  \leq 2 |u_1-u_2|$.
Therefore, using \eqref{eq-bumpderiv-control} we see that
\begin{equation}\label{eq-deform-c2-5}\begin{split}
|\Delta C_2|
& = | \tilde\varphi'(x_1) g(v_1) - \tilde\varphi'(x_2) g(v_2) |
\le | \tilde\varphi'(x_1) | + | \tilde\varphi'(x_2) | \\
& \leq 10^{2-N} \left( \frac{\tilde{\varphi}(x_1)}{2s_{i+1}} \right)^{2/3} +
10^{2-N} \left( \frac{\tilde{\varphi}(x_2)}{2s_{i+1}} \right)^{2/3} \\
&\le 4 \cdot 10^{2-N} \alpha(u_1,u_2).
\end{split}\end{equation}
Since $v_1$ and $v_2$ are both close to $\gamma_0$, they are both in
the region where $|g|=1$, whence
\begin{equation}\label{eq-deform-c3}
\Delta C_3 = 0.
\end{equation}

It remains to bound $\Delta C_2$ and $\Delta C_3$ when $v_1$ and $v_2$
lie on the same side of $\gamma_0$. Without loss of generality, we may
assume that both $v_1$ and $v_2$ are above $\gamma_0$. Then $g(v_k) =
\varphi(A_k/B_k)$ where $A_k = \gamma_k(x_k)-\gamma_0(x_k)$ and $B_k =
10^Ns_{i+1}-\gamma_0(x_k)$. Note that
\begin{gather*}
|A_k| \leq 1.01 \cdot 10^N s_{i+1}, \quad |B_k| \in [0.99\cdot
10^Ns_{i+1},1.01 \cdot 10^N s_{i+1}], \\
|A_1-A_2| \leq 2|v_1-v_2|, \quad |B_1-B_2| \leq \frac{1}{100} |v_1-v_2|.
\end{gather*}
To estimate $|g(v_1)-g(v_2)|$ we use the simple estimate
\begin{equation}\begin{split}\label{eq-deform-c2-2b}
\left| \frac{A_1}{B_1} - \frac{A_2}{B_2} \right |
\le \frac{ |A_1-A_2| \cdot |B_2| + |A_2| \cdot |B_1-B_2|}{|B_1 B_2|}
\le 3 \cdot 10^{-N} \left( \frac{|v_1-v_2|}{s_{i+1}} \right). % \leq 10^{1-N} \alpha(u_1,u_2).
\end{split}\end{equation}
Thus
\begin{equation}\label{eq-deform-c2-2}
|g(v_1)-g(v_2)| = \left| \varphi\left(\frac{A_1}{B_1}\right) -
  \varphi\left(\frac{A_2}{B_2}\right) \right|
\le \|\varphi'\|_\infty \left| \frac{A_1}{B_1} - \frac{A_2}{B_2} \right |
\le 10^{1-N} \left( \frac{|v_1-v_2|}{s_{i+1}} \right). % \le 10^{2-N} \alpha(u_1,u_2).
\end{equation}
Since $u_1-u_2 = v_1-v_2 + (0,\tilde\varphi(x_1) g(v_1) - \tilde\varphi(x_2) g(v_2))$,
the estimate $|u_1-u_2| \geq 0.99 |v_1-v_2|$ follows from the following bound.
% Note too that \eqref{eq-deform-c2-2} and \eqref{eq-deform-c2-3} give that
\begin{equation}\begin{split}\label{eq-distancecontrol}
	|\tilde\varphi(x_1) g(v_1) - \tilde\varphi(x_2) g(v_2)|
	& \leq |\tilde\varphi(x_1)-\tilde\varphi(x_2)| \, \| g \|_\infty + \| \tilde\varphi \|_\infty |g(v_1)-g(v_2)| \\
	& \leq 10^{2-N} |x_1-x_2| + 6s_{i+1} \cdot 10^{1-N} \left( \frac{|v_1-v_2|}{s_{i+1}} \right)
		\leq 10^{3-N} |v_1 - v_2|.
\end{split}\end{equation}

Observe too that
\begin{equation}\label{eq-deform-c2-3}
|\tilde\varphi'(x_1)-\tilde\varphi'(x_2)| \leq \|\tilde{\varphi}''\|_\infty \,|x_1-x_2|
\le 10^{2-2N} s_{i+1}^{-1} |v_1-v_2| \leq 10^{3-2N} \alpha(u_1,u_2).
\end{equation}

From \eqref{eq-deform-c2-2} we have $|g(v_1)-g(v_2)| \leq 10^{2-N} \alpha(u_1,u_2)$, which combines with \eqref{eq-deform-c2-3} to get
\begin{equation}\label{eq-deform-c2-4}\begin{split}
|\Delta C_2|
	& \leq |\tilde\varphi'(x_1)| \, |g(v_1)-g(v_2)| + |\tilde\varphi'(x_1)-\tilde\varphi'(x_2)|\,|g(v_2)|\\
	& \leq 10^{2-N} \cdot 10^{2-N} \alpha(u_1,u_2) + 10^{3-2N} \alpha(u_1,u_2) \leq 10^{5-2N} \alpha(u_1,u_2).
\end{split}\end{equation}

Finally, we must bound $|\Delta C_3|$. As $v_1$, $v_2$ both lie above $\gamma_0$, we have
$$
(g\circ(\id\otimes\gamma_k))'(x_k) = E_k \varphi'(A_k/B_k),
$$
where $A_k$ and $B_k$ are as defined above and
\[
E_k = \frac{A_k' B_k - A_k B_k'}{B_k^2},\quad  A_k' =
\gamma_k'(x_k)-\gamma_0'(x_k), \quad B_k' = \gamma_0'(x_k).
\]
Now $\max\{|A_k'|,|B_k'|\} \le \tfrac{1}{100}$, so $|E_k| \leq 3 \cdot
10^{-2-N} s_{i+1}^{-1}$ and $(g\circ(\id\otimes\gamma_k))'(x_k)
\leq 10^{-1-N}s_{i+1}^{-1}$. Since $v_1$ and $v_2$ lie on the same
side of $\gamma_0$, we have
\[
\max\{|A_1-A_2|,|B_1-B_2|\} \le 10 s_{i+1} \alpha(u_1,u_2) \quad \mbox{and} \quad \max\{|A_1'-A_2'|,|B_1'-B_2'|\} \le \alpha(u_1,u_2),
\]
so
\begin{align*}
|E_1-E_2|
& = \left| \frac{A_1' B_1 - A_1 B_1'}{B_1^2} - \frac{A_2' B_2 - A_2
    B_2'}{B_2^2} \right| \\
& \leq 10^{1-4N} s_{i+1}^{-4} \left| (A_1' B_1 - A_1 B_1')B_2^2 -
  (A_2' B_2 - A_2 B_2')B_1^2 \right| \\
& = 10^{1-4N} s_{i+1}^{-4} \left| B_1 B_2^2 (A_1'-A_2') + A_2' B_1
  B_2(B_2-B_1) + A_2 B_1^2(B_2' - B_1') \right. \\
& \qquad \left. +B_1' B_1^2(A_2-A_1) + A_1 B_1'(B_1+B_2)(B_1-B_2) \right| \\
& \le 10^{1-4N} s_{i+1}^{-4} \left( 2 \cdot 10^{1+3N}s_{i+1}^3\alpha(u_1,u_2) +
  3 \cdot 10^{2N}s_{i+1}^3\alpha(u_1,u_2) \right) \le 10^{3-N} s_{i+1}^{-1} \alpha(u_1,u_2).
\end{align*}
Thus $|(g\circ(\id\otimes\gamma_1))'(x_1)-(g\circ(\id\otimes\gamma_2))'(x_2)|$
is equal to
\begin{equation*}\begin{split}
\left| E_1 \,\varphi'\left(\frac{A_1}{B_1}\right) - E_2
  \,\varphi'\left(\frac{A_2}{B_2}\right) \right|
& \le \left| E_1-E_2 \right| \cdot \left|
  \varphi'\left(\frac{A_1}{B_1}\right) \right|  +  |E_2| \cdot \left|
  \varphi'\left(\frac{A_1}{B_1}\right) -
  \varphi'\left(\frac{A_2}{B_2}\right) \right| \\
& \le \left| E_1-E_2 \right| \cdot \| \varphi' \|_\infty + |E_2|
\cdot \| \varphi''\|_\infty \cdot \left|
  \frac{A_1}{B_1}-\frac{A_2}{B_2} \right| \\
& \le 10^{4-N} s_{i+1}^{-1} \alpha(u_1,u_2) + 10^{1-2N} s_{i+1}^{-1} \alpha(u_1,u_2)
  \le 10^{5-N} s_{i+1}^{-1} \alpha(u_1,u_2),
\end{split}\end{equation*}
while
$$
|\tilde\varphi(x_1)-\tilde\varphi(x_2)| \le ||\tilde\varphi'||_\infty |x_1-x_2| \le 10^{2-N} |v_1-v_2|.
$$
Putting all this together,
\begin{equation}\begin{split}\label{eq-deform-c3-3}
|\Delta C_3| &\le |\tilde\varphi(x_1)| \,
\left|(g\circ(\id\otimes\gamma_1))'(x_1)-(g\circ(\id\otimes\gamma_2))'(x_2)
\right| \\
& \qquad \qquad + |\tilde\varphi(x_1) - \tilde\varphi(x_2)| \, \left|
  (g\circ(\id\otimes\gamma_2))'(x_2) \right| \\
& \le 6s_{i+1} \cdot 10^{5-N} s_{i+1}^{-1} \alpha(u_1,u_2) + 10^{2-N} |v_1-v_2| \cdot 10^{-1-N} s_{i+1}^{-1} \\
& \le 10^{6-N} \alpha(u_1,u_2) + 10^{2-2N} \alpha(u_1,u_2) \le 10^{7-N} \alpha(u_1,u_2).
\end{split}\end{equation}
We can now tie all these estimates together. Using \eqref{arclength} again, we estimate
\begin{equation*}\begin{split}
\angle(\dot{\Gamma}_{i+1}(u_1), \dot{\Gamma}_{i+1}(u_2))
&\le \frac\pi2|\dot{\Gamma}_{i+1}(u_1)-\dot{\Gamma}_{i+1}(u_2)| \\
&\le \pi | (h\circ(\id\otimes\gamma_1))'(x_1) - (h\circ(\id\otimes\gamma_2))'(x_2) |.
\end{split}\end{equation*}
This follows from \eqref{eq-geom-lemma} upon noting that
$$
\dot\Gamma_{i+1}(u_k) =
(H\circ(\id\otimes\gamma_k))'(x_k)/|(H\circ(\id\otimes\gamma_k))'(x_k)|
$$
and
$(H\circ(\id\otimes\gamma_k))'(x_k)=(1,(h\circ(\id\otimes\gamma_k))'(x_k))$.

We combine \eqref{eq-deform-c1}, \eqref{eq-deform-c23}, \eqref{eq-deform-c2-5},
\eqref{eq-deform-c3}, \eqref{eq-deform-c2-4}, \eqref{eq-deform-c3-3} and $N \geq 4$ to conclude that
\begin{equation*}\begin{split}
\angle(\dot{\Gamma}_{i+1}(u_1), \dot{\Gamma}_{i+1}(u_2))
 & \le \pi \left( |\Delta C_1| + |\Delta C_2| + |\Delta C_3| \right) \\
 & \le \pi \left( 3\delta_0 a_{i+1}^{2/3} + 4 \cdot 10^{2-N} + 10^{7-N} \right) \alpha(u_1,u_2)
\le \delta_0 \left( \frac{|u_1-u_2|}{2s_{i+1}} \right)^{2/3},
\end{split}\end{equation*}
where the last inequality holds provided that $N=N(\delta_0)$ is chosen large enough. This completes the proof of Lemma \ref{Blemma}.
\end{proof}

\begin{lemma}\label{Clemma}
Condition~\eqref{eq-good-curves-weight} of Definition \ref{def-goodcurves} is satisfied, and $\| w_{i+1} \|_q \leq 2 \| w_i \|_q$ on $Q$.
\end{lemma}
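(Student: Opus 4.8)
The plan is to read $w_{i+1}$ off from the push-forward description \eqref{Gamma-recursion}. By construction $(\Gamma_{i+1},\sigma_{i+1})$ is the image of $(\Gamma_i,\sigma_i)$, with the single curve $\gamma_0$ deleted, under the $C^2$ diffeomorphism $H(x,y)=(x,h(x,y))$, $h(x,y)=y+\tilde\varphi(x)g(x,y)$; moreover $H$ respects the graph structure, carrying $\{y=\gamma(x)\}$ to $\{y=h(x,\gamma(x))\}$ and $\sigma_i$ to $\sigma_{i+1}$ by the tautological bijection of curves. Writing every curve of $\Gamma_i$ in $R$ in graph form and performing the obvious change of variables in the two foliation parameters (the curve label and the $x$-coordinate) in the definition \eqref{eq-def-nu} of $\nu_\Gamma$, one obtains, for $v=(x,\gamma(x))$ on $\gamma\in\Gamma_i$,
\[
w_{i+1}(H(v))=w_i(v)\,J(v),\qquad
J(v)=\frac{\sqrt{1+\tilde\gamma'(x)^2}}{\sqrt{1+\gamma'(x)^2}}\cdot\frac{1}{\partial_y h(x,\gamma(x))},
\]
where $\tilde\gamma(x)=h(x,\gamma(x))$ and $\partial_y h=1+\tilde\varphi\,\partial_y g=\det DH$. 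Note that $J\equiv1$ off $Q$, since $\tilde\varphi$ vanishes there.

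For the second assertion, $\|w_{i+1}\|_q\le2\|w_i\|_q$ on $Q$, I would change variables back through $H$, absorbing the factor $\det DH=\partial_y h$ produced by the area formula into the $(\partial_y h)^{-q}$ coming from $J^q$:
\[
\int_Q w_{i+1}^q\,d\cH^2=\int_{Q\cap\spt\Gamma_i}w_i^q\left(\frac{\sqrt{1+\tilde\gamma'^2}}{\sqrt{1+\gamma'^2}}\right)^{\!q}(\partial_y h)^{1-q}\,d\cH^2 .
\]
The slope normalization together with the splitting $\tilde\gamma'=C_1+C_2+C_3$ from the proof of Lemma~\ref{Blemma} and Lemma~\ref{lem-bump-fn} keeps $|\tilde\gamma'|$ below $\tfrac1{100}$, and the bounds of Lemma~\ref{lem-bump-fn} give $|\partial_y h-1|\le16\cdot10^{-N}$. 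Hence the $q$-th root of the multiplier, namely $\tfrac{\sqrt{1+\tilde\gamma'^2}}{\sqrt{1+\gamma'^2}}(\partial_y h)^{(1-q)/q}$, is at most $\sqrt{1+\tilde\gamma'^2}(\partial_y h)^{-1}<2$ for $N\ge5$; the key point is that $\tfrac{1-q}{q}\in(-1,0]$, so no large negative power of $\partial_y h$ — and hence no bad $q$-dependence — ever enters. Thus $\int_Q w_{i+1}^q\le2^q\int_Q w_i^q$, which is exactly the estimate used (with constant $2^q$) in the proof of Proposition~\ref{prop-weight-bound}.

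For condition \eqref{eq-good-curves-weight} itself I would first record that $J$ is uniformly close to $1$ on $Q$ and $\tfrac23$-Hölder there with constant $\lesssim10^{2-N}s_{i+1}^{-2/3}$: closeness to $1$ follows from the bounds just cited, and the Hölder modulus from the Hölder estimates \eqref{eq-deform-c1}, \eqref{eq-deform-c2-5}, \eqref{eq-deform-c2-4}, \eqref{eq-deform-c3-3} for $\tilde\gamma'$ together with elementary Lipschitz bounds for $\partial_y h$ (controlling $\tilde\varphi',\tilde\varphi''$ and $\partial_yg,\partial_{xy}g,\partial_{yy}g$ via Lemma~\ref{lem-bump-fn} and the near-zero slope of $\gamma_0$). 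Now fix a ball $B=B(z',r)$, $r\le s_{i+1}$, meeting $\spt\Gamma_{i+1}$. If $B$ misses the region (compactly contained in $Q$) where $H\neq\id$, then $w_{i+1}=w_i$ on $B$ and \eqref{eq-good-curves-weight} is inherited from $\delta_0$-goodness of $\Gamma_i$ at the smaller scale $r<s_i$, the permitted Hölder constant $(2s_{i+1})^{-2/3}A'\delta_0$ being the larger of the two. Otherwise $H^{-1}(B)$ is contained in a ball of radius $\le 4s_{i+1}<s_i$ (since $H^{-1}$ distorts distances by $1+O(10^{2-N})$ and $a_{i+1}\le a'$ is tiny), and on that ball the $\tfrac23$-Hölder bound for $w_i$ forces its multiplicative oscillation to be merely $1+O(\delta_0 a_{i+1}^{2/3})$, while $J$ oscillates there by $1+O(10^{2-N})$; since $w_{i+1}\circ H=w_iJ$, the weight $w_{i+1}$ on $B$ then oscillates by at most $1+O(\delta_0 a_{i+1}^{2/3}+10^{2-N})\le1+\delta_0$, and is $\tfrac23$-Hölder with constant comfortably below $(2s_{i+1})^{-2/3}A'\delta_0$ thanks to the extra factor $a_{i+1}^{-2/3}$ carried by the target. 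Combining the two cases proves \eqref{eq-good-curves-weight}, completing Lemma~\ref{Clemma} and hence Proposition~\ref{good-prop}.

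The hard part is this last estimate: a priori, multiplying the already-$\delta_0$-oscillating weight $w_i$ by the perturbation $J$ could push the oscillation past $1+\delta_0$. The resolution rests on the observation that on the tiny scales $r\le s_{i+1}=a_{i+1}s_i$ relevant to $\Gamma_{i+1}$, the Hölder continuity of $w_i$ confines its oscillation to $O(\delta_0 a_{i+1}^{2/3})\ll\delta_0$, leaving room to absorb the $O(10^{2-N})$ error contributed by the deformation — provided the constants are chosen in the order $\delta_0$, then $N=N(\delta_0)$, then $a_{**}\le a'$.
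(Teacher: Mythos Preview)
Your approach is essentially the paper's: both write $w_{i+1}\circ H=w_i\cdot J$ with $J=\|D_{\dot\Gamma_i}H\|/JH$, bound each factor's range and $\tfrac23$-H\"older constant separately (the paper packages this as three sublemmas treating $JH^{-1}$, $\|D_{\dot\Gamma_i}H\circ H^{-1}\|$, and $w_i\circ H^{-1}$), and combine via a product rule, with the $L^q$ bound following from the pointwise estimate and a change of variables.

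One imprecision worth flagging: the H\"older constant of $J$ is not $O(10^{2-N})(2s_{i+1})^{-2/3}$ as you assert. The factor $\sqrt{1+\tilde\gamma'^2}/\sqrt{1+\gamma'^2}$ inherits a $\delta_0$-dependent H\"older modulus from the slopes $\gamma',\tilde\gamma'$ (whose variation is governed by condition~\eqref{eq-good-curves-gamma-dot}, hence by $\delta_0$), and its smallness comes not from $N$ but from the extra factor $|\gamma'|,|\tilde\gamma'|\le\tfrac1{100}$ picked up when differentiating $t\mapsto\sqrt{1+t^2}$; the paper records this as $\tfrac12\delta_0(2s_{i+1})^{-2/3}$. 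Your argument still goes through, since $\tfrac12\delta_0+O(\delta_0 a_{i+1}^{2/3})<\delta_0$, but the mechanism you describe in your last paragraph --- choosing $N=N(\delta_0)$ large to make $J$'s contribution negligible --- only governs the $(\partial_y h)^{-1}$ piece, not the slope ratio.
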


\begin{proof}
Let us write $\| D_{\dot{\Gamma_i}} H(v)\|$ for the magnitude of the
directional derivative of $H$ in the direction of $\dot{\Gamma}_i$ at
the point $v$. Since $H$ is $C^2$ on an open set,
\[
w_{i+1}(H(v)) JH(v) = \| D_{\dot{\Gamma}_i} H(v)\| w_i(v)
\]
for every $v$ in the domain of $H$; here $JH$ denotes the Jacobian of $H$. Thus, for $u \in \spt\Gamma_{i+1}$,
\begin{equation}\label{eq-deform-wt-1}
w_{i+1}(u) = JH^{-1}(u) \ \| D_{\dot{\Gamma}_i} H( H^{-1}(u))\| \ w_i(H^{-1}(u)).
\end{equation}
We want to show that, on any given ball of radius $s_{i+1}$, there is
a constant $A'$ so that $w_{i+1}$ takes values in
$[(1+\delta_0)^{-1}A',(1+\delta_0)A']$ and is $\frac{2}{3}$-H\"older
continuous with constant $\frac{A'\delta_0}{(2s_{i+1})^{2/3}}$.
	
\begin{sublemma}\label{lem-deform-wt-1}
On any ball of radius $s_{i+1}$, $JH^{-1}$ is
$10^{5-2N}s_{i+1}^{-1}$-Lipschitz with values in
$[(1+10^{3-N})^{-1},1+10^{3-N}]$. In particular, $JH^{-1}$ is
$\frac{2}{3}$-H\"older continuous with constant
$\frac{10^{6-2N}}{(2s_{i+1})^{2/3}}$.
\end{sublemma}

\begin{proof}
First, we compute the differential of $H$:
\[
DH(x,y) = \begin{pmatrix}
1 & 0 \\ \tilde{\varphi}'(x) g(x,y) + \tilde{\varphi}(x) g_x(x,y) &
	1 + \tilde{\varphi}(x) g_y(x,y) \end{pmatrix}.
\]
Thus
\begin{equation}\label{eq:JH}
JH = 1+\tilde{\varphi}(x) g_y(x,y).
\end{equation}
Outside $Q$, near the edge of $Q$, or near $\gamma_0$, we have $JH
\equiv 1$, so $JH^{-1} \equiv 1$. It remains to consider the case when
$v_1, v_2$ are in $Q$ and above $\gamma_0$. We see that
\[
g_y(x,y) = \frac{1}{10^Ns_{i+1}-\gamma_0(x)}
\varphi'\left(\frac{y-\gamma_0(x)}{10^Ns_{i+1}-\gamma_0(x)}\right).
\]
Now $10^Ns_{i+1}-\gamma_0$ is $10^{-2}$-Lipschitz and takes values in $[0.99
\cdot 10^N s_{i+1}, 1.01 \cdot 10^N s_{i+1}]$, thus
$(10^Ns_{i+1}-\gamma_0)^{-1}$ is $10^{-1-2N}s_{i+1}^{-2}$-Lipschitz and takes
values in $[0.98\cdot 10^{-N}s_{i+1}^{-1},1.02\cdot 10^{-N}s_{i+1}^{-1}]$. On the other
hand, $\varphi'(\frac{y-\gamma_0(x)}{10^Ns_{i+1}-\gamma_0(x)})$ has size at most
$\tfrac52$ and is Lipschitz with constant
\[
\| \varphi'' \|_\infty \left( 1.01\cdot 10^N s_{i+1} \cdot
  10^{-1-2N}s_{i+1}^{-2} + 1.03 \cdot 10^{-N}s_{i+1}^{-1} \right)
\le 10^{2-N}s_{i+1}^{-1}.
\]
Thus $g_y$ has size at most $10^{1-N}s_{i+1}^{-1}$ and is Lipschitz
with constant
\[
1.02\cdot 10^{-N}s_{i+1}^{-1} \cdot 10^{2-N}s_{i+1}^{-1} + \frac{5}{2} \cdot
10^{-1-2N}s_{i+1}^{-2} \le 10^{3-2N}s_{i+1}^{-2}.
\]
Therefore, $JH$ takes values in $[(1+10^{3-N})^{-1},1+10^{3-N}]$ and
is Lipschitz with constant
\[
6s_{i+1} \cdot 10^{3-2N}s_{i+1}^{-2} + 10^{1-N}s_{i+1}^{-1} \cdot
10^{2-N} \le 10^{4-2N}s_{i+1}^{-1}.
\]
Since $H^{-1}$ is $1.01$-Lipschitz, $JH^{-1}=(JH \circ H^{-1})^{-1}$
takes values in $[(1+10^{3-N})^{-1},1+10^{3-N}]$ and is Lipschitz with
constant $10^{5-2N}s_{i+1}^{-1}$.
\end{proof}

\begin{sublemma}\label{lem-deform-wt-2}
On any ball of radius $s_{i+1}$, $\| D_{\dot{\Gamma}_i} H \circ H^{-1}\|$
is $\frac{2}{3}$-H\"older continuous with constant
$\frac{1}{2}\delta_0(2s_{i+1})^{-2/3}$ and takes values in
$[1/1.01,1.01]$. %\fbox{can do better...}
\end{sublemma}

\begin{proof}
Writing $v_k = (x_k, \gamma_k(x_k))$ for $\gamma_k \in \Gamma_i$, we
calculate
\[
\| D_{\dot{\Gamma_i}} H(v_k)\| = \frac{\| DH_{v_k}
  (1,\gamma_k'(x_k))\|}{\| (1,\gamma_k'(x_k))\|}
= \sqrt{\frac{{1+((h \circ (\id \otimes
      \gamma_k))'(x_k))^2}}{{1+(\gamma_k'(x_k))^2}}}.
\]
Thus $\| D_{\dot{\Gamma_i}} H(v_k)\| \in [1/1.01,1.01]$.
%\fbox{do better if needed}
Note that $H^{-1}$ is $1.01$-Lipschitz, and that near $z=1$, $\sqrt{z}$ is $1$-Lipschitz. Thus it suffices to show that
\begin{equation}\label{C-Holder}
\frac{{1+((h \circ (\id \otimes \gamma_k))')^2}}{{1+(\gamma_k')^2}} \mbox{ is $\frac{2}{3}$-H\"older continuous}
\end{equation}
with H\"older constant $\tfrac{1}{3}\delta_0(2s_{i+1})^{-2/3}$. To this end, consider the equality
\begin{equation}\label{eq-deform-wt-2}
\frac{1+L_1^2}{1+M_1^2} -\frac{1+L_2^2}{1+M_2^2} =
\frac{(1+M_1^2)(L_1^2-L_2^2)+(1+L_1^2)(M_2^2-M_1^2)}{(1+M_1^2)(1+M_2^2)}.
\end{equation}
In our case, $M_k = \gamma_k'$ is at most $\tfrac1{200}$ and is
$\frac{2}{3}$-H\"older continuous with constant
$3\delta_0a_{i+1}^{2/3}(2s_{i+1})^{-2/3}$, while $L_k = (h \circ (\id
\otimes \gamma_k))'$ is at most $\tfrac1{100}$ and is
$\frac{2}{3}$-H\"older continuous with constant
$\frac{1}{10}\delta_0(2s_{i+1})^{-2/3}$. (This follows from condition
\eqref{eq-good-curves-gamma-dot}, for sufficiently large $N$.)
In view of \eqref{eq-deform-wt-2}, \eqref{C-Holder} is satisfied with H\"older constant
\begin{equation*}
\frac{1}{10}\delta_0(2s_{i+1})^{-2/3} + 3\delta_0a_{i+1}^{2/3}(2s_{i+1})^{-2/3} \le
\frac{1}{3}\delta_0(2s_{i+1})^{-2/3}. \qedhere
\end{equation*}
\end{proof}

Sublemmas \ref{lem-deform-wt-1} and \ref{lem-deform-wt-2} combine with \eqref{eq-deform-wt-1} to show
that $\| w_{i+1} \|_q \leq 2 \|w_i \|_q$ on $Q$.

\begin{sublemma}\label{lem-deform-wt-3}
On a ball $B$ of radius $s_{i+1}$, $A^{-1} w_i\circ H^{-1}$ is
$\frac{2}{3}$-H\"older continuous with constant
$\frac{10^{-N/2}\delta_0}{(2s_{i+1})^{2/3}}$ and takes values in
$[(1+\delta_0)^{-1},1+\delta_0]$, where $A$ is the constant from condition \eqref{eq-good-curves-weight} for $B$.
\end{sublemma}

\begin{proof}
We already know that $A^{-1}w_i$ is $\frac{2}{3}$-H\"older continuous
with constant $\frac{\delta_0}{(2s_i)^{2/3}}$ and takes values in
$[(1+\delta_0)^{-1},1+\delta_0]$. Since $H^{-1}$ is $1.01$-Lipschitz,
we conclude that $A^{-1} w_i\circ H^{-1}$ is $\frac{2}{3}$-H\"older
with constant
\begin{equation*}
1.01^{2/3} \frac{\delta_0}{(2s_i)^{2/3}} \leq \frac{1.01 a_{i+1}^{2/3}
  \delta_0}{(2s_{i+1})^{2/3}} \leq
\frac{10^{-N/2}\delta_0}{(2s_{i+1})^{2/3}}.\qedhere
\end{equation*}
\end{proof}

The following lemma is trivial.

\begin{sublemma}\label{lem-deform-wt-4}
If $f_i: X \ra [M_i^{-1},M_i]$ is $\alpha$-H\"older continuous with
constant $L_i$ for $i\in\{1,2,3\}$, then $g: X \ra
[(M_1M_2M_3)^{-1},M_1 M_2 M_3]$ given by $g=f_1f_2f_3$ is
$\alpha$-H\"older continuous with constant $L_1 M_2 M_3 + M_1 L_2 M_3
+ M_1 M_2 L_3$.
\end{sublemma}

%We are now ready for the proof of Proposition \ref{good-prop}.

%\begin{proof}[Proof of Proposition \ref{good-prop}]
Sublemmas \ref{lem-deform-wt-1} to \ref{lem-deform-wt-4} together with
\eqref{eq-deform-wt-1} combine to show that $A^{-1}w_{i+1}$ takes
values in $[1/1.02,1.02]$ and is $\frac{2}{3}$-H\"older continuous
with constant at most
$$
(2s_{i+1})^{-2/3} \left( 10^{6-2N} (1.01)(1+\delta_0) +
  (1+10^{3-N})\frac{\delta_0}{2}(1+\delta_0) +
  (1+10^{3-N})(1.01)10^{-N/2}\delta_0 \right),
$$
which is bounded above by $\tfrac{3}{4}\delta_0(2s_{i+1})^{-2/3}$
provided $N$ is large enough.

Thus on balls of radius $s_{i+1}$, the ratio of maximum to minimum
values of $w_{i+1}$ (which is the same as the ratio for
$A^{-1}w_{i+1}$) is at most
\[
\left( \frac1{1.02} \right)^{-1} \left(
  \frac{1}{1.02}+\frac{3}{4}\delta_0 \right) = 1+
1.02\frac{3}{4}\delta_0 \le (1+\delta_0)^2.
\]
We can choose $A' \in [A/1.02,1.02A]$ as appropriate for the given
ball to conclude that the H\"older constant for $w_{i+1}$ on the
ball is at most $A\frac{3}{4}\delta_0(2s_{i+1})^{-2/3} \leq
\frac{A'\delta_0}{(2s_{i+1})^{2/3}}$.  This completes the proof of Lemma \ref{Blemma}.

\

Conditions (A), (B) and (C) having been verified for the new measured curve family $(\Gamma_{i+1},\sigma_{i+1})$, the proof of Proposition \ref{good-prop} is now complete.
\end{proof}

%%%%%%%%%%%%%%%%%%%%%%%%%%%%%%%%%%%%%%%%%%%%%%%%%%%%%%%%%%%%%%%%%%%%%%%%%%%%

\section{Uniformization by round carpets and slit carpets}\label{sec:uniformization}

In this section we prove Corollaries \ref{corC} and \ref{corD} from the introduction, on the existence of round and slit carpets supporting Poincar\'e inequalities.

The proof of Corollary \ref{corC} relies on the following uniformization theorem of Bonk
\cite[Theorem 1.1 and Corollary 1.2]{bonk:uniformization}.
%The final assertion in Theorem \ref{bonk-theorem} holds more generally for
%Schottky sets by a result of Bonk--Kleiner--Merenkov \cite{bkm:schottky}.

\begin{theorem}[Bonk]\label{bonk-theorem}
Let $\{D_i:i\in I\}$ be a family of pairwise disjoint domains in
$\R^2$ with Jordan curve boundaries $S_i = \partial D_i$. Assume
that the curves $\{S_i\}$ are uniformly relatively separated uniform
quasicircles. Then there exists a quasiconformal map
$f:\R^2\to\R^2$ so that $f(D_i)$ is a disc for all $i\in I$.
In particular, if $T$ is a planar carpet whose peripheral circles are
uniformly relatively separated uniform quasicircles, then $T$ can be
mapped to a round carpet $T'$ by a quasisymmetric homeomorphism $f$.
Furthermore, if $T$ has measure zero, then $f$ is unique up to
post-composition with a M\"obius transformation.
\end{theorem}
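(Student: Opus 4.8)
The plan is to work on the Riemann sphere $\Sph^2 = \R^2\cup\{\infty\}$, to build a quasiconformal self-map $f$ of $\Sph^2$ carrying every $D_i$ onto a round disk, and then to restrict to $\R^2$; since a quasiconformal map of $\Sph^2$ is quasisymmetric on every subset, this simultaneously gives the first assertion and, when $T$ is a carpet with peripheral disks $D_i$, a quasisymmetric homeomorphism of $T$ onto the round carpet $T' = \Sph^2\setminus\bigcup_i f(D_i)$.

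\textbf{Step 1: the finite case with uniform dilatation.} I would first prove the quantitative statement that if $D_1,\dots,D_n$ are pairwise disjoint Jordan domains whose boundaries $S_i$ are $K$-quasicircles that are $s$-relatively separated, then there is an $L$-quasiconformal $\phi\colon\Sph^2\to\Sph^2$ with every $\phi(D_i)$ a round disk, where $L = L(K,s)$ is \emph{independent of $n$}. Let $\Omega = \Sph^2\setminus\bigcup_{i=1}^n\overline{D_i}$, a finitely connected domain. By Koebe's uniformization theorem it carries a conformal homeomorphism $\psi\colon\Omega\to\Omega^\ast$ onto a circular domain $\Omega^\ast = \Sph^2\setminus\bigcup_{i=1}^n\overline{B_i}$, and, each $S_i$ being a Jordan curve, $\psi$ extends to a homeomorphism of the closures. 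On each $\overline{D_i}$ I replace $\psi$ by a quasiconformal extension $\psi_i\colon\overline{D_i}\to\overline{B_i}$ of the boundary homeomorphism $\psi|_{S_i}$: since $D_i$ is a $K$-quasidisk it may be mapped onto a round disk by a quasiconformal map with dilatation controlled by $K$, after which one only needs to extend a quasisymmetric circle homeomorphism, which the Beurling--Ahlfors construction does with dilatation controlled by the quasisymmetry constant. Gluing $\psi$ on $\overline\Omega$ with the $\psi_i$ gives a homeomorphism $\phi$ of $\Sph^2$ which is conformal off $\bigcup_i\overline{D_i}$ and quasiconformal on each $D_i$; a removability argument for the finitely many quasicircles $S_i$ then shows $\phi$ is $L$-quasiconformal on $\Sph^2$ with $L = L(K,s)$.

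\textbf{Step 2 (the crux): uniform quasisymmetry of the Koebe boundary map.} Everything reduces to showing $\psi|_{S_i}\colon S_i\to\partial B_i$ is $\eta$-quasisymmetric with $\eta = \eta(K,s)$, and I expect this to be the main difficulty, since it is exactly where the relative-separation hypothesis must enter. I would argue in two parts: (a) a conformal modulus estimate showing that the circular domain $\Omega^\ast$ is itself $s'$-relatively separated with $s' = s'(K,s)$ --- were two of the disks $B_i, B_j$ too close relative to their diameters, a ring of large modulus would separate them from the other boundary components, and pulling it back by the conformal map $\psi\inv$ would violate the $s$-separation and $K$-quasidisk bounds for $D_i, D_j$; and (b) a comparison, via conformal invariance of modulus, of the moduli of rings and quadrilaterals straddling $S_i$ in $\Omega$ with those of their images straddling $\partial B_i$ in $\Omega^\ast$: on the $\Omega^\ast$ side the Koebe distortion theorem near a round boundary circle together with (a) gives two-sided metric control, on the $\Omega$ side the $K$-quasicircle estimates give the matching control, and conformal invariance transfers one side to the other. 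Combining (a) and (b) yields the three-point condition defining $\eta$-quasisymmetry of $\psi|_{S_i}$ with $\eta = \eta(K,s)$, and hence, via Step 1, an $L(K,s)$-quasiconformal rounding map in the finite case.

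\textbf{Step 3: limiting argument, carpet statement, and uniqueness.} For a general (possibly infinite) family, I would normalize $\phi$ to fix three prescribed points of $\Sph^2$, apply Step 1 to $D_1,\dots,D_n$ to obtain $L$-quasiconformal maps $\phi_n$, and use compactness of the family of normalized $L$-quasiconformal self-maps of $\Sph^2$ to extract a subsequence converging locally uniformly to an $L$-quasiconformal $f$. For each fixed $i$ and all large $n$, $\phi_n(D_i)$ is a round disk, and the separation bound transported by $\phi_n$, together with the normalization, prevents these disks from degenerating to a point or to the whole sphere, so $f(D_i)$ is a genuine round disk; conjugating so that $f(\infty) = \infty$ and restricting to $\R^2$ gives the first assertion, and the carpet statement follows as described above. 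For uniqueness when $\cH^2(T) = 0$: if $f_1, f_2$ both round $T$, then $g = f_2\circ f_1\inv$ is a quasisymmetric homeomorphism between round carpets of measure zero, which extends to a quasiconformal self-map $\hat g$ of $\Sph^2$. Since $g$ carries peripheral circles to peripheral circles and these are round, $\hat g$ maps each peripheral disk of $f_1(T)$ onto one of $f_2(T)$; reflecting $\hat g$ across the (smooth) peripheral circles repeatedly pushes the Beltrami coefficient of $\hat g$ onto ever deeper generations of complementary disks, and since $f_1(T)$ has measure zero the dilatation is forced to vanish almost everywhere in the limit, so $g$ --- hence $f_2\circ f_1\inv$ --- is the restriction of a M\"obius transformation.
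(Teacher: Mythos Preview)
The paper does not prove this theorem. Theorem~\ref{bonk-theorem} is quoted from Bonk's paper \cite{bonk:uniformization} (specifically Theorem~1.1 and Corollary~1.2 there) and is used as a black box in Section~\ref{sec:uniformization}; the only remark the authors add is that Bonk states it on the sphere with the spherical metric and that stereographic projection converts it to the planar formulation given here. So there is no ``paper's own proof'' to compare your proposal against.

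That said, your outline is a reasonable sketch of the strategy Bonk actually uses: uniformize finitely many complementary components via Koebe, extend across the $D_i$ using the uniform quasicircle hypothesis, establish that the dilatation bound depends only on $(K,s)$ and not on the number of components, and pass to a limit by normal-family compactness; uniqueness in the measure-zero case comes from a reflection/rigidity argument for round carpets. Your Step~2 correctly identifies the heart of the matter --- controlling the quasisymmetry of the Koebe boundary correspondence uniformly in terms of the separation and quasicircle data --- and this is indeed where the bulk of the work in \cite{bonk:uniformization} lies. If you want to turn this into a self-contained proof you would need to flesh out Step~2 substantially (Bonk devotes several sections to it), but for the purposes of this paper the theorem is simply cited, not proved.
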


A Jordan curve $S\subset\R^2$ is a {\it quasicircle} if it is the
image of $\Sph^1$ under a quasiconformal map of $\R^2$. A family of
Jordan curves $\{S_i\}_{i\in I}$ consists of {\it uniform
quasicircles} if there exists $K\ge 1$ so that each $S_i$ is the
image of $\Sph^1$ under a $K$-quasiconformal map of $\R^2$. Finally, a
family of Jordan curves $\{S_i\}_{i\in I}$ is {\it uniformly relatively separated} if there exists $c>0$ so that
\begin{equation}\label{urs}
\frac{\dist(S_i,S_j)}{\min\{\diam S_i,\diam S_j\}} \ge c \qquad
\forall i,j \in I, i \ne j.
\end{equation}
%whenever $i,j\in I$, $i\ne j$.
Theorem \ref{bonk-theorem} is stated in \cite{bonk:uniformization} for
the extended complex plane $\widehat\C$ endowed with the spherical
metric. However, an application of the (conformal) stereographic
projection mapping converts the statement in
\cite{bonk:uniformization} to our formulation.

\subsection{Quasisymmetric uniformizability of $S_\ba$ by round carpets}\label{subsec:QS-uniform-round}

In this subsection we prove Corollary \ref{corC}.

For any $\ba$, the peripheral circles of the carpet $S_\ba$
are uniformly separated. Indeed, when $\ba \in c_0$, the uniform
relative separation condition \eqref{urs} holds in the following
stronger form:
\begin{equation}\label{urs2}
\lim_{\max\{\diam S_i,\diam S_j\} \to 0} \frac{\dist(S_i,S_j)}{\min\{\diam S_i,\diam S_j\}} \to \infty.
\end{equation}
Since the peripheral circles are rigid squares, they are uniform
quasicircles. By Theorem \ref{bonk-theorem}, $S_\ba$ is quasisymmetrically
uniformized by a round carpet $T'$ whenever $\ba\in c_0$.

When $\ba \not\in\ell^2$ (and so $S_\ba$ has zero Lebesgue measure),
$T'$ is rigid, i.e., unique up to the application of a M\"obius
transformation.

When $\ba\in\ell^2$ (and so $S_\ba$ has positive Lebesgue measure),
we observe that $T'=f(S_\ba)$ is an Ahlfors $2$-regular subset of
$\R^2$%, i.e., there exists a constant $C\ge 1$ so that
%\begin{equation}\label{A2regularity}
%C^{-1}r^2 \le \cH^2(B(z,r)\cap T') \le C r^2
%\end{equation}
%whenever $z \in T'$ and $0<r\le 1$.
(the volume upper bound is trivial; the lower bound follows from the
quasisymmetry of $f$ as in Corollary 3.10 and Remark 3.6(1) of
\cite{tys:analytic}). It now follows from Theorem \ref{thmB} and a
result of Koskela and MacManus \cite[Theorem 2.3]{km:sobolev} that the
round carpet $T'$ supports a $p$-Poincar\'e inequality for some $p<2$.

\subsection{Quasisymmetric uniformizability of $S_\ba$ by slit carpets}

We now turn to the proof of Corollary \ref{corD}.

Let $S_\ba$ be a carpet with $\ba\in\ell^2$. For each $m$, the
interior $S_{\ba,m}^o$ of the precarpet $S_{\ba,m}$ is a finitely
connected domain in the plane. By Koebe's uniformization theorem (see
for instance \cite[V\S2]{gol:functiontheory}), $S_{\ba,m}^o$ can be
conformally uniformized to a parallel slit domain $D_m$. Now the
identity map between the Euclidean metric and the internal metric
$\delta$ on $D_m$ is conformal, hence $S_{\ba,m}^o$ is conformally
equivalent to $(D_m,\delta)$.

By Proposition \ref{properties-of-mu} and the proof of Theorem
\ref{thmB}, the precarpets $S_{\ba,m}$ are Ahlfors $2$-regular and
support a $p$-Poincar\'e inequality for any $p>1$ with constants
independent of $m$. In particular, such precarpets are $2$-Loewner in
the Euclidean metric; since they are quasiconvex (with constant
independent of $m$) they are also $2$-Loewner in the internal metric.

It is straightforward to check that the domains $(D_m,\delta)$ are
LLC. We now appeal to a theorem of Heinonen \cite[Theorem
6.1]{hei:john} which asserts that any quasiconformal map from a
bounded domain which is Loewner in the internal metric to a bounded
LLC domain is quasisymmetric. The preceding result is quantitative in
the usual sense; in our situation this implies that the relevant
quasisymmetric distortion function is independent of $m$. Moreover,
the target domains $(D_m,\delta)$ are uniformly Ahlfors $2$-regular
by an argument similar to that used in
subsection \ref{subsec:QS-uniform-round}.
% (the upper bound is
%trivial; the lower bound follows from the
%quasisymmetric equivalence, see \cite[Corollary 3.10]{tys:assouad}).

Passing to the limit as $m\to\infty$, we obtain a quasisymmetric map
from $S_\ba$ onto an Ahlfors $2$-regular parallel slit carpet. To
complete the proof, we again use \cite[Theorem 2.3]{km:sobolev} to
conclude that the target carpet supports a $p$-Poincar\'e inequality
for some $p<2$.

\bibliographystyle{acm}
\bibliography{biblio}
\end{document}